\definecolor{darkgreen}{rgb}{0.0, 0.27, 0.13}
\definecolor{ForestGreen}{RGB}{34,139,34}
\numberwithin{equation}{section}
\def\hangbox to #1 #2{\vskip3pt\hangindent #1\noindent \hbox to #1{#2}$\!\!$}
\theoremstyle{plain}
\newtheorem{theorem}{Theorem}[section]
\newtheorem{proposition}[theorem]{Proposition}
\newtheorem{corollary}[theorem]{Corollary}
\newtheorem{lemma}[theorem]{Lemma}
\theoremstyle{definition}
\newtheorem{remark}[theorem]{Remark}
\newtheorem{definition}[theorem]{Definition}
\DeclareSymbolFont{bbold}{U}{bbold}{m}{n}
\DeclareSymbolFontAlphabet{\mathbbold}{bbold}
    \let\Cref\crtCref
    \let\cref\crtcref
\def\sfrac#1#2{\kern.1em\raise.5ex\hbox{$#1$}
	\kern-.1em/\kern-.05em\lower.25ex\hbox{$#2$}}
\newcommand{\fw}{\text{\fw}}
\newcommand{\supp}{{\rm supp}}
\begin{document}

\title[Quasilinear Schr\"odinger]{Low regularity solutions for the general Quasilinear ultrahyperbolic Schr\"odinger equation}

\author{Ben Pineau}
\address{Department of Mathematics\\
University of California at Berkeley
} \email{bpineau@berkeley.edu}
\author{Mitchell A.\ Taylor}
\address{Department of Mathematics\\
ETH Z\"urich
} \email{mitchell.taylor@math.ethz.ch}

\begin{abstract}
We present a novel method for establishing large data local well-posedness in low regularity Sobolev spaces for general quasilinear Schr\"odinger equations with non-degenerate and nontrapping metrics. Our result represents a definitive improvement over the landmark results of Kenig, Ponce, Rolvung and Vega \cite{kenig2005variable, MR2263709, MR1660933, MR2096797}, as it weakens the regularity and decay assumptions to the same scale of spaces considered by Marzuola, Metcalfe and Tataru in  \cite{marzuola2021quasilinear}, but removes the uniform ellipticity assumption on the metric from their result. Our method has the additional benefit of being relatively simple but also very robust. In particular, it only relies on the use of pseudodifferential calculus for classical symbols. 
\end{abstract} 

 \subjclass{Primary: 35Q55. Secondary: 35A01, 35B30}
 \keywords{Quasilinear Schr\"odinger, ultrahyperbolic, local well-posedness.}

 \maketitle

\tableofcontents

\section{Introduction}
In this article, we consider the large data local well-posedness problem for general quasilinear ultrahyperbolic Schr\"odinger equations of the form
\begin{equation}\label{fulleqn}
\begin{cases}
&i\partial_tu+g^{jk}(u,\overline{u},\nabla u,\nabla\overline{u})\partial_j\partial_k u=F(u,\overline{u},\nabla u,\nabla\overline{u}),\hspace{5mm} u:\mathbb{R}\times\mathbb{R}^d\to\mathbb{C}^m,
\\
&u(0,x)=u_0(x),
\end{cases}
\end{equation}
where $g$ and $F$ are assumed to be smooth functions of their arguments with $g$ real, symmetric and uniformly non-degenerate and $F$ vanishing at least quadratically at the origin.

\medskip

In a recent series of articles \cite{marzuola2012quasilinear, MR3263550, marzuola2021quasilinear},  Marzuola, Metcalfe and Tataru have studied the well-posedness of the system \eqref{fulleqn} in low regularity Sobolev spaces.  As a brief overview, the paper \cite{MR3263550}  considers the small data problem for cubic and higher nonlinearities in the Sobolev spaces $H^s(\mathbb{R}^d)$, $s>\frac{d+5}{2}$. The article \cite{marzuola2012quasilinear}, on the other hand, permits quadratic terms in the nonlinearity, but assumes that the data comes from the smaller space $l^1H^s(\mathbb{R}^d)$, $s>\frac{d}{2}+3$. Here, $l^1H^s$ is an appropriate  translation invariant Sobolev type space, imposing similar regularity requirements as $H^s$, but  slightly stronger decay. To see that some additional decay  is needed, it is instructive to look at the leading part of the linearized flow, which can be written schematically as
\begin{equation}\label{linearflowintro}
\begin{cases}
&i\partial_tv+\partial_jg^{jk}\partial_k v+b^j\partial_jv+\tilde{b}^j\partial_j\overline{v}=f,
\\
&v(0,x)=v_0(x).
\end{cases}
\end{equation}
Here, for the purposes of our heuristic discussion, we have written the principal operator in divergence form with $g^{jk}=g^{jk}(u,\overline{u})$ -- we will elaborate further on this reduction later on. As is well-known, a necessary condition for  $L^2$ well-posedness of a wide class of such linear systems is  integrability of the first order coefficient $\operatorname{Re}(b^j)$ along the bicharacteristic (or Hamilton) flow of the principal differential operator $\partial_jg^{jk}\partial_k$. This is usually referred to as the Mizohata (or Takeuchi–Mizohata) condition. See, for instance, \cite{MR0948533,jeong2023illposedness, marzuola2008wave, MR0170112, mizohata1981quelques, MR0860041, takeuchi1980cauchy} for several manifestations of this ill-posedness mechanism. For cubic and higher nonlinearities, the  integrability of $\operatorname{Re}(b^j)$ along the bicharacteristics is automatic for small $H^s$ data, but for quadratic nonlinearities it is not. That being said, there are several natural ways to recover the above integrability condition. One common approach is to work in weighted Sobolev spaces. However, the alternative  $l^1H^s$ spaces also achieve this goal, but have the additional advantage of being   translation invariant -- they are also far less restrictive in terms of regularity and decay, as we will see below.  
\medskip

In contrast to the case of small data, the third paper in the series by Marzuola, Metcalfe and Tataru \cite{marzuola2021quasilinear} considers the significantly more challenging large data problem. Here, the authors establish well-posedness in the same setting as their small data papers, but under two additional assumptions. The first assumption is that the initial metric $g(u_0)$ is nontrapping, meaning that all nontrivial bicharacteristics corresponding to the principal operator $\Delta_{g(u_0)}$ escape to spatial infinity at both ends. Such a condition is automatic in the small data regime (assuming sufficient regularity and asymptotic flatness of the metric) as in this setting the Hamilton trajectories are close to straight lines.  For large data well-posedness, a nontrapping assumption is completely natural, in light of the Mizohata condition. 
\medskip

On the other hand, the methods in \cite{marzuola2021quasilinear} also rely on the assumption of uniform ellipticity of the principal operator, i.e., the existence of a uniform constant $c>0$ such that
\begin{equation}\label{ellipticity}
c^{-1}|\xi|^2\leq g^{jk}(x)\xi_j\xi_k\leq c|\xi|^2.
\end{equation}
 This assumption is critically used in the above article to effectively diagonalize the linearized equation (\ref{linearflowintro}) and remove the complex conjugate first order term. Roughly speaking, this diagonalization proceeds by considering the new variable
\begin{equation*}
Sv:=v+\mathcal{R}\overline{v},
\end{equation*}
where $\mathcal{R}$ is a pseudodifferential operator of order $-1$ with symbol which is essentially of the form
\begin{equation*}
r(x,\xi)=\frac{i\tilde{b}^l\xi_l}{g^{jk}\xi_j\xi_k},
\end{equation*}
when $|\xi|\geq 1$. It is not difficult to see that, to leading order, $Sv$ formally satisfies an equation like (\ref{linearflowintro}), but without the complex conjugate first order term. This diagonalization procedure  is then used as a key ingredient in the proofs of the requisite local smoothing and $L_T^{\infty}L_x^2$ estimates for the linearized flow. A similar diagonalization is heavily relied upon in \cite{chihara1995local} and \cite{MR2096797}.
\medskip

The primary objective of the current article is to generalize the main result of \cite{marzuola2021quasilinear}  to the full class of ultrahyperbolic quasilinear Schr\"odinger flows, while keeping the regularity and function spaces identical. That is, we shall relax the uniform ellipticity assumption
 \begin{equation*}
c^{-1}|\xi|^2\leq g^{jk}\xi_j\xi_k\leq c|\xi|^2
\end{equation*}
 to the much weaker uniform non-degeneracy condition
 \begin{equation*}
c^{-1}|\xi|\leq |g^{jk}\xi_k|\leq c|\xi|.
 \end{equation*}
The lack of an ellipticity assumption on the metric in \eqref{fulleqn} causes significant difficulties. For example,  the diagonalization procedure outlined above completely breaks down in this case. This is what prompted the development of the new well-posedness scheme that we present in this article (we were also inspired by the scheme in \cite{jeong2023cauchy}). On the other hand, there are several physical sources of motivation for studying the general ultrahyperbolic problem. Some well-known examples arise naturally in the study of water waves \cite{davey1974three} and others arise in the theory of completely integrable models \cite{ishimori1984multi, zakharov1980degenerative}. More recently,  the Hall and electron magnetohydrodynamic equations without resistivity have been shown to behave at leading order like degenerate quasilinear Schr\"odinger systems of ultrahyperbolic type \cite{MR4456288} with principal operator of the form $B\cdot \nabla |\nabla|$ where $B$ is the magnetic field. This dispersive character of the equations was used to great effect in \cite{MR4456288,jeong2023cauchy},  leading to well-posedness in certain regimes and ill-posedness in others.
 \medskip
 
 Although \cite{marzuola2021quasilinear} requires ellipticity of the metric in order to achieve their low regularity results,  significant progress has been made towards removing the ellipticity assumptions from the well-posedness theory of \eqref{fulleqn} in the high regularity regime. This is  best illustrated by the pioneering series \cite{kenig2005variable, MR2263709, MR1660933, MR2096797} of Kenig, Ponce, Rolvung and Vega, which  culminates in a proof of large data well-posedness under the nontrapping assumption for systems of the form \eqref{fulleqn} in high regularity weighted Sobolev spaces of the form $H^s\cap L^2(\langle x\rangle^Ndx)$, where $s$ and $N$ are suitably large, dimension dependent parameters. In this fundamental series of papers,  \cite{MR2096797} studies the well-posedness problem assuming ellipticity of the principal operator $\partial_jg^{jk}\partial_k$, while  \cite{kenig2005variable, MR2263709, MR1660933} consider  symmetric, non-degenerate metrics, first in the constant coefficient case and then later for variable coefficients.  As should  be evident from these  articles,  the ellipticity assumption on the metric is not easy to remove, even in the high regularity regime. The main objective of the current paper is to give a much simpler proof of well-posedness for the general system \eqref{fulleqn} that is also robust enough to work in low regularity spaces. To the best of our knowledge, this is the first low regularity well-posedness result that applies to the full class of ultrahyperbolic quasilinear Schr\"odinger flows.
 \medskip
 
 The rough strategy used in \cite{kenig2005variable} to prove well-posedness of the ultrahyperbolic flow \eqref{fulleqn} in high regularity weighted spaces is to first establish an estimate for the local energy type norm 
\begin{equation*}
\|v\|_{LE}:=\|\langle x\rangle^{-\frac{N}{2}}\langle\nabla\rangle^{\frac{1}{2}}v\|_{L_T^2L_x^2},\hspace{5mm}N=N(d)\in\mathbb{N},
\end{equation*}
for the linearized equation (\ref{linearflowintro}) (assuming suitably strong asymptotic decay of the coefficients $b^j$, $\tilde{b}^j$ and $\nabla_x g^{jk}$) of the form
\begin{equation}\label{LEKENIG}
\|v\|_{LE}\lesssim \|v\|_{L_T^{\infty}L_x^2}+\|f\|_{LE^*+L_T^1L_x^2}.
\end{equation}
Here, $LE^*$ denotes the ``dual" local energy space. The  estimate \eqref{LEKENIG} shows that the local energy norm of $v$ remains under control, as long as  $v$ satisfies  an a priori $L_T^{\infty}L_x^2$ bound. The preliminary estimate \eqref{LEKENIG}  follows,  roughly speaking, from a suitable adaptation of Doi's construction in \cite{ichi1996remarks}  to the ultrahyperbolic problem. The more significant technical obstruction in \cite{kenig2005variable} is in establishing the a priori bound for the $L_T^{\infty}L_x^2$ norm. To understand  the difficulties, we first note that when the real part of the coefficient $b^j$ vanishes, it is a relatively straightforward exercise (in view of (\ref{LEKENIG})) to obtain the bound
\begin{equation*}
\|v\|_{L_T^{\infty}L_x^2}\lesssim \|v_0\|_{L_x^2}+\|f\|_{LE^*+L_T^1L_x^2}.
\end{equation*}
Indeed, this follows by a standard energy estimate, as one can integrate by parts to shift derivatives off of the first order terms and onto the coefficients $b^j$ and $\tilde{b}^j$. Therefore, in the general case, one is motivated 
to try to conjugate away the badly behaved first order term $\operatorname{Re}(b^j)\partial_jv$. In  \cite{kenig2005variable}, this  conjugation is accomplished by constructing a (formally) zeroth order operator $\mathcal{O}$ which achieves the approximate cancellation
\begin{equation}\label{approxcancellation}
[\mathcal{O},\partial_jg^{jk}\partial_k]+\mathcal{O}\operatorname{Re}(b^j)\partial_j\approx 0.
\end{equation}
The idea here is very loosely akin to the method of integrating factors from ODE. On a formal level, the symbol for the operator $\mathcal{O}$ achieving \eqref{approxcancellation} is given by
\begin{equation}\label{renormalizationnotideal}
O(x,\xi):=\exp\left(-\int_{-\infty}^{0}\operatorname{Re}(b(x^{t}))\cdot\xi^tdt\right),
\end{equation}
where $(x^t,\xi^t)$ denotes the bicharacteristic flow  
\begin{equation*}
(\dot{x}^t,\dot{\xi}^t)=(\nabla_{\xi}a(x^t,\xi^t),-\nabla_xa(x^t,\xi^t)),\hspace{5mm}(x^0,\xi^0)=(x,\xi),
\end{equation*}
corresponding to the principal symbol $a(x,\xi):=-g^{jk}(x)\xi_j\xi_k$.
Unfortunately, the symbol $O$ does not belong to the standard symbol class $S^0$. Rather, (assuming that $b$ has sufficient regularity and decay) it satisfies
\begin{equation}\label{CKSclass}
|\partial^{\alpha}_{\xi}\partial_x^{\beta}O(x,\xi)|\lesssim_{\alpha,\beta}\langle\xi\rangle^{-|\alpha|}\langle x\rangle^{|\alpha|}.
\end{equation}
In the case when the metric is positive-definite (i.e.~$\Delta_g$ is elliptic), the mapping properties of the pseudodifferential operators associated with this class of symbols were intensively studied in the paper \cite{craig1995microlocal} of Craig, Kappeler and Strauss. In the case of a merely non-degenerate metric, Kenig, Ponce, Rolvung and Vega in \cite{kenig2005variable} execute a systematic study of this symbol class as well as a very careful analysis of the bicharacteristic flow for $-g^{jk}(x)\xi_j\xi_k$ to establish suitable mapping properties for $\mathcal{O}$. In contrast, in the current article, to obtain the $L_T^{\infty}L_x^2$ estimate for (\ref{linearflowintro}) we will instead use a spatially truncated version of the above renormalization operator which achieves a suitable cancellation of the form (\ref{approxcancellation}), at least within a large compact set. The key advantage of this truncation is that the corresponding renormalization operator will be a classical pseudodifferential operator of order $0$, which will dramatically simplify the analysis (perhaps at the cost of estimating some extra error terms). Moreover, it will allow us to considerably lower the regularity and decay assumptions on the coefficients in (\ref{linearflowintro}) compared to \cite{kenig2005variable} when estimating the $L_T^{\infty}L_x^2$ norm of $v$. Of course, this idea comes with some technical caveats of its own, which will be discussed later.
\medskip

We remark that the idea of using the above spatial truncation to close the energy estimate for (\ref{linearflowintro}) is inspired by the article \cite{jeong2023cauchy} of Jeong and Oh, where they consider the well-posedness problem for the electron MHD equations near non-zero, constant magnetic fields, and perform an analogous truncation in their setting. As we shall see below, such a construction turns out to be tied heavily to the direction of propagation of the bicharacteristics of the principal part of the corresponding linear flow.  For the electron MHD equations, the bicharacteristics have a distinguished direction of propagation. However, the bicharacteristics for the Schr\"odinger equations that we consider in this article do not exhibit this feature. Therefore, one key novelty of the present paper is in dealing with the multi-directionality present in Schr\"odinger flows. Another important novelty is our ability to extend the truncation idea in order to give a new and very simple proof of the natural local smoothing type estimate for (\ref{linearflowintro}) in the local energy norms compatible with the translation invariant function spaces used in this paper. The method that we present is very robust and requires only  mild decay of the coefficients (e.g.~uniform integrability along the Hamilton flow). A more detailed outline of the argument will be given in \Cref{OOTP}.

\subsection{Statements of the results}
We now state our main results more precisely. As in \cite{marzuola2021quasilinear}, our primary focus will be on the case of quadratic nonlinear interactions.
\medskip

Let $d,m\geq 1$ and consider a system of equations of the form \eqref{fulleqn} where
\begin{equation}\label{mapping prop of g,f}
\begin{split}
&g:\mathbb{C}^m\times \mathbb{C}^m\times (\mathbb{C}^m)^d\times (\mathbb{C}^m)^d\to \mathbb{R}^{d\times d} \ \ \text{and} \ \ F:\mathbb{C}^m\times \mathbb{C}^m\times (\mathbb{C}^m)^d\times (\mathbb{C}^m)^d\to \mathbb{C}^m
\end{split}
\end{equation}
are smooth functions. We assume that $F$  vanishes at least quadratically at the origin, so that
\begin{equation}\label{QIP}
\begin{split}
& |F(y,z)|\approx \mathcal{O}\left(|y|^2+|z|^2\right) \ \text{near} \ (y,z)=(0,0).
\end{split}
\end{equation}
 In \cite{marzuola2021quasilinear}, the authors assume that the metric $g$ is uniformly elliptic and coincides with the identity matrix at the origin. That is, they assume that $g(0)=I_{d\times d}$ and that there is a  fixed constant $c>0$ so that
 \begin{equation*}
   c^{-1}|\xi|^2\leq g^{jk}(x)\xi_j\xi_k\leq c|\xi|^2,  \ \forall \xi\in \mathbb{R}^d, \ x\in\mathbb{C}^m\times \mathbb{C}^m\times (\mathbb{C}^m)^d\times (\mathbb{C}^m)^d.
\end{equation*}
In this article, we only assume that $g$ is symmetric and (uniformly) non-degenerate, in the sense that
\begin{equation}\label{nondeg metric}
    c^{-1}|\xi|\leq |g(x)\xi|\leq c|\xi|, \ \forall \xi\in \mathbb{R}^d, \ x\in\mathbb{C}^m\times \mathbb{C}^m\times (\mathbb{C}^m)^d\times (\mathbb{C}^m)^d,
\end{equation}
for some fixed constant $c>0$.
\medskip

As in \cite{marzuola2012quasilinear, MR3263550, marzuola2021quasilinear}, we also consider a second class of quasilinear Schr\"odinger equations of the form
\begin{equation}\label{div-form}
\begin{cases}
&i\partial_tu+\partial_j g^{jk}(u,\overline{u})\partial_k u=F(u,\overline{u},\nabla u,\nabla\overline{u}),\hspace{5mm} u:\mathbb{R}\times\mathbb{R}^d\to\mathbb{C}^m,
\\
&u(0,x)=u_0(x),
\end{cases}
\end{equation}
where  $F$ is as in \eqref{QIP}, but where the metric $g$ depends on $u$ but not on $\nabla u$. Such an equation arises by formally differentiating the  system \eqref{fulleqn}. Indeed, if $u$ solves \eqref{fulleqn} then $(u,\nabla u)$ solves an equation of the form \eqref{div-form} with a nonlinearity $F$ which depends at most quadratically on $\nabla u$. 
\medskip
\begin{remark}
    Note that the second order operator in \eqref{div-form} has a divergence structure, which can be achieved by commuting the first derivative with $g$ and viewing the commutator as an additional term on the right-hand side. In contrast, the second order operator in \eqref{fulleqn} cannot be written in divergence form without possibly changing the type of the equations.
\end{remark}
To state our main well-posedness theorem, we must recall the function spaces used in \cite{marzuola2012quasilinear, MR3263550, marzuola2021quasilinear}. For now, we limit ourselves to an expository summary, giving more precise definitions in \Cref{Func space}.
\medskip

Consider a standard spatial Littlewood-Paley decomposition
\begin{equation*}
    1=\sum_{j\in \mathbb{N}_0}S_j,
\end{equation*}
where $S_j$, $j\geq 1$, selects frequencies of size $\approx 2^j$ and $S_0$ selects all frequencies of size $\lesssim 1$. Corresponding to each dyadic frequency scale $2^j\geq 1$, we consider an associated partition $\mathcal{Q}_j$ of $\mathbb{R}^d$ into cubes of side length $2^j$ and an associated smooth partition of unity
\begin{equation*}
    1=\sum_{Q\in \mathcal{Q}_j}\chi_Q.
\end{equation*}
We define the $l_j^1L^2$ norm by 
\begin{equation}\label{ell_1sum intro}
    \|u\|_{l_j^1L^2}=\sum_{Q\in \mathcal{Q}_j}\|\chi_Qu\|_{L^2},
\end{equation}
and the space $l^1 H^s$ via the norm
\begin{equation}\label{ell1Hs intro}
   \|u\|^2_{l^1 H^s}=\sum_{j\geq 0}2^{2sj}\|S_ju\|_{l_j^1L^2}^2. 
\end{equation}
Note that if one replaces the $\ell^1$ sum by an $\ell^2$ sum in \eqref{ell_1sum intro} and defines $l^2 H^s$ analogously to \eqref{ell1Hs intro}, then $H^s=l^2H^s$ with equivalent norms. The extra summability in the definition of the $l^1H^s$ norm yields the decay necessary to circumvent Mizohata's ill-posedness mechanism. However, unlike the high regularity weighted Sobolev spaces used in previous works, the function spaces $l^1H^s$ admit translation invariant equivalent norms and contain functions exhibiting weaker regularity and decay. 
\medskip

As mentioned above, in the large data problem, one  has to contend with trapping. This is an obvious obstruction to well-posedness, so we will  need to impose a nontrapping assumption on the initial metric $g(u_0)$ to prevent this. Then, as part of our well-posedness theorem, we will    show  that the nontrapping assumption  propagates on a  time interval whose length depends on the data size and the profile of the initial metric. Our  definition of nontrapping is the same as \cite{marzuola2021quasilinear}.
\begin{definition}\label{def of nontrap}
    We say that the metric $g(u_0)$ is \emph{nontrapping} if all nontrivial bicharacteristics for $\Delta_{g(u_0)}$ escape to spatial infinity at both ends.
\end{definition}
The above qualitative definition of nontrapping suffices in order to state our main results. However, as we shall see, the proofs require us to introduce a parameter  $L$ which gives a quantitative description of nontrapping. The precise way in which we define  $L$ is slightly different than \cite{marzuola2021quasilinear}, so as to better handle  the case when  $\Delta_g$ is not elliptic. 
\medskip

With the above discussion in mind, we may  state our main well-posedness theorem as follows. 
\begin{theorem}\label{quadraticmain}
Let $s>\frac{d}{2}+3$ and suppose that the initial data $u_0\in l^1H^s$ makes $g(u_0)$ into  a real, symmetric, uniformly non-degenerate, nontrapping metric. Then \eqref{fulleqn} with the quadratic nonlinearity \eqref{QIP} is locally well-posed in $l^1H^s$. The same result holds if $s>\frac{d}{2}+2$ for the equation \eqref{div-form}.
\end{theorem}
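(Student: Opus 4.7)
The plan is to follow the standard paradigm for quasilinear dispersive equations: establish suitable a priori and linearized estimates, then close the iteration via a frequency envelope / Bona--Smith regularization argument tailored to $l^1H^s$. Following the introduction, I would first reduce the problem. For the full system \eqref{fulleqn}, I would differentiate the equation and work with the system for $(u,\nabla u)$, which is of divergence form \eqref{div-form}; this is why \eqref{fulleqn} requires one extra derivative of regularity ($s > d/2+3$ versus $s > d/2+2$ for \eqref{div-form}). Thus it suffices to treat \eqref{div-form}. Writing the linearization of \eqref{div-form} in the schematic form \eqref{linearflowintro}, the heart of the matter is to prove a pair of estimates for the linear flow, namely an $L^\infty_T L^2_x$ energy bound and a local smoothing / local energy bound, both with constants depending only on a controlled nonlinear norm of $u$ and on a quantitative nontrapping parameter $L$ associated to $g(u_0)$.

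For the local smoothing estimate I would adapt Doi's positive commutator construction to the ultrahyperbolic setting. The aim is to build a zeroth-order classical pseudodifferential multiplier $\mathcal{Q}$ whose symbol is increasing along the bicharacteristic flow of $a(x,\xi)=-\partial_j g^{jk}\partial_k$ on the cosphere bundle, quantitatively in a large but compact region of phase space. The nontrapping assumption on $g(u_0)$, propagated to a short time interval, guarantees the existence of such a symbol. Pairing $i[\partial_t - i\partial_j g^{jk}\partial_k, \mathcal{Q}]v$ with $v$ gives control of $\|v\|_{LE}$ modulo low-frequency errors and contributions from $b^j,\tilde b^j$, yielding a bound of the form $\|v\|_{LE}\lesssim \|v\|_{L^\infty_T L^2_x}+\|f\|_{LE^*+L^1_T L^2_x}$. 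A key novelty I would exploit here is to take the Doi multiplier to be classical of order $0$, at the cost of extra localized error terms that can be absorbed using the decay of the $l^1H^s$ spaces.

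For the $L^\infty_T L^2_x$ estimate, the standard energy estimate fails because of the real part of $b^j\partial_j v$; moreover, the elliptic diagonalization of \cite{marzuola2021quasilinear} using $\mathcal{R}$ of order $-1$ breaks down when $g$ is only non-degenerate. Instead, I would follow the approach flagged in the introduction: construct a spatially truncated version of the integrating-factor symbol
\[
O(x,\xi)=\exp\left(-\int_{-\infty}^0\operatorname{Re}(b(x^t))\cdot\xi^t\,dt\right),
\]
obtained by cutting off the bicharacteristic integral to a compact time window (of scale $L$) and to a large compact set in $x$. Because Hamilton trajectories of $-g^{jk}\xi_j\xi_k$ travel in several directions (unlike the single distinguished direction in \cite{jeong2023cauchy}), this truncation must be performed symmetrically along both the forward and backward flow; this is a principal conceptual point. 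The resulting symbol sits in the classical class $S^0$, so $\mathcal{O}$ obeys standard pseudodifferential calculus, and it realizes the approximate cancellation $[\mathcal{O},\partial_j g^{jk}\partial_k]+\mathcal{O}\,\mathrm{Re}(b^j)\partial_j\approx 0$ microlocally in the truncated region. Conjugating the equation by $\mathcal{O}$ reduces the first-order term $\mathrm{Re}(b^j)\partial_j v$ to errors of two types: a compactly supported error in phase space that can be bounded through the local smoothing estimate above, and a complex-conjugate term $\tilde b^j\partial_j \bar v$ which, in the absence of an order $-1$ symbolic diagonalization, I would treat by integration by parts after pairing with $\mathcal{O}v$, again absorbing it via the local energy norm together with the $l^1$ summability.

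Combining the two estimates and tracking the dependence of the constants on $\|u\|_{l^1H^s}$ and on $L$, I would then run a frequency-localized/Bona--Smith iteration: solve regularized problems, prove uniform bounds in $l^1H^s$ by applying the linearized estimates to $S_j u$ on a uniform time interval, derive Lipschitz dependence in $l^1L^2$ (or a slightly weaker norm, using the linearized estimates without derivatives), and interpolate to obtain continuous dependence in $l^1H^s$. Propagation of nontrapping on a short time interval follows from the fact that $g(u(t))-g(u_0)$ is small in an appropriate norm using the Sobolev embedding into $C^1$, which perturbs the Hamilton flow only mildly. The main obstacle I expect is the construction of the truncated renormalization operator $\mathcal{O}$: balancing the size of the spatial truncation so that (i) $\mathcal{O}$ remains in $S^0$ uniformly, (ii) the cancellation \eqref{approxcancellation} is accurate enough to close the $L^\infty_T L^2_x$ estimate, and (iii) the residual first-order complex-conjugate term is controllable by the local energy norm, all simultaneously under only the non-degeneracy (not ellipticity) of $g$ and the $l^1H^s$-level regularity of the coefficients.
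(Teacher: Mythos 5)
Your proposal captures the paper's overall architecture accurately: the differentiation reduction from \eqref{fulleqn} to \eqref{div-form}, the split of the linear analysis into an $L^\infty_TL^2_x$ bound and a local smoothing bound, the replacement of the elliptic order-$(-1)$ diagonalization of \cite{marzuola2021quasilinear} by a spatially truncated integrating-factor operator $\mathcal{O}$, the choice of a classical $S^0$ Doi multiplier rather than the exotic symbol class \eqref{CKSclass}, and the frequency-envelope iteration for the nonlinear problem. The instinct to symmetrize the renormalization symbol is also right: in the paper $\psi_{ideal}$ averages $B$ over the two data points $(x,\pm\xi)$ so that $O$ is even in $\xi$ on $B_{R/8}(0)$, and this is precisely what controls the complex-conjugate error $\tilde{B}^0_{k_0}(\overline{\mathcal{O}u}-\mathcal{O}\overline{u})$. (Minor inaccuracy: the time integral is \emph{not} cut off to a window of length $L$; it is the integrand, not the domain of integration, that is truncated, via $\chi_{<4R}B$.)

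The genuine gap is in the step you flag as the main obstacle but leave unresolved: after spatial truncation, $O=e^{\psi_1+\psi_2}$ does \emph{not} lie in $S^0$ uniformly in $R$. Its higher seminorms grow with $R$ through $L(R)$ (via \Cref{bicharhigher}), so the crude Calderon--Vaillancourt operator bound for $\mathcal{O}$ would overwhelm the smallness of $b^j,\tilde b^j$ outside $B_R(0)$, and the argument would be circular. The paper's resolution rests on two ingredients your plan is missing. First, only the $L^\infty$ norm of $O$ is bounded uniformly in $R$, by uniform integrability of $B$ along the nontrapping bicharacteristic flow (\Cref{intalongbi} and \Cref{Osymbounds}). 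Second, there is a high-frequency Calderon--Vaillancourt refinement (\Cref{CVvariant} and its $X^0,Y^0$ analogue \Cref{LEopbounds}) asserting that $\|Op(a)S_{\geq k_1}\|_{L^2\to L^2}\lesssim\|a\|_{L^\infty}$ once $k_1$ is large. Closing the estimate therefore requires first reducing to $v$ supported at frequencies $\gtrsim 2^{k_1}$ (\Cref{hifreqlemma}) and truncating the coefficients in time and frequency (replacing $b^j$ by $b^j_{<k_0}(0)$) so that the symbol is smooth and time-independent with controlled bounds. The local energy estimate has a parallel and equally essential subtlety: since $q$ must be elliptic on $B_{2R}(0)$ its $L^\infty$ bound cannot be $R$-independent, so the paper introduces a second, larger radius $R'\gg R$ together with a term $\eta_{R'}$ in the symbol that dominates $|\mathbf{B}^0_{k_0}|$ on all of $B_{R'}(0)$ while keeping $\|q\|_{L^\infty}$ independent of $R'$; the exterior residual is then absorbed by the same high-frequency mechanism. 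Without these two mechanisms the spatial-truncation idea does not close, so as written your plan would stall exactly where you predicted.
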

\begin{remark}
  We will prove the latter result in \Cref{quadraticmain} as it will imply the former by differentiating the equation.   
\end{remark}
\begin{remark}
As in \cite{marzuola2021quasilinear}, the regularity and decay assumptions in the above results can be weakened if the metric and nonlinearity satisfy the stronger vanishing conditions
\begin{equation*}
g(y,z)=g(0)+\mathcal{O}\left(|y|^2+|z|^2\right),\hspace{5mm}|F(y,z)|\approx \mathcal{O}\left(|y|^3+|z|^3\right) \ \text{near} \ (y,z)=(0,0).  
\end{equation*}
Namely, it can be shown that \eqref{fulleqn} is well-posed in the same sense as \Cref{quadraticmain} when $u_0\in H^s$ and $s>\frac{d+5}{2}$. An analogous result holds if $s>\frac{d+3}{2}$ for the equation \eqref{div-form}. To prove this, one makes modifications to the quadratic case which are virtually identical  to those made in  \cite{marzuola2021quasilinear}. In order to simplify our exposition, we omit the details for these relatively straightforward modifications and instead focus on the general case of quadratic nonlinearities.
\end{remark}
\begin{remark}
    In the above results, well-posedness is to be interpreted in the standard quasilinear fashion. More precisely, in the setting of \Cref{quadraticmain} it includes the following key features.
    \begin{itemize}
        \item (Regular solutions). For large $\sigma$ and nontrapping initial data $u_0\in l^1H^\sigma$ there is a unique solution $u\in C([0,T];l^1H^\sigma)$ which persists and remains nontrapping on some nontrivial maximal time interval $I=[0,T_*).$
        \item (Rough solutions). For $s>\frac{d}{2}+3$ and nontrapping data $u_0\in l^1H^s$ there is a unique solution  $u\in C([0,T];l^1H^s)\cap l^1 X^s([0, T])$  which persists and remains nontrapping on some nontrivial maximal time interval $I=[0, T_*).$ Here, the auxiliary space $l^1X^s$ is a natural analogue of the local energy space $LE$ described earlier. A precise definition of this space will be given in \Cref{Func space}. 
        \item (Continuous dependence). The maximal time $T_*(u_0)$ is a lower semicontinuous function of $u_0$ with respect to the $l^1H^s$ topology and for each $T<T_{*}(u_0)$ the data-to-solution map $v_0\mapsto v$ is continuous near $u_0$ from $l^1H^s$ into $C([0,T];l^1H^s)\cap l^1 X^s([0,T]).$
    \end{itemize}
\end{remark}
\begin{remark}
    As in \cite[Remark 1.3.2]{marzuola2021quasilinear}, the maximal existence time $T_{*}(u_0)$  a priori depends on the full profile of the initial data $u_0$ rather than just its size in $l^1H^s$, due to the nontrapping condition on the metric.
\end{remark}
\begin{remark}
   As in \cite{marzuola2021quasilinear}, the arguments we use here are purely dispersive. This is in contrast to the viscosity methods used in earlier works, which are less tailored to the structure of the equations, and hence less suitable for low regularity analysis.
\end{remark}
\subsection{Organization of the paper} The paper is organized as follows. In \Cref{Prelims} we recall the precise functional setting used in \cite{marzuola2021quasilinear} as well as the standard Fourier-analytic, nonlinear, and pseudodifferential machinery that will be used throughout the article. In certain cases, we adapt this machinery in order to obtain refined estimates in the space-time function spaces where we aim to construct solutions to \eqref{fulleqn} and \eqref{div-form}. In \Cref{OOTP}, we provide a detailed outline of the proof. Then, in \Cref{THE BICHARAC FLOW}, we analyze the bicharacteristic flow. The key objectives of this section are to quantify nontrapping, show that nontrapping is stable under small perturbations, and establish suitable asymptotic bounds for the bicharacteristics.  In  \Cref{THE LINEAR ULTRA FLOW},  we state our main well-posedness theorem for the linearized flow and reduce the main linear estimate to establishing a simplified bound for the corresponding inhomogeneous linear paradifferential flow in the  $l^1X^s$ spaces where we intend to construct solutions. Then, in \Cref{Sec lin flow}, we aim to establish a suitable estimate for the $L_T^{\infty}L_x^2$ component of the $l^1X^s$ norm by constructing a truncated version of the renormalization operator $\mathcal{O}$ in (\ref{renormalizationnotideal}). Such an estimate will close on a short enough time interval, up to controlling a small factor of the local energy component of the $l^1X^s$ norm. In \Cref{LED sec}, we control this remaining component of the $l^1X^s$ norm for the linear paradifferential flow. Then, in  \Cref{proofoflinest}, we deduce the full $l^1X^s$ estimate for the paradifferential and linearized equations by combining the local energy estimate with the $L_T^{\infty}L_x^2$ estimate from \Cref{Sec lin flow}. Finally, in \Cref{finalsection} we use the linearized estimates from the previous sections along with a suitable paradifferential reduction of the full nonlinear equation to establish \Cref{quadraticmain}.
\subsection{Acknowledgements} We thank Sung-Jin Oh for several enlightening discussions. We also thank both Sung-Jin Oh and In-Jee Jeong for kindly sharing with us the method of spatially truncating the renormalization operator in the energy bound for the linearized equation, which was an idea that they first implemented in \cite{jeong2023cauchy} for the electron MHD equations. An important component of this argument, which we also use in our setting, is the high-frequency Calderon-Vaillancourt bound in \Cref{CVvariant}. During the writing of this paper,  the authors were partially supported by the NSF grant DMS-2054975 as well as by the Simons Investigator grant of Daniel Tataru.  
\section{Preliminaries}\label{Prelims}
In this section, we recall some basic Fourier-analytic tools as well as the definitions and elementary properties of the function spaces that will be used in our analysis. We also recall some standard facts about pseudodifferential operators and establish some new estimates for these operators in our function spaces.
\subsection{Littlewood-Paley decomposition} We begin by recalling the standard Littlewood-Paley decomposition. We let $\varphi:\mathbb{R}^d\to [0,1]$ be a smooth radial function supported in the ball of radius $2$, $B_{2}=B_2(0)$, which satisfies $\varphi=1$ on $B_1$. We define Fourier multipliers $S_0$ and $S_k$ by
\begin{equation*}
\begin{split}
&\widehat{S}_k:=\varphi(2^{-k}\xi)-\varphi(2^{-k+1}\xi),\hspace{5mm}k\in\mathbb{N},
\\
&\widehat{S}_0:=\varphi(\xi).
\end{split}
\end{equation*}
We then define for each $k\in\mathbb{N}$,
\begin{equation*}
S_{<k}:=\sum_{0\leq j<k}S_j,\hspace{5mm}S_{\geq k}:=\sum_{k\leq j<\infty}S_j.
\end{equation*}
With the above notation, we have the standard (inhomogeneous) Littlewood-Paley decomposition
\begin{equation*}
1=\sum_{k\geq 0}S_k.
\end{equation*}
In the sequel, we will often phrase our  bilinear and nonlinear estimates in the language of paradifferential calculus. For a suitable pair of complex-valued functions $f$ and $g$, we will write $T_gf$ to mean
\begin{equation}\label{paraproductdef}
T_gf:=\sum_{k\geq 0}S_{<k-4}gS_kf.
\end{equation}
 In other words, $T_gf$ selects the portion of the product $fg$ where $f$ is at high frequency compared to $g$. With this notation, we have the so-called Bony decomposition or Littlewood-Paley trichotomy,
 \begin{equation*}
fg=T_fg+T_gf+\Pi (f,g).     
 \end{equation*}
We refer the reader to \cite{bony1981calcul} and \cite{metivier2008differential} for some basic properties of these operators. To compactify the above notation, we will sometimes write $f_{<k}$ as shorthand for $S_{<k}f$ and $f_{\geq k}$ as shorthand for $S_{\geq k}f$. 
\subsection{Function spaces}\label{Func space} Next, we recall the definitions and basic properties of the function spaces that will be used in our analysis.  Much of the material here is recalled from \cite{marzuola2012quasilinear} and the large data paper \cite{marzuola2021quasilinear}. For each frequency scale $2^k$, we consider a partition of $\mathbb{R}^d$ into a set $Q_k$ of disjoint cubes of side length $2^k$ along with a smooth partition of unity in physical space,
\begin{equation*}
1=\sum_{Q\in Q_k}\chi_{Q}.
\end{equation*}
For a translation-invariant Sobolev type space $U$, we define the spaces $l_k^pU$ by
\begin{equation*}
\begin{split}
\|u\|_{l_k^pU}:=\left(\sum_{Q\in Q_k}\|\chi_{Q}u\|_{U}^p\right)^{\frac{1}{p}},\hspace{2mm} 1\leq p<\infty,\hspace{5mm}\|u\|_{l_k^{\infty}U}:=\sup_{Q\in Q_k}\|\chi_{Q}u\|_{U}.
\end{split}
\end{equation*}
As noted in \cite{marzuola2012quasilinear}, these spaces have a translation invariant equivalent norm, obtained by replacing the sum over cubes with an integral. Moreover, up to norm equivalence, the smooth partition by compactly supported cutoffs  can be replaced by a partition consisting of  cutoffs which are all localized to  frequency zero. 
\medskip

We next recall the definition of the local energy type space $X$, which is defined for each $T>0$ by
\begin{equation*}\label{Xdef}
\|u\|_{X}:=\sup_{l\in\mathbb{N}_0}\sup_{Q\in Q_l} 2^{-\frac{l}{2}} \| u\|_{L_T^2L_x^2([0,T]\times Q)}.
\end{equation*} 
Associated to $X$ is the ``dual" local energy type space $Y\subset L^2([0,T]\times \mathbb{R}^d)$ which satisfies the relation $X=Y^*$. See \cite{marzuola2012quasilinear} for more details on the properties and construction of this space. For each non-negative integer $k$, we define
\begin{equation*}
X_k:=2^{-\frac{k}{2}}X\cap L_T^{\infty}L_x^2,\hspace{5mm}\|u\|_{X_k}:=2^{\frac{k}{2}}\|u\|_{X}+\|u\|_{L_T^{\infty}L_x^2}
\end{equation*}
and
\begin{equation*}
Y_k:=2^{\frac{k}{2}}Y+L_T^1L_x^2,\hspace{5mm}\|u\|_{Y_k}:=\inf\{2^{-\frac{k}{2}}\|u_1\|_{Y}+\|u_2\|_{L_T^1L_x^2}: u=u_1+u_2\}.
\end{equation*}
Loosely speaking, we will use $X_k$ to measure solutions to the Schr\"odinger equation localized at frequency $2^k$ whereas $Y_k$ will be used to measure inhomogeneous source terms localized at this frequency. Next, we define for each $s\in\mathbb{R}$,
\begin{equation*}
\|u\|_{l^pX^s}:=\left(\sum_{k\geq 0}2^{2ks}\|S_ku\|_{l^p_kX_k}^2\right)^{\frac{1}{2}},\hspace{5mm}\|u\|_{l^pY^s}:=\left(\sum_{k\geq 0}2^{2ks}\|S_ku\|_{l^p_kY_k}^2\right)^{\frac{1}{2}},
\end{equation*}
for $1\leq p<\infty$ (with the natural modification for $p=\infty$). We will also work with the corresponding spaces without the $\ell^p$ summability,
\begin{equation*}
\|u\|_{X^s}:=\left(\sum_{k\geq 0}2^{2ks}\|S_ku\|_{X_k}^2\right)^{\frac{1}{2}},\hspace{5mm}\|u\|_{Y^s}:=\left(\sum_{k\geq 0}2^{2ks}\|S_ku\|_{Y_k}^2\right)^{\frac{1}{2}}.
\end{equation*}
As already mentioned, throughout the article we will frequently make use of the standard tools of paradifferential calculus to estimate various multilinear and nonlinear expressions. A very nice bookkeeping device for efficiently tracking the frequency distribution of such terms is the language of frequency envelopes introduced by Tao in \cite{tao2001global}. To define
these, suppose that we are given a translation-invariant Sobolev type space $U$ with the orthogonality relation,
\begin{equation*}
\|u\|_{U}\approx \left(\sum_{k\geq 0}\|S_ku\|_{U}^2\right)^{\frac{1}{2}}.
\end{equation*}
An admissible frequency envelope for $u\in U$ is a positive sequence $(c_k)\subset\mathbb{N}_0$ such that for each $k\in\mathbb{N}_0$, we have
\begin{enumerate}
\item (Boundedness and size).
\begin{equation*}
\|S_ku\|_{U}\lesssim c_k\|u\|_{U},\hspace{5mm}\|c_k\|_{l^2_k}\approx 1.
\end{equation*}
\item (Left-slowly varying).
\begin{equation*}
c_j\geq 2^{\delta (j-k)}c_k,\hspace{5mm}j<k,
\end{equation*}
for some fixed parameter $\delta>0$.
\item (Right-uniformly varying).
\begin{equation*}
c_j\geq 2^{\sigma(k-j)}c_k,\hspace{5mm}j>k    ,
\end{equation*}
for some  fixed parameter $\sigma>0$.
\end{enumerate}
For nonzero $u$, such a frequency envelope always exists. For instance, we may define
\begin{equation*}
c_j=\|u\|_{U}^{-1}\left(\max_{k\geq j}2^{-\delta |j-k|}\|S_ku\|_{U}+\max_{k\leq j}2^{-\sigma |j-k|}\|S_ku\|_{U}\right).
\end{equation*}
In this article, the primary purpose of the above frequency envelopes will be to facilitate the proof of the continuity of the data-to-solution map for the quasilinear Schr\"odinger systems we consider.
\subsection{Pseudodifferential calculus}\label{psi do} Our  objective in this subsection is to recall some basic properties of pseudodifferential operators and then establish some refined estimates for these operators in the local energy and ``dual" local energy spaces defined above.
\medskip

For $m\in\mathbb{R}$, we recall that  the standard symbol class $S^m:=S^m_{1,0}$ is defined by
\begin{equation*}
S^m:=\{a\in C^{\infty}(\mathbb{R}^{2d}): |a|_{S^m}^{(j)}<\infty\, ,\hspace{2mm}j\in\mathbb{N}_0\},
\end{equation*}
where the corresponding seminorms $|a|_{S^m}^{(j)}$ are given by 
\begin{equation*}
|a|_{S^m}^{(j)}:=\sup\{\|\langle\xi\rangle^{|\alpha|-m}\partial_x^{\beta}\partial^{\alpha}_{\xi} a(x,\xi)\|_{L^{\infty}(\mathbb{R}^{2d})} :\hspace{2mm}|\alpha+\beta|\leq j\}.
\end{equation*}
To each symbol $a\in S^m$ we can associate the pseudodifferential operator $Op(a)\in OPS^m$, defined for $f\in \mathcal{S}(\mathbb{R}^d)$ by the quantization
\begin{equation*}
Op(a)f(x)=\frac{1}{(2\pi)^d}\int_{\mathbb{R}^d}a(x,\xi)e^{ix\cdot\xi}\widehat{f}(\xi)d\xi.
\end{equation*}
We now list some basic  properties of pseudodifferential operators; proofs can be found in the standard reference \cite{taylor1991pseudodifferential}.  We begin with  an elementary result on Sobolev boundedness. 
\begin{proposition}[Sobolev boundedness]\label{Sobolevbound} Let $s,m\in\mathbb{R}$ and let $a\in S^m$. Then $Op(a)$ extends to a bounded linear operator from $H^{s+m}$ to $H^s$ and there exists $j$ depending only on $s,$ $m$ and the dimension such that
\begin{equation*}
\|Op(a)\|_{H^{s+m}\to H^s}\lesssim |a|_{S^m}^{(j)}.
\end{equation*}
\end{proposition}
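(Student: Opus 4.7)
My approach is to reduce the general case to the base case $s=0$, $m=0$ via symbolic calculus, and then prove $L^2$-boundedness for symbols in $S^0$ by a classical Beals--Cordes square-root iteration.

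For the reduction, I would conjugate by Bessel potentials. The operator $\langle D\rangle^s \, Op(a) \, \langle D\rangle^{-s-m}$, obtained by composing three pseudodifferential operators of orders $s$, $m$ and $-s-m$, can be written (modulo a smoothing remainder) as $Op(\tilde a)$ with $\tilde a \in S^0$, where each semi-norm $|\tilde a|_{S^0}^{(k)}$ is controlled by finitely many semi-norms $|a|_{S^m}^{(j)}$ (the number of semi-norms depending only on $s$, $m$ and $d$). Since $\langle D\rangle^{s+m}\colon H^{s+m}\to L^2$ and $\langle D\rangle^{-s}\colon L^2\to H^s$ are isometries, it suffices to prove the $L^2\to L^2$ bound for $Op(\tilde a)$.

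For the base case, let $A=Op(a)$ with $a\in S^0$ and fix $M^2>\sup_{x,\xi}|a(x,\xi)|^2$. By the composition calculus, $A^*A\in OPS^0$ with principal symbol $|a|^2$, so
$$M^2 I - A^*A = Op(b) + R_0, \qquad b := M^2 - |a|^2 \in S^0, \quad R_0\in OPS^{-1},$$
and $b\ge \varepsilon>0$. Extracting a smooth square root $b^{1/2}\in S^0$ and iterating the construction at each lower order produces an asymptotic expansion $c \sim b^{1/2} + c_{-1} + c_{-2}+\cdots$ with $c_{-j}\in S^{-j}$, such that
$$M^2 I - A^*A = Op(c)^* Op(c) + R_\infty,$$
where $R_\infty$ is a smoothing operator whose norm can be made as small as one pleases after a suitable truncation. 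Pairing with $u$ yields $\|Au\|_{L^2}^2 \le M^2\|u\|_{L^2}^2 + |\langle R_\infty u,u\rangle|$, and a standard regularization argument gives $\|A\|_{L^2\to L^2}\le M$, with $M$ controlled by $|a|_{S^0}^{(j)}$ for an integer $j=j(d)$.

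The main obstacle is the bookkeeping in the iteration: one must verify that at each stage only finitely many additional derivatives of $a$ are invoked, so that the final $L^2$ bound depends on a fixed semi-norm rather than on the full Fréchet structure of $S^0$. Once this is tracked, combining with the reduction step in the first paragraph gives the claimed dependence $j=j(s,m,d)$. A cleaner alternative, which avoids the iteration entirely, is to establish pointwise off-diagonal decay of the Schwartz kernel $K(x,y)$ of $Op(a)$ by integration by parts in $\xi$, dyadically decompose in frequency, and apply the Cotlar--Stein almost-orthogonality lemma; the semi-norm accounting is essentially the same, and either route reproduces the result stated in Taylor's reference.
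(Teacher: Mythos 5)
The paper does not give a proof of this proposition; it simply cites Taylor's text \cite{taylor1991pseudodifferential}, and your argument is precisely the standard textbook route taken there: conjugation by Bessel potentials to reduce to $L^2$-boundedness for $S^0$, followed by the H\"ormander square-root argument (or, as you note, a Cotlar--Stein alternative). So the approach matches the source the paper invokes.

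One small imprecision worth flagging: the remainder $R_\infty$ in the square-root iteration cannot in general be ``made as small as one pleases'' --- truncating the asymptotic expansion at order $N$ produces a remainder in $OPS^{-N-1}$, but its $L^2\to L^2$ norm is merely \emph{bounded} (not small), since higher terms in the expansion involve higher derivatives of $a$ whose seminorms need not shrink. What actually closes the argument is that for $N > d$ the remainder has an $L^1$-off-diagonal Schwartz kernel, so its $L^2$ norm is controlled by a Schur test rather than by a circular appeal to Calder\'on--Vaillancourt; this yields $\|A\|_{L^2\to L^2}^2 \le M^2 + \|R_N\|_{L^2\to L^2}$ with everything bounded by finitely many seminorms of $a$, which is exactly the (non-sharp) estimate the proposition asks for. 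Your conclusion is therefore correct, but the phrase ``$\|A\|\le M$'' should be read as ``$\|A\|$ is bounded in terms of $\sup|a|$ plus finitely many seminorms,'' not as the sharp H\"ormander bound, which would require an additional bootstrap not needed here.
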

We next recall the sharp G\r{a}rding inequality for symbols $a\in S^1$. 
\begin{proposition}[Sharp G\r{a}rding inequality] \label{garding}Let $a\in S^1$ and let $R>0$ be such that $\operatorname{Re}(a)\geq 0$ for $|\xi|\geq R$. Then $Op(a)$ is semi-positive. That is, there exists $j$ depending on $d$ such that for $f\in \mathcal{S}(\mathbb{R}^d)$, we have
\begin{equation*}
\operatorname{Re}\langle Op(a)f,f\rangle\gtrsim_{R} -|a|_{S^1}^{(j)}\|f\|_{L^2}^2,
\end{equation*}
where $\langle\cdot,\cdot\rangle$ denotes the usual $L^2$ inner product.
\end{proposition}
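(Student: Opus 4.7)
The plan is to reduce to the classical sharp G\r{a}rding inequality, in which $\operatorname{Re}(a) \geq 0$ on all of $\mathbb{R}^{2d}$, by peeling off the bounded-frequency portion of the symbol where positivity can fail.

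First, I would pick a cutoff $\chi \in C_c^\infty(\mathbb{R}^d)$ with $\chi \equiv 1$ on $B_1$ and $\operatorname{supp}\chi \subset B_2$, and decompose
$$a = a_\flat + a_\sharp, \qquad a_\flat(x,\xi) := a(x,\xi)\chi(\xi/R), \qquad a_\sharp(x,\xi) := a(x,\xi)(1-\chi(\xi/R)).$$
The low-frequency piece $a_\flat$ has $\xi$-support in $\{|\xi|\leq 2R\}$, on which $\langle\xi\rangle \lesssim R$. A Leibniz-rule computation then places $a_\flat$ in $S^{-\infty}$ with seminorms bounded by $C(R)|a|_{S^1}^{(j)}$, so \Cref{Sobolevbound} yields
$$|\langle Op(a_\flat) f, f\rangle| \leq C(R) |a|_{S^1}^{(j)} \|f\|_{L^2}^2.$$
The high-frequency piece $a_\sharp$ is supported in $\{|\xi|\geq R\}$, where $\operatorname{Re}(a) \geq 0$ by hypothesis, hence $\operatorname{Re}(a_\sharp) \geq 0$ on all of $\mathbb{R}^{2d}$. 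Moreover, because $\partial_\xi^{\alpha_2}(1-\chi(\xi/R))$ is supported on $|\xi| \sim R$ where $R^{-|\alpha_2|} \sim \langle\xi\rangle^{-|\alpha_2|}$, the Leibniz rule also gives $|a_\sharp|_{S^1}^{(j)} \lesssim |a|_{S^1}^{(j)}$ with implicit constant independent of $R \geq 1$.

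To $a_\sharp$ I would then apply the standard sharp G\r{a}rding inequality for first-order symbols with globally nonnegative real part (see \cite{taylor1991pseudodifferential}), obtaining
$$\operatorname{Re}\langle Op(a_\sharp) f, f\rangle \geq -C |a_\sharp|_{S^1}^{(j)} \|f\|_{L^2}^2 \geq -C |a|_{S^1}^{(j)} \|f\|_{L^2}^2.$$
Adding this to the low-frequency estimate yields the proposition with constant depending on $R$. The only substantive input is the classical sharp G\r{a}rding inequality, whose standard proof proceeds via Friedrichs symmetrization: one builds a manifestly nonnegative operator $A^F$ from Gaussian wave-packet transforms and checks that $A^F - \operatorname{Re} Op(a_\sharp) \in OPS^0$ with $L^2$-operator norm controlled by finitely many $S^1$-seminorms, then reads off the lower bound. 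I would cite rather than reprove this construction, since the paper only asks for the existence of some $j = j(d)$ and no sharp seminorm bookkeeping is required. Consequently, I do not foresee a real obstacle: the entire argument is a symbol splitting paired with a citation.
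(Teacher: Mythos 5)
Your argument is correct and matches the paper's in spirit: the paper simply cites the Lax–Nirenberg reference \cite{MR0233064} without proof, and your frequency splitting $a = a\chi(\xi/R) + a(1-\chi(\xi/R))$ is precisely the standard reduction from the stated variant (nonnegativity only for $|\xi|\geq R$) to the classical sharp G\r{a}rding inequality (nonnegativity everywhere) that the paper leaves implicit. The only small point to make explicit is that the cutoff $\chi$ should take values in $[0,1]$ so that $\operatorname{Re}(a_\sharp)\geq 0$ pointwise.
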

\begin{proof}
    See, e.g.,~\cite{MR0233064}.
\end{proof}
\begin{remark}\label{matrixgard}
As shown in \cite{MR0233064,MR0206534}, a variant of \Cref{garding} also holds for $N\times N$ matrix-valued symbols. More specifically, if $a\in S^1$ is an $N\times N$ symbol satisfying $\operatorname{Re}(a)\geq 0$ then the associated pseudodifferential operator $Op(a)$ is semi-positive in the sense that $\operatorname{Re}\langle Op(a)f,f\rangle\geq -c\|f\|_{L^2}^2$ for all $f$ in the Schwartz class.
\end{remark}
Next, we recall a (weak) version of the Calderon-Vaillancourt theorem \cite{calderon1972class}. 
\begin{proposition}[Calderon-Vaillancourt theorem]\label{classicalCV} Let $a\in S^0$. There exists an integer $j>0$ depending on the dimension such that 
\begin{equation*}
\|Op(a)\|_{L^2\to L^2}\lesssim\sup_{|\alpha+\beta|\leq j}\|\partial^{\alpha}_{\xi}\partial^{\beta}_xa\|_{L^{\infty}(\mathbb{R}^{2d})},
\end{equation*}
where the implicit constant is universal.
\end{proposition}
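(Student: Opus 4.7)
The plan is to deduce this classical result from the Cotlar--Stein almost orthogonality lemma after a phase-space decomposition at unit scale. Fix a smooth partition of unity $\sum_{k\in\Z^d}\chi(\cdot-k)\equiv 1$ with $\chi\in C_c^\infty(\R^d)$ supported in the unit ball, and set $a_{n,m}(x,\xi):=\chi(x-n)\chi(\xi-m)a(x,\xi)$. Writing
\[
Op(a)=\sum_{n,m\in\Z^d}T_{n,m},\qquad T_{n,m}:=Op(a_{n,m}),
\]
the goal becomes to bound this doubly-indexed family of operators via almost orthogonality.

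For each individual piece the symbol $a_{n,m}$ is supported in a unit cube in $\xi$, so the integral defining its Schwartz kernel $K_{n,m}$ is absolutely convergent. Integration by parts in $\xi$ against the oscillatory factor $e^{i(x-y)\cdot\xi}$ yields
\[
|K_{n,m}(x,y)|\lesssim_N C_{j(N)}(a)(1+|x-y|)^{-N},
\]
where $C_{j(N)}(a):=\sup_{|\alpha+\beta|\leq j(N)}\|\partial_\xi^\alpha\partial_x^\beta a\|_{L^\infty}$. Schur's test then gives $\|T_{n,m}\|_{L^2\to L^2}\lesssim C_{j_0}(a)$ uniformly in $n,m$.

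The main step is to establish the almost orthogonality of the family $\{T_{n,m}\}$. The Schwartz kernel of $T_{n,m}^*T_{n',m'}$ is
\[
L(x,y)=\tfrac{1}{(2\pi)^{2d}}\iiint\overline{a_{n,m}(z,\xi)}\,a_{n',m'}(z,\eta)\,e^{i(x\cdot\xi-z\cdot\xi+z\cdot\eta-y\cdot\eta)}\,dz\,d\xi\,d\eta.
\]
If $|n-n'|$ exceeds the support diameter of $\chi$ the $z$-integrand vanishes identically, so the composition is zero. If $|m-m'|$ is large, the phase $z\cdot(\eta-\xi)$ is nonstationary in $z$ with gradient of size comparable to $|m-m'|$ (since $\xi$ is localized near $m$ and $\eta$ near $m'$), and repeated integration by parts in $z$ produces the gain $(1+|m-m'|)^{-N}$. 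Combining this with integration by parts in $\xi$ and $\eta$ to extract the factor $(1+|x-z|)^{-N}(1+|z-y|)^{-N}$ and applying Schur's test, one arrives at
\[
\|T_{n,m}^*T_{n',m'}\|_{L^2\to L^2}+\|T_{n,m}T_{n',m'}^*\|_{L^2\to L^2}\lesssim_N C_{j(N)}(a)(1+|n-n'|+|m-m'|)^{-N}.
\]
Taking $N$ sufficiently large so that $(1+|n|+|m|)^{-N/2}$ is summable over $\Z^{2d}$ and invoking the Cotlar--Stein lemma concludes the proof, with an integer $j=j(d)$ that depends only on the dimension.

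The only real obstacle is bookkeeping: one must track how many $(x,\xi)$ derivatives of $a$ are consumed when producing each polynomial decay factor and verify that a single dimension-dependent $j$ suffices throughout. No subtler ideas are needed beyond the standard oscillatory-integral and almost-orthogonality machinery; in particular, since we are proving only a weak form of Calder\'on--Vaillancourt with loss of many derivatives, we do not need the sharper $L^\infty$-type control on a bounded number of derivatives that the classical strong version requires.
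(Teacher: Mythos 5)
The paper does not actually prove \Cref{classicalCV}: it simply states the weak Calder\'on--Vaillancourt bound as a classical fact with a citation to \cite{calderon1972class}. So there is no ``paper proof'' to compare against; what matters is whether your argument is sound.

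Your proof via unit-scale phase-space decomposition and the Cotlar--Stein lemma is correct and is one of the standard textbook routes to this result. The structure is right: disjointness of the $z$-supports kills $T_{n,m}^*T_{n',m'}$ when $|n-n'|$ is large, the nonstationary phase in $z$ yields $(1+|m-m'|)^{-N}$ decay when $|m-m'|$ is large, and symmetrically for $T_{n,m}T_{n',m'}^*$ the roles of spatial and frequency separation are exchanged; Schur's test on each piece and Cotlar--Stein then assemble the full bound. One point worth emphasizing, since the paper immediately uses this proposition in \Cref{CVvariant} via an anisotropic rescaling of the symbol: because each $a_{n,m}$ is supported in a unit $\xi$-cube, all the integrations by parts cost only unweighted $L^\infty$ bounds on $\partial_\xi^\alpha\partial_x^\beta a$ and never the $S^0_{1,0}$ weights $\langle\xi\rangle^{-|\alpha|}$, so your argument genuinely produces the stated conclusion and not a weaker bound in terms of the $S^0$ seminorms. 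A minor typo: the composition estimates should read
\[
\|T_{n,m}^*T_{n',m'}\|_{L^2\to L^2}+\|T_{n,m}T_{n',m'}^*\|_{L^2\to L^2}\lesssim_N C_{j(N)}(a)^2\,(1+|n-n'|+|m-m'|)^{-N},
\]
with $C_{j(N)}(a)^2$ rather than $C_{j(N)}(a)$, since two copies of the symbol appear; after the square root in Cotlar--Stein this gives exactly the linear dependence claimed, so the final statement is unaffected.
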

Finally, we recall some elementary symbolic calculus which will allow us to perform basic manipulations with pseudodifferential operators.
\begin{proposition}[Algebraic properties of pseudodifferential operators]\label{symbcalc} Let $m_1,m_2\in\mathbb{R}$ and let $a_1\in S^{m_1}$ and $a_2\in S^{m_2}$. The following properties hold.
\begin{enumerate}
\item (Composition property). There is $a\in S^{m_1+m_2-1}$  such that
\begin{equation*}\label{comp}
Op(a_1)Op(a_2)=Op(a_1a_2)+Op(a)
\end{equation*}
and for every $j\in\mathbb{N}_0$, $|a|_{S^{m_1+m_2-1}}^{(j)}$ is controlled by $|a_1|_{S^{m_1}}^{(k)}|a_2|_{S^{m_2}}^{(k)}$ for some $k$ depending on $j$ and $d$.
\\

\item (Adjoint). There is $a\in S^{m_1-1}$ such that
\begin{equation*}\label{adjoint}
Op(a_1)^*=Op(\overline{a}_1)+Op(a)
\end{equation*}
and for every $j\in\mathbb{N}_0$, $|a|_{S^{m_1-1}}^{(j)}$ is controlled by $|a_1|_{S^{m_1}}^{(k)}$ for some $k$ depending on $j$ and $d$.
\\

\item (Commutator). There is $a\in S^{m_1+m_2-2}$ such that
\begin{equation*}\label{commutatorpseudo}
Op(a_1)Op(a_2)-Op(a_2)Op(a_1)=Op(-i\{a_1,a_2\})+Op(a)
\end{equation*}
where $\{\cdot,\cdot\}$ denotes the Poisson bracket, which is defined by
\begin{equation*}
\{a_1,a_2\}=\nabla_{\xi}a_1\cdot\nabla_{x}a_2-\nabla_{\xi}a_2\cdot\nabla_{x}a_1.
\end{equation*}
Moreover, for every $j\in\mathbb{N}_0$, $|a|_{S^{m_1+m_2-2}}^{(j)}$ is controlled by $|a_1|_{S^{m_1}}^{(k)}|a_2|_{S^{m_2}}^{(k)}$ for some $k$ depending on $j$ and $d$.
\end{enumerate}
\end{proposition}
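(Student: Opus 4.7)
The plan is to derive all three properties from a single asymptotic expansion of the composition symbol. Once (i) is established via a Taylor expansion to first order, refining the expansion to second order will yield (iii), and an analogous argument for the adjoint symbol will handle (ii).

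First I would derive the standard oscillatory integral representation for the composition,
\begin{equation*}
Op(a_1) Op(a_2) = Op(c), \qquad c(x,\xi) = \frac{1}{(2\pi)^d} \iint e^{-iy\cdot\eta}\, a_1(x,\xi+\eta)\, a_2(x+y,\xi)\, dy\, d\eta.
\end{equation*}
Taylor expanding $a_1(x,\xi+\eta)$ to first order around $\eta = 0$, the zeroth order term integrates (using Fourier inversion in $(y,\eta)$) to $a_1(x,\xi) a_2(x,\xi)$, and the remainder can be written as an oscillatory integral carrying a factor of $\eta$. Integrating by parts in $y$ once absorbs this factor as an $x$-derivative on $a_2$, and repeated integration by parts in both $y$ and $\eta$, exploiting the non-stationarity of the phase $-y\cdot\eta$ away from $y=\eta=0$, yields a symbol in $S^{m_1+m_2-1}$, proving (i). The same strategy handles (ii): the Schwartz kernel computation gives
\begin{equation*}
a_1^*(x,\xi) = \frac{1}{(2\pi)^d}\iint e^{-iy\cdot\eta}\, \overline{a_1(x+y,\xi+\eta)}\, dy\, d\eta,
\end{equation*}
so Taylor expanding in $(y,\eta)$ to first order and integrating by parts as before yields $a_1^* = \overline{a}_1 + a$ with $a \in S^{m_1-1}$.

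For (iii), I would refine the argument in (i) by carrying the Taylor expansion one order further, which produces
\begin{equation*}
Op(a_1) Op(a_2) = Op(a_1 a_2) + Op(-i \nabla_\xi a_1 \cdot \nabla_x a_2) + Op(r),
\end{equation*}
with $r \in S^{m_1+m_2-2}$. Subtracting the analogous identity with $a_1$ and $a_2$ interchanged, the symmetric product $a_1 a_2$ cancels and the remaining first-order terms combine to $-i\{a_1, a_2\}$, yielding the commutator formula.

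The main obstacle is the technical bookkeeping of the seminorm bounds. One must verify that each $S^{m_1+m_2-k}$ seminorm of the remainder is controlled by a product of $S^{m_1}$ and $S^{m_2}$ seminorms of $a_1$ and $a_2$ of some higher but finite order. This is achieved by iterated application of the dual operators $(1-\Delta_y)/(1+|\eta|^2)$ and $(1-\Delta_\eta)/(1+|y|^2)$, which generate the weights $\langle y\rangle^{-N}$ and $\langle\eta\rangle^{-N}$ needed to render the oscillatory integrals absolutely convergent and to control $x$- and $\xi$-derivatives of the remainder after differentiating under the integral sign. The full details are standard; see \cite{taylor1991pseudodifferential}.
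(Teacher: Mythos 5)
Your argument is the standard oscillatory-integral proof of the pseudodifferential calculus, which is exactly what the paper relies on when it cites \cite[Theorem 2.1.2]{kenig2005variable} and \cite{taylor1991pseudodifferential} rather than giving its own proof. The outline is correct: the composition kernel, a Taylor expansion in $\eta$, absorbing powers of $\eta$ via integration by parts in $y$, and the stationary-phase-type weights $(1-\Delta_y)/(1+|\eta|^2)$ and $(1-\Delta_\eta)/(1+|y|^2)$ to control the remainder seminorms all match the standard treatment, and the cancellation of $a_1a_2$ in the commutator to leave $-i\{a_1,a_2\}$ is the right observation.
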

\begin{proof}
    See, e.g.,~\cite[Theorem 2.1.2]{kenig2005variable} for a precise statement and \cite{PsiDo,taylor1991pseudodifferential} for proofs.
\end{proof}
In our construction, we will need the following refinement of the Calderon-Vaillancourt theorem for symbols $a\in S^0$, which ensures that the $L^2\to L^2$ operator bound for $Op(a)$  depends only on the $L^{\infty}$ norm of $a$ when applied to functions localized at sufficiently high frequency. This refinement will be important later when we  attempt to spatially localize the renormalization operator mentioned in the introduction.  We remark that \Cref{CVvariant} is also used in the  paper \cite{jeong2023cauchy} to achieve a similar purpose. We include the simple proof for completeness. 
\begin{proposition}[Calderon-Vaillancourt theorem at high frequency]\label{CVvariant}
Let $a\in S^0$. There is $k_0$ depending on $a$ such that for $k\geq k_0$, $Op(a)$ satisfies the  $L^2\to L^2$ bound,
\begin{equation*}
\|Op(a)S_{\geq k}\|_{L^2\to L^2}\lesssim \|a\|_{L^{\infty}}.
\end{equation*}
That is, the $L^2\to L^2$ bound for $Op(a)$ depends only on the $L^{\infty}$ norm of the symbol $a$ when applied to functions at sufficiently high frequency.
\end{proposition}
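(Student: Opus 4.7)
The plan is to use a $TT^*$ reduction combined with the symbolic calculus of Proposition~\ref{symbcalc} and a positivity estimate via sharp G\r{a}rding to extract the dependence on $\|a\|_{L^\infty}$.

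Write $T_k := Op(a)S_{\geq k}$ and observe that $\|T_k f\|_{L^2}^2 = \langle S_{\geq k}\,Op(a)^*\,Op(a)\,S_{\geq k}f, f\rangle$. Using the adjoint and composition rules of Proposition~\ref{symbcalc}, I would decompose $Op(a)^* Op(a) = Op(|a|^2) + Op(r)$ with $r \in S^{-1}$ whose seminorms are controlled by finitely many seminorms of $a$. The remainder $Op(r)$ maps $H^{-1}\to L^2$ boundedly by Proposition~\ref{Sobolevbound}, and combined with the fact that $S_{\geq k}:L^2\to H^{-1}$ has norm $\lesssim 2^{-k}$, this shows that the $Op(r)$ contribution to the quadratic form is at most $C(a)\,2^{-k}\|f\|_{L^2}^2$.

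For the main piece, write $|a|^2 = \|a\|_{L^\infty}^2 - p$, where $p(x,\xi):= \|a\|_{L^\infty}^2 - |a(x,\xi)|^2\geq 0$ is a non-negative $S^0$ symbol. The constant contributes the target $\|a\|_{L^\infty}^2\,\|S_{\geq k}f\|^2_{L^2}\leq \|a\|_{L^\infty}^2\|f\|^2_{L^2}$, and it remains to show $\langle Op(p) S_{\geq k}f, S_{\geq k}f\rangle \geq -\epsilon_k\|f\|_{L^2}^2$ with $\epsilon_k\to 0$. Since Proposition~\ref{garding} is only stated for $S^1$, I would bridge to $S^0$ by observing that $\langle\xi\rangle p\in S^1$ is also non-negative; applying Proposition~\ref{garding} to $\langle\xi\rangle p$ and then using the composition rule in Proposition~\ref{symbcalc} to conjugate back by $\langle D\rangle^{-1/2}$ (the error being a symbol in $S^{-1}$) yields a bound of the form $\langle Op(p) g, g\rangle \geq -C(a)\|g\|_{H^{-1/2}}^2 - C(a)\|g\|_{H^{-1}}\|g\|_{L^2}$. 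Setting $g = S_{\geq k}f$ and using the gain $\|S_{\geq k}f\|_{H^{-s}}\lesssim 2^{-ks}\|f\|_{L^2}$ for $s>0$ produces the desired lower bound $-C(a)\,2^{-k}\|f\|_{L^2}^2$.

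Assembling the estimates yields $\|T_k f\|_{L^2}^2\leq \|a\|_{L^\infty}^2\,\|f\|_{L^2}^2 + C(a)\,2^{-k}\,\|f\|_{L^2}^2$. Taking $k_0 = k_0(a)$ large enough that $C(a)\,2^{-k_0}\leq \|a\|_{L^\infty}^2$ (the case $\|a\|_{L^\infty} = 0$ is trivial, since then $a\equiv 0$) delivers the universal bound $\|T_k\|_{L^2\to L^2}\leq \sqrt{2}\,\|a\|_{L^\infty}$ for all $k\geq k_0$. The delicate point I expect to wrestle with is the $S^0$ version of sharp G\r{a}rding: the lift $p\mapsto \langle\xi\rangle p$ replaces the clean $L^2$ lower bound of Proposition~\ref{garding} with an $H^{-1/2}$ lower bound, and one must verify that this weaker estimate is compensated by the high-frequency projection $S_{\geq k}$, which is precisely where the freedom to take $k\geq k_0(a)$ is exploited. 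A purely rescaling-based attempt (via $f\mapsto 2^{dk/2}f(2^k\cdot)$ and Proposition~\ref{classicalCV} applied to the rescaled symbol $a(2^{-k}x, 2^k\eta)\psi(\eta)$) does not suffice, since the $\eta$-derivatives of this symbol are governed by higher $S^0$ seminorms of $a$ that are not controlled by $\|a\|_{L^\infty}$.
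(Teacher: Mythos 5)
Your $TT^*$ plus sharp G\r{a}rding argument is correct (one should take real parts throughout, since $Op(|a|^2)$ and $Op(p)$ are self-adjoint only modulo $OPS^{-1}$, but those commutator errors fit inside the $C(a)2^{-k}$ budget you already track), and it does prove the proposition. It is, however, a genuinely different and substantially heavier route than the paper's, and the closing paragraph of your proposal dismisses the paper's actual argument on mistaken grounds. The paper \emph{does} prove this by rescaling --- but with the balanced choice $\lambda = 2^{k/2}$ rather than the $\lambda = 2^{k}$ that you test. Writing $a_{k,\lambda}(x,\xi):=a_k(\lambda^{-1}x,\lambda\xi)$ with $a_k := a\psi_k$, on the support of the rescaled cutoff one has $|\lambda\xi|\gtrsim 2^k$, hence
\begin{equation*}
|\partial_x^\beta\partial_\xi^\alpha a_{k,\lambda}| \lesssim \lambda^{|\alpha|-|\beta|}\,|a|_{S^0}^{(|\alpha|+|\beta|)}\,2^{-k|\alpha|}.
\end{equation*}
With $\lambda = 2^{k}$ the $\xi$-derivatives are merely bounded, not small --- precisely your objection --- but with $\lambda = 2^{k/2}$ the $\lambda^{|\alpha|}$ growth, the $2^{-k|\alpha|}$ decay, and the $\lambda^{-|\beta|}$ decay combine to give a net $2^{-(|\alpha|+|\beta|)k/2}$ gain for \emph{every} nontrivial derivative. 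Applying Proposition~\ref{classicalCV} to $a_{k,\lambda}$ then yields $\|Op(a_k)\|_{L^2\to L^2}\lesssim \|a\|_{L^\infty} + C(a)\,2^{-k/2}$, and taking $k\geq k_0(a)$ absorbs the second term. This is a few lines, uses no G\r{a}rding, and needs no lift $p\mapsto\langle\xi\rangle p$ from $S^0$ to $S^1$. Your approach is a valid alternative, at the cost of two rounds of symbolic calculus and the conjugation-by-$\langle D\rangle^{-1/2}$ detour required to make Proposition~\ref{garding} applicable.
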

\begin{proof}
The proof is a simple scaling argument. The symbol for $S_{> k}$ is of the form $\psi_{k}(\xi):=1-\varphi(2^{-k}\xi)$, where $\varphi$ is a smooth bump function equal to one on the unit ball and supported in $B_2(0)$. Define the symbol $a_k:=a\psi_k$. Let $\lambda>0$ be some constant to be chosen, and define $a_{k,\lambda}(x,\xi):=a_{k}(\lambda^{-1}x,\lambda\xi)$, $v_{\lambda}(x):=v(\lambda x)$. We clearly have
\begin{equation*}
Op(a_k)v=(2\pi)^{-d}\int_{\mathbb{R}^d}a_k(x,\lambda\xi)e^{i\lambda x\cdot\xi}\widehat{v_{\lambda^{-1}}}(\xi)d\xi.
\end{equation*}
Hence,
\begin{equation*}
\|Op(a_k)v\|_{L^2}=\lambda^{-\frac{d}{2}}\|Op(a_{k,\lambda})v_{\lambda^{-1}}\|_{L^2}.
\end{equation*}
By \Cref{classicalCV}, we have 
\begin{equation*}
\begin{split}
\lambda^{-\frac{d}{2}}\|Op(a_{k,\lambda})v_{\lambda^{-1}}\|_{L^2}&\lesssim \lambda^{-\frac{d}{2}}\sup_{|\alpha|,|\beta|\leq j(d)}\|\partial_x^{\beta}\partial_{\xi}^{\alpha}a_{k,\lambda}\|_{L^{\infty}}\|v_{\lambda^{-1}}\|_{L^2}
\\
&=\sup_{|\alpha|,|\beta|\leq j(d)}\|\partial_x^{\beta}\partial_{\xi}^{\alpha}a_{k,\lambda}\|_{L^{\infty}}\|v\|_{L^2},
\end{split}
\end{equation*}
where $j(d)$ depends only on the dimension. To conclude, we therefore only need to show that for a suitable choice of $\lambda$, we have
\begin{equation*}
\sup_{|\alpha|,|\beta|\leq j(d)}\|\partial_x^{\beta}\partial_{\xi}^{\alpha}a_{k,\lambda}\|_{L^{\infty}}\lesssim \|a\|_{L^{\infty}}.
\end{equation*}
The case $|\alpha|=|\beta|=0$ is trivial. Otherwise, taking $\lambda=2^{\frac{k}{2}}$ and using that $a\in S^0$, we find
\begin{equation*}
|\partial_x^{\beta}\partial_{\xi}^{\alpha}a_{k,\lambda}|\lesssim \left|a\right|_{S^0}^{(|\alpha|+|\beta|)}2^{-|\alpha|k}\lambda^{|\alpha|-|\beta|}\lesssim\left|a\right|_{S^0}^{(|\alpha|+|\beta|)}2^{-(|\alpha|+|\beta|)\frac{k}{2}}.
\end{equation*}
The proof is concluded by taking $k$ sufficiently large (depending only on the symbol bounds for $a$).
\end{proof}
Next, we extend the above bounds to the $X^0$ and $Y^0$ spaces.
\begin{proposition}[Operator bounds for $X^0$ and $Y^0$]\label{LEopbounds} Let $a\in S^0$ be time-independent and let $T\lesssim 1$. Then there is $j=j(d)$ such that we have the operator bounds
\begin{equation}\label{LEopbounds1} 
\|Op(a)\|_{X^0\to X^0}+\|Op(a)\|_{Y^0\to Y^0}\lesssim 1+|a|_{S^0}^{(j)}.
\end{equation}
Moreover, there is $k_0>0$ depending only on $a$ such that if $k\geq k_0$, we also have
\begin{equation}\label{hifreqLEop}
\|Op(a)S_{\geq k}\|_{X^0\to X^0}+\|Op(a)S_{\geq k}\|_{Y^0\to Y^0}\lesssim 1+\|a\|_{L^{\infty}}.
\end{equation}
\end{proposition}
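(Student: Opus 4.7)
The plan is to decompose the $X_k$ norm into its $L_T^\infty L_x^2$ and local energy pieces and bound them separately. The $L_T^\infty L_x^2$ part of $\|Op(a)u\|_{X_k}$ is immediate from Proposition~\ref{classicalCV} applied at each fixed time, since $a$ is time-independent. This reduces the work to the local energy component $2^{-l/2}\|\chi_Q Op(a)u\|_{L_T^2 L_x^2}$, uniformly over cubes $Q \in Q_l$. The $Y^0$ bound will follow from the $X^0$ bound by duality (using $X = Y^*$), together with the observation that $Op(a)^* = Op(\bar{a}) + Op(r)$ with $r \in S^{-1}$ by Proposition~\ref{symbcalc}(ii), the smoothing remainder being handled by the same method with gain to spare. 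I therefore focus on $X^0$.

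The key structural fact is that $Op(a) S_k$ is almost local at spatial scale $2^{-k}$: integration by parts in $\xi$ in the oscillatory integral representation of its kernel
\begin{equation*}
K_k(x,y) = (2\pi)^{-d}\int a(x,\xi)\,\psi(2^{-k}\xi)\, e^{i(x-y)\cdot \xi}\, d\xi,
\end{equation*}
together with the $S^0$ bounds on $a$ and the scaling of $\psi(2^{-k}\cdot)$, yields the standard estimate $|K_k(x,y)| \lesssim_N |a|_{S^0}^{(N)}\, 2^{dk}\,\langle 2^k|x-y|\rangle^{-N}$. Combined with the fact that the off-diagonal frequency compositions $S_j Op(a) S_k$ are smoothing to arbitrary order in $|j-k|$ by Proposition~\ref{symbcalc}, this reduces matters to the frequency-diagonal bound $\|Op(a) S_k u\|_{X_k} \lesssim (1+|a|_{S^0}^{(j)})\|S_k u\|_{X_k}$.

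For the local energy piece of the latter at spatial scale $l \geq 0$, fix $Q \in Q_l$ and partition $S_k u = \sum_{Q' \in Q_l} \chi_{Q'} S_k u$. The contribution from cubes $Q'$ within $O(2^l)$ of $Q$ is controlled using the $L^2$-boundedness of $Op(a)$ pointwise in time, yielding $O(1)$ terms each of size $\lesssim 2^{-l/2}\|\chi_{Q'} S_k u\|_{L^2_T L^2_x} \lesssim \|S_k u\|_X$. For $Q'$ at separation $\gtrsim 2^l$ from $Q$, the kernel estimate produces a rapidly decaying prefactor in $2^k \mathrm{dist}(Q,Q')$, and since $k \geq 0$ and $l \geq 0$ give $2^{k+l} \geq 1$, the resulting series is summable in $Q'$ and is again bounded by $\|S_k u\|_X$.

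The main obstacle I anticipate is the bookkeeping in the almost-orthogonality step, where one must carefully reconcile the spatial cube partitions at scale $2^l$ with the frequency localization at scale $2^k$; the regime of small $k+l$ is the most delicate, since the kernel decay is weakest there. The high-frequency refinement \eqref{hifreqLEop} is then obtained by substituting Proposition~\ref{CVvariant} for Proposition~\ref{classicalCV} throughout, which replaces the seminorm $|a|_{S^0}^{(j)}$ by $\|a\|_{L^\infty}$ once $k \geq k_0(a)$, while the kernel-decay step is insensitive to this change.
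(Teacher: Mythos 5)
Your proposal is correct in substance but takes a different route from the paper. Where you invoke kernel estimates and near/far cube decompositions to handle the spatial localization in the local energy norm, the paper instead uses the fact that the commutator $[Op(a),\chi_Q]$ lies in $OPS^{-1}$ with bounds uniform in the cube and the scale, combined with Sobolev boundedness, to pass the cutoff through $Op(a)$ in a single step. Similarly, for the frequency-diagonal reduction you appeal to the smoothing of $S_j Op(a)S_k$ for $|j-k|$ large, whereas the paper writes the same content as the commutator $[Op(a),\tilde S_k]\in OPS^{-1}$ with uniform symbol bounds. Both routes reduce the heart of the matter to \Cref{classicalCV} (respectively \Cref{CVvariant} at high frequency); yours is more hands-on and closer to first principles, while the paper's commutator phrasing keeps the error terms packaged as classical $\Psi$DOs and avoids the cube-by-cube Schur-test bookkeeping.

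One point to tighten is the closing claim that ``the kernel-decay step is insensitive to this change'' in the passage to \eqref{hifreqLEop}. It is not literally insensitive: the kernel bound $|K_k(x,y)|\lesssim_N |a|_{S^0}^{(N)}2^{dk}\langle 2^k|x-y|\rangle^{-N}$ genuinely depends on higher symbol seminorms, not merely on $\|a\|_{L^\infty}$. What rescues the argument is that the resulting far-cube contribution carries a prefactor of size $|a|_{S^0}^{(N)}2^{(d-N)(k+l)}$, which is $\lesssim 1$ once $k\geq k_0(a)$ (and $N>d$), so the seminorm dependence is absorbed by the frequency gain exactly as the paper does with the $2^{-k/2}$ gain in the second line of its display \eqref{commutatorQl}. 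You should make that absorption explicit rather than asserting insensitivity; as written the claim is imprecise even though the conclusion is correct.
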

\begin{remark}
The inequality (\ref{hifreqLEop}) can be thought of as the analogue of \Cref{CVvariant} for the $X^0$ and $Y^0$ spaces. 
\end{remark}
\begin{proof}
We prove (\ref{LEopbounds1}) and remark on the very minor modifications required to prove (\ref{hifreqLEop}) where necessary. For notational convenience, we let $K_a$  denote the term on the right-hand side of (\ref{LEopbounds1}). We begin with the $X^{0}\to X^0$ bound. By definition, we have
\begin{equation}\label{X0term1}
\begin{split}
\|Op(a)f\|_{X^{0}}^2&=\sum_{k\geq 0}\|S_kOp(a)f\|_{X_k}^2
\\
&\lesssim\sum_{k\geq 0}\|S_k[Op(a),\tilde{S}_k]f\|_{X_k}^2+\sum_{k\geq 0}\|S_kOp(a)\tilde{S}_kf\|_{X_k}^2,
\end{split}
\end{equation}
for some fattened Littlewood-Paley projection $\tilde{S}_k$. For the first term, we can crudely estimate using H\"older in $T$ and dyadic summation, 
\begin{equation*}\label{X0boundcommutator}
\begin{split}
\left(\sum_{k\geq 0}\|S_k[Op(a),\tilde{S}_k]f\|_{X_k}^2\right)^{\frac{1}{2}}&\lesssim \left(\sum_{k\geq 0}2^{k}\|S_k[Op(a),\tilde{S}_k]f\|_{L_T^{\infty}L_x^2}^2\right)^{\frac{1}{2}}
\\
&\lesssim \sup_{k\geq 0}\|[Op(a),\tilde{S}_k]f\|_{L_T^{\infty} H_x^{\frac{1}{2}+\epsilon}}
\\
&\lesssim_{\epsilon} K_a\|f\|_{L_T^{\infty}H_x^{-\frac{1}{2}+\epsilon}}
\\
&\lesssim K_a\|f\|_{X^0},
\end{split}
\end{equation*}
where in the second to third line, we used \Cref{Sobolevbound} and that $[Op(a),\tilde{S}_k]\in OPS^{-1}$ (which has symbol bounds uniform in $k$, thanks to \Cref{symbcalc}). 
\begin{remark}
We remark briefly on one change needed here for the proof of (\ref{hifreqLEop}). If $f$ is replaced by $S_{>k}f$ for some sufficiently large $k$, then in the third line above, we can estimate
\begin{equation*}
K_a\|S_{>k}f\|_{L_T^{\infty}H_x^{-\frac{1}{2}+\epsilon}}\lesssim K_a2^{-k(\frac{1}{2}-\epsilon)}\|f\|_{X^0},  
\end{equation*}
and so, the factor of $K_a$ can be replaced by $1$ in the above estimate by taking $k$ large enough.
\end{remark}

Now, we turn to the second term in (\ref{X0term1}). By square summing and \Cref{classicalCV} (or \Cref{CVvariant} when proving (\ref{hifreqLEop})), it suffices to estimate the $2^{-\frac{k}{2}}X$ component of the $X_k$ norm.
For this, we have
\begin{equation*}
\|Op(a)\tilde{S}_kf\|_{2^{-\frac{k}{2}}X}=\sup_{l\in\mathbb{N}_0}\sup_{Q\in Q_l}2^{\frac{k-l}{2}}\|\chi_{Q}Op(a)\tilde{S}_kf\|_{L_T^2L_x^2}.
\end{equation*}
Using the $L^2\to L^2$ bound for $Op(a)$ from \Cref{classicalCV} and that $[Op(a),\chi_Q]\in OPS^{-1}$ with bounds independent of $l$, we obtain for each $Q\in Q_l$, 
\begin{equation}\label{commutatorQl}
\begin{split}
2^{\frac{k-l}{2}}\|\chi_{Q}Op(a)\tilde{S}_kf\|_{L_T^2L_x^2}&\lesssim K_a 2^{\frac{k-l}{2}}\|\chi_Q\tilde{S}_kf\|_{L_T^2L_x^2}+2^{\frac{k}{2}}K_a\|\tilde{S}_kf\|_{L_T^{2}H_x^{-1}}
\\
&\lesssim K_a\|\tilde{S}_kf\|_{X_k}+2^{-\frac{k}{2}}K_a\|\tilde{S}_kf\|_{L_T^{\infty}L_x^2}.
\end{split}
\end{equation}
Therefore,
\begin{equation*}
\left(\sum_{k\geq 0}\|Op(a)\tilde{S}_kf\|_{X_k}^2\right)^{\frac{1}{2}}\lesssim K_a\|f\|_{X^{0}},
\end{equation*}
which establishes the $X^0\to X^0$ bound. The high-frequency variant (\ref{hifreqLEop}) is proved by using instead \Cref{CVvariant} in place of \Cref{classicalCV} above and using the frequency gain in the latter term in the second line of (\ref{commutatorQl}) to absorb the factor of $K_a$.
\medskip

Next, we turn to the $Y^0\to Y^0$ bound. Again, by definition, we have
\begin{equation*}\label{twotermsY0}
\begin{split}
\|Op(a)f\|_{Y^{0}}^2&=\sum_{k\geq 0}\|S_kOp(a)f\|_{Y_k}^2
\\
&\lesssim \sum_{k\geq 0}\|S_k[Op(a),\tilde{S}_k]f\|_{Y_k}^2+\sum_{k\geq 0}\|S_kOp(a)\tilde{S}_kf\|_{Y_k}^2.
\end{split}
\end{equation*}
For the first term, we estimate using the embedding $L_T^1L_x^2\subset Y_k$ and that $[Op(a),\tilde{S}_k]\in OPS^{-1}$ to obtain
\begin{equation*}
\left(\sum_{k\geq 0}\|S_k[Op(a),\tilde{S}_k]f\|_{Y_k}^2\right)^\frac{1}{2}\lesssim K_a\|f\|_{L_T^1H_x^{-1+\epsilon}}\lesssim K_a\|f\|_{Y^0},
\end{equation*}
where the last inequality follows from the fact that $Y^0\subset L_T^1H_x^{-\frac{1}{2}-\epsilon}$.   Similarly to before, for the bound (\ref{hifreqLEop}) when $f$ is replaced by $S_{>k}f$, we have
\begin{equation*}
K_a\|S_{>k}f\|_{L_T^1H_x^{-1+\epsilon}}\lesssim K_a2^{-k(\frac{1}{2}-2\epsilon)}\|f\|_{Y^0},  
\end{equation*}
and so, the factor of $K_a$ can be replaced by $1$ if $k$ is large enough. For the second term, we use duality. Let $g\in X_k$ with $\|g\|_{X_k}\leq 1$. We have by \Cref{symbcalc} and similar embeddings as above,
\begin{equation*}
\begin{split}
|\langle S_k Op(a)\tilde{S}_kf,g\rangle|&\lesssim \|\tilde{S}_kf\|_{Y_k}\|\tilde{S}_k(Op(\overline{a}))S_kg\|_{X_k}+K_a\|\tilde{S}_kf\|_{L_T^1H_x^{-1}}\|S_kg\|_{L_T^{\infty}L_x^2} 
\\
&\lesssim \|\tilde{S}_kf\|_{Y_k}\|\tilde{S}_k(Op(\overline{a}))S_kg\|_{X_k}+2^{-k(\frac{1}{2}-\epsilon)}K_a\|\tilde{S}_kf\|_{Y_k}.
\end{split}
\end{equation*}
Again, if $k$ is large enough, the $K_a$ factor in the latter term can be discarded.  Using the $X^0\to X^0$ bound already established above, we also have
\begin{equation*}
\|\tilde{S}_k(Op(\overline{a}))S_kg\|_{X_k}\lesssim  \|Op(\overline{a})S_kg\|_{X^0}\lesssim K_a,
\end{equation*}
where $K_a$ can be replaced by $1+\|a\|_{L^{\infty}}$ if $k$ is large enough. The proof of (\ref{LEopbounds1}) is then concluded by dyadic summation.
\end{proof}
In our analysis later, we will sometimes need to estimate commutators of pseudodifferential and paradifferential operators. For this purpose, we recall the following Coifman-Meyer type estimate from (3.6.4) and (3.6.5) of \cite{taylor1991pseudodifferential}.
\begin{proposition}[Coifman-Meyer type bound]\label{CM}
For every $m,\sigma\in\mathbb{R}$ and $P\in OPS^m$, we have
\begin{equation}\label{lowhicom}
\|[P,T_g]f\|_{H^{\sigma}}\leq C\|g\|_{W^{1,\infty}}\|f\|_{H^{\sigma+m-1}},
\end{equation}
where $C>0$  is a constant depending on $P$ and $\sigma$. 
\end{proposition}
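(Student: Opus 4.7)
The plan is to reduce the commutator to the Bony paraproduct decomposition and use the symbolic calculus from \Cref{symbcalc} to extract one derivative in each Littlewood-Paley piece, then recombine via square-summing.

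First, I would write $T_g f = \sum_{k\geq 0} S_{<k-4} g \cdot S_k f$, so that the commutator becomes
\[
[P, T_g] f = \sum_{k\geq 0} [P, g_k] f_k, \qquad g_k := S_{<k-4} g, \quad f_k := S_k f.
\]
Since $g_k$ is bandlimited to $|\xi| \lesssim 2^{k-4}$, each product $g_k f_k$ is frequency-localized in an annulus of size $\sim 2^k$; applying $P \in OPS^m$ preserves this localization up to a rapidly-decaying tail (coming from the smoothness of the symbol), which is harmless for Sobolev bounds. Thus it will suffice to prove the per-piece bound $\|[P,g_k]f_k\|_{L^2} \lesssim 2^{k(m-1)} \|g\|_{W^{1,\infty}} \|f_k\|_{L^2}$ and then square-sum.

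For the per-piece estimate, I would view $g_k$ as a pseudodifferential operator with $\xi$-independent symbol in $S^0$. \Cref{symbcalc}(iii) then gives
\[
[P, g_k] = Op(-i \nabla_\xi p \cdot \nabla_x g_k) + R_k,
\]
where $R_k \in OPS^{m-2}$. By Bernstein's inequality, $\|\partial^\alpha g_k\|_{L^\infty} \lesssim 2^{(|\alpha|-1)k}\|g\|_{W^{1,\infty}}$, so the principal symbol $-i \nabla_\xi p \cdot \nabla_x g_k$ belongs to $S^{m-1}$ with weighted $L^\infty$ norm $\lesssim |p|_{S^m}^{(1)} \|g\|_{W^{1,\infty}}$, uniformly in $k$. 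Higher $S^{m-1}$ seminorms do grow with $k$, but as in the proof of \Cref{CVvariant}, a rescaling argument exploiting the frequency localization $f_k = S_k f$ at scale $2^k$ gives the desired $L^2 \to L^2$ bound for $\langle D\rangle^{-(m-1)}Op(-i\nabla_\xi p \cdot \nabla_x g_k)$ depending only on the $L^\infty$ norm of the rescaled symbol (plus a constant depending on $P$). The remainder $R_k$ is handled in the same spirit using \Cref{Sobolevbound}, since the symbolic loss of $2^{k}$ per additional $x$-derivative of $g_k$ is outweighed by the order gain $m-2$ acting on a function at frequency $2^k$.

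To recombine, I would use the almost-orthogonality noted above: the principal contribution of $[P,g_k] f_k$ is concentrated in the frequency annulus $|\xi| \sim 2^k$, with the Schwartz-tail error absorbed via standard rapid-decay estimates. Consequently Littlewood-Paley gives
\[
\|[P,T_g]f\|_{H^\sigma}^2 \lesssim \sum_{k\geq 0} 2^{2k\sigma} \|[P,g_k]f_k\|_{L^2}^2 \lesssim C_P \|g\|_{W^{1,\infty}}^2 \sum_{k\geq 0} 2^{2k(\sigma+m-1)} \|f_k\|_{L^2}^2 \lesssim C_P \|g\|_{W^{1,\infty}}^2 \|f\|_{H^{\sigma+m-1}}^2,
\]
which is the desired estimate. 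The main obstacle is the per-piece step, where naive Calderon-Vaillancourt would pick up $k$-dependent seminorms of $g_k$; this is precisely where the rescaling trick, in the spirit of \Cref{CVvariant}, is essential to ensure the only norm of $g$ entering the final estimate is $\|g\|_{W^{1,\infty}}$.
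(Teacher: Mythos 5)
The paper gives no proof here; \Cref{CM} is imported directly from (3.6.4)--(3.6.5) of \cite{taylor1991pseudodifferential}, so your attempt is necessarily a fresh argument. The skeleton is sound: decompose $[P,T_g]f=\sum_k[P,g_k]f_k$ with $g_k=S_{<k-4}g$, $f_k=S_kf$, prove the per-piece bound $\|[P,g_k]f_k\|_{L^2}\lesssim 2^{k(m-1)}\|g\|_{W^{1,\infty}}\|f_k\|_{L^2}$, then recombine by almost-orthogonality. However, the handling of the symbolic-calculus remainder $R_k$ is wrong as stated. You claim that the order gain from $m-1$ to $m-2$ ``outweighs'' the symbolic loss of $2^k$ per extra $x$-derivative of $g_k$ and then invoke \Cref{Sobolevbound}. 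In fact the two effects are exactly matched, not dominated: one more $x$-derivative on $g_k$ costs $2^k$, and dropping the order by one gains only $2^{-k}$ on a function at frequency $\sim 2^k$. The balance then tips the wrong way, because \Cref{symbcalc}(iii) controls $|R_k|^{(j)}_{S^{m-2}}$ only via $|g_k|^{(N)}_{S^0}$ for some $N=N(j,d)$ that can be large, while \Cref{Sobolevbound} consumes $j_0(d)\geq 1$ seminorms of $R_k$; Bernstein then leaves an uncontrolled power $2^{k(N(j_0,d)-2)}$. The rescaling trick must be applied to the remainder as well, not just the principal term (or one must Taylor expand the composition to depth at least $N(j_0,d)$).

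Even for the principal part, the phrase ``as in the proof of \Cref{CVvariant}'' is miscalibrated. That proof uses $\lambda=2^{k/2}$ precisely because its symbol has $k$-independent seminorms, so the balanced scaling drives every nonzero rescaled derivative to zero while keeping the $L^\infty$ bound. Here the $j$-th seminorm of $\langle\xi\rangle^{-(m-1)}\nabla_\xi p\cdot\nabla_x g_k$ grows like $2^{k(j-1)}\|g\|_{W^{1,\infty}}$, and with $\lambda=2^{k/2}$ the rescaled seminorms with $|\beta|>|\alpha|$ still grow like $2^{k(|\beta|-|\alpha|)/2}$; the working choice is $\lambda=2^k$, for which the three factors $2^{k|\beta|}$ (from $g_k$-derivatives), $2^{-k|\alpha|}$ (from the frequency localization $|\xi|\sim 2^k$), and $\lambda^{|\alpha|-|\beta|}$ cancel identically and every rescaled seminorm is $\lesssim\|g\|_{W^{1,\infty}}$ times constants depending on $P$ only. (Your ``depending only on the $L^\infty$ norm of the rescaled symbol'' is also misleading: Calderon--Vaillancourt still eats finitely many seminorms; the point is that after the correct rescaling all of them come out $\lesssim\|g\|_{W^{1,\infty}}$.) A cleaner route that bypasses both issues entirely is a kernel estimate: since $g_kf_k$ is supported in $|\xi|\sim 2^k$, replace $P$ by $P\tilde S_k$, whose kernel obeys $|\tilde K_k(x,z)|\lesssim_N 2^{k(m+d)}\langle 2^kz\rangle^{-N}$; the commutator kernel is $\tilde K_k(x,x-y)\bigl(g_k(y)-g_k(x)\bigr)$, the mean-value bound $|g_k(y)-g_k(x)|\leq|x-y|\|\nabla g\|_{L^\infty}$ supplies the crucial extra $2^{-k}$, and Schur's test gives the per-piece bound at once.
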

\subsection{Multilinear and Moser estimates}
Here we recall several of the multilinear and Moser-type estimates for the local energy and dual local energy spaces defined above. 
\begin{proposition}[Proposition 3.1 in \cite{marzuola2012quasilinear}]\label{Algebraproperty}
Let $s>\frac{d}{2}$. Then for $u,v\in l^1X^s$ we have the algebra property
\begin{equation*}
\|uv\|_{l^1X^s}\lesssim \|u\|_{l^1X^s}\|v\|_{l^1X^s}.
\end{equation*}
We also have the Moser-type estimate,
\begin{equation*}
\|F(u)\|_{l^1X^s}\lesssim \|u\|_{l^1X^s}(1+\|u\|_{l^1X^s})c(\|u\|_{L^{\infty}}),
\end{equation*}
for $s>\frac{d}{2}$ and any smooth function $F$ with $F(0)=0$.
\end{proposition}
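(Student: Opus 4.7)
The proof uses the Bony decomposition $uv = T_u v + T_v u + \Pi(u,v)$ and estimates each piece in the $l^1 X^s$ norm. The main observation for the low-high pieces $T_u v$, $T_v u$ is that at frequency $2^k$ the low-frequency factor $S_{<k-4} u$ is spectrally localized at scale $\lesssim 2^k$ and therefore essentially constant on the cubes $Q \in Q_k$ of side length $2^k$; up to commutators $[\chi_Q, S_{<k-4}u]$ which gain a derivative by symbolic calculus, one obtains
$$\|\chi_Q(S_{<k-4} u \cdot S_k v)\|_{X_k} \lesssim \|u\|_{L^\infty_{t,x}}\, \|\chi_Q S_k v\|_{X_k},$$
and summing first over $Q \in Q_k$ and then dyadically over $k$ with weights $2^{ks}$ yields $\|T_u v\|_{l^1 X^s} \lesssim \|u\|_{L^\infty_{t,x}} \|v\|_{l^1 X^s}$. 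The high-high resonant piece $\Pi(u,v) = \sum_k S_k u \cdot \tilde S_k v$ is handled more flexibly: since the output frequency $2^j$ satisfies $j \le k + O(1)$, one works on the finer cubes $Q_k$ (each contained in a coarser cube of $Q_j$), applies Bernstein's inequality on each cube to move one factor into $L^\infty$, and then uses Cauchy--Schwarz in the cube and frequency indices, together with a Schur test in $k-j$, to dyadically close the sum.

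In both cases the argument terminates via the Sobolev-type embedding $l^1 X^s \hookrightarrow L^\infty_{t,x}$ valid for $s > d/2$, which follows from Bernstein combined with the inclusion $L^\infty_T L^2_x \subset X_k$:
$$\|u\|_{L^\infty_{t,x}} \leq \sum_k \|S_k u\|_{L^\infty_{t,x}} \lesssim \sum_k 2^{kd/2}\|S_k u\|_{X_k} \lesssim \|u\|_{l^1 X^s}.$$
Substituting this into the paraproduct bounds replaces $\|u\|_{L^\infty_{t,x}}$ by $\|u\|_{l^1 X^s}$ and delivers the algebra property. The main technical obstacle is the careful treatment of the cube-Littlewood--Paley commutators $[\chi_Q, S_k]$ when the cube side length $2^k$ matches the projector scale, a regime in which they are not obviously negligible. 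This is circumvented using the remark made after the definition of $l^1_k U$: the compactly supported cutoffs $\chi_Q$ may, up to norm equivalence, be replaced by a partition of unity whose components are frequency-localized near zero, after which all such commutators are controlled by standard symbol calculus and absorbed by dyadic summation.

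For the Moser estimate, I use the paralinearization $F(u) = T_{F'(u)} u + R(u)$, where $R(u)$ is the smoother paradifferential remainder. The principal contribution is controlled by the algebra estimate just established together with the pointwise bound $\|F'(u)\|_{L^\infty} \le c(\|u\|_{L^\infty})$, giving a contribution of $c(\|u\|_{L^\infty}) \|u\|_{l^1 X^s}$. The remainder $R(u)$ is handled via a Taylor expansion of $F$ and iterated application of the algebra property, which is precisely where the additional factor $(1 + \|u\|_{l^1 X^s})$ arises. The overall strategy mirrors the proof of Proposition~3.1 in \cite{marzuola2012quasilinear}, to which this result is attributed.
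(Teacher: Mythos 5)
The paper does not prove this proposition; it is quoted verbatim from \cite{marzuola2012quasilinear}, where it appears as Proposition~3.1, so there is no in-paper proof for you to match. Your overall plan — Bony decomposition plus the embedding $l^1X^s\hookrightarrow L^\infty_{t,x}$ for $s>d/2$ to close the algebra property, and paralinearization for the Moser bound — is the expected route and is consistent with what the cited source does.

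Two of your supporting claims should be corrected, however. First, the heuristic behind your low-high estimate is wrong: a function with Fourier support in $\{|\xi|\lesssim 2^k\}$ oscillates on scales $\gtrsim 2^{-k}$, and is therefore nowhere near constant on a cube of side $2^k$. The estimate you write, $\|\chi_Q(S_{<k-4}u\,S_kv)\|_{X_k}\lesssim\|u\|_{L^\infty}\|\chi_QS_kv\|_{X_k}$, is correct, but for a more elementary reason: $\chi_Q$ and $S_{<k-4}u$ are both pointwise multiplication operators and commute exactly (so there is no commutator $[\chi_Q,S_{<k-4}u]$ to control), and multiplication by an $L^\infty$ function preserves each of the Lebesgue-type components of $X_k$. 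The commutators that genuinely need care are of the form $[\chi_Q,S_j]$, a cube cutoff against the output Littlewood--Paley projector; these are handled by the rapid off-diagonal decay of the kernel of $S_j$ relative to the cube scale $2^j$, or equivalently by the frequency-localized partition of unity referenced in Section~\ref{Func space}. Second, in the resonant piece $\Pi(u,v)$ with output frequency $2^j$ and inputs at $2^k\gtrsim 2^j$, the cubes in $Q_k$ have side $2^k\geq 2^j$ and are therefore \emph{coarser} than those of $Q_j$, not finer as you assert; the nesting runs the opposite way. Since the $l^1$ sum over the finer output cubes $Q_j$ a priori exceeds the $l^1$ sum over the coarser cubes $Q_k$ on which the inputs are measured, the high-high piece is precisely where the subtlety of the $l^1$ structure lies, and the ``work on finer cubes $Q_k$'' step as stated would go in the wrong direction.
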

We next recall some elementary bilinear estimates for the $l_k^1Y_k$ spaces.
\begin{proposition}[Bilinear estimates]\label{bilinear}  The following bilinear estimates hold for $l_k^1Y_k$ spaces. 
\begin{enumerate}
\item (High-low interactions). If $j<k-4$,
\begin{equation*}
\|S_juS_kv\|_{l_k^1Y_k}\lesssim 2^{j(\frac{d}{2}+1)}2^{-k}\|S_kv\|_{2^{-\frac{k}{2}}X}\|S_ju\|_{l_j^1L_T^{\infty}L_x^2}.
\end{equation*}
\item (Balanced interactions). If $|i-j|\leq 4$ and $i,j\geq k-4$,
\begin{equation*}
\|S_k(S_iuS_jv)\|_{l_k^1Y_k}\lesssim 2^{\frac{jd}{2}}\|S_iu\|_{l_i^1L_T^2L_x^2}\|S_jv\|_{L_T^{\infty}L_x^2}.
\end{equation*}
\end{enumerate}
\end{proposition}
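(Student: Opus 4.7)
The proof plan for both estimates rests on the embedding $L_T^1 L_x^2\hookrightarrow Y_k$ (and, in the high-low case, the sharper atomic characterization of the $Y$ component of $Y_k$), combined with Hölder in $t$, pointwise Hölder in $x$, Cauchy--Schwarz in the cube partition, a localized Bernstein bound, and the near-commutation $\sum_{Q\in Q_k}\|\chi_Q S_k h\|_{L^2}\lesssim \sum_Q\|\chi_Q h\|_{L^2}$, which follows from the rapid decay of the kernel of $S_k$ at scale $2^{-k}\ll 2^k$.

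For the balanced case (ii), I would first absorb $S_k$ via near-commutation and use Fubini in $(t,Q)$ to reduce the estimate to controlling $\int_0^T\sum_{Q\in Q_k}\|\chi_Q S_iu(t)S_jv(t)\|_{L^2}\,dt$. A pointwise Hölder $\|\chi_Q S_iuS_jv\|_{L^2}\leqslant \|\chi_Q S_iu\|_{L^2}\|\chi_Q S_jv\|_{L^\infty}$ and Cauchy--Schwarz in $Q$ convert this into a product of $\ell^2_Q$ square sums, and a Hölder in $t$ with exponents $(2,2)$ decouples these in time. The first resulting factor, $\|(\sum_Q\|\chi_Q S_iu\|_{L^2}^2)^{1/2}\|_{L_T^2}$, is comparable to $\|S_iu\|_{L_T^2L_x^2}$ by orthogonality of the cube partition and is controlled by $\|S_iu\|_{l_i^1L_T^2L_x^2}$ via the elementary inequality $(\sum a^2)^{1/2}\leqslant \sum a$. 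The second factor is bounded by $2^{jd/2}T^{1/2}\|S_jv\|_{L_T^\infty L_x^2}$ using the localized Bernstein $\|\chi_Q S_jv\|_{L^\infty}\lesssim 2^{jd/2}\|\chi_{\tilde Q}S_jv\|_{L^2}$ together with bounded overlap of the enlarged cubes. Since $T\lesssim 1$, this yields the claim.

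For the high-low case (i) the plain $L_T^1L_x^2$ embedding is lossy by a factor $2^{k-j}$, so I would instead exploit the atomic structure of $Y$ at the smaller scale $2^j$. Writing $S_juS_kv=\sum_{Q''\in Q_j}\chi_{Q''}S_juS_kv$, each atom is supported in a cube of size $2^j<2^k$ and, by the atomic definition of $Y$, satisfies $\|\chi_{Q''}S_juS_kv\|_{Y_k}\leqslant 2^{-k/2}\|\chi_{Q''}S_juS_kv\|_Y\leqslant 2^{(j-k)/2}\|\chi_{Q''}S_juS_kv\|_{L_T^2L_x^2}$. Since $j<k-4$ each $Q''\in Q_j$ is contained in essentially a single $Q\in Q_k$, so the $l_k^1$ sum passes without loss to an $l_j^1$ sum, producing a first gain of $2^{(j-k)/2}$. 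A Hölder splitting $\|\chi_{Q''}S_juS_kv\|_{L_T^2L_x^2}\leqslant \|\chi_{Q''}S_ju\|_{L_T^\infty L_x^\infty}\|\chi_{Q''}S_kv\|_{L_T^2L_x^2}$ combined with an $\ell^1\cdot\ell^\infty$ Hölder over $Q''\in Q_j$ then supplies a second gain of $2^{(j-k)/2}$ via
\[
\sup_{Q''\in Q_j}\|\chi_{Q''}S_kv\|_{L_T^2L_x^2}\leqslant 2^{j/2}\|S_kv\|_X=2^{(j-k)/2}\|S_kv\|_{2^{-k/2}X},
\]
while the remaining $\ell^1$ sum over $Q''$ of $\|\chi_{Q''}S_ju\|_{L_T^\infty L_x^\infty}$ is bounded by $2^{jd/2}\|S_ju\|_{l_j^1L_T^\infty L_x^2}$ via local Bernstein and bounded overlap. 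Multiplying the three factors gives $2^{jd/2+j-k}=2^{j(d/2+1)}2^{-k}$, matching the claim.

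The principal subtlety is the choice of cube scale at each step. In the balanced case, the Cauchy--Schwarz-in-$Q$ step converts the $\ell^1_Q$ cube sum to a global $L^2$ norm, for which the trivial inequality $\|\cdot\|_{L^2}\leqslant\|\cdot\|_{l_i^1L^2}$ absorbs any summability loss. In the high-low case the sharp exponent $2^{j-k}$ can only be extracted if \emph{both} the atomic decomposition of $Y$ \emph{and} the $X$-norm supremum are evaluated at the low-frequency scale $2^j$ rather than at the natural output scale $2^k$; obtaining the factor $2^{-k}$ exactly is the delicate technical point.
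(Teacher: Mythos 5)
Your proof is correct and takes essentially the approach the paper intends: the paper itself omits the argument, deferring to \cite[Lemma 4.3]{marzuola2021quasilinear}, but your use of the atomic structure of $Y$ at the low-frequency cube scale $2^j$ together with the $X$-norm supremum at that same scale for the high-low case, and the embedding $L_T^1L_x^2\hookrightarrow Y_k$ with localized Bernstein for the balanced case, is exactly the standard route. Your bookkeeping of the two gains of $2^{(j-k)/2}$ (one from the atomic $Y$ bound, one from evaluating the $X$-supremum at scale $2^j$) and the factor $2^{jd/2}$ from Bernstein is correct.
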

\begin{proof}
This is a slight refinement of Lemma 4.3 in \cite{marzuola2021quasilinear}. The proof is almost identical, so we omit the details. 
\end{proof}
By dyadic summation, the following is a consequence of \Cref{bilinear}.
\begin{proposition}[Paradifferential bilinear estimates]\label{parabil}
Let $s_0>\frac{d}{2}+2$. Then for every $\sigma\geq 0$, we have
\begin{equation*}
\begin{split}
&\|T_uv\|_{l^1Y^{\sigma}}+\|(T_v-v)u\|_{l^1Y^{\sigma}}\lesssim \|u\|_{l^1X^{s_0-1}}\|v\|_{X^{\sigma-1}},
\\
&\|T_uv\|_{l^1Y^{\sigma}}+\|(T_v-v)u\|_{l^1Y^{\sigma}}\lesssim \|u\|_{l^1X^{s_0-2}}\|v\|_{X^{\sigma}}.
\end{split}
\end{equation*}
If $0\leq\sigma\leq s_0$, we also have
\begin{equation}\label{restrictedpara}
\begin{split}
&\|(T_v-v)u\|_{l^1Y^{\sigma}}\lesssim \|u\|_{l^1X^{\sigma-1}}\|v\|_{X^{s_0-1}},
\\
&\|(T_v-v)u\|_{l^1Y^{\sigma}}\lesssim \|u\|_{l^1X^{\sigma-2}}\|v\|_{X^{s_0}}.
\end{split}
\end{equation}
\end{proposition}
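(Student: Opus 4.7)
The proof will follow by a standard dyadic reduction to the elementary bilinear estimates of \Cref{bilinear}. Invoking Bony's decomposition, one has
\begin{equation*}
(T_v-v)u = -T_uv - \Pi(u,v),
\end{equation*}
so that the first two inequalities will follow once $T_uv$ and $\Pi(u,v)$ are separately estimated, while the second pair reduces to the same two building blocks with the roles of $u$ and $v$ interchanged in the regularity distribution.

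For $T_uv=\sum_k S_{<k-4}u\cdot S_kv$, the Littlewood–Paley projection $S_kT_uv$ is essentially $S_{<k-4}u\cdot S_kv$ up to harmless fattened projectors, so the high–low bound from \Cref{bilinear}(i) applies. Writing $a_k:=\|S_kv\|_{X_k}$ and $b_j:=\|S_ju\|_{l_j^1X_j}$, bounding $\|S_kv\|_{2^{-k/2}X}\le 2^{-k/2}\|S_kv\|_{X_k}$ and $\|S_ju\|_{l_j^1L_T^\infty L_x^2}\le b_j$, we obtain
\begin{equation*}
2^{k\sigma}\|S_kT_uv\|_{l_k^1Y_k}\lesssim 2^{k(\sigma-1)}a_k\sum_{j<k-4}2^{j(d/2+1)}b_j.
\end{equation*}
For the first inequality, the key observation is that $s_0>\tfrac{d}{2}+2$ gives $d/2+1<s_0-1$, so the inner sum is bounded by $\|u\|_{l^1X^{s_0-1}}$ uniformly in $k$ by Cauchy–Schwarz applied to $\sum_j 2^{j(d/2+2-s_0)}\cdot 2^{j(s_0-1)}b_j$; matching with the $\ell^2$-summability of $2^{k(\sigma-1)}a_k$ closes the estimate. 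For the second inequality, one instead distributes the derivatives as $2^{k(\sigma-1)}=2^{-k}\cdot 2^{k\sigma}$ and uses Cauchy–Schwarz in $j$ against the weight $2^{j(d/2+3-s_0)}$, whose (possibly growing) $\ell^2$-partial-sum $\sim 2^{k(d/2+3-s_0)_+}$ is dominated by the leftover $2^{-k}$ factor thanks to $s_0>d/2+2$.

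For $\Pi(u,v)$, one writes $S_k\Pi(u,v)=\sum_{|i-j|\le 4,\;i,j\ge k-4}S_k(S_iu\cdot S_jv)$ and applies \Cref{bilinear}(ii). Using the embeddings $l_i^1L_T^2L_x^2\hookleftarrow l_i^1X_i$ and $L_T^\infty L_x^2\hookleftarrow X_j$ and collapsing the diagonal $|i-j|\le 4$ up to constants, one obtains $\|S_k\Pi(u,v)\|_{l_k^1Y_k}\lesssim \sum_{i\ge k-4}2^{id/2}b_i\tilde{a}_i$, where $\tilde{a}_i$ is a localized average of $a_j$. For the first inequality, substitute $b_i=2^{-i(s_0-1)}e_i$, $\tilde{a}_i=2^{-i(\sigma-1)}\tilde d_i$ (with $e,\tilde d\in\ell^2$) and use $2^{k\sigma}\le C 2^{i\sigma}$ for $i\ge k-4$, $\sigma\ge 0$; setting $\epsilon:=s_0-\tfrac{d}{2}-2>0$, this yields
\begin{equation*}
2^{k\sigma}\|S_k\Pi(u,v)\|_{l_k^1Y_k}\lesssim \sum_{i\ge k-4}2^{-\epsilon i}e_i\tilde d_i\lesssim 2^{-\epsilon k/2}\|e\|_{\ell^2}\|\tilde d\|_{\ell^2},
\end{equation*}
which is $\ell^2$ in $k$. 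The three remaining $\Pi$-type bounds are produced by the same redistribution of derivatives, with the inequalities $\sigma\ge 0$ and $\sigma\le s_0$ in \eqref{restrictedpara} used precisely to keep all output exponents nonpositive.

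The main obstacle is the high-high to low cascade in $\Pi(u,v)$: the $S_k$ output receives contributions from every pair of input frequencies $i\sim j\ge k-4$, and without any a priori decay one cannot even expect $\ell^\infty$-summability in $k$, let alone $\ell^2$. The workhorse that makes the scheme close is the strict gap $s_0>\tfrac{d}{2}+2$, which produces the exponentially decaying weight $2^{-\epsilon i}$ in the sum above and, after a Cauchy–Schwarz split that sacrifices half of it to extract $2^{-\epsilon k/2}$, delivers the required square summability in the output frequency $k$.
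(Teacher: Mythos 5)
Your proposal is correct and carries out in detail exactly what the paper asserts without proof, namely that \Cref{parabil} follows from \Cref{bilinear} by dyadic summation: you use the Bony identity $(T_v-v)u=-T_uv-\Pi(u,v)$ to reduce to the paraproduct $T_uv$ (high–low, handled by \Cref{bilinear}(i)) and the diagonal piece $\Pi(u,v)$ (handled by \Cref{bilinear}(ii)), and close the dyadic sums via Cauchy–Schwarz, using the gap $s_0>\tfrac{d}{2}+2$ to supply the geometric weight and using $\sigma\geq 0$ (resp.\ $\sigma\leq s_0$) to control the output exponents in the $\Pi$ part (resp.\ the paraproduct part) of \eqref{restrictedpara}. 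One small typo: you wrote $\|S_kv\|_{2^{-k/2}X}\leq 2^{-k/2}\|S_kv\|_{X_k}$ when the correct embedding is $\|S_kv\|_{2^{-k/2}X}\leq\|S_kv\|_{X_k}$; since the displayed inequality that follows uses the factor $2^{k(\sigma-1)}$ rather than $2^{k(\sigma-3/2)}$, you evidently used the correct bound in the computation, so this is a slip in the prose and not in the argument.
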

We next  state a closely related commutator estimate, which is a slight refinement of the version in \cite{marzuola2012quasilinear}.
\begin{proposition}\label{commutatorwithmultiplier}
Let $s>\frac{d}{2}+2$ and let $A\in S^0$ be a Fourier multiplier. Then we have
\begin{equation*}
\|\nabla [S_{<k-4}g,A(D)]\nabla S_ku\|_{l^1Y^0}\lesssim_A \|g-g_{\infty}\|_{l^1X^{s}}\|S_ku\|_{X^0},
\end{equation*}
\end{proposition}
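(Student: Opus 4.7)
The plan is to first algebraically expand the commutator to isolate its leading terms, then apply the high--low bilinear estimate of \Cref{bilinear}(i) together with the symbolic calculus of \Cref{symbcalc}. A direct manipulation (using that $\nabla$ commutes with $A(D)$) yields the identity
\begin{equation*}
\nabla[S_{<k-4}g, A(D)]\nabla S_k u = \nabla(S_{<k-4}g) \cdot A(D)\nabla S_k u - [\nabla A(D), S_{<k-4}g]\nabla S_k u.
\end{equation*}
Since $\nabla A(D)\in OPS^1$, the commutator $[\nabla A(D), S_{<k-4}g]$ belongs to $OPS^0$ with principal symbol $\nabla_\xi(\xi A(\xi))\cdot \nabla g_{<k-4}(x)\in S^0$ and residual in $OPS^{-2}$. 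Both terms on the right thus share the structure of a product of a low-frequency $\nabla g_{<k-4}$ with an order-zero multiplier applied to the high-frequency input $\nabla S_k u$.

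To estimate the main term, I would decompose $\nabla(S_{<k-4}g)=\sum_{j<k-4}\nabla S_j g$ and apply \Cref{bilinear}(i) to each dyadic piece:
\begin{equation*}
\|\nabla S_j g \cdot A(D)\nabla S_k u\|_{l^1_k Y_k}\lesssim 2^{j(\frac{d}{2}+1)}2^{-k}\|A(D)\nabla S_k u\|_{2^{-k/2}X}\|\nabla S_j g\|_{l^1_j L^{\infty}_T L^2_x}.
\end{equation*}
Since $A\in S^0$, an application of \Cref{LEopbounds} to the frequency-localized multiplier $A(\xi)\,i\xi$ (of $S^0$-size $\sim 2^k$) gives $\|A(D)\nabla S_k u\|_{2^{-k/2}X}\lesssim 2^k\|S_k u\|_{X_k}\lesssim 2^k\|S_k u\|_{X^0}$, while Bernstein yields $\|\nabla S_j g\|_{l^1_j L^{\infty}_T L^2_x}\lesssim 2^j\|S_j g\|_{l^1_j L^{\infty}_T L^2_x}$. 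Combining these bounds gives $2^{j(\frac{d}{2}+2)}\|S_j g\|_{l^1_j L^{\infty}_T L^2_x}\|S_k u\|_{X^0}$ for each summand. The commutator term is treated identically: its principal symbol produces exactly the same high--low product structure (with a different zeroth-order multiplier in place of $A(D)$), and the $OPS^{-2}$ residual is strictly better behaved after the two outer factors of $\nabla$ and the input $\nabla S_k u$ are accounted for.

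Finally, I would sum over $j<k-4$ via Cauchy--Schwarz,
\begin{equation*}
\sum_{j<k-4}2^{j(\frac{d}{2}+2)}\|S_j g\|_{l^1_j L^{\infty}_T L^2_x}\leq \biggl(\sum_j 2^{2j(\frac{d}{2}+2-s)}\biggr)^{1/2}\|g-g_\infty\|_{l^1 X^s},
\end{equation*}
using that $S_j g=S_j(g-g_\infty)$ for $j\geq 1$ and $\|S_j(g-g_\infty)\|_{l^1_j L^\infty_T L^2_x}\leq \|S_j(g-g_\infty)\|_{l^1_j X_j}$. The geometric series converges precisely because $s>\frac{d}{2}+2$. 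Since $\nabla[S_{<k-4}g,A(D)]\nabla S_k u$ is essentially frequency-localized near $2^k$ (its Fourier support lies in $\{|\xi|\sim 2^k\}$ up to an $O(1)$ window), the full $l^1 Y^0$ norm is dominated by the $l^1_k Y_k$ norm, yielding the claim. The main technical hurdle is to realize the cancellation in the commutator expansion so as to produce an operator of net order $+1$ whose coefficients involve only \emph{one} derivative of $g$; failing to capture this cancellation would force an estimate involving two derivatives of $g$ and push the threshold to $s>\frac{d}{2}+3$.
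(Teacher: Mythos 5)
Your leading-order analysis is correct and is the heart of the matter: after expanding the commutator, you isolate a high--low product with $\nabla S_{<k-4}g$ as the low-frequency coefficient, apply \Cref{bilinear}(i) dyadically, replace $\|S_jg\|_{l^1_jL^\infty_TL^2_x}$ by $2^{-js}\|g-g_\infty\|_{l^1X^s}$, and sum via the geometric series, which converges precisely for $s>\frac{d}{2}+2$. This is the intended argument, and your observation that the commutator cancellation is what lowers the threshold from $\frac{d}{2}+3$ to $\frac{d}{2}+2$ is exactly the right point.

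The gap is in your treatment of the residual of $[\nabla A(D),S_{<k-4}g]$, and it is a real one. First, a correction: for $a_1\in S^1$ and $a_2\in S^0$, \Cref{symbcalc}(iii) places the residual in $OPS^{m_1+m_2-2}=OPS^{-1}$, not $OPS^{-2}$. More importantly, the residual cannot be dismissed as ``strictly better behaved.'' Its symbol seminorms, as produced by \Cref{symbcalc}, are controlled by $|S_{<k-4}g|_{S^0}^{(j)}=\|S_{<k-4}g\|_{C^j}$ for some dimension-dependent $j$, and since $g$ has only finite regularity (essentially $C^{2,\delta}$ via Sobolev embedding), $\|S_{<k-4}g\|_{C^j}$ grows like $2^{(j-2)k}$ once $j\geq 3$. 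Bounding the residual crudely through \Cref{Sobolevbound} therefore produces constants that blow up in $k$, which destroys the uniform estimate you need. To fix this you would have to decompose $S_{<k-4}g=\sum_{j<k-4}S_jg$ \emph{inside} the residual and repeat the dyadic high--low analysis for every term in the asymptotic expansion; at that point it is both cleaner and standard (and is what the cited Proposition 3.2 of \cite{marzuola2012quasilinear} actually does) to bypass the pseudodifferential residual altogether by writing the commutator as a translation-invariant bilinear operator of Coifman--Meyer type, $\nabla[S_{<k-4}g,A(D)]\nabla S_ku=L(\nabla S_{<k-4}g,\nabla S_ku)$ with $L(\phi_1,\phi_2)(x)=\int\phi_1(x+y)\phi_2(x+z)K(y,z)\,dy\,dz$ and $\|K\|_{L^1}\lesssim_A 1$ (compare the proof of \Cref{controlofprincipal}), so that \Cref{bilinear}(i) applies directly to the whole expression and no residual ever appears.
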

where $g_{\infty}$ is any constant matrix.
\begin{proof}
The proof of this is essentially identical to the proof of Proposition 3.2 in \cite{marzuola2012quasilinear}. We omit the details.
\end{proof}
\begin{remark}\label{comremark}
We note that if $A\in S^m$ is a Fourier multiplier for some real number $m\geq 0$ then we can write the commutator in the above proposition as
\begin{equation*}
[S_{<k-4}g,A(D)]\nabla S_ku=2^{mk}\tilde{S}_k[S_{<k-4}g,2^{-mk}A(D)\tilde{S}_k]\nabla S_ku    ,
\end{equation*}
for some fattened projection $\tilde{S}_k$. Since $2^{-mk}A(D)\tilde{S}_k\in OPS^0$ with symbol bounds uniform in $k$, we have from \Cref{commutatorwithmultiplier},
\begin{equation*}
\|[S_{<k-4}g,A(D)]\nabla S_ku\|_{l^1Y^0}\lesssim 2^{(m-1)k}\|g-g_{\infty}\|_{l^1X^s}\|S_ku\|_{X^0}. 
\end{equation*}
\end{remark}
The next bound will allow us to precisely estimate certain error terms in the dual local energy space $Y^0$ which involve commutators of pseudodifferential and paradifferential operators. This is essentially a variant of \Cref{CM} but for the $X$ and $Y$ spaces.
\begin{proposition}[$X^{-2}\to Y^0$ commutator estimate]\label{commutatorestimate} Let $T\lesssim 1$, $\mathcal{O}\in OPS^0$ be time-independent with symbol $O\in S^0$ and let $s_0>\frac{d}{2}+2$. Moreover, let $g$ be a function such that $g-g_{\infty}\in l^1X^{s_0}$ for some constant $g_{\infty}$. Then we have the estimate
\begin{equation*}
\|[\mathcal{O},T_g]f\|_{Y^0}\leq C\|g-g_{\infty}\|_{l^1X^{s_0}}\|f\|_{X^{-2}},
\end{equation*}
where $C$ depends only on $\mathcal{O}$. 
\end{proposition}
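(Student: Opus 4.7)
The plan is to decompose the commutator into Littlewood--Paley blocks and estimate each piece via the high-low bilinear estimate of \Cref{bilinear}. Writing $\tilde g = g - g_\infty$, we have $[\mathcal{O}, T_g]f = [\mathcal{O}, T_{\tilde g}]f + g_\infty[\mathcal{O}, S_{\geq 5}]f$, and since $[\mathcal{O}, S_{\geq 5}]$ has a symbol compactly supported in $\xi$ (hence lies in $OPS^{-\infty}$), the $g_\infty$-contribution is a trivial error. Assuming $g_\infty = 0$, write
$$[\mathcal{O}, T_g]f = \sum_{k \geq 0} [\mathcal{O}, S_{<k-4} g]\, S_k f.$$
Each summand is essentially frequency-localized at $2^k$, so by quasi-orthogonality the $Y^0$ norm reduces to controlling $\sum_k \|[\mathcal{O}, S_{<k-4}g] S_k f\|_{Y_k}^2$.

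For each $k$, applying the symbolic calculus of \Cref{symbcalc} to multiplication by $S_{<k-4} g$ (whose symbol is independent of $\xi$) yields a composition expansion that truncates as
$$[\mathcal{O}, S_{<k-4} g] = \sum_{1 \leq |\alpha| \leq N} \frac{(-i)^{|\alpha|}}{\alpha!} Op\bigl(\partial_\xi^\alpha O \cdot \partial_x^\alpha S_{<k-4} g\bigr) + R_{k,N},$$
with $R_{k, N} \in OPS^{-(N+1)}$ for any chosen $N$. Using the identity $Op(q(x,\xi) \phi(x)) = \phi(x)\, Op(q)$ (valid when $\phi$ depends only on $x$), each principal piece equals $\sum_{j < k-4} \partial_x^\alpha S_j g \cdot Op(\partial_\xi^\alpha O) S_k f$, which has the high-low paraproduct structure needed to invoke \Cref{bilinear}.

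Since $\partial_\xi^\alpha O \in S^{-|\alpha|}$, a rescaling argument based on \Cref{LEopbounds} gives $\|Op(\partial_\xi^\alpha O) S_k f\|_{X_k} \lesssim 2^{-|\alpha| k} \|S_k f\|_{X_k}$ with essential frequency support still at $2^k$. Feeding this into \Cref{bilinear} and using Bernstein, for each $j < k-4$,
$$\|\partial^\alpha S_j g \cdot Op(\partial_\xi^\alpha O) S_k f\|_{l_k^1 Y_k} \lesssim 2^{j(d/2 + 1 + |\alpha|)} 2^{-(1+|\alpha|) k} \|S_j g\|_{l_j^1 X_j} \|S_k f\|_{X_k}.$$
Inserting $\|S_j g\|_{l_j^1 X_j} \lesssim c_j 2^{-j s_0} \|g\|_{l^1 X^{s_0}}$, with $s_0 > \tfrac{d}{2} + 2$ and $(c_j)$ an $\ell^2$-summable envelope, and summing the resulting geometric series in $j < k-4$ produces a uniform block gain of at least $2^{-2k}$. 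The remainder $R_{k, N}$, for $N$ chosen sufficiently large, is dispatched by crude $L^2$ estimates exploiting the $OPS^{-(N+1)}$ smoothing. Square-summing in $k$ then converts the per-block $2^{-2k}$ gain exactly into $\|f\|_{X^{-2}}$, completing the proof.

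The main obstacle is that the subprincipal pieces $|\alpha| \geq 2$ would naively require $s_0 > \tfrac{d}{2} + 1 + |\alpha|$ in order for the $j$-sum to converge absolutely. This is bypassed by noting that even when the $j$-sum grows like $2^{k(d/2 + 1 + |\alpha| - s_0)}$, the pseudodifferential prefactor $2^{-(1+|\alpha|)k}$ still yields overall block decay $2^{k(d/2 - s_0)} < 2^{-2k}$, so only the condition $s_0 > \tfrac{d}{2} + 2$ is needed.
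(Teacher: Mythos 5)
Your proposed decomposition in the second display is incorrect, and this is a genuine gap rather than a bookkeeping slip. Since $T_g = \sum_k (S_{<k-4}g)\, S_k$, we have
\begin{equation*}
[\mathcal{O}, T_g]f \;=\; \sum_{k\geq 0}[\mathcal{O}, S_{<k-4}g]\,S_k f \;+\; \sum_{k\geq 0} S_{<k-4}g\,[\mathcal{O}, S_k]f,
\end{equation*}
whereas you have dropped the second sum. That term is not negligible: it is of the same order as the pieces you keep, and in the paper's proof it is precisely the source of the symbol $p_1 = -i\sum_k S_{<k-4}g\,\nabla_\xi\widehat{S}_k\cdot\nabla_x O$, while your symbolic expansion of $[\mathcal{O}, S_{<k-4}g]$ only produces the $p_2$-type contributions (where $x$-derivatives land on $g$). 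The missing sum does have the high-low paraproduct structure and can be estimated by the same \Cref{bilinear} argument you use for the main term — since $[\mathcal{O},S_k]\in OPS^{-1}$ with $k$-uniform bounds and $[\mathcal{O},S_k]f$ remains essentially localized at frequency $2^k$, one gets the same $2^{-2k}$ block gain — but as written the identity you start from is false, and the proof does not go through without restoring this term.

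A secondary concern: your all-orders expansion $\sum_{1\leq|\alpha|\leq N}\frac{(-i)^{|\alpha|}}{\alpha!}Op(\partial_\xi^\alpha O\cdot\partial_x^\alpha S_{<k-4}g) + R_{k,N}$ needs the remainder $R_{k,N}$ to behave uniformly in $k$, but the $S^0$ seminorms of the symbol $S_{<k-4}g$ grow with $k$ once the order of $x$-differentiation exceeds the finite regularity of $g$. This can be reconciled against the frequency gain $2^{-(N+1)k}$ once one tracks the exact seminorms appearing in the composition remainder, but the ``crude $L^2$ estimates'' you invoke do not address it. The paper sidesteps the issue entirely by stopping at the first-order Poisson-bracket term from \Cref{symbcalc}(iii) and absorbing the whole remainder as an $H^{-2}_x\to L^2_x$ bounded operator with norm controlled by $\|g\|_{L_T^\infty C^{2,\epsilon}}\lesssim\|g\|_{l^1X^{s_0}}$, which uses only the regularity actually available and is uniform in $T$. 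If you want to keep your higher-order expansion you should either justify the $k$-dependence of $R_{k,N}$'s seminorms explicitly, or adopt the paper's one-step truncation.
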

\begin{proof}
Clearly, it suffices to prove the claim with $g_{\infty}=0$. Moreover, it suffices to work with the principal part of the  commutator  since the remainder is bounded from $H_x^{-2}\to L_x^2$ uniformly in $T$ with norm $\lesssim_{\mathcal{O}}\|g\|_{L_T^{\infty}C^{2,\epsilon}}$ for some sufficiently small $\epsilon>0$, which by Sobolev embedding can be controlled by $\|g\|_{l^1X^{s_0}}$. The principal symbol $p$ for $[T_g,\mathcal{O}]$ is
\begin{equation*}
\begin{split}
p(x,\xi)&:=-i\sum_{k\geq 0}\{S_{<k-4}g(x)\widehat{S}_k(\xi),O\}
\\
&=-i\sum_{k\geq 0}S_{<k-4}g(x)\nabla_{\xi}\widehat{S}_k(\xi)\cdot\nabla_xO+i\sum_{k\geq 0}S_{<k-4}\nabla_xg(x)\widehat{S}_k(\xi)\cdot\nabla_\xi O=:p_1+p_2.
\end{split}
\end{equation*}
First, we consider bounds for $P_1:=Op(p_1)$. Modulo  an operator which is bounded from $H_x^{-2}\to L_x^2$ with norm $\lesssim_\mathcal{O} \|g\|_{L_T^{\infty}C^{2,\epsilon}}$, we can write
\begin{equation*}
P_1=-i\sum_{k\geq 0}S_{<k-4}g(x)(\nabla_{\xi}\widehat{S}_k)(D)\cdot Op(\nabla_xO)\tilde{S}_k+\mathcal{O}_{L_T^{\infty}H_x^{-2}\to L_T^{\infty}L^2_x}(1)    ,
\end{equation*}
where $\tilde{S}_k$ is a slightly fattened Littlewood-Paley projection. As $\nabla_{\xi}\widehat{S}_k$ is localized at frequency $\approx 2^k$, we can use \Cref{bilinear} and dyadic summation to estimate
\begin{equation*}
\|P_1f\|_{Y^0}\lesssim \|g\|_{l^1X^{s_0-1}}\|f\|_{X^{-2}}    .
\end{equation*}
A similar argument for $P_2:=Op(p_2)$ gives 
\begin{equation*}
\|P_2f\|_{Y^0}\lesssim \|\nabla_x g\|_{l^1X^{s_0-1}}\|f\|_{X^{-2}},
\end{equation*}
which concludes the proof.
\end{proof}
Finally, we state  versions of some of the above bilinear and Moser estimates which are phrased in terms of frequency envelopes. This will be convenient for establishing the finer properties of the solution map later on, such as the continuous dependence of the solution on the initial data. From Proposition 3.2 of \cite{marzuola2012quasilinear}, we have the following estimates.
\begin{proposition}[Frequency localized estimates I]\label{freqenv1}
Let $s>\frac{d}{2}$ and let $u,v\in l^1X^s$ with $l^1X^s$ frequency envelopes given by $a_k$ and $b_k$, respectively. Then for each $k\in\mathbb{N}_0$, we have
\begin{equation*}
\|S_k(uv)\|_{l^1X^{s}}\lesssim (a_k+b_k)\|u\|_{l^1X^s}\|v\|_{l^1X^s}.
\end{equation*}
Moreover, if $F$ is a smooth function with $F(0)=0$, then we have 
\begin{equation*}
\|S_k(F(u))\|_{l^1X^s}\lesssim a_k\|u\|_{l^1X^s}(1+\|u\|_{l^1X^s})c(\|u\|_{L^{\infty}}).
\end{equation*}
\end{proposition}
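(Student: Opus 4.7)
The plan is to follow the standard Bony paradifferential strategy, leveraging the algebra property already established in \Cref{Algebraproperty} together with the slow/uniform variation properties of the envelopes. For the product estimate, I decompose $uv = T_uv + T_vu + \Pi(u,v)$ and bound $S_k$ of each piece separately. In the high--low paraproduct $S_k(T_uv)$, the outer Littlewood--Paley localization forces only summands $S_k(S_{<j-4}u\cdot S_jv)$ with $|j-k|\leq 4$ to contribute; applying \Cref{Algebraproperty} to each such term gives $\|S_{<j-4}u\cdot S_jv\|_{l^1X^s} \lesssim \|u\|_{l^1X^s}\|S_jv\|_{l^1X^s} \lesssim b_j \|u\|_{l^1X^s}\|v\|_{l^1X^s}$, and the slowly-varying hypothesis yields $b_j \lesssim b_k$ in this range. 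The symmetric term $S_k(T_vu)$ produces the $a_k$ contribution in an identical fashion.

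For the high--high diagonal $S_k(\Pi(u,v)) = \sum_{j \geq k-O(1)} S_k(\tilde S_ju \cdot S_jv)$, I exploit the embedding $l^1X^s \hookrightarrow L^\infty$ for $s>d/2$ together with \Cref{Algebraproperty}: each summand is controlled in $l^1X^s$ by $\|S_ju\|_{l^1X^s}\|S_jv\|_{l^1X^s}$, and $S_k$ applied to a product of functions at frequency $2^j$ with $j\geq k$ produces a geometric decay factor $2^{-\epsilon(j-k)}$ (coming from the mismatch between the regularity index $s$ and the scale at which the product lives). The right-uniformly varying condition $a_k + b_k \lesssim 2^{\sigma(j-k)}(a_j+b_j)$ then allows me to sum the resulting series, choosing $\epsilon > \sigma$, to obtain the final bound $(a_k+b_k)\|u\|_{l^1X^s}\|v\|_{l^1X^s}$.

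For the Moser-type composition estimate, I would paralinearize $F(u) = T_{F'(u)}u + R(u)$, where $R(u)$ denotes the Bony remainder which, by standard paradifferential calculus, gains an additional degree of regularity in the scale $l^1X^{s+\theta}$ controlled by $c(\|u\|_{L^\infty})\|u\|_{l^1X^s}(1+\|u\|_{l^1X^s})$. The first piece $S_k T_{F'(u)}u$ inherits the envelope $a_k$ of $u$ directly from the argument used for $T_uv$ above, together with the $L^\infty$ bound on $F'(u)$; the smoother remainder is absorbed by using the right-uniform variation property, which ensures that the extra regularity can be traded for the factor $a_k$ without losing anything of substance, since $a_k$ cannot decay faster than a fixed geometric rate. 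I expect the main obstacle to lie in this last step — properly ensuring that the envelope $a_k$ of $u$ actually controls the frequency decomposition of the nonlinear object $F(u)$, rather than requiring a fresh envelope adapted to $F(u)$, which is precisely why the right-uniform variation condition was built into the definition of admissible envelope.
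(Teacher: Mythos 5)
The paper does not prove this proposition; it is quoted directly from Proposition~3.2 of \cite{marzuola2012quasilinear}, so there is no ``paper proof'' to compare against. Your sketch is nonetheless the natural route (Bony decomposition plus envelope bookkeeping), which is how the cited reference proceeds, so I will assess it on its own terms.

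The high--low and low--high pieces are handled correctly: only $O(1)$ values of $j$ contribute, \Cref{Algebraproperty} applies term by term, and the slowly-varying property transfers $b_j$ to $b_k$. The issues are in the high--high term $\Pi(u,v)$ and in how the sum is closed.

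First, the geometric decay factor $2^{-\epsilon(j-k)}$ cannot be extracted from \Cref{Algebraproperty} alone. That proposition gives $\|\tilde S_j u \cdot S_j v\|_{l^1 X^s}\lesssim \|S_j u\|_{l^1 X^s}\|S_j v\|_{l^1 X^s}$, which bounds the full $l^1 X^s$ norm of the product; it contains no information about how small the piece at output frequency $2^k\ll 2^j$ is. The gain must come from a genuine dyadic bilinear estimate --- e.g.\ a Bernstein-type volume factor $(2^k/2^j)^{d/2}$ when projecting the product of two frequency-$2^j$ functions to frequency $2^k$, combined with $s>d/2$ to beat it. As written, this step is asserted rather than derived, and the algebra property is the wrong tool to produce it.

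Second, the summability condition is misattributed. To close $\sum_{j\geq k}2^{-\epsilon(j-k)}a_jb_j\lesssim a_k+b_k$ you need an \emph{upper} bound $a_j\lesssim 2^{\delta(j-k)}a_k$ (equivalently $b_j\lesssim 2^{\delta(j-k)}b_k$) for $j>k$, which is exactly the left-slowly-varying property read in the $j>k$ direction; the condition to require is $\epsilon>\delta$. The right-uniformly-varying inequality you cite, $a_k+b_k\lesssim 2^{\sigma(j-k)}(a_j+b_j)$, is a \emph{lower} bound on $a_j+b_j$ and does not control the tail of the sum. (The $\sigma$-condition is what you want in the Moser part, where you absorb a genuinely smoother remainder by noting $a_k$ cannot decay faster than $2^{-\sigma k}$ --- that part of your sketch is fine.) Neither of these is fatal, but as stated the argument for $\Pi(u,v)$ has a real gap.
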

\begin{proposition}[Frequency localized estimates II]\label{freqenv2} Let $s>\frac{d}{2}+2$. The following estimates hold for $k\in\mathbb{N}_0$.
\begin{enumerate}
\item Let $0\leq\sigma\leq s$ and let $u\in l^1X^{\sigma-1}$ and $v\in l^1X^{s-1}$ with corresponding frequency envelopes $a_k$ and $b_k$, respectively. We have
\begin{equation*}\label{freqb1}
\|S_k(uv)\|_{l^1Y^{\sigma}}\lesssim (a_k+b_k)\|u\|_{l^1X^{\sigma-1}}\|v\|_{l^1X^{s-1}}.
\end{equation*}
\item Let $0\leq\sigma\leq s-1$ and let $u\in l^1 X^{\sigma}$ and $v\in l^1X^{s-2}$ with corresponding frequency envelopes $a_k$ and $b_k$, respectively. We have
\begin{equation*}\label{loc3}
\|S_k(uv)\|_{l^1Y^{\sigma}}\lesssim (a_k+b_k)\|u\|_{l^1X^{\sigma}}\|v\|_{l^1X^{s-2}}.
\end{equation*}
\item Let $0\leq\sigma\leq s$ and let $u\in l^1X^{\sigma}$ and $v\in l^1X^{s-2}$ with corresponding frequency envelopes $a_k$ and $b_k$, respectively. We have
\begin{equation*}
\|S_k(vS_{\geq k-4}u)\|_{l^1Y^{\sigma}}\lesssim (a_k+b_k)\|u\|_{l^1X^{\sigma}}\|v\|_{l^1X^{s-2}}.
\end{equation*}
\end{enumerate}
\end{proposition}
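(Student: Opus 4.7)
The plan is to combine the Bony decomposition $uv = T_uv + T_vu + \Pi(u,v)$ with the bilinear bounds of Proposition~\ref{bilinear}, tracking frequency envelopes through each piece. The envelope factor $b_k$ will arise naturally whenever the output frequency $2^k$ is forced by the factor $v$ (i.e.\ in the $T_uv$ piece), the factor $a_k$ similarly from the $T_vu$ piece, and the balanced piece $\Pi(u,v)$ will produce both simultaneously.

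For part (i), I would first bound $T_uv = \sum_j S_{<j-4}u\,S_jv$ at output frequency $\sim 2^k$ via the high-low estimate of Proposition~\ref{bilinear}(i), inserting $\|S_kv\|_{X_k} \lesssim 2^{-k(s-1)}b_k\|v\|_{l^1X^{s-1}}$ and $\|S_ju\|_{X_j} \lesssim 2^{-j(\sigma-1)}a_j\|u\|_{l^1X^{\sigma-1}}$. After multiplying by the outer weight $2^{k\sigma}$, the resulting sum $\sum_{j<k-4} 2^{j(d/2+2-\sigma)}a_j$ is controlled using the uniform bound $a_j \lesssim 1$ from $\|a\|_{\ell^2} \approx 1$; together with the prefactor $2^{-k(s-\sigma)}$ and the hypothesis $s>d/2+2$, this yields an overall bound $\lesssim b_k\|u\|_{l^1X^{\sigma-1}}\|v\|_{l^1X^{s-1}}$ in every regime of $\sigma\in[0,s]$. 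The symmetric piece $T_vu$ is handled identically with the roles of $u,v$ swapped, producing $a_k$. For the balanced piece, restricted to $i,j\geq k-4$ with $|i-j|\leq 4$, Proposition~\ref{bilinear}(ii) applies; the right-uniform-variation $a_j+b_j \lesssim 2^{\rho(j-k)}(a_k+b_k)$ for $j\geq k$, combined with the linear dyadic gain in the balanced bound, lets the $j$-sum be reabsorbed into $(a_k+b_k)$.

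Parts (ii) and (iii) differ only in how the regularity budget is distributed and in which pieces of the trichotomy survive. In (ii) we shift one derivative from $v$ onto $u$, so $v \in l^1X^{s-2}$ and $u \in l^1X^{\sigma}$, and the balanced summation then forces the restriction $\sigma \leq s-1$. In (iii) the hypothesis $S_{\geq k-4}u$ eliminates the $T_uv$ paraproduct from the decomposition entirely, since that term is built from $S_{<j-4}u$; only $T_vu$ and the balanced $\Pi$ survive. With the dangerous low-frequency $u$ interactions excised by hypothesis, the wider range $0\leq \sigma \leq s$ is recovered even though $v$ now carries only $l^1X^{s-2}$ regularity.

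The main obstacle will be the balanced piece $\Pi(u,v)$ in the regime of large $\sigma$, since there the direct $j$-sum has a nonnegative exponent and one must exploit both the $l_j^1$ structure of the refined balanced bound in Proposition~\ref{bilinear}(ii) and the slow-variation plus $\ell^2$-normalization of the envelopes to close. Since the non-envelope versions of all three bounds are already established in \cite{marzuola2012quasilinear, marzuola2021quasilinear}, the envelope-valued refinements amount to systematically inserting a factor of $a_j$ or $b_j$ next to each dyadic norm $\|S_j\cdot\|_{X_j}$ and re-estimating the resulting envelope-weighted sums by $(a_k+b_k)$ via the slow-variation bounds; no genuinely new analytic ingredient is needed beyond the machinery already developed in \Cref{Prelims}.
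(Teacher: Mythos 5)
The paper does not actually give a proof of \Cref{freqenv2}: it is stated directly after \Cref{freqenv1} without argument, evidently as a routine envelope-weighted variant of \Cref{bilinear} and \Cref{parabil} in the spirit of \cite{marzuola2012quasilinear}, so there is no paper proof to compare against. Your plan (Bony decomposition plus the dyadic bilinear bounds of \Cref{bilinear}, tracking envelope factors through each piece) is the natural one and your exponent bookkeeping for the $T_u v$, $T_v u$, and balanced terms is sound. Two slips should be corrected, however. First, the bound $a_j \lesssim 2^{\rho(j-k)}a_k$ for $j \geq k$, which you use to resum the balanced piece, is the \emph{left-slowly-varying} condition read with the roles of $j$ and $k$ interchanged, not the right-uniformly-varying one; the latter gives only the opposite-direction lower bound $a_j \geq 2^{-\rho(j-k)}a_k$. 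One should also note that the envelope parameter must be chosen smaller than the regularity gap $s-\frac{d}{2}-2$ for this resummation to converge, a standard but non-automatic normalization. Second, the restriction $\sigma \leq s-1$ in part (ii) is not imposed by the balanced piece: that term contributes
$2^{k\sigma}\sum_{j\gtrsim k} 2^{j(\frac{d}{2}+2-\sigma-s)}a_j b_j \lesssim 2^{k(\frac{d}{2}+2-s)}(a_k+b_k)$
uniformly over $\sigma \geq 0$. What actually forces $\sigma \leq s-1$ is the $T_u v$ piece (low $u$, high $v$), whose dyadic prefactor becomes $2^{k(\sigma+1-s)}\sum_{j<k-4}2^{j(\frac{d}{2}+1-\sigma)}a_j$ and fails once $\sigma > s-1$ and $\sigma > \frac{d}{2}+1$. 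Your own discussion of part (iii) already implicitly recognizes this, since you correctly attribute the wider range $\sigma \leq s$ there to the disappearance of $T_u v$; the claim in (ii) should be fixed for consistency.
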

\section{Overview of the proof}\label{OOTP}
In this section, we give an overview of the key ideas that go into the proof of \Cref{quadraticmain}. We recall that our essential aim is  to establish local well-posedness for the system
\begin{equation}\label{div-form-outline}
\begin{cases}
&i\partial_tu+\partial_j g^{jk}(u,\overline{u})\partial_k u=F(u,\overline{u},\nabla u,\nabla\overline{u}),\hspace{5mm} u:\mathbb{R}\times\mathbb{R}^d\to\mathbb{C}^m,
\\
&u(0,x)=u_0(x),
\end{cases}
\end{equation}
in  the $l^1H^s$ scale, for  $s>s_0>\frac{d}{2}+2$. As we shall see, our scheme builds on and complements the ideas from \cite{jeong2023cauchy,kenig2005variable, MR2263709, MR1660933, MR2096797, marzuola2021quasilinear}, but also has several important novelties.
\subsection{The linear and paradifferential ultrahyperbolic flows}
The main component of our argument involves a careful analysis of the  linear ultrahyperbolic flow  
\begin{equation}\label{linearizedeqn}
\begin{cases}
&i\partial_tv+\partial_jg^{jk}\partial_k v+b^j\partial_jv+\tilde{b}^j\partial_j\overline{v}=f,
\\
&v(0,x)=v_0(x),
\end{cases}
\end{equation}
which is naturally associated with the linearization of (\ref{div-form-outline}). Here, the metric $g^{jk}$ is  real, nontrapping, symmetric and non-degenerate, and the coefficients $g$, $b$, and $\tilde{b}$ satisfy the asymptotic flatness conditions
\begin{equation}\label{asympflatoutline}
\|g-g_{\infty}\|_{l^1X^{s_0}}+\|\partial_tg\|_{l^1X^{s_0-2}}+\|(b,\tilde{b})\|_{l^1X^{s_0-1}}+\|\partial_t(b,\tilde{b})\|_{l^1X^{s_0-3}}\leq M,
\end{equation}
where $M>0$ is a fixed constant and $g_{\infty}$ is a constant, non-degenerate, symmetric matrix. Note that in the special case where $v$ corresponds to the linearization around a solution $u$ to (\ref{div-form-outline}), the coefficients and inhomogeneous source term in \eqref{linearizedeqn} take the form
\begin{equation}\label{coefficients1}
\begin{cases}
&b^j:=\nabla_u g^{jk}\partial_ku-\nabla_{(\nabla u)_j}F,\hspace{4mm}\tilde{b}^j:=\nabla_{\overline{u}}g^{jk}\partial_ku-\nabla_{(\nabla\overline{u})_j}F,
\\
&f:=(\nabla_{\overline{u}}F-\partial_j(\nabla_{\overline{u}}g^{jk})\partial_ku)\overline{v}+(\nabla_uF-\partial_j(\nabla_ug^{jk})\partial_ku)v,
\end{cases}
\end{equation}
where we have suppressed the dependence on $u$ in the coefficients for simplicity of notation. In our general analysis of (\ref{linearizedeqn}), we will not require that $b$, $\tilde{b}$ and $f$ arise from solutions to (\ref{div-form-outline}) via linearization.
\medskip

An equation that is closely related to (\ref{linearizedeqn}) is the associated linear paradifferential  flow 
\begin{equation}\label{paralinflowoutline}
\begin{cases}
&i\partial_tv+\partial_jT_{g^{jk}}\partial_k v+T_{b^j}\partial_jv+T_{\tilde{b}^j}\partial_j\overline{v}=f,
\\
&v(0,x)=v_0(x),
\end{cases}
\end{equation}
which extracts the leading  part of the linear flow. Here, the paradifferential operator $T_g$ is defined as in (\ref{paraproductdef}). Again, when $v$ arises from the linearization around a solution $u$ to (\ref{div-form-outline}), $b^j$ and $\tilde{b}^j$ remain  as in (\ref{coefficients1}), but now
\begin{equation*}
\begin{split}
f&:=(\nabla_{\overline{u}}F-\partial_j(\nabla_{\overline{u}}g^{jk})\partial_ku)\overline{v}+(\nabla_uF-\partial_j(\nabla_ug^{jk})\partial_ku)v+(\partial_jT_{g^{jk}}\partial_k-\partial_jg^{jk}\partial_k)v
\\
&+(T_{b^j}-b^j)\partial_jv+(T_{\tilde{b}^j}-\tilde{b}^j)\partial_j\overline{v}.
\end{split}
\end{equation*}
In this case, $f$  can be thought of as being comprised of perturbative error terms when measured in the dual local energy space $l^1Y^0$.  These terms either have a suitable algebraic balance of derivatives between the coefficients and $v$ or have coefficient functions that are at high or comparable frequency relative to $v$. 
\subsection{Quantitative nontrapping and the bicharacteristic flow} As in \cite{marzuola2021quasilinear}, to adequately study the linear (and ultimately nonlinear) problem, we will need a  quantitative measure of nontrapping. For our purposes, we will only need to define nontrapping for time-independent metrics $g$ with regularity and decay given by
\begin{equation}\label{datasizeoutline}
\|g-g_{\infty}\|_{l^1H^{s_0}}\leq M,\hspace{5mm}\frac{d}{2}+2<s_0<s  ,
\end{equation}
where $M>0$ is a fixed constant and $g_{\infty}$ is a constant, non-degenerate, symmetric matrix. Note that the condition  \eqref{datasizeoutline} guarantees that $g\in C^{2,\delta}$, which in particular ensures that the corresponding Hamilton flow,
\begin{equation*}
(\dot{x}^t,\dot{\xi}^t)=(\nabla_{\xi}a(x^t,\xi^t),-\nabla_xa(x^t,\xi^t)),\hspace{5mm}a(x,\xi)=-g^{ij}(x)\xi_i\xi_j, \hspace{5mm} (x^0,\xi^0)=(x,\xi),
\end{equation*}
is locally well-posed. The first preliminary objective of \Cref{THE BICHARAC FLOW} is to show that under the nontrapping assumption on $g$ and the asymptotic flatness condition (\ref{datasizeoutline}), the Hamilton flow is in fact globally defined. This is not automatic when $\Delta_g$ is not elliptic. Indeed, although $g^{ij}\xi_i\xi_j$ is conserved by the Hamilton flow, unlike in the elliptic case, it does not necessarily control the size of $|\xi^t|$ in our setting. 
\medskip

The second objective of \Cref{THE BICHARAC FLOW} is to provide a  quantitative measure of nontrapping. For this, we define a function $L: [0,\infty)\to [0,\infty)$ where $L(R)$ measures (roughly speaking) the maximal amount of time any initially  unit speed bicharacteristic can intersect the ball $B_R(0)$. Our definition differs slightly from the definition in \cite{marzuola2021quasilinear}, as they define $L$ in terms of the Hamilton flow projected onto the co-sphere bundle $|\xi^t|=1$. This latter definition is natural in the elliptic case, in light of the conservation of $g^{ij}\xi_i\xi_j$, but is not quite suitable for our problem. Analogously to \cite{marzuola2021quasilinear}, we show that our nontrapping parameter $L$ is stable under small perturbations of the metric, which will be important later on when we analyze the linear and nonlinear Schr\"odinger flows. 
\subsection{The main linear estimate}
The crux of our argument centers around establishing the following key estimate for the linear paradifferential  flow (\ref{paralinflowoutline}):
\begin{equation}\label{linearestimateoutline}
\|v\|_{l^1X^{\sigma}}\leq C(M,L)(\|v_0\|_{l^1H^{\sigma}}+\|f\|_{l^1Y^\sigma}),\hspace{5mm}0\leq\sigma  ,  
\end{equation}
which as a simple consequence yields the following estimate for the linear flow (\ref{linearizedeqn}):
\begin{equation*}
\|v\|_{l^1X^{\sigma}}\leq C(M,L)(\|v_0\|_{l^1H^{\sigma}}+\|f\|_{l^1Y^\sigma}),\hspace{5mm}0\leq\sigma\leq s_0.    
\end{equation*}
Here, $C(M, L)$ is a constant depending on the coefficient size $M$  in (\ref{asympflatoutline}) and on the nontrapping parameter $L$ for $g$ within a fixed compact set whose size depends on the profile of the metric $g$ and the rate of decay of the coefficients $b^j$ and $\tilde{b}^j$. In \Cref{THE LINEAR ULTRA FLOW}, we reduce establishing the above two estimates to establishing the following simpler bound for the linear paradifferential flow:
\begin{equation}\label{simplifiedboundoutline}
\|v\|_{X^{\sigma}}\leq C(M,L)(\|v_0\|_{H^{\sigma}}+\|f\|_{Y^{\sigma}}),\hspace{5mm}\sigma\geq 0,
\end{equation}
in the setting where $\widehat{v}$ is supported at frequencies $\gtrsim 2^{k_1}$, where $k_1$ is some sufficiently large parameter. This latter reduction follows in a straightforward manner as low-frequency errors can be controlled by taking $T$  small enough depending on $k_1$. The reason we perform this reduction is so that we can make use of the more precise pseudodifferential mapping properties in Propositions \ref{CVvariant} and \ref{LEopbounds}, which provide  high frequency operator bounds for pseudodifferential operators that depend only on the $L^{\infty}$ norm of their symbols (as long as the symbol itself does not depend on $k_1$). This will be of critical importance in Sections~\ref{Sec lin flow} and \ref{LED sec},  as we will explain below. 
\subsection{$L^\infty_tL^2_x$ bounds for the linear flow}
We next give an outline of \Cref{Sec lin flow}, where we prove the first of the two main components of the bound (\ref{simplifiedboundoutline}). The main aim of \Cref{Sec lin flow} is to establish control of the $L_T^{\infty}H_x^{\sigma}$ norm of $v$. Throughout the discussion, we assume that $\widehat{v}$ is supported at frequencies larger than $2^{k_1}$. Given $\epsilon>0$, our aim is to prove an estimate of the form
\begin{equation}\label{intro energy norm}
\|v\|_{L_T^{\infty}H_x^{\sigma}}\leq C(M,L)(\|v_0\|_{H^{\sigma}}+\|f\|_{Y^{\sigma}})+\epsilon\|v\|_{X^{\sigma}},\hspace{5mm}\sigma\geq 0,  
\end{equation}
 on a time interval $[0,T]$ whose length depends on $M,$ $L$ and $\epsilon$. In order to  clearly outline the main techniques, we focus on the case $\sigma=0$. The key objective in this part of the proof is to construct a spatially truncated version of the renormalization operator in (\ref{renormalizationnotideal}) which conjugates away the ``main" portion of the term $\operatorname{Re}(b^j)\partial_jv$. As noted in the introduction, obtaining an $L_T^{\infty}L_x^2$ bound for (\ref{paralinflowoutline}) is straightforward in the absence of such a term. Unlike the symbol in (\ref{renormalizationnotideal}), however, we  want the symbol $O(x,\xi)$ of our renormalization operator $\mathcal{O}$ to be time-independent and to belong to $S^0$. In view of the first goal (and also to ensure that our symbol is smooth) we  truncate in frequency and time,  rewriting the  paradifferential linear flow  as
 \begin{equation}\label{paraeqnoutline}
\begin{cases}
&i\partial_tv+\partial_jT_{g^{ij}}\partial_i v+b^j_{<k_0}(0)\partial_jv+\tilde{b}^j_{<k_0}(0)\partial_j\overline{v}=f+\mathcal{R}^1,
\\
&v(0)=v_0.
\end{cases}
\end{equation}
We then prove that if the frequency truncation parameter $k_0$ is large enough and $T$ is small enough, the resulting error term $\mathcal{R}^1$ satisfies the perturbative bound
\begin{equation*}
\|\mathcal{R}^1\|_{Y^0}\leq\epsilon\|v\|_{X^0}.
\end{equation*}
In order  to guarantee smoothness of our symbol $O$, we will only work with the Hamilton flow $(x^t,\xi^t)$ for the truncated symbol $a:=-g^{ij}_{<k_0}(0)\xi_i\xi_j$. By our stability results, the truncated metric $g^{ij}_{<k_0}(0)$ will be nontrapping with comparable parameters to $g^{ij}$ if $k_0$ is sufficiently large and $T$ is sufficiently small. The downside of working exclusively with these truncated quantities, however, is that we will need to obtain an estimate of the form
\begin{equation*}
\|[\mathcal{O},\partial_i(T_{g^{ij}}-g^{ij}_{<k_0}(0))\partial_j]\|_{X^0\to Y^0}\leq\epsilon ,   
\end{equation*}
when we commute the equation with $\mathcal{O}$. Establishing such a bound is not completely trivial, but can be handled expeditiously with the tools developed in \Cref{Prelims}. Knowing this, our construction of $O$  proceeds as follows: First, we fix a large parameter $R$ such that the coefficients in the equation are small outside of $B_{R}(0)$. That is,
\begin{equation}\label{smallnessoutline}
\|(g-g_{\infty})\chi_{>R}\|_{l^1X^{s_0}}+\|(b,\tilde{b})\chi_{>R}\|_{l^1X^{s_0-1}}\ll\epsilon.    
\end{equation}
We then make the ansatz $O:=e^{\psi_1+\psi_2}$, where $\psi_1,\psi_2\in S^0$. The purpose of the symbol $\psi_1$ is to arrange for the leading order cancellation
\begin{equation*}
\{a,\psi_1\}+\operatorname{Re}(b^j_{<k_0}(0))\xi_j\geq 0   , 
\end{equation*}
within the region $B_{R}(0)$ where the coefficient $b^j$ is potentially large. Roughly speaking (but not exactly), we will take
\begin{equation*}
\psi_1(x,\xi):=-\chi_{<2R}(x)\int_{-\infty}^{0}\operatorname{Re}((\chi_{<4R}b_{<k_0}(0))(x^t))\cdot\xi^tdt    .
\end{equation*}
The symbol $\psi_2$ will then be chosen to correct the error terms in the transition region $|x|\approx R$ which appear  when derivatives are applied to the localization $\chi_{<2R}$. The resulting symbol $O$ will turn out to be a classical time-independent $S^0$ symbol, allowing us to avoid the more exotic symbol class (\ref{CKSclass}) used in \cite{kenig2005variable}. We remark importantly that we only attempt to cancel the leading part of the first order term $\operatorname{Re}(b^j_{<k_0}(0))\partial_jv$ in \eqref{paraeqnoutline}  as the remaining first order terms exhibit a favourable energy structure from the point of view of $L^\infty_TL^2_x$ estimates. See \Cref{basicEE} for a more precise description of this.
\medskip

We crucially note that the spatial localization in $O$ comes with one significant caveat. Namely, it only conjugates away the bad first order term within the region $B_R(0)$. Therefore, we still have to estimate the residual error term $\chi_{>R}\operatorname{Re}(b^j_{<k_0}(0))\partial_j\mathcal{O}v$ in $Y^0$. Ideally, such an estimate  would follow easily from the smallness (\ref{smallnessoutline}) of $b^j$ outside of $B_{R}(0)$. However, the symbol bounds for $O$ grow in the parameter $R$. Therefore, we have to somehow ensure  that the $X^0\to X^0$ bounds for $\mathcal{O}$ do not counteract the smallness coming from $b^j$. This is accomplished by using the observation that, unlike the higher order symbol bounds, the $L^{\infty}$ norm of the symbol $O$ is independent of the parameter $R$ (as $R\to\infty$). Therefore, since $\widehat{v}$ is supported at  frequencies $\gtrsim 2^{k_1}$, we can make use of the bounds in \Cref{CVvariant} and \Cref{LEopbounds} to ensure  that we have an estimate essentially of the form
\begin{equation*}
\|\mathcal{O}v\|_{X^0}\lesssim \|O\|_{L^{\infty}}\|v\|_{X^0}.    
\end{equation*}
This is what will ultimately allow us to break any potential circularity in our analysis. As mentioned earlier, an analogous construction  was used to establish well-posedness for the electron MHD equations in \cite{jeong2023cauchy}.
\subsection{Local energy bounds for the linear flow}
In \Cref{LED sec}, we turn to the second of the two main components of the bound (\ref{simplifiedboundoutline}). Again, for simplicity of discussion, we take $\sigma=0$. Here, the aim is to establish control of the local energy component of the $X^0$ norm of $v$ in terms of the dual norm of $f$ and the $L^\infty_TL^2_x$ norm of $v$. More precisely, we aim to prove an estimate  of the form
\begin{equation}\label{LEsimplified}
\|v\|_{X^0}\leq C(M,L)(\|v\|_{L_T^{\infty}L_x^2}+\|f\|_{Y^{0}}).
\end{equation}
Combining this bound with the $L_T^{\infty}L_x^2$ bound \eqref{intro energy norm} (with $\epsilon$ sufficiently small), it is relatively straightforward to obtain the main bound (\ref{simplifiedboundoutline}). To obtain (\ref{LEsimplified}), we implement a novel approach based on the truncation idea used in the $L_T^{\infty}L_x^2$ bound. As before, we begin by fixing $R>0$ so that we have the smallness (\ref{smallnessoutline}) outside of $B_R(0)$. 
\medskip

Our first observation is that we can use the small data result from \cite{marzuola2012quasilinear} (which holds for the general ultrahyperbolic Schr\"odinger flows that we consider here) to reduce having to control the entire local energy component of $v$ to having to obtain the corresponding estimate within the compact set $B_{R}(0)$. Precisely, we can reduce matters to establishing the bound
\begin{equation}\label{compactregionoutline}
\|\chi_{<R}v\|_{L_T^2H_x^{\frac{1}{2}}}\leq C(M,L)(\|v\|_{L_T^{\infty}L_x^2}+\|f\|_{Y^0})+\epsilon\|v\|_{X^0}.     
\end{equation}
Note that in \eqref{compactregionoutline} we  work with  the stronger (but simpler) $L_T^2H_x^{\frac{1}{2}}$ norm  within the compact set $B_R(0)$. The starting point in the proof of this estimate is to rewrite (\ref{paralinflowoutline}) as a system for $\bold{v}:=(v,\overline{v})$:
\begin{equation*}
\partial_t\bold{v}+\textbf{P}\bold{v}+\textbf{B}_{k_0}^0\bold{v}=\bold{R},
\end{equation*}
where $\bold{P}$ is the corresponding principal operator. As in the $L_T^{\infty}L_x^2$ estimate, the operator $\textbf{B}_{k_0}^0$ is a suitable time and frequency truncated version of the first order differential operator in the paralinearized Schr\"odinger equation and $\bold{R}$ is a source term which can be controlled by the right-hand side of (\ref{compactregionoutline}) in $Y^0$. We  write $\bold{P}_{k_0}^0$ as a shorthand for the associated time and frequency truncated principal operator. Precise definitions can be found in \eqref{Refchange2}.
\medskip

The estimate \eqref{compactregionoutline} proceeds via a positive commutator argument. Our implementation can be thought of as a spatially truncated version of Doi's argument in \cite{ichi1996remarks}. Precisely, we aim to construct a real symbol $q\in S^0$ and a corresponding pseudodifferential operator $\bold{Q}$ such that the principal symbol for the commutator $[\bold{Q},\bold{P}_{k_0}^0]$ is elliptic within $B_{R}(0)$ and controls the first order term $\textbf{B}_{k_0}^0$ up to a small error. Like before, we work with the bicharacteristic flow  $(x^t,\xi^t)$ for the time and frequency truncated metric $g^{ij}_{<k_0}(0)$ to ensure that the symbol we construct is smooth and time-independent. To construct $q$, we first fix a secondary parameter $R'\gg R$ to be chosen. Similarly to before, we make the ansatz
\begin{equation*}
q:=e^{C(M)(p_1+p_2+p_3)},
\end{equation*}
where $C(M)$ is a suitably large constant and $p_1,p_2,p_3$ are $S^0$ symbols to be chosen. The choice of $p_1$ will simply ensure the ellipticity of $[\bold{Q},\bold{P}_{k_0}^0]$ in $B_{R}(0)$. We can take
\begin{equation*}
p_1(x,\xi):=-\chi_{<R'}\int_{0}^{\infty}\chi_{<R}(x^t,\xi^t)|\xi^t|dt    .
\end{equation*}
A natural next step would be to correct this symbol in the transition region $|x|\approx R'$ and use the smallness of the coefficients $(b,\tilde{b})$ outside of $B_{R}(0)$ as in the $L_T^{\infty}L_x^2$ estimate \eqref{intro energy norm}. However, this will not work because the $L^{\infty}$ bound for $p_1$  will not be uniform in $R$. Instead, we consider a second symbol $p_2$ whose purpose will be to ensure that the commutator $[\bold{Q},\bold{P}_{k_0}^0]$ controls the first order term $\bold{B}_{k_0}^0$ within the much larger compact set $B_{R'}(0)$ but with an $L^{\infty}$ bound which does not depend on the larger parameter $R'$. Roughly speaking, we will take $p_2$ to be 
\begin{equation*}
p_2(x,\xi):=-\chi_{<R'}\int_{0}^{\infty}\chi_{<R'}(x^t)\sqrt{|(b_{<k_0}(0))(x^t)|^2+|(\tilde{b}_{<k_0}(0))(x^t)|^2+L(R')^{-2}}\langle\xi^t\rangle dt ,   
\end{equation*}
which turns out to be a $S^0$ symbol with the desired properties. The symbol $p_3$ will then be chosen to correct the error in the transition region $|x|\approx R'$ similarly to the $L_T^{\infty}L_x^2$ bound. If $\bold{v}$ is localized at high enough frequency, the multiplier $\bold{Q}$ will then achieve the following key outcomes.
\begin{itemize}
\item $[\bold{Q},\bold{P}_{k_0}^0]$ will have an essentially positive-definite principal symbol which is elliptic of order $1$ within $B_R(0)$. This will permit the use of G\r arding's inequality to control $\chi_{<R}\bold{v}$ in $L_T^2H_x^{\frac{1}{2}}$.
\medskip

\item $[\bold{Q},\bold{P}_{k_0}^0]\bold{v}$ will control the first order term $\chi_{<R'}\bold{B}_{k_0}^0\bold{Q}\bold{v}$.
\medskip

\item If $\bold{v}$ is at high enough frequency $2^{k_1}$,  $\|\bold{Q}S_{>k_1-4}\|_{X^0\to X^0}$ will be independent of $R'$. This will allow us to control $\chi_{\geq R'}\bold{B}_{k_0}^0\bold{v}$ in $Y^0$ by a small factor of $\|v\|_{X^0}$ by taking $R'$ sufficiently large and using the smallness of $b^j$ and $\tilde{b}^j$ outside of $B_{R'}(0)$.
\end{itemize}
The above scheme turns out to be sufficient for closing the estimate (\ref{LEsimplified}). We remark that this method is very robust and works under extremely mild decay assumptions on the coefficients -- we essentially only require integrability along the bicharacteristic flow $(x^t,\xi^t)$. Such integrability is guaranteed by the asymptotic flatness condition (\ref{asympflatoutline}) and the fact that the metric is nontrapping.
\subsection{Well-posedness for the nonlinear equation}
Finally, in \Cref{finalsection} we will make use of the  estimate (\ref{linearestimateoutline}) for both the linear and paradifferential flows as well as its various corollaries to establish well-posedness for the nonlinear flow. Having established our key linear estimate, the scheme for establishing well-posedness is virtually identical to the one implemented in Section 7 of \cite{marzuola2021quasilinear}. We therefore only outline the minor changes, and refer  to \cite{marzuola2021quasilinear} for additional details. The interested reader may also consult \cite{ifrim2023local} for an expository presentation of the overarching well-posedness scheme.
\section{The bicharacteristic flow}\label{THE BICHARAC FLOW}
 In this section, we define  our quantitative measure of nontrapping and   establish basic properties of the bicharacteristic flow corresponding to the symbol $a(x,\xi):=-g^{ij}(x)\xi_i\xi_j$. We begin by fixing $s_0>\frac{d}{2}+2$ and letting $g$ be a time-independent metric satisfying
\begin{equation}\label{metricsize}
\|g-g_{\infty}\|_{l^1H^{s_0}}\leq M,
\end{equation}
for some constant non-degenerate symmetric matrix $g_{\infty}$. We moreover assume the non-degeneracy condition
\begin{equation}\label{nondegenerunif}
c^{-1}|\xi|\leq |g^{ij}\xi_j|\leq c|\xi|,\hspace{5mm}\forall\xi\in\mathbb{R}^d,
\end{equation}
for some constant $c>0$. By Sobolev embedding, we have for some $\delta>0$,
\begin{equation*}
\|g\|_{C^{2,\delta}}\lesssim_{g_{\infty}} 1+M.
\end{equation*}
As a consequence, for each $(x,\xi)\in\mathbb{R}^{2d}$, the bicharacteristic flow $(x^{t},\xi^{t}):=(x^{t}_{(x,\xi)},\xi^{t}_{(x,\xi)})$ given by
\begin{equation}\label{bicharacteristic}
(\dot{x}^t,\dot{\xi}^t)=(\nabla_{\xi}a(x^t,\xi^t),-\nabla_xa(x^t,\xi^t)),\hspace{5mm}(x^0,\xi^0)=(x,\xi),
\end{equation}
is well-defined in a neighborhood of $t=0$ (whose size a priori depends on $(x,\xi)$). 
\medskip

In addition to the above decay and non-degeneracy assumptions, we will also impose the condition that the metric $g$ be nontrapping. The meaning of this is given in a qualitative form by the following definition.
\begin{definition}[Nontrapping metric]\label{nontrapdef}
A non-degenerate metric $g$ is said to be \emph{nontrapping} if for every $(x,\xi)\in\mathbb{R}^d\times (\mathbb{R}^d-\{0\})$ and every compact set $K\subset\mathbb{R}^d$, the bicharacteristic $x^{t}$ intersects $K$ on a compact time interval. 
\end{definition}
As in \cite{marzuola2021quasilinear}, we will need a more quantitative description of the above definition. The quantitative parameter $L=L(R)$ we introduce should measure, in some sense, how long a given bicharacteristic can intersect $B_R(0)$. However, since the bicharacteristic flow satisfies the homogeneity law
\begin{equation}\label{homogeneity}
\xi\mapsto \lambda\xi,\hspace{5mm}t\mapsto\lambda t,
\end{equation}
such a parameter will not be uniform in the size of $\xi$. To deal with this, it is natural to restrict to data $\xi\in \mathbb{S}^{d-1}$. From the non-degeneracy of the metric, this restricts the initial speed of a given bicharacteristic to  approximately unit size.
\medskip

At this point, we fix a non-degenerate, nontrapping metric $g$ satisfying (\ref{metricsize}) and (\ref{nondegenerunif}). By a compactness argument, the function 
\begin{equation*}
L:[0,\infty)\to \mathbb [0,\infty)
\end{equation*}
given formally by
\begin{equation}\label{quantnontrap}
L(R):=\inf\{s\geq 0: |x^{t}|>R,\hspace{2mm} \forall\hspace{2mm}|t|\geq s,\hspace{2mm}\forall (x,\xi)\in B_{R}(0)\times \mathbb{S}^{d-1}\}
\end{equation}
is well-defined. We will use $L:=L(R)$ as a quantitative measure of nontrapping. 
\begin{remark}
In the case where $\Delta_g$ is elliptic, it is automatic that the bicharacteristic flow is globally well-defined because the quantity 
\begin{equation}\label{conservedbyflow}
g^{ij}\xi_i\xi_j    
\end{equation}
is preserved by the flow, which, by ellipticity, implies that $|\xi^t|$ remains bounded uniformly in $t$ by $|\xi|$. The same is not immediate when the symbol $g^{ij}\xi_i\xi_j$ is not elliptic and therefore it still needs to be proved that the bicharacteristic flow is globally well-defined. We  remark that our definition of the nontrapping parameter $L$ is slightly different than the one used in \cite{marzuola2021quasilinear}. In their article, they define $L$ in terms of the maximal time any bicharacteristic for the projected flow onto $\{|\xi^t|=1\}$ can intersect $B_R(0)$. In light of the above discussion, this is a natural definition in the case of an elliptic symbol, but is not so natural for our purposes because the bicharacteristic flow should not in general preserve any normalization of $|\xi|$ (even though (\ref{conservedbyflow}) is still preserved by the flow). We therefore only restrict the initial $\xi$ to the unit sphere in our quantitative measure of nontrapping. 
\end{remark}
Our next proposition addresses the problem of global existence and asymptotic bounds for the bicharacteristic flow when the metric is nontrapping and satisfies the decay condition $g-g_{\infty}\in l^1H^{s_0}$.
\begin{proposition}\label{globally well-defined} Let $s_0>\frac{d}{2}+2$ and let $g$ be a non-degenerate, nontrapping metric satisfying \eqref{metricsize}. Then
\begin{enumerate}
\item For each $(x,\xi)\in \mathbb{R}^{d}\times(\mathbb{R}^{d}-\{0\})$, the bicharacteristic flow for $a(x,\xi):=-g^{ij}\xi_i\xi_j$ is globally defined.
\item For every $\epsilon_0>0$ sufficiently small, there exists $R_0>0$ such that for any initially outgoing bicharacteristic $($i.e. $\dot{x}^t(0)\cdot x \geq 0)$ with data $(x,\xi)\in (\mathbb{R}^d-B_{R_0}(0))\times (\mathbb{R}^d-\{0\})$, $x^{t}$ is defined for $t\geq 0$ and is close to the flat flow in the sense that for all $t\geq 0$, we have
\begin{equation}\label{flatflowclose}
|x^{t}-x+2t g_{\infty}^{ij}\xi_j|\leq  t\epsilon_0|\xi|,\hspace{5mm}|\xi^{t}-\xi|\leq \epsilon_0|\xi|.
\end{equation}
\end{enumerate} 
\end{proposition}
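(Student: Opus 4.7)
My plan is to establish part (ii) first, since it provides a perturbative description of the flow in the asymptotic region, and then leverage nontrapping to reduce part (i) to part (ii).

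For part (ii), the condition $s_0 > d/2 + 2$ combined with \eqref{metricsize} yields $g \in C^{2,\delta}$, and more importantly the summable decay
\begin{equation*}
\mu(R) := \sum_{Q \subset \{|x| > R\}} \|\nabla g\|_{L^\infty(Q)} \longrightarrow 0 \quad \text{as } R \to \infty,
\end{equation*}
which is an immediate consequence of Sobolev embedding on each unit cube combined with $g - g_\infty \in l^1H^{s_0}$. Given small $\epsilon_0 > 0$, I fix $R_0$ so that $\mu(R_0/2) \leq \epsilon_0/C$ for a suitable constant $C = C(g_\infty, c)$. By the scaling symmetry \eqref{homogeneity} it suffices to treat $|\xi| = 1$. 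Writing $(x^t_\infty, \xi^t_\infty) := (x - 2tg_\infty\xi, \xi)$ for the flat flow, I run a bootstrap on the ansatz
\begin{equation*}
|\xi^t - \xi| \leq \epsilon_0, \qquad |x^t - x^t_\infty| \leq \epsilon_0 t.
\end{equation*}
The outgoing condition together with $|g(x) - g_\infty| \ll 1$ on $|x| > R_0$ forces $|x^t_\infty| \gtrsim |x|$ uniformly in $t$ (via a direct computation with the quadratic polynomial $t \mapsto |x^t_\infty|^2$ using \eqref{nondegenerunif}), and by the bootstrap this propagates to $|x^t| \gtrsim R_0$ with $|x^t_\infty| \to \infty$ linearly. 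Integrating $\dot\xi^t = \nabla_x g^{ij}(x^t)\xi_i^t\xi_j^t$ and bounding
\begin{equation*}
\int_0^\infty |\nabla g(x^s)| \, ds \lesssim \sum_{Q \subset \{|x| > R_0/2\}} \|\nabla g\|_{L^\infty(Q)} \lesssim \mu(R_0/2),
\end{equation*}
where the $l^1$ structure arises because $|\dot x^s| \approx 2|g(x^s)\xi^s|$ is bounded below and above and each unit cube is consequently traversed for an $O(1)$ time-window, I improve the constant on the first bootstrap. A parallel argument subtracting $\dot x^t$ and $\dot x^t_\infty$ closes the bound on $|x^t - x^t_\infty|$. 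This simultaneously establishes \eqref{flatflowclose} and the uniform boundedness of $|\xi^t|$, so the maximal interval of existence is all of $[0, \infty)$.

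For part (i), local forward existence on some $[0, T_*)$ follows from $g \in C^{2,\delta}$. Assume for contradiction $T_* < \infty$. Applying the nontrapping hypothesis to $K := \overline{B_{R_0+1}}$, the set $\{t \in [0, T_*) : x^t \in K\}$ is a compact subset of $[0, T_*)$; let $T_0$ be its supremum. Then $|x^{T_0}| = R_0 + 1$ and $|x^t| > R_0 + 1$ for $t \in (T_0, T_*)$, which in turn forces $\dot x^{T_0} \cdot x^{T_0} \geq 0$. Moreover $\xi^{T_0} \neq 0$ because the locus $\{\xi = 0\}$ is invariant under the Hamiltonian flow by uniqueness of ODE. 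After rescaling via \eqref{homogeneity} so that $|\xi^{T_0}| = 1$, part (ii) applied at the new initial data gives the bicharacteristic on all of $[T_0, \infty)$ with uniformly bounded $|\xi^t|$, contradicting $T_* < \infty$. Backward existence follows by applying the same argument to the time-reversed Hamiltonian.

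The chief obstacle, as flagged in the remark after \eqref{quantnontrap}, is that outside the elliptic case the conservation of $g^{ij}(x^t)\xi^t_i\xi^t_j$ does \emph{not} control $|\xi^t|$, so the bare Gronwall bound $\frac{d}{dt}|\xi^t| \lesssim |\nabla g(x^t)||\xi^t|^2$ admits finite-time blow-up. The decisive input is the $l^1$-summability of $\|\nabla g\|_{L^\infty(Q)}$ across cubes: combined with the comparison to the escaping flat flow, it yields uniform integrability of $|\nabla g(x^t)|$ in time, which is exactly what powers both the bootstrap in part (ii) and the reduction in part (i).
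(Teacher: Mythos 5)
Your overall structure — bootstrap the flow against the flat flow, control $\int |\nabla g(x^s)|\,ds$ by integrability of $\nabla g$ along escaping bicharacteristics, then invoke nontrapping to reduce (i) to (ii) by restarting at the last exit from a large ball — matches the paper's, and your explicit reduction of (i) to (ii) is a nice fleshing-out of what the paper leaves as ``this will clearly suffice.'' However, the key integrability bound contains a genuine gap. You set $\mu(R) = \sum_{Q_0\subset\{|x|>R\}}\|\nabla g\|_{L^\infty(Q_0)}$ with the sum running over \emph{unit} cubes and assert that $\mu(R)<\infty$ and $\mu(R)\to 0$ is ``an immediate consequence'' of $g-g_\infty\in l^1H^{s_0}$ with $s_0>\frac d2+2$. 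This is false for $d\geq 3$. The $l^1$ in the definition \eqref{ell1Hs intro} sums, at frequency $2^j$, over cubes of side $2^j$ (not unit cubes); collapsing to unit cubes costs a Cauchy--Schwarz factor of $2^{jd/2}$ per dyadic scale, since for $S_j u$ spread over a cube of side $2^j$ one has $\|S_ju\|_{l^1_0L^2}\approx 2^{jd/2}\|S_ju\|_{l^1_jL^2}$. Concretely, take $u_j$ to be $2^{jd}$ unit-spaced wave packets at frequency $2^j$ and spatial scale $2^{-j}$ filling a single $2^j$-cube, normalized so $\|u_j\|_{l^1H^{s_0}}\approx 1$; then $\sum_{Q_0}\|\nabla u_j\|_{L^\infty(Q_0)}\approx 2^{j(d+1-s_0)}$, which diverges as $j\to\infty$ whenever $s_0<d+1$. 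Stacking such pieces at disjoint spatial locations gives a metric in $l^1H^{s_0}$ with $\mu=\infty$, so your starting estimate is unavailable precisely in the range $\frac d2+2 < s_0 \leq d+1$, which is nonempty once $d\geq 3$.

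The paper's Lemma~\ref{timeintegrateddecay} is designed to avoid exactly this loss by matching scales. It decomposes $\nabla g=\sum_k S_k\nabla g$ and, for the frequency-$2^k$ piece, uses cubes $Q\in Q_k$ of side $2^k$ together with the geometric fact (coming from the bootstrap and nondegeneracy of $g_\infty$) that a near-straight escaping bicharacteristic spends time $\lesssim 2^k|\xi|^{-1}$ in such a cube. One then gets
\begin{equation*}
\int_0^t|\nabla g(x^s)|\,ds \lesssim |\xi|^{-1}\sum_{k\geq 0}\sum_{Q\in Q_k}2^{k}\|\chi_Q S_k(\chi_{>R_0/2}\nabla g)\|_{L^\infty} \lesssim |\xi|^{-1}\|\chi_{>R_0/2}\nabla g\|_{l^1H^{s_0-1}},
\end{equation*}
where the last step uses Bernstein on each $\chi_Q S_k$ block and costs precisely $2^{k(1+\frac d2)}$ per frequency, so the $k$-sum converges if and only if $s_0-1>\frac d2+1$, i.e.\ $s_0>\frac d2+2$. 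Replacing your unit-cube sum $\mu(R)$ by this multi-scale estimate repairs the argument; the bootstrap closing and your reduction of (i) to (ii) can then be kept as written.
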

\begin{proof}
The proof of this is very similar to Lemma 5.1 in \cite{marzuola2021quasilinear}. We include the short argument for completeness. We begin by choosing $R_0$ large enough so that $g$ is sufficiently close to the flat metric $g_{\infty}$ in $l^1H^{s_0}$ outside of $B_{\frac{R_0}{2}}(0)$. That is,
\begin{equation}\label{asympsmallnessbootstrap}
\|\chi_{>\frac{R_0}{2}}(g-g_{\infty})\|_{l^1H^{s_0}}<\epsilon    ,
\end{equation}
where $0<\epsilon\ll\epsilon_0\ll 1$ is some sufficiently small constant relative to $\epsilon_0$. We let $(x^t,\xi^t)$ be any initially outgoing bicharacteristic  with data $(x,\xi)\in (\mathbb{R}^d-B_{R_0}(0))\times (\mathbb{R}^d-\{0\})$ and make the bootstrap assumption that the bicharacteristic $(x^t,\xi^t)$ satisfies (\ref{flatflowclose}) on a time interval $t\in [0,T]$. Our goal will be to show that when $\epsilon>0$ is small enough,  the factor of $\epsilon_0$ in the bootstrap hypothesis can be improved to $\frac{\epsilon_0}{2}$. Thanks to the nontrapping assumption on $g$, this will clearly suffice for establishing both (i) and (ii).
\medskip

To close the bootstrap, we note that on $[0,T]$, thanks to (\ref{flatflowclose}) and the fact that $x^t$ is initially outgoing, the bicharacteristic $x^t$ remains outside $B_{\frac{3}{4}R_0}(0)$. Using this and the bootstrap hypothesis, we aim to prove the following simple lemma.
\begin{lemma}\label{timeintegrateddecay}
The following estimate holds for every $t\in [0,T]:$
\begin{equation*}
\begin{split}
\int_{0}^{t}|\nabla_xg(x^s)|ds&\lesssim \epsilon |\xi|^{-1}.  
\end{split}    
\end{equation*}
\end{lemma}
\begin{proof}
We estimate 
\begin{equation*}
\begin{split}
\int_{0}^{t}|\nabla_x g(x^s)|ds&\lesssim \sum_{k\geq 0}\sum_{Q\in Q_k}\int_{0}^{t}|\chi_Q(x^s)(S_k(\chi_{>\frac{R_0}{2}}\nabla_x g))(x^s)|ds
\\
&\lesssim |\xi|^{-1}\sum_{k\geq 0}\sum_{Q\in Q_k}2^k\|\chi_QS_k(\chi_{>\frac{R_0}{2}}\nabla_x g)\|_{L^{\infty}}
\\
&\lesssim |\xi|^{-1}\|\chi_{>\frac{R_0}{2}}\nabla_x g\|_{l^1H^{s_0-1}}
\\
&\leq |\xi|^{-1}\epsilon,
\end{split}
\end{equation*}
where in the second line we used (\ref{flatflowclose}) and the non-degeneracy of $g_{\infty}$, which ensures that the bicharacteristic $x^s$ intersects a cube of size $2^k$ for time at most $\lesssim 2^k|\xi|^{-1}$. In the third line, we used Bernstein's inequality, the fact that $s_0>\frac{d}{2}+2$ and the definition of $l^1H^{s_0-1}$. In the fourth line, we used (\ref{asympsmallnessbootstrap}). This concludes the proof.
\end{proof}
To close the bootstrap, we note that by (\ref{flatflowclose}) we have $|\xi^t|\leq (1+\epsilon_0)|\xi|$. Therefore, by using \Cref{timeintegrateddecay} and integrating in time the equation
\begin{equation*}
\frac{d}{dt}(\xi^t-\xi)=\nabla_xg^{ij}(x^t)\xi_i^t\xi_j^t    
\end{equation*}
we obtain
\begin{equation*}
|\xi^t-\xi|\lesssim \epsilon |\xi|.    
\end{equation*}
Using this bound, integrating in time the equation
\begin{equation*}
\frac{d}{dt}(x^t-x+2tg_{\infty}^{ij}\xi_j)=2(g_{\infty}^{ij}-g^{ij})(x^t)\xi_j^t-2g_{\infty}^{ij}(\xi_j^t-\xi_j)    
\end{equation*}
and using that $|(g-g_{\infty})(x^t)|\lesssim\epsilon$, we also obtain
\begin{equation*}
|x^t-x+2tg_{\infty}^{ij}\xi_j|\lesssim t\epsilon |\xi|   ,
\end{equation*}
which improves the bootstrap (\ref{flatflowclose}) if $\epsilon$ is small enough relative to $\epsilon_0$. This concludes the proof of the proposition.
\end{proof}
The next proposition shows that  the size of the nontrapping function $L$ as well as the bicharacteristic bounds are stable under small perturbations of the metric.
\begin{proposition}\label{perturbationstable}
Let $g_0$ be a non-degenerate nontrapping metric satisfying \eqref{metricsize}. For every sufficiently small $\epsilon_0>0$, there is a radius $R_0(\epsilon_0)>0$ and a constant $C_0>0$ depending only on $M$ and the profile of $g_0$ such that if $g_1$ is another non-degenerate metric satisfying 
\begin{equation}\label{smallperturbation}
\|g_0-g_1\|_{l^1H^{s_0}}< e^{-C_0L(R_0)}
\end{equation}
then the bicharacteristics corresponding to $g_1$ satisfy (ii) in \Cref{globally well-defined} with comparable parameters $R_0$ and $\epsilon_0$ and, moreover, $g_1$ is also nontrapping with comparable parameters $L_1$ and data size $M_1$.  
\begin{proof}
Choosing $e^{-C_0L(R_0)}$ so small  that
\begin{equation*}
\|g_0-g_1\|_{l^1H^{s_0}}\ll\epsilon_0    
\end{equation*}
ensures that the data size $M_1$ is comparable to $M$ and also that the proof of part (ii) of \Cref{globally well-defined} works equally well for the metric $g_1$. It therefore suffices to show that $L_1$ is comparable to $L$ for $R\leq R_0$. To do this, we fix $(x,\xi) \in B_{R_0}(0)\times \mathbb{S}^{d-1}$. The desired conclusion will follow if we can show that the bicharacteristic flows corresponding to $g_0$ and $g_1$ are close within $B_{R_0}(0)$ in the sense that 
\begin{equation}\label{hbound}
|x^{t}_0-x_1^{t}|_{L^{\infty}_{t}}+|\xi^{t}_0-\xi^{t}_1|_{L_{t}^{\infty}}\lesssim e^{-C(M)L(R_0)}
\end{equation}
for times in which $x^{t}_0$ intersects $B_{R_0}(0)$. The proof of this is similar to the proof of Proposition 5.2 in \cite{marzuola2021quasilinear} but since our nontrapping parameter $L$ is slightly different than theirs, we include the short proof. 
\medskip

We implement a simple bootstrap. First, we can restrict to a time interval $J$ such that $|J|\leq L(R_0)$. We will then assume the bound (\ref{hbound}) on some smaller interval $I\subset J$ and  establish the same bound with an improved constant. We begin by writing the equation for $\delta x^t:=x_0^t-x_1^t$ and $\delta\xi^t:=\xi_0^t-\xi_1^t$. Dropping the $i,j$ indices, we obtain
\begin{equation*}\label{linearizedbich}
\begin{cases}
&\frac{d}{dt}\delta x^t=2(g_1-g_0)(x_1^{t})\xi^{t}_1+2(g_0(x_1^t)-g_0(x_0^t))\xi_0^t-2g_0(x_1^t)\delta\xi^t,
\\
&\frac{d}{dt}\delta\xi^t=-\xi^{t}_1\nabla(g_1-g_0)(x_1^t)\xi^{t}_1-\xi^{t}_1(\nabla g_0(x_1^t)-\nabla g_0(x_0^t))\xi_1^{t}+(\xi_0^t\nabla g_0(x_0^t)\xi_0^t-\xi_1^t\nabla g_0(x_0^t)\xi_1^t),
\\
&(\delta x^{0},\delta \xi^0)=(0,0).
\end{cases}
\end{equation*}
By definition, we have $|I|\leq L(R_0)$. Moreover, by a compactness argument, there is a constant $K_0>1$ depending on the profile of $g_0$ (but not on $(x,\xi)$) such that
\begin{equation*}
|\xi_0^t|\lesssim K_0    
\end{equation*}
for every $t\in J$. By the bootstrap hypothesis, this implies the same bound for $\xi_1^t$ on $I$. From this, (\ref{smallperturbation}), the bootstrap hypothesis and the fact that $g_0\in C^2$, we obtain the bound
\begin{equation*}
\frac{d}{dt}[(\delta x^t)^2+(\delta\xi^t)^2]\lesssim e^{-2C_0L(R_0)}+C(K_0)(1+M)[(\delta x^t)^2+(\delta \xi^t)^2] .  
\end{equation*}
By Gr\"onwall's inequality and the bound $|I|\leq L(R_0)$, we obtain
\begin{equation*}
(\delta x^t)^2+(\delta\xi^t)^2\lesssim e^{-2C_0L(R_0)}e^{C(K_0)(1+M)L(R_0)}    
\end{equation*}
on $I$. Choosing $C_0$ large enough improves the bootstrap hypothesis and concludes the proof. 
\end{proof}
\end{proposition}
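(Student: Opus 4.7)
The plan is to deduce Proposition \ref{perturbationstable} from a stability analysis of the bicharacteristic ODE combined with a Gr\"onwall estimate, choosing the constant $C_0$ in the exponent of the smallness assumption large enough to beat the Gr\"onwall blow-up factor. The point is that the nontrapping parameter $L(R_0)$ controls the length of time any unit-speed trajectory of $g_0$ can intersect $B_{R_0}(0)$, so the Gr\"onwall factor we need to absorb is at most $e^{C(M,g_0)L(R_0)}$.

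First I would note that \eqref{smallperturbation} together with \eqref{metricsize} for $g_0$ immediately gives $\|g_1-(g_0)_\infty\|_{l^1H^{s_0}}\le M+1=:M_1$, so the size hypothesis transfers to $g_1$ with a comparable constant. Moreover, by taking $C_0$ so large that $e^{-C_0 L(R_0)}\ll \epsilon_0$, the proof of part (ii) of Proposition~\ref{globally well-defined} applies verbatim to $g_1$, with the same $R_0$ and comparable $\epsilon_0$, giving both the flat-flow approximation at infinity and global existence of the $g_1$-bicharacteristics (via the same outgoing/incoming bootstrap outside $B_{R_0}(0)$).

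The main step is to compare the two flows inside the compact region. Fix $(x,\xi)\in B_{R_0}(0)\times\mathbb{S}^{d-1}$ and let $(x_i^t,\xi_i^t)$ denote the $g_i$-bicharacteristic with initial data $(x,\xi)$. Set $\delta x^t=x_0^t-x_1^t$, $\delta\xi^t=\xi_0^t-\xi_1^t$ and write the Hamilton ODEs for their difference. A compactness argument using \eqref{metricsize} yields a uniform bound $|\xi_0^t|\le K_0(M,g_0)$ on any time interval where $x_0^t$ stays in $B_{R_0}(0)$; combined with the bootstrap hypothesis this transfers to $\xi_1^t$. Using $g_0\in C^2$ (from Sobolev embedding, since $s_0>\tfrac{d}{2}+2$), the assumption \eqref{smallperturbation}, and splitting each right-hand side into a ``metric difference'' piece bounded by $e^{-C_0L(R_0)}$ and a Lipschitz piece, one obtains
\begin{equation*}
\tfrac{d}{dt}\bigl(|\delta x^t|^2+|\delta\xi^t|^2\bigr)\le C(M,K_0)\,e^{-2C_0L(R_0)} + C(M,K_0)\bigl(|\delta x^t|^2+|\delta\xi^t|^2\bigr).
\end{equation*}
Since by definition of $L$ the trajectory $x_0^t$ can remain in $B_{R_0}(0)$ for time at most $L(R_0)$, Gr\"onwall's inequality yields
\begin{equation*}
|\delta x^t|^2+|\delta\xi^t|^2\lesssim e^{(C(M,K_0)-2C_0)L(R_0)}
\end{equation*}
on this interval, which can be made arbitrarily small by enlarging $C_0$.

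With this closeness in hand, the comparability of $L_1$ and $L$ on $[0,R_0]$ follows by a short bootstrap: suppose for contradiction that some $g_1$-bicharacteristic starting in $B_{R}(0)\times\mathbb{S}^{d-1}$ re-enters $B_R(0)$ at a time $t$ with $|t|$ much larger than $L(R)$; then by the Gr\"onwall closeness, the corresponding $g_0$-trajectory would also be in a slightly larger ball at that time, contradicting the definition of $L(R)$. The main obstacle in the argument is precisely the circularity in the Gr\"onwall step --- one needs $|\xi_1^t|$ bounded to get the Lipschitz estimate, but $|\xi_1^t|$ is only bounded while the trajectories stay close --- and this is handled by running the bootstrap on a shrinking subinterval $I\subset J$ with $|J|\le L(R_0)$ and improving the constant by choosing $C_0$ large.
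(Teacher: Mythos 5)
Your proposal is correct and follows essentially the same route as the paper's proof: transfer the size bound and the exterior flat-flow bootstrap of Proposition~\ref{globally well-defined}(ii) to $g_1$, then compare the two bicharacteristic flows in the compact region via a Gr\"onwall/bootstrap estimate on $(\delta x^t,\delta\xi^t)$, using compactness for a uniform bound on $\xi_0^t$ over time $\lesssim L(R_0)$, and finally choose $C_0$ large enough that $e^{-2C_0 L(R_0)}$ dominates the Gr\"onwall growth factor $e^{C(M,K_0)L(R_0)}$. The only cosmetic difference is that you spell out the final comparison of $L_1$ with $L$ as an explicit contradiction argument, which the paper leaves implicit.
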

By combining \Cref{globally well-defined} with \Cref{perturbationstable}, we have the following immediate corollary which gives a precise quantitative bound for $\xi^{t}$.
\begin{corollary}\label{xibond}
Let $g_0$ be as in \Cref{globally well-defined}. Then  the corresponding bicharacteristic $\xi_0^{t}$ (which is defined for all $t$) satisfies the bound
\begin{equation*}
|\xi^{t}_0|\lesssim C_0|\xi|
\end{equation*}
for all $(x,\xi)\in \mathbb{R}^{2d}$ and some constant $C_0>1$ depending only on $M$ and the profile of $g_0$. Moreover, if $g_1$ is any other metric satisfying the conditions of \Cref{perturbationstable}, then the corresponding bicharacteristic $\xi_1^t$ is globally defined and satisfies the same bound with a similar constant.
\end{corollary}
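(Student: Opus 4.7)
The plan is to combine an ODE compactness argument for initial data in a bounded region with the asymptotic flatness supplied by \Cref{globally well-defined}(ii), exploiting the reversibility of the Hamilton flow to treat the general case. By the homogeneity $\xi \mapsto \lambda\xi$, $t \mapsto \lambda t$ I reduce to $|\xi| = 1$, and by the time-reversal symmetry of the Hamilton equations (the map $s \mapsto (x^{-s}, -\xi^{-s})$ is again a bicharacteristic for the same symbol $a$, since $a$ is even in $\xi$) combined with the global existence in \Cref{globally well-defined}(i), it suffices to bound $|\xi^t|$ for $t \geq 0$. Fix a small $\epsilon_0 > 0$ and let $R_0 = R_0(\epsilon_0)$ be the radius from \Cref{globally well-defined}(ii). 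Since $g_0 \in C^{2,\delta}$ (via the embedding $l^1 H^{s_0} \hookrightarrow C^{2,\delta}$) the Hamilton vector field is locally Lipschitz and the corresponding flow depends continuously on initial data; by compactness of $\overline{B}_{R_0}(0) \times \mathbb{S}^{d-1}$ and finiteness of $L(R_0)$, this yields a uniform bound $|\xi^t_0| \leq K_1 = K_1(M, g_0)$ for all $|t| \leq L(R_0)$ and all initial data in this compact set.

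For general $(x, \xi)$ with $|\xi| = 1$ the key structural observation is that the bicharacteristic enters the compact region $\overline{B}_{R_0}$ at most once in forward time: once it exits at some time $t_2$, it is outgoing there, so \Cref{globally well-defined}(ii) compares it to the flat Hamilton flow, which monotonically departs from the origin for outgoing data, and it never returns. I split into cases: (A) $x^t \notin \overline{B}_{R_0}$ for all $t \geq 0$; or (B) $x^t$ first enters $\overline{B}_{R_0}$ at some $t_1 \geq 0$ (with $t_1 = 0$ if $x \in \overline{B}_{R_0}$) and exits at some later $t_2 \geq t_1$. In case (A), let $t^*$ minimize $|x^t|$ over $[0, \infty)$, which is attained since $|x^t| \to \infty$ by nontrapping and $|x^t|$ is continuous. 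At an interior minimum one has $\dot x^{t^*} \cdot x^{t^*} = 0$, so both the forward bicharacteristic from $(x^{t^*}, \xi^{t^*})$ and the reversed one from $(x^{t^*}, -\xi^{t^*})$ satisfy the initially outgoing hypothesis, and two applications of \Cref{globally well-defined}(ii) yield $|\xi^t - \xi^{t^*}| \leq \epsilon_0 |\xi^{t^*}|$ for all $t \geq 0$. Evaluating at $t = 0$ gives $|\xi^{t^*}| \leq |\xi|/(1-\epsilon_0)$, hence $|\xi^t| \leq C_* |\xi|$ with $C_* := (1+\epsilon_0)/(1-\epsilon_0)$; the boundary case $t^* = 0$ requires only the forward application and is handled analogously.

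In case (B), for $t \in [0, t_1]$ (if $t_1 > 0$) the reversed bicharacteristic from $(x^{t_1}, -\xi^{t_1})$ is outgoing (since $\dot x^{t_1} \cdot x^{t_1} \leq 0$ at the moment of entry), so \Cref{globally well-defined}(ii) gives $|\xi^t| \leq C_*|\xi|$ there. For $t \in [t_1, t_2]$ the bicharacteristic lies in $\overline{B}_{R_0}$, and rescaling $\xi^{t_1}$ to unit norm via homogeneity reduces matters to the compact data bound from the first paragraph, yielding $|\xi^t| \leq K_1 |\xi^{t_1}| \leq K_1 C_* |\xi|$. For $t \geq t_2$ the bicharacteristic is outgoing at $x^{t_2}$, so \Cref{globally well-defined}(ii) directly gives $|\xi^t| \leq (1+\epsilon_0) K_1 C_* |\xi|$. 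Combining all cases produces $|\xi^t_0| \leq C_0 |\xi|$ with $C_0 = C_0(M, g_0)$. The perturbation statement is then immediate: \Cref{perturbationstable} ensures $g_1$ is itself non-degenerate and nontrapping with parameters $L_1, M_1$ comparable to those of $g_0$, so running the identical argument for $g_1$ produces the analogous bound with a comparable constant. The principal minor technical subtlety I anticipate is the strict inequality $|x| > R_0$ required by \Cref{globally well-defined}(ii) (the boundary invocations above occur at $|x| = R_0$), which is handled by enlarging $R_0$ slightly at the outset, or by a routine continuity argument letting the initial times approach the boundary.
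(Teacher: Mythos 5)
Your proof correctly supplies the details that the paper's two-sentence proof of \Cref{xibond} leaves implicit, and the ingredients — homogeneity to normalize $|\xi|=1$, time-reversal using evenness of $a$ in $\xi$, a compactness bound for the flow on $\overline B_{R_0}\times\mathbb S^{d-1}$ over the horizon $L(R_0)$, and \Cref{globally well-defined}(ii) at initially outgoing points to handle the exterior and transitions — are exactly the ones the paper cites. One imprecision worth flagging, though it is not load-bearing: the motivating claim that a trajectory which exits $\overline B_{R_0}$ never returns does not follow cleanly from \Cref{globally well-defined}(ii), since the $t\epsilon_0|\xi|$ error term in the flat-flow comparison can momentarily push the true trajectory back across $\{|x|=R_0\}$ near a tangential exit (the bootstrap in the proof of (ii) only guarantees that $x^t$ stays outside the smaller ball $B_{\frac{3}{4}R_0}$). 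Fortunately your case (B) estimate for $t\ge t_2$ invokes (ii) at the initially outgoing boundary point $(x^{t_2},\xi^{t_2})$, which yields $|\xi^t-\xi^{t_2}|\le\epsilon_0|\xi^{t_2}|$ for \emph{all} $t\ge t_2$ irrespective of any re-entries, and the definition of $L$ in \eqref{quantnontrap} already bounds the rescaled duration $|\xi^{t_1}|(t_2-t_1)$ without needing non-re-entry; so the argument closes as written, and the "at most once" observation can simply be dropped.
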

\begin{proof}
For $|\xi|=1$, this follows immediately from \Cref{globally well-defined}, \Cref{perturbationstable}, and the nontrapping assumption. The general case follows from this case and the homogeneity law (\ref{homogeneity}).  
\end{proof}
We next note the following bounds for the $x$ and $\xi$ derivatives of $x^{t}$ and $\xi^{t}$.
\begin{proposition}[Higher regularity bounds]\label{bicharhigher} Let $(x,\xi)\in B_R(0)\times \mathbb{S}^{d-1}$. Let $k$ be a positive integer. Assume that the metric satisfies $g\in C^{k+1}$ and write $M_k:=\|g\|_{C^{k+1}}$. Then if $|x^t|\leq R$, there holds
\begin{equation*}
|\partial_{\xi}^{\alpha}\partial_x^{\beta} x^{t}_{(x,\xi)}|+|\partial_{\xi}^{\alpha}\partial_x^{\beta}\xi^{t}_{(x,\xi)}|\leq e^{C(M_k)L(R)},  \hspace{5mm}|\alpha+\beta|\leq k.
\end{equation*}
\end{proposition}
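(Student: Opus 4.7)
The plan is to prove the estimate by induction on $|\alpha+\beta|\leq k$, applying Gronwall's inequality to the linear variational systems that govern the derivatives of the Hamilton flow with respect to initial data. Throughout, the key point is that by \Cref{xibond} we have $|\xi^t|\leq C_0$ \emph{uniformly} in $t$ (using $|\xi|=1$), so all coefficients in the variational equations are bounded globally in time, not merely on the set where $x^t\in B_R(0)$.

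For the base case $|\alpha+\beta|=1$, I differentiate the Hamilton system \eqref{bicharacteristic} once in $(x,\xi)$, obtaining a linear ODE
\[
\frac{d}{dt}\begin{pmatrix}\partial x^t\\ \partial\xi^t\end{pmatrix}=A(t)\begin{pmatrix}\partial x^t\\ \partial\xi^t\end{pmatrix},
\]
where the entries of $A(t)$ are polynomial in $\xi^t$ and in the derivatives $\nabla g,\nabla^2 g$ evaluated at $x^t$. By the uniform bound on $\xi^t$ together with $\|g\|_{C^2}\leq M_1$, we have $\|A(t)\|\leq C(M_1)$ globally, so Gronwall gives $|\partial x^t|+|\partial\xi^t|\leq e^{C(M_1)|t|}$. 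Finally, I invoke the key observation that the hypothesis $|x^t|\leq R$ forces $|t|<L(R)$ directly from the definition \eqref{quantnontrap}: were $|t|\geq L(R)$, we would have $|x^t|>R$, contradicting the hypothesis. This closes the case $k=1$.

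For the inductive step, assuming the bound for all multi-indices of length $<k$, I differentiate the variational system $|\alpha+\beta|-1$ additional times and apply the chain rule (Fa\`a di Bruno) to obtain a linear ODE of the form
\[
\frac{d}{dt}\begin{pmatrix}\partial^\alpha_\xi\partial^\beta_x x^t\\ \partial^\alpha_\xi\partial^\beta_x\xi^t\end{pmatrix}=A(t)\begin{pmatrix}\partial^\alpha_\xi\partial^\beta_x x^t\\ \partial^\alpha_\xi\partial^\beta_x\xi^t\end{pmatrix}+F_{\alpha,\beta}(t),
\]
with the same matrix $A(t)$ as above, and with $F_{\alpha,\beta}$ polynomial in derivatives of $(x^t,\xi^t)$ of order strictly less than $k$ and in derivatives of $g$ up to order $k+1$. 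The induction hypothesis combined with the uniform bound on $\xi^t$ yields $|F_{\alpha,\beta}(t)|\leq e^{C(M_k)L(R)}$ for all $|t|<L(R)$. Since the initial data vanish for $|\alpha+\beta|\geq 2$, Duhamel's formula combined with Gronwall produces the desired bound, possibly with an enlarged constant $C(M_k)$.

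The argument is essentially routine once \Cref{xibond} is in hand: the role of the nontrapping parameter $L(R)$ is reduced to converting the hypothesis $|x^t|\leq R$ into the time bound $|t|<L(R)$, after which one only performs a standard Gronwall exponentiation. The only mildly delicate point is organizing the higher-order variational systems so that the source terms $F_{\alpha,\beta}$ are manifestly polynomial in the lower-order derivatives of the flow; this is pure combinatorics and poses no real obstacle.
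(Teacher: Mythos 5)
Your proof follows essentially the same approach as the paper: differentiate the Hamilton system in $(x,\xi)$, observe that the resulting variational systems have coefficients controlled by $\|g\|_{C^{k+1}}$ and the uniform $\xi^t$ bound, and then conclude by Gr\"onwall over the time window $|t|\leq L(R)$ enforced by the hypothesis $|x^t|\leq R$. The paper phrases this as a differential inequality for $|\partial^\alpha_\xi\partial^\beta_x x^t|^2+|\partial^\alpha_\xi\partial^\beta_x\xi^t|^2$ with an inductive application of Gr\"onwall, while you set up the linear variational ODEs and apply Duhamel plus Gr\"onwall --- the same argument in slightly different packaging. Your explicit appeal to \Cref{xibond} to bound $|\xi^t|$ uniformly, and your observation that $|x^t|\leq R$ forces $|t|\leq L(R)$ by \eqref{quantnontrap}, are exactly the two facts the paper's terse proof relies on implicitly.
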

\begin{proof}
The proof follows by differentiating (\ref{bicharacteristic}) in the $x$ and $\xi$ variables, which leads to a differential inequality for 
\begin{equation*}
\frac{d}{dt}\left( |\partial_{\xi}^{\alpha}\partial_x^{\beta}x^t|^2+|\partial_{\xi}^{\alpha}\partial_x^{\beta}\xi^t|^2\right).
\end{equation*}
 One then concludes by inductively applying Gr\"onwall's inequality. We omit the details which are straightforward.
\end{proof}
The final result of this section shows that functions in $l^1H^{s}$ with $s>\frac{d}{2}+1$ are uniformly integrable along the bicharacteristic flow. This is what will allow us to recover the Mizohata condition.
\begin{proposition}\label{intalongbi}
Let $g$ be as in \Cref{globally well-defined} and let $s>\frac{d}{2}+1$. Let $v\in l^1H^{s}$. Then $v$ is integrable along the bicharacteristic flow and satisfies the bound
\begin{equation*}
\sup_{(x,\xi)\in\mathbb{R}^d\times \mathbb{S}^{d-1}}\|v(x^{t}_{(x,\xi)})\|_{L^1_{t}(\mathbb{R})}\lesssim (1+L(R_0))\|v\|_{l^1H^{s}},
\end{equation*}
where $R_0$ is as in \Cref{globally well-defined}.
\begin{proof}
We abbreviate $x^{t}_{(x,\xi)}$ by $x^{t}$. Without loss of generality, we may assume that $x^{t}$ intersects $B_{R_0}(0)$ only if $|t|<L(R_0)$. We then have 
\begin{equation*}
\int_{-L(R_0)}^{L(R_0)}|v(x^{t})|dt\leq 2L(R_0)\|v\|_{L^{\infty}}\lesssim L(R_0)\|v\|_{l^1H^{s}}.
\end{equation*}
By homogeneity of the flow, it therefore suffices to show that
\begin{equation*}
\int_{L(R_0)}^{\infty}|v(x^{t})|dt\lesssim \|v\|_{l^1H^{s}}.
\end{equation*}
Without loss of generality, we may assume that $x^{t}(L(R_0))$ is outgoing. Using \Cref{globally well-defined}, we see that if $t\geq L(R_0)$, then for every cube $Q\subset \mathbb{R}^d$, $x^{t}$ intersects the cube on a time interval $I$ of size at most $|I|\lesssim |Q|^{\frac{1}{d}}$. Therefore, we have
\begin{equation*}
\begin{split}
\int_{L(R_0)}^{\infty}|v(x^{t})|dt\lesssim \sum_{k\geq 0}\sum_{Q\in Q_k}2^{k} \|\chi_QS_kv\|_{L^{\infty}}\lesssim \|v\|_{l^1H^{s}},
\end{split}
\end{equation*}
where in the last step we used Bernstein's inequality and dyadic summation. Here,  the strict inequality $s>\frac{d}{2}+1$ was what allowed us to retain summability in $k$. This completes the proof.
\end{proof}
\end{proposition}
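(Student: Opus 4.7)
The plan is to split the time integral into a bounded region where the bicharacteristic may linger near the origin, and an asymptotic region where \Cref{globally well-defined}(ii) forces the flow to be close to a straight line. By the time-translation invariance of the Hamilton flow and by the nontrapping hypothesis, I will first reduce to the situation where $x^t_{(x,\xi)}$ intersects $B_{R_0}(0)$ only for $|t|<L(R_0)$: indeed, if the bicharacteristic meets $B_{R_0}$ at all, I may shift the time parameter so that all such intersection times lie in $(-L(R_0),L(R_0))$, whereas if it never meets $B_{R_0}$ then \Cref{globally well-defined}(ii) already applies for all $t\geq 0$ (and symmetrically for negative times). The homogeneity law $(\xi,t)\mapsto(\lambda\xi,\lambda t)$ together with \Cref{xibond} shows it suffices to treat $|\xi|=1$.

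For the inner region $|t|<L(R_0)$, I would simply estimate
\[
\int_{-L(R_0)}^{L(R_0)}|v(x^t)|\,dt \;\leq\; 2L(R_0)\|v\|_{L^\infty}\;\lesssim\; L(R_0)\|v\|_{l^1H^s},
\]
using the Sobolev embedding $l^1H^s\hookrightarrow L^\infty$, which is available since $s>\tfrac{d}{2}+1>\tfrac{d}{2}$. This accounts for the $L(R_0)$ factor on the right-hand side of the claimed bound.

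For the outer region, say $t\geq L(R_0)$ with the bicharacteristic now initially outgoing relative to $B_{R_0}$, \Cref{globally well-defined}(ii) guarantees that $x^t$ deviates from the straight line $x-2tg_\infty\xi$ by at most $\epsilon_0 t|\xi|$, so the flow moves with speed comparable to $1$. Consequently, for each dyadic scale $2^k$ and each cube $Q\in\mathcal{Q}_k$ the sojourn time $|\{t\geq L(R_0):x^t\in Q\}|$ is $\lesssim 2^k$. Decomposing $v=\sum_{k\geq 0}S_kv$ and using the partition of unity on each scale, I would bound
\[
\int_{L(R_0)}^{\infty}|v(x^t)|\,dt \;\leq\; \sum_{k\geq 0}\sum_{Q\in\mathcal{Q}_k}\|\chi_Q S_kv\|_{L^\infty}\,\bigl|\{t\geq L(R_0):x^t\in Q\}\bigr|\;\lesssim\;\sum_{k\geq 0}2^k\sum_{Q\in\mathcal{Q}_k}\|\chi_Q S_kv\|_{L^\infty}.
\]
Applying Bernstein's inequality at frequency $2^k$ (which is compatible with spatial localization at the same scale $2^k$, i.e.\ $\|\chi_Q S_kv\|_{L^\infty}\lesssim 2^{kd/2}\|\chi_Q S_kv\|_{L^2}$), the inner sum becomes $2^{k(d/2+1)}\|S_kv\|_{l^1_kL^2}$.

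To finish, I would sum in $k$ via Cauchy--Schwarz:
\[
\sum_{k\geq 0}2^{k(d/2+1)}\|S_kv\|_{l^1_kL^2}\;=\;\sum_{k\geq 0}2^{-k(s-d/2-1)}\cdot 2^{ks}\|S_kv\|_{l^1_kL^2}\;\lesssim\;\|v\|_{l^1H^s},
\]
where convergence of the geometric factor uses precisely the strict inequality $s>\tfrac{d}{2}+1$. The main (very mild) obstacle is bookkeeping the WLOG reduction so that the straight-line approximation of \Cref{globally well-defined}(ii) is genuinely applicable in the outer regime; everything else is a clean dyadic summation that exploits the interplay between the $\ell^1$ summability over cubes in the definition of $l^1H^s$ and the linear-in-$2^k$ sojourn bound from nontrapping.
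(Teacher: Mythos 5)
Your proof matches the paper's argument essentially line by line: the same WLOG reduction so that the bicharacteristic only meets $B_{R_0}(0)$ for $|t|<L(R_0)$, the same trivial bound on the inner interval via the $L^\infty$ embedding, the same use of \Cref{globally well-defined}(ii) to get the $\lesssim 2^k$ sojourn time per cube in $\mathcal{Q}_k$, and the same Bernstein-plus-dyadic-summation step exploiting $s>\tfrac{d}{2}+1$. You spell out the final Cauchy--Schwarz more explicitly than the paper, but the underlying argument is identical.
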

\section{The linear ultrahyperbolic flow} \label{THE LINEAR ULTRA FLOW}
Let $s_0>\frac{d}{2}+2$ and let $0\leq\sigma\leq s_0$. Here we consider the $l^1H^{\sigma}$ well-posedness of the linear ultrahyperbolic flow,
\begin{equation}\label{linearflow}
\begin{cases}
&i\partial_tv+\partial_jg^{jk}\partial_k v+b^j\partial_jv+\tilde{b}^j\partial_j\overline{v}=f,
\\
&v(0,x)=v_0(x),
\end{cases}
\end{equation}
as well as the corresponding linear paradifferential flow,
\begin{equation}\label{paralinflow}
\begin{cases}
&i\partial_tv+\partial_jT_{g^{jk}}\partial_k v+T_{b^j}\partial_jv+T_{\tilde{b}^j}\partial_j\overline{v}=f,
\\
&v(0,x)=v_0(x).
\end{cases}
\end{equation}
 We make the following basic assumptions on the metric $g^{jk}$ and the coefficients $b^j$ in the above equations:
\begin{enumerate}
\item (Non-degeneracy). The metric $g^{jk}$ is real, symmetric and non-degenerate. That is, there is $c>0$ such that for all $\xi\in\mathbb{R}^d$ we have,
\begin{equation*}\label{non-degeneracy}
c^{-1}|\xi|\leq |g^{jk}\xi_k|\leq c|\xi|.
\end{equation*}
\item (Asymptotic flatness and size). There is a constant, symmetric, non-degenerate matrix $g_{\infty}$ and a constant $M>0$ such that 
\begin{equation}\label{asympflat}
\|g-g_{\infty}\|_{l^1X^{s_0}}+\|\partial_tg\|_{l^1X^{s_0-2}}+\|(b,\tilde{b})\|_{l^1X^{s_0-1}}+\|\partial_t(b,\tilde{b})\|_{l^1X^{s_0-3}}\leq M.
\end{equation}
\item (Asymptotic smallness).  For every $\epsilon_0>0$, there is $R_0>0$ such that
\begin{equation}\label{smallness}
\|(g^{jk}-g^{jk}_{\infty})\chi_{>R_0}\|_{l^1X^{s_0}}+\|(b^j,\tilde{b}^j)\chi_{>R_0}\|_{l^1X^{s_0-1}}\leq\epsilon_0,
\end{equation}
where $0\leq \chi_{>R_0}\leq 1$ is a smooth cutoff which vanishes on $B_{R_0}(0)$ and is equal to $1$ outside of $B_{2R_0}(0)$. 
\item (Nontrapping). The metric is nontrapping with parameter $L$ as defined in (\ref{quantnontrap}).
\end{enumerate}
Note that  condition  (iii) follows from the asymptotic flatness condition  (ii). However, we prefer to make statement (iii) explicit, as it will play a prominent role in the analysis. 
\medskip

In the sequel, we will write $C(L)$ to denote a constant which depends on the parameter $L$ within some fixed compact set whose size depends on the profile of the metric $g$. The main result we aim to prove is the following.
\begin{theorem}\label{linearestimate} Let $s_0>\frac{d}{2}+2$ and $0\leq \sigma\leq s_0$. Moreover, assume that $g^{jk}, b^j,\tilde{b}^j$ satisfy the above assumptions with parameters $M$ and $L$. Then for every $f\in l^1Y^\sigma$, the equation \eqref{linearflow} is  well-posed in $l^1 H^{\sigma}$. Furthermore, there is $T_0>0$ depending on the size of $L$ within a compact set and on the data size $M$ such that for every $0\leq T\leq T_0$, we have
\begin{equation}\label{linearizedestiamte}
\|v\|_{l^1X^{\sigma}}\leq C(M,L) (\|v_0\|_{l^1H^{\sigma}}+\|f\|_{l^1Y^{\sigma}}). 
\end{equation}
The same result holds for the paradifferential flow \eqref{paralinflow} for every $\sigma\geq 0$.
\end{theorem}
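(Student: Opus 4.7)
The plan is to prove the $l^1X^{\sigma}$ bound \eqref{linearizedestiamte} first for the paradifferential flow \eqref{paralinflow} by combining (i) a high-frequency reduction, (ii) an $L_T^{\infty}H_x^{\sigma}$ energy bound via a truncated renormalization, and (iii) a local smoothing estimate via a positive commutator; then deduce the bound for the full linearized flow \eqref{linearflow} and conclude well-posedness by standard duality. In Step (i), using the Littlewood--Paley structure of $l^1X^{\sigma}$ together with the frequency-localized bilinear bounds of \Cref{parabil}, \Cref{freqenv1}, and \Cref{freqenv2}, I would reduce \eqref{linearizedestiamte} for \eqref{paralinflow} to the scalar simplified bound
$$\|v\|_{X^{\sigma}} \leq C(M,L)\bigl(\|v_0\|_{H^{\sigma}} + \|f\|_{Y^{\sigma}}\bigr)$$
under the additional assumption that $\widehat{v}$ is supported in $\{|\xi|\gtrsim 2^{k_1}\}$ for a sufficiently large $k_1$ chosen to unlock the high-frequency pseudodifferential bounds of \Cref{CVvariant} and \Cref{LEopbounds}; the low-frequency part is absorbed by taking $T=T_0(k_1,M,L)$ small.

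For Step (ii), I would establish
$$\|v\|_{L_T^{\infty}H_x^{\sigma}} \leq C(M,L)\bigl(\|v_0\|_{H^{\sigma}}+\|f\|_{Y^{\sigma}}\bigr) + \epsilon\|v\|_{X^{\sigma}}$$
by constructing a time-independent zero-order renormalization $\mathcal{O}=Op(e^{\psi_1+\psi_2})\in OPS^0$. Fixing $R$ large enough that \eqref{smallness} holds and working with the frozen, frequency-truncated symbol $a(x,\xi)=-g^{ij}_{<k_0}(0)\xi_i\xi_j$ (nontrapping with parameters comparable to those of $g$ by \Cref{perturbationstable}, provided $k_0$ is large and $T$ small), I would take $\psi_1$ to be the spatial truncation by $\chi_{<2R}$ of the Kenig--Ponce--Rolvung--Vega symbol \eqref{renormalizationnotideal}, arranged to solve the transport identity $\{a,\psi_1\}+\chi_{<2R}\mathrm{Re}(b^j_{<k_0}(0))\xi_j\approx 0$ along bicharacteristics, and $\psi_2$ to correct the errors coming from $\partial\chi_{<2R}$. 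Membership $\psi_1,\psi_2\in S^0$ follows from Propositions~\ref{globally well-defined}, \ref{bicharhigher}, and \ref{intalongbi}. Conjugating \eqref{paralinflow} by $\mathcal{O}$ and running the energy identity, controlling commutator errors with \Cref{symbcalc}, \Cref{commutatorestimate} and the lower-order first-order residues with \Cref{matrixgard}, yields the bound modulo the spatial residue $\chi_{>R}\mathrm{Re}(b^j_{<k_0}(0))\partial_j\mathcal{O}v$. This is the principal obstacle: the $S^0$ seminorms of $\mathcal{O}$ blow up with $R$, whereas the smallness of the coefficients in \eqref{smallness} is what must absorb them. The resolution is that \Cref{CVvariant} and \Cref{LEopbounds} give high-frequency operator bounds depending only on $\|e^{\psi_1+\psi_2}\|_{L^{\infty}}$, which is uniform in $R$, so the smallness in \eqref{smallness} suffices.

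For Step (iii), I would prove the local smoothing estimate
$$\|v\|_{X^{\sigma}} \leq C(M,L)\bigl(\|v\|_{L_T^{\infty}H_x^{\sigma}}+\|f\|_{Y^{\sigma}}\bigr)$$
by a positive commutator argument on the doubled unknown $\mathbf{v}=(v,\overline{v})$. The multiplier $\mathbf{Q}=Op(e^{C(M)(p_1+p_2+p_3)})$ has symbols built from the truncated Hamilton flow of $a$: $p_1$ is an integral of $\chi_{<R}(x^t)|\xi^t|$ along the flow, cut off at a secondary radius $R'\gg R$, making $[\mathbf{Q},\mathbf{P}^0_{k_0}]$ elliptic of order $1$ in $B_R(0)$; $p_2$ is an integral of $\sqrt{|b_{<k_0}(0)(x^t)|^2+|\tilde{b}_{<k_0}(0)(x^t)|^2+L(R')^{-2}}\langle\xi^t\rangle$ along the flow inside $B_{R'}(0)$, arranged so that the commutator dominates the first-order term $\mathbf{B}^0_{k_0}\mathbf{Q}\mathbf{v}$ there with an $L^{\infty}$ bound uniform in $R'$ thanks to \Cref{intalongbi}; and $p_3$ corrects the transition errors in $|x|\sim R'$. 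The matrix-valued sharp G\r arding inequality \Cref{matrixgard} then converts the positivity into the local $L_T^2 H_x^{1/2}$ control on $\chi_{<R}\mathbf{v}$, and the small-data result of \cite{marzuola2012quasilinear} reduces the remainder of the $X^{\sigma}$ norm to this compactly supported piece; the high-frequency operator bounds of \Cref{CVvariant}--\Cref{LEopbounds} keep the $|x|\gtrsim R'$ residue perturbative. Combining Steps (ii) and (iii) with $\epsilon$ small enough closes the simplified bound, and Step (i) upgrades it to \eqref{linearizedestiamte} for \eqref{paralinflow}.

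Finally, in Step (iv), I would pass from \eqref{paralinflow} to \eqref{linearflow} by absorbing the non-paradifferential residuals $(g^{jk}-T_{g^{jk}})\partial_j\partial_k v$, $(b^j-T_{b^j})\partial_j v$ and $(\tilde{b}^j-T_{\tilde{b}^j})\partial_j\overline{v}$ into the right-hand side via \Cref{parabil} and \eqref{restrictedpara}, which is precisely where the restriction $0\leq\sigma\leq s_0$ enters (the paradifferential bound has no upper bound on $\sigma$ because no Moser step is required). Well-posedness of \eqref{linearflow} in $l^1H^{\sigma}$ then follows from \eqref{linearizedestiamte} by the standard duality scheme: uniqueness is immediate from the estimate at $\sigma=0$; existence is obtained by mollifying the coefficients (which preserves nontrapping with comparable parameters via \Cref{perturbationstable}), applying the same estimate to the adjoint problem, and passing to the limit; and continuous dependence follows by linearity.
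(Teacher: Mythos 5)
Your proposal is correct and takes essentially the same approach as the paper: high-frequency and $l^1$-to-$X^\sigma$ reductions, a spatially truncated renormalization operator $\mathcal{O}=Op(e^{\psi_1+\psi_2})$ for the $L^\infty_TH^\sigma_x$ bound exploiting the $L^\infty$-only dependence of \Cref{CVvariant} and \Cref{LEopbounds} at high frequency, a positive-commutator local smoothing bound on the doubled system with the multiplier $\bold{Q}$ built from the truncated Hamilton flow, and finally absorption of the non-paradifferential residue via \Cref{parabil}/\eqref{restrictedpara} (this is indeed where $0\le\sigma\le s_0$ enters) and a standard duality scheme for well-posedness. The only cosmetic difference is that you split the local smoothing symbol into three pieces $p_1,p_2,p_3$ as in the overview section, whereas the paper's Proposition~\ref{compactregion} merges the ellipticity and coefficient-control pieces into a single integral $\int_0^\infty(\chi_{<2R}+\eta_{R'})(x^t,\xi^t)|\xi^t|\,dt$; and the $l^1\to X^\sigma$ reduction (Lemma~\ref{l1reduction}) in the paper leans on the small data result \Cref{smalldata} applied to the exterior truncated equation, which you invoke only in Step~(iii), though your use of it there is in the same spirit.
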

As the above result holds for the paradifferential flow for all $\sigma\geq 0$, it is a straightforward consequence to deduce the following frequency envelope variant using similar reasoning to Section 5 of \cite{marzuola2012quasilinear} (see also \cite{ifrim2023local}).
\begin{corollary}\label{freqenvbound}
 Let $\sigma\geq 0$ and assume the other properties in the statement of \Cref{linearestimate}. Let $a_k$ be an admissible $l^1H^{\sigma}$ frequency envelope for $v_0$ and let $b_k$ be an admissible $l^1Y^{\sigma}$ frequency envelope for $f$. Then the solution $v$ to the paradifferential equation \eqref{paralinflow} satisfies the bound
 \begin{equation*}
 \|S_kv\|_{l^1X^{\sigma}}\leq C(M,L)\left(a_k\|v_0\|_{l^1H^{\sigma}}+b_k\|f\|_{l^1Y^{\sigma}}\right)   
 \end{equation*}
on a time interval $[0,T]$ whose length depends on the size of $L$ within a compact set and on the data size $M$. 
\end{corollary}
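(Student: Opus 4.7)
The plan is to derive the frequency-localized bound by applying $S_k$ to the paradifferential equation \eqref{paralinflow} and reducing to \Cref{linearestimate} with a carefully estimated source. Setting $w_k := S_k v$, the commutator structure gives
\begin{equation*}
i\partial_t w_k + \partial_j T_{g^{jk}}\partial_k w_k + T_{b^j}\partial_j w_k + T_{\tilde b^j}\partial_j \overline{w_k} = S_k f + E_k,
\end{equation*}
where $E_k = -\partial_j[S_k, T_{g^{jk}}]\partial_k v - [S_k, T_{b^j}]\partial_j v - [S_k, T_{\tilde b^j}]\partial_j \overline{v}$. Since $w_k$ solves a paradifferential Schr\"odinger equation with the same coefficients and initial data $S_k v_0$, \Cref{linearestimate} at regularity $\sigma$ yields
\begin{equation*}
\|w_k\|_{l^1X^\sigma} \le C(M,L)\bigl(\|S_k v_0\|_{l^1 H^\sigma} + \|S_k f\|_{l^1 Y^\sigma} + \|E_k\|_{l^1 Y^\sigma}\bigr).
\end{equation*}
By the definition of an admissible envelope, the first two right-hand contributions are bounded by $a_k \|v_0\|_{l^1H^\sigma}$ and $b_k \|f\|_{l^1Y^\sigma}$, respectively, so everything reduces to controlling $E_k$.

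The core estimate is
\begin{equation*}
\|E_k\|_{l^1 Y^\sigma} \le C(M)\,(a_k+b_k)\,\|v\|_{l^1X^\sigma}.
\end{equation*}
To obtain this I would combine the Coifman--Meyer style commutator bound \Cref{CM} (together with \Cref{comremark} to handle the extra outer derivative in the principal commutator) with the frequency-localized paradifferential estimates in \Cref{freqenv2}. The key structural facts are that $[S_k, T_h]$ is localized near frequency $2^k$ and gains one derivative relative to $T_h$, and that the commutator can be rewritten as a frequency-restricted bilinear expression where either $h \in \{g, b, \tilde b\}$ has frequency $\lesssim 2^k$ paired with a $\tilde S_k v$ (controlled by items (i)--(ii) of \Cref{freqenv2} and responsible for the $a_k$ contribution), or $h$ has frequency $\gtrsim 2^k$ paired with the tail $S_{\geq k-4} v$ (handled by item (iii) of \Cref{freqenv2}). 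In both cases the coefficient norm is absorbed by $M$ from \eqref{asympflat}.

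Combining these with the a priori bound $\|v\|_{l^1 X^\sigma} \le C(M,L)(\|v_0\|_{l^1H^\sigma} + \|f\|_{l^1Y^\sigma})$ that follows directly from \Cref{linearestimate} substitutes for the $\|v\|_{l^1X^\sigma}$ factor and produces the claimed envelope bound. The main obstacle is the bookkeeping in $E_k$: one must verify that the high-high interactions in the commutator produce an output envelope of the form $(a_k+b_k)$ and not a crude uniform constant. The left- and right-slowly-varying properties of admissible envelopes are precisely what let the finitely many dyadic shifts (coming from replacing $S_k$ by fattened projections $\tilde S_k$ in the paradifferential computations) be absorbed without loss. Aside from this, the reduction to \Cref{linearestimate} is mechanical, which matches the authors' claim that the corollary follows by essentially the same reasoning as in \cite{marzuola2012quasilinear}.
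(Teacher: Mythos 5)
Your overall architecture (apply $S_k$, re-interpret $v_k = S_k v$ as solving the paradifferential equation with source $S_k f + E_k$, invoke \Cref{linearestimate}, and control the commutator) is the right starting point, and it matches the approach the paper references. However, the central estimate you write down,
\[
\|E_k\|_{l^1 Y^\sigma} \le C(M)\,(a_k + b_k)\,\|v\|_{l^1 X^\sigma},
\]
is not justified and is in fact circular. The commutator $E_k = -\partial_j[S_k, T_{g^{jk}}]\partial_k v - [S_k, T_{b^j}]\partial_j v - [S_k, T_{\tilde b^j}]\partial_j \overline{v}$ has every coefficient function $g,b,\tilde b$ sitting at frequency $\lesssim 2^{k-4}$ (that is what $T_{(\cdot)}$ does), paired with $v$ at frequency $\approx 2^k$. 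Any bilinear estimate (whether \Cref{CM}, \Cref{commutatorwithmultiplier}, or \Cref{freqenv2}) therefore bounds $E_k$ in terms of the frequency envelope of $v$ at dyadic scale $k$ — not the envelopes $a_k$ of $v_0$ or $b_k$ of $f$. Those are exactly the quantities the corollary is supposed to produce for $v$, so the claimed bound presupposes the conclusion. Relatedly, your invocation of item (iii) of \Cref{freqenv2} (the $v\, S_{\geq k-4}u$ term) has no role here: the paradifferential operators $T_h$ never place the coefficient at high frequency, so the commutator $[S_k, T_h]$ contains no ``$h$ at frequency $\gtrsim 2^k$'' piece; that interaction only appears when passing between the full flow and its paradifferential truncation, which is a separate step.

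The missing idea is the one that drives Sections 6--7 of this paper: the commutator estimate one can actually prove has the form $\|E_k\|_{Y^\sigma} \le C(M)\|\tilde S_k v\|_{\mathcal X^\sigma} + \epsilon\|\tilde S_k v\|_{X^\sigma}$ (this is exactly the bound used in \Cref{controlofprincipal}), where the large constant $C(M)$ multiplies only the \emph{local-energy} component $\mathcal X^\sigma$, and the full $X^\sigma$ norm only enters with a small factor $\epsilon$ (obtained by taking $T$ small and pushing to high frequency). One then needs to run the frequency-localized versions of \Cref{mainEE} and \Cref{LEreduction} in tandem — $\mathcal X^\sigma$ is bounded through $L^\infty_T H^\sigma$ plus data, which is in turn bounded through data plus a small $X^\sigma$ error — and only after composing these and choosing $\epsilon$ small relative to $C(M,L)$ does the absorption close. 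Your proposal skips this interplay entirely, treating the commutator contribution as a single term to be fed directly back through \Cref{linearestimate}. Finally, as a minor point, even if the bound on $E_k$ held, substituting $\|v\|_{l^1 X^\sigma} \lesssim \|v_0\| + \|f\|$ produces $(a_k+b_k)(\|v_0\|+\|f\|)$, which contains the cross terms $a_k\|f\|$ and $b_k\|v_0\|$ and is not literally the stated bound $a_k\|v_0\| + b_k\|f\|$; one would need to pass to a common envelope to recover the stated form.
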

The main component of the proof of well-posedness for the equations (\ref{linearflow}) and (\ref{paralinflow}) is the energy estimate (\ref{linearizedestiamte}). This is because the adjoint equation, which has essentially the same form, will also satisfy a similar energy estimate. Well-posedness then follows by a standard duality argument. Therefore, we focus our attention mainly on the bound (\ref{linearizedestiamte}).
\subsection{Some simplifying reductions} We begin our analysis by making some straightforward but useful reductions which will allow us to simplify some of the steps in the proof of \Cref{linearestimate}. Our first reduction shows that by restricting the time interval to be small enough, we may assume that $\widehat{v}$ is supported at high frequency. More precisely, we have the following lemma. 
\begin{lemma}[High frequency reduction]\label{hifreqlemma} Let $\epsilon>0$. Under the assumptions of \Cref{linearestimate}, for every $k_1>0$ there is $T_0>0$ depending on $k_1$, $\epsilon$, $M$ and $\sigma$ such that for $0< T\leq T_0$, $v_{>k_1}:=S_{>k_1}v$ satisfies the equation
\begin{equation*}
\begin{cases}
&i\partial_tv_{>k_1}+\partial_jg^{ij}\partial_iv_{>k_1}+b^j\partial_jv_{>k_1}+\tilde{b}^j\partial_j\overline{v_{>k_1}}=h,
\\
&v_{>k_1}(0):=S_{>k_1}v_0,
\end{cases}
\end{equation*}
where $h$ and $S_{\leq k_1}v$ satisfy the estimate
\begin{equation*}
\|S_{\leq k_1}v\|_{l^1X^{\sigma}}+\|h\|_{l^1Y^{\sigma}}\leq C(M,k_1,\sigma)(\|v_0\|_{l^1H^{\sigma}}+\|f\|_{l^1Y^{\sigma}})+\epsilon\|v\|_{l^1X^{\sigma}}
\end{equation*}
for $0\leq\sigma\leq s_0$. The analogous result holds for the paradifferential equation \eqref{paralinflow} for $\sigma\geq 0$.
\end{lemma}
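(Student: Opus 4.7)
The plan is to apply the projection $S_{>k_1}$ to the equation and estimate the resulting commutator errors as source terms, gaining the required smallness by letting $T$ be small (depending on $k_1$, $M$, $\epsilon$, $\sigma$). Since $S_{>k_1}$ is a radial Fourier multiplier with real symbol, it commutes with $\partial_j$ and with complex conjugation. Applying $S_{>k_1}$ to \eqref{linearflow} therefore yields the stated equation for $v_{>k_1}$ with
\begin{equation*}
h = S_{>k_1}f - \partial_j[S_{>k_1}, g^{jk}]\partial_k v - [S_{>k_1}, b^j]\partial_j v - [S_{>k_1}, \tilde b^j]\partial_j \bar v.
\end{equation*}

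The next step is to bound $h$ in $l^1Y^\sigma$. The term $S_{>k_1}f$ is trivially controlled by $\|f\|_{l^1 Y^\sigma}$. For the commutator contributions, the key structural observation is the identity $[S_{>k_1}, \phi] = -[S_{\leq k_1}, \phi]$, which shows that each commutator output has Fourier support essentially localized at frequencies $\lesssim 2^{k_1}$ (modulo rapidly decaying tails that can be absorbed trivially). Decomposing $g^{jk}\partial_k v$, $b^j\partial_j v$ and $\tilde b^j\partial_j\bar v$ using Bony's trichotomy, the high-low paradifferential commutator $[S_{\leq k_1}, T_{g^{jk}}]\partial_k v$ gains a full derivative via \Cref{CM}, yielding a bound involving $\|g\|_{L^\infty_T W^{1,\infty}}\lesssim M$. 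The remaining $T_{\partial v}g$ and $\Pi(g, \partial v)$ pieces (and the analogues for $b, \tilde b$) are estimated using \Cref{bilinear} and \Cref{parabil}, exploiting the regularity $g - g_\infty \in l^1X^{s_0}$ and $b, \tilde b \in l^1X^{s_0-1}$. Since all resulting outputs are frequency-localized at scales $\lesssim 2^{k_1}$, the embedding $L^1_T L^2_x \hookrightarrow Y_k$ together with Bernstein converts each bound into one of the form
\begin{equation*}
\|\text{commutator}\|_{l^1Y^\sigma} \leq T \cdot C(M, k_1, \sigma)\|v\|_{l^1X^\sigma},
\end{equation*}
and choosing $T_0$ sufficiently small renders the right side at most $\tfrac{\epsilon}{2}\|v\|_{l^1X^\sigma}$.

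For the low-frequency portion $S_{\leq k_1}v$, the strategy is to use a direct energy estimate rather than any delicate $l^1 X^\sigma$ analysis. At frequencies $\lesssim 2^{k_1}$, Bernstein gives $\|S_{\leq k_1}v\|_{l^1 X^\sigma} \lesssim C(k_1, \sigma)\|S_{\leq k_1}v\|_{L^\infty_T l^1L^2_x}$ for $T \lesssim 1$ (where $l^1 L^2$ is the cube-summed $L^2$ norm at a scale natural for the cutoff $2^{k_1}$). Applying $S_{\leq k_1}$ to \eqref{linearflow} and integrating from $t=0$ provides
\begin{equation*}
\|S_{\leq k_1}v\|_{L^\infty_T l^1 L^2_x} \leq \|S_{\leq k_1}v_0\|_{l^1 L^2} + \|S_{\leq k_1}f\|_{L^1_T l^1 L^2_x} + T\cdot C(M,k_1)\|v\|_{l^1X^\sigma},
\end{equation*}
where the last term absorbs both the contribution of $\partial_j g^{jk}\partial_k$ applied to $S_{\leq k_1}v$ (bounded in $L^2$ by $\lesssim 2^{2k_1}M$ via Bernstein) and the first-order commutator errors generated by the action of $S_{\leq k_1}$ on the equation, estimated analogously to the previous paragraph. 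Shrinking $T_0$ further turns the final term into $\tfrac{\epsilon}{2}\|v\|_{l^1X^\sigma}$, closing the estimate.

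The main obstacle is executing the paradifferential decomposition of $\partial_j[S_{>k_1}, g^{jk}]\partial_k v$ in $l^1 Y^\sigma$ cleanly enough to extract the factor of $T$; the separation into a smoothing commutator handled by \Cref{CM} and a low-frequency bilinear piece handled by \Cref{bilinear} is what makes this go through. For the paradifferential equation \eqref{paralinflow}, the scheme is essentially identical but simpler, since the principal operator is already in paradifferential form and the corresponding commutator estimates hold for all $\sigma \geq 0$ without being constrained by the $s_0$-regularity of the coefficients.
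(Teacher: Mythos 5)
Your overall strategy (project to high frequency, put all the error in a source term $h$, gain smallness from a factor of $T$, and handle $S_{\leq k_1}v$ by a crude energy estimate) is the same as the paper's, and your commutator form of $h$ is algebraically equivalent to the paper's split form via $[S_{>k_1},\phi]=-[S_{\leq k_1},\phi]$. However, there is a genuine gap in how you estimate $h$, and it is precisely the issue the paper's case distinction is designed to handle.

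Your key claim --- that ``each commutator output has Fourier support essentially localized at frequencies $\lesssim 2^{k_1}$'' --- is false. Writing $[S_{\leq k_1},\phi]w = S_{\leq k_1}(\phi w) - \phi\, S_{\leq k_1}w$, the first piece is indeed localized, but the second piece $\phi\, S_{\leq k_1}w$ is not: the input $S_{\leq k_1}w$ is low frequency, but the output sits at the frequency of the coefficient $\phi$, which can be arbitrarily high. Concretely, after Bony decomposition you are left with a piece of the form $\partial_j T_{\partial_i S_{\leq k_1}v}\,g^{ij}$ (equivalently, in the paper's split, $\partial_j g^{ij}\partial_i S_{\leq k_1}v$). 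Its output lives at the frequency of $g$, not at $\lesssim 2^{k_1}$, so the ``estimate in $L^1_T L^2_x\hookrightarrow Y_k$ and then Bernstein'' step does not apply. If you instead try to estimate this piece naively in $l^1L^1_TH^\sigma_x$, the derivative $\partial_j$ falling on $g$ costs one extra derivative and you need $g-g_\infty\in l^1H^{\sigma+1}$, which only follows from the hypothesis $g-g_\infty\in l^1X^{s_0}$ when $\sigma\leq s_0-1$. This is exactly why the paper splits into the cases $0\leq\sigma\leq s_0-1$ and $s_0-1<\sigma\leq s_0$.

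For the high-$\sigma$ regime the paper switches to the bilinear estimates of \Cref{parabil}, which avoid the derivative loss on $g$ but no longer produce a factor of $T$; instead they yield $\|h\|_{l^1Y^\sigma}\lesssim\|f\|_{l^1Y^\sigma}+C(M,k_1,\sigma)\|v\|_{l^1L^\infty_TL^2_x}$, and the right-hand side is then controlled by the auxiliary crude energy inequality $\|v\|_{l^1L^\infty_TL^2_x}\lesssim_M\|v_0\|_{l^1L^2_x}+\|v\|_{l^1L^1_TH^1_x}+\|f\|_{l^1Y^\sigma}$, with the middle term made small in $T$. Your proposal contains neither the case distinction nor this auxiliary energy step; as written, your argument only closes for $\sigma\leq s_0-1$, so the claim for the full range $0\leq\sigma\leq s_0$ (and, for the paradifferential flow, all $\sigma\geq 0$) is not established. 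The $[S_{\leq k_1},T_g]$ piece that you handle via \Cref{CM} is fine --- that piece really is frequency localized and the CM gain closes for all $\sigma$ --- but the low-high piece is where the difficulty lies.
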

\begin{proof}
We show the proof for the full linear equation. The proof for the paradifferential flow is similar. Using the notation of the lemma, we  easily compute that
\begin{equation*}
\begin{split}
h&=S_{>k_1}f-(\partial_jg^{ij}\partial_iS_{\leq k_1}v+b^j\partial_jS_{\leq k_1}v+\tilde{b}^j\partial_jS_{\leq k_1}\overline{v})
\\
&+S_{\leq k_1}(\partial_jg^{ij}\partial_iv+b^j\partial_j v+\tilde{b}^j\partial_j\overline{v}).
\end{split}
\end{equation*}
We  clearly have
\begin{equation*}
\|S_{>k_1}f\|_{l^1Y^{\sigma}}\lesssim \|f\|_{l^1Y^{\sigma}}.
\end{equation*}
For the remaining source terms, if $0\leq\sigma\leq s_0-1$, we can estimate in $l^1L_T^1H_x^{\sigma}\subset l^1Y^{\sigma}$ in a na\"ive fashion using the frequency projection $S_{\leq k_1}$ to obtain
\begin{equation*}
\|h\|_{l^1Y^{\sigma}}\lesssim \|f\|_{l^1Y^{\sigma}}+\epsilon\|v\|_{l^1L^{\infty}_TH_x^{\sigma}},
\end{equation*}
by applying H\"older's inequality in $T$ and taking $T$ small enough (depending on $k_1$). On the other hand, for $s_0-1<\sigma\leq s_0$, we can instead use the bilinear estimates in \Cref{parabil}  to obtain
\begin{equation*}
\|h\|_{l^1Y^{\sigma}}\lesssim \|f\|_{l^1Y^{\sigma}}+C(M,k_1,\sigma)\|v\|_{l^1L^{\infty}_TL_x^2}.
\end{equation*}
We can estimate the latter term on the right using the crude energy inequality
\begin{equation*}
\|v\|_{l^1L^{\infty}_TL_x^2}\lesssim_M\|v_0\|_{l^1L_x^2}+ \|v\|_{l^1L_T^1H_x^{1}}+\|f\|_{l^1Y^{\sigma}},
\end{equation*}
which follows from a direct energy estimate for (\ref{linearflow}) where the first order terms are estimated directly in $L_T^1L_x^2$. Since $\sigma>1$, we may conclude by applying H\"older in $T$ and taking $T\ll\epsilon$ to control the second term on the right by $\epsilon\|v\|_{l^1X^{\sigma}}$. It remains to estimate $\|S_{\leq k_1}v\|_{l^1X^{\sigma}}$. Using that $S_{\leq k_1}v$ is frequency localized, we easily have
\begin{equation*}
\|S_{\leq k_1}v\|_{l^1X^{\sigma}}\lesssim 2^{k_1(\sigma+\frac{1}{2})}\|S_{\leq k_1}v\|_{l^1L_T^{\infty}L_x^2}.
\end{equation*}
We then note the na\"ive energy type estimate
\begin{equation*}
\begin{split}
\|S_{\leq k_1}v\|_{l^1L_T^{\infty}L_x^2}&\lesssim_{M,k_1} \|v_0\|_{l^1L_x^2}+\|v\|_{l^1L_T^1L_x^2}+\|f\|_{l^1Y^{0}},
\end{split}
\end{equation*}
which follows from inspecting the equation for $S_{\leq k_1}v$ and using the fact that the first and second-order terms in the resulting equation are localized to frequencies $\lesssim k_1$. Then using H\"older in $T$ and taking $T$ small enough (depending on $M$, $k_1$ and $\epsilon$) we can again control the second term on the right by $\epsilon\|v\|_{l^1X^{\sigma}}$.
This concludes the proof of the lemma for (\ref{linearflow}). A very similar argument works for the paradifferential analogue. We omit the details.
\end{proof}
\subsection{Reduction to the paradifferential flow}
As a second reduction, we reduce proving \Cref{linearestimate} to proving the corresponding estimate for the paradifferential equation. We begin by writing (\ref{linearflow}) in the paradifferential form
\begin{equation*}\label{paraeqn1}
\begin{cases}
&i\partial_tv+\partial_jT_{g^{ij}}\partial_i v+T_{b^j}\partial_jv+T_{\tilde{b}^j}\partial_j\overline{v}=f+\mathcal{R},
\\
&v(0)=v_0,
\end{cases}
\end{equation*}
where $\mathcal{R}$ is a remainder term given by
\begin{equation}\label{Remainder}
\begin{split}
\mathcal{R}=(T_{b^j}\partial_jv-b^j\partial_jv)+(T_{\tilde{b}^j}\partial_j\overline{v}-\tilde{b}^j\partial_j\overline{v})+\partial_j(T_{g^{ij}}\partial_i v-g^{ij}\partial_i v).
\end{split}
\end{equation}
Thanks to \Cref{hifreqlemma}, we may harmlessly assume that $v$ is localized to frequencies $\gtrsim 2^{k_1}$. Our next lemma shows that the error term $\mathcal{R}$ can be treated perturbatively if $k_1$ is large enough. 
\begin{lemma}[Paradifferential source terms]\label{parasourcefirst}
Assume that the estimate in \Cref{linearestimate} holds for the paradifferential flow for each $\sigma\geq 0$. Let $\epsilon>0$ and  assume that $\widehat{v}$ is supported at frequencies $|\xi|\gtrsim 2^{k_1}$. Then for $k_1$ large enough and $T$ small enough depending on $\epsilon$ and $k_1$, the remainder term $\mathcal{R}$ satisfies the estimate
\begin{equation*}
\|\mathcal{R}\|_{l^1Y^{\sigma}}\leq C(M,k_1,\sigma)(\|v_0\|_{l^1H^{\sigma}}+\|f\|_{l^1Y^{\sigma}})+\epsilon\|v\|_{l^1X^{\sigma}}.
\end{equation*}
\end{lemma}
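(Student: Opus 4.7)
The strategy is to exploit the high-frequency localization of $v$ to extract smallness in the paradifferential remainders. For each coefficient $c\in\{b^j,\tilde{b}^j,g^{ij}-g^{ij}_\infty\}$ appearing in \eqref{Remainder}, I would split $c=c_{<k_1/2}+c_{\geq k_1/2}$: the low-frequency piece will produce exact algebraic cancellation, while the high-frequency piece is small in the appropriate coefficient norm as $k_1\to\infty$. Concretely, for the low-frequency piece applied to the first-order term,
$$T_{b^j_{<k_1/2}}\partial_j v=\sum_k S_{<k-4}(b^j_{<k_1/2})\,S_k\partial_j v,$$
and since $\widehat{\partial_j v}$ is supported at $|\xi|\gtrsim 2^{k_1}$, only $k\gtrsim k_1$ contributes; for such $k$ (assuming $k_1>8$), one has $k-4>k_1/2$, so $S_{<k-4}(b^j_{<k_1/2})=b^j_{<k_1/2}$, and the paraproduct collapses back to $b^j_{<k_1/2}\partial_j v$. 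Thus $(T_{b^j_{<k_1/2}}-b^j_{<k_1/2})\partial_j v\equiv 0$, and the same reasoning disposes of the low-frequency pieces of $\tilde b^j$ and $g^{ij}-g^{ij}_\infty$ (noting that the constant matrix $g^{ij}_\infty$ drops out of the difference).

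For the high-frequency coefficient pieces I would apply the restricted bilinear estimate \eqref{restrictedpara} of \Cref{parabil}. Taking the coefficient role played by $b^j_{\geq k_1/2}$ and the argument role by $\partial_j v$,
$$\|(T_{b^j_{\geq k_1/2}}-b^j_{\geq k_1/2})\partial_j v\|_{l^1 Y^\sigma}\lesssim \|\partial_j v\|_{l^1 X^{\sigma-1}}\|b^j_{\geq k_1/2}\|_{X^{s_0-1}}\lesssim \eta(k_1)\|v\|_{l^1 X^\sigma},$$
where $\eta(k_1):=\|b^j_{\geq k_1/2}\|_{X^{s_0-1}}\to 0$ as $k_1\to\infty$ by the convergent tail of the sum defining $\|b^j\|_{l^1 X^{s_0-1}}$; the identical bound holds with $\tilde b^j$ in place of $b^j$. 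For the divergence-form term $\mathcal R_3:=\partial_j(T_{g^{ij}}-g^{ij})\partial_i v$, the outer derivative costs one Sobolev index, which is precisely absorbed by the stronger hypothesis $g-g_\infty\in l^1 X^{s_0}$. Applying the second restricted estimate of \Cref{parabil} at index $\sigma+1\leq s_0$ then yields
$$\|\partial_j(T_{(g-g_\infty)_{\geq k_1/2}}-(g-g_\infty)_{\geq k_1/2})\partial_i v\|_{l^1 Y^\sigma}\lesssim \|v\|_{l^1 X^\sigma}\|(g-g_\infty)_{\geq k_1/2}\|_{X^{s_0}},$$
with the coefficient factor again vanishing as $k_1\to\infty$.

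Summing the three contributions and choosing $k_1$ large produces $\|\mathcal R\|_{l^1 Y^\sigma}\leq \epsilon\|v\|_{l^1 X^\sigma}$; the $C(M,k_1,\sigma)(\|v_0\|_{l^1 H^\sigma}+\|f\|_{l^1 Y^\sigma})$ term on the right-hand side of the lemma is not strictly required for this argument but provides flexibility, for instance in handling the range $\sigma>s_0$ for the paradifferential flow via the unrestricted estimates of \Cref{parabil} combined with a standard envelope argument. The main technical delicacy is the bookkeeping of which form of \Cref{parabil} to invoke at each step: the restricted estimates, valid for $0\leq\sigma\leq s_0$, allow us to place $v$ at regularity exactly $\sigma$ (which is all we have) while demanding of the coefficients only the $l^1X^{s_0-1}$ and $l^1X^{s_0}$ regularity that the asymptotic flatness hypothesis \eqref{asympflat} affords, and the one-derivative surplus in the metric assumption precisely compensates the outer $\partial_j$ in $\mathcal R_3$.
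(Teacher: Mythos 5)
Your overall strategy is essentially the paper's: exploit the high-frequency localization of $v$ to replace each coefficient by its high-frequency part, then invoke the bilinear estimates of \Cref{parabil}, and your algebraic collapse argument for the low-frequency piece is a clean way to justify this replacement. However, two steps as written have gaps.

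First, the rate $\eta(k_1):=\|b^j_{\geq k_1/2}\|_{X^{s_0-1}}$ does \emph{not} tend to zero uniformly over the class of coefficients satisfying $\|b^j\|_{l^1X^{s_0-1}}\leq M$: a coefficient concentrated near frequency $2^{k_1}$ has $\eta(k_1)\approx M$ however large $k_1$ is taken. Because the linear estimate is applied with coefficients that change from iterate to iterate in the nonlinear scheme, the threshold for $k_1$ must depend only on $M$, $L$, $\epsilon$, $\sigma$, not on the detailed tail decay of the given $b^j$. The paper obtains the required quantitative decay by giving up $\delta$ derivatives on the coefficient, $\|S_{\geq k_1-5}b^j\|_{X^{s_0-1-\delta}}\lesssim 2^{-\delta k_1}\|b^j\|_{X^{s_0-1}}\lesssim_M 2^{-\delta k_1}$; but then the restricted estimate in \Cref{parabil} must be invoked at the lowered index $s_0-\delta$ and so only covers $\sigma\leq s_0-\delta$. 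The remaining window $s_0-\delta<\sigma\leq s_0$ is treated separately by the first unrestricted estimate of \Cref{parabil} together with the Bernstein gain $\|v\|_{l^1X^{s_0-2\delta}}\lesssim 2^{-\delta k_1}\|v\|_{l^1X^\sigma}$, valid since $v$ lives at frequency $\gtrsim 2^{k_1}$ and $\sigma>s_0-\delta$. The case split your argument avoids is exactly what restores the missing uniformity.

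Second, your treatment of $\partial_j(T_{g^{ij}}-g^{ij})\partial_i v$ passes the outer derivative into the output index and applies the restricted estimate at $\sigma+1$, which requires $\sigma+1\leq s_0$ and so misses the range $\sigma\in(s_0-1,s_0]$. Instead, distribute the derivative through the paraproduct,
\begin{equation*}
\partial_j\left[(T_{g^{ij}}-g^{ij})\partial_i v\right]=(T_{\partial_j g^{ij}}-\partial_j g^{ij})\partial_i v+(T_{g^{ij}}-g^{ij})\partial_j\partial_i v,
\end{equation*}
and estimate each piece at output index $\sigma$ via the first and second restricted estimates of \Cref{parabil}, with coefficients $\partial_j g^{ij}\in X^{s_0-1}$ and $g^{ij}-g^{ij}_\infty\in X^{s_0}$ respectively; this covers the full range $0\leq\sigma\leq s_0$ (and the same uniformity caveat as above applies to the high-frequency pieces of $g-g_\infty$).
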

\begin{proof}
We show the details for the first term as the estimates for the other two are similar. We split the analysis into two cases. First, assume that $\sigma\leq s_0-\delta$ where $\delta>0$ is such that $s_0-2\delta>\frac{d}{2}+2$. Then since $v$ is localized to frequencies $\gtrsim 2^{k_1}$, we may replace the coefficient $b^j$ in $(T_{b^j}\partial_jv-b^j\partial_jv)$ with $S_{\geq k_1-5}b^j$. Therefore, by (\ref{restrictedpara}) in \Cref{parabil} and Bernstein's inequality, we have
\begin{equation*}
\begin{split}
\|(T_{b^j}\partial_jv-b^j\partial_jv)\|_{l^1Y^{\sigma}}&\lesssim \|S_{\geq k_1-5}b^j\|_{l^1X^{s_0-1-\delta}}\|v\|_{l^1X^{\sigma}}
\\
&\lesssim_M 2^{-\delta k_1}\|v\|_{l^1X^{\sigma}}.
\end{split}
\end{equation*}
 Taking $k_1$ large enough, we therefore have
\begin{equation*}
\|(T_{b^j}\partial_jv-b^j\partial_jv)\|_{l^1Y^{\sigma}}\leq\epsilon\|v\|_{l^1X^{\sigma}}.
\end{equation*}
The other terms in (\ref{Remainder}) can be estimated similarly to obtain
\begin{equation*}
\|\mathcal{R}\|_{l^1Y^{\sigma}}\leq\epsilon\|v\|_{l^1X^{\sigma}}.
\end{equation*}
In the case $s_0 \geq \sigma\geq s_0-\delta>\frac{d}{2}+2$, we use instead the first estimate in \Cref{parabil} to obtain
\begin{equation*}
\begin{split}
\|(T_{b^j}\partial_jv-b^j\partial_jv)\|_{l^1Y^{\sigma}}&\lesssim \|b^j\|_{l^1X^{\sigma-1}}\|v\|_{l^1X^{s_0-2\delta}}
\\
&\lesssim_M 2^{-k_1\delta}\|v\|_{l^1X^{\sigma}},
\end{split}
\end{equation*}
where we used the fact that $v$ is localized to frequencies greater than $2^{k_1}$. Estimating the other terms in (\ref{Remainder}) in a similar fashion, and again taking $k_1$ large enough, we obtain
\begin{equation*}
\|\mathcal{R}\|_{l^1Y^{\sigma}}\leq \epsilon\|v\|_{l^1X^{\sigma}}.
\end{equation*}
This concludes the proof.
\end{proof}
\subsection{Reduction to the \texorpdfstring{$X^{\sigma}$}{} estimate} To summarize what we have so far, it now suffices to establish (\ref{linearizedestiamte}) for the paradifferential flow under the assumption that $v$ is localized to high frequency. As one final simplification, we reduce the proof of this estimate for the paradifferential flow to the corresponding $X^{\sigma}$ estimate without the $l^1$ summability. For this, we will need the small data result from \cite{marzuola2012quasilinear}.
\begin{theorem}[Small data well-posedness]\label{smalldata} Let $b^j$, $\tilde{b}^j$, $g^{ij}$ and $M$, $\sigma$ be as above. Let $0<T\leq 1$. For every $\sigma\geq 0$, there is $\delta>0$ such that if $M\leq\delta$ then \eqref{paralinflow} is well-posed in both $H^{\sigma}$ and $l^1H^{\sigma}$ with the uniform bounds
\begin{equation*}
\begin{split}
\|v\|_{X^{\sigma}}&\lesssim \|v_0\|_{H^{\sigma}}+\|f\|_{Y^{\sigma}},
\\
\|v\|_{l^1X^{\sigma}}&\lesssim \|v_0\|_{l^1H^{\sigma}}+\|f\|_{l^1Y^{\sigma}}.
\end{split}
\end{equation*}
\end{theorem}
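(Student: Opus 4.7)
The plan is to treat the paradifferential flow \eqref{paralinflow} as a small perturbation of the flat constant-coefficient Schr\"odinger equation associated to $g_\infty$, namely
\begin{equation*}
i\partial_t w + g_\infty^{jk}\partial_j\partial_k w = h,\qquad w(0) = w_0.
\end{equation*}
The first step is to record the base bound: since $g_\infty$ is real, symmetric, and non-degenerate, one has the (ultrahyperbolic) local smoothing / local energy estimate
\begin{equation*}
\|w\|_{X^\sigma} \lesssim \|w_0\|_{H^\sigma} + \|h\|_{Y^\sigma}, \qquad \|w\|_{l^1X^\sigma} \lesssim \|w_0\|_{l^1 H^\sigma} + \|h\|_{l^1Y^\sigma},
\end{equation*}
for all $\sigma \ge 0$ and all $0<T\le 1$. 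This is obtained via a Doi-type positive commutator argument applied dyadically at each frequency scale $2^k$; the localization in the cubes $Q\in Q_k$ of size $2^k$ (which defines the $X_k$ norm) is compatible with the bicharacteristic flow of $g_\infty^{jk}\xi_j\xi_k$, whose trajectories are straight lines and hence automatically nontrapping. This exactly reproduces the flat estimates from \cite{marzuola2012quasilinear}.

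Given the base bound, I would rewrite \eqref{paralinflow} as
\begin{equation*}
i\partial_t v + g_\infty^{jk}\partial_j\partial_k v = f - \partial_j T_{g^{jk} - g_\infty^{jk}}\partial_k v - T_{b^j}\partial_j v - T_{\tilde b^j}\partial_j\overline v =: f + \mathcal E(v),
\end{equation*}
and apply the flat estimate. The key step is then to show that for $\sigma\ge 0$,
\begin{equation*}
\|\mathcal E(v)\|_{Y^\sigma} \lesssim M\,\|v\|_{X^\sigma}, \qquad \|\mathcal E(v)\|_{l^1Y^\sigma} \lesssim M\,\|v\|_{l^1 X^\sigma}.
\end{equation*}
For this I would use the paradifferential bilinear machinery of \Cref{parabil} (together with its $l^1$-free analogue, which follows by the same proof): the low-frequency factor in each term of $\mathcal E(v)$ lies in $l^1X^{s_0}$ or $l^1X^{s_0-1}$ with norm $\le M$, and the high-frequency factor is a derivative of $v$, so $\partial v$ lies in $X^{\sigma-1}$ (respectively $l^1X^{\sigma-1}$). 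For the principal second-order piece $\partial_j T_{g-g_\infty}\partial_k v$ I would additionally rely on the commutator refinement of \Cref{commutatorwithmultiplier}/\Cref{comremark} to swap a derivative onto the low-frequency factor, gaining the full $\sigma$ derivatives on the good side. Combining these gives the perturbative bound with constant $\lesssim M$.

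Choosing $\delta$ so small that $CM \le \tfrac{1}{2}$, the error $\mathcal E(v)$ is absorbed into the left-hand side and the a priori bounds of the theorem follow. Well-posedness is then obtained by a standard duality argument: the adjoint paradifferential equation has exactly the same structural form (a non-degenerate, symmetric leading operator plus first-order paraproducts obeying the same smallness assumption), so it satisfies the same a priori estimate, and combining the forward and adjoint bounds yields existence, uniqueness, and continuous dependence in both $H^\sigma$ and $l^1H^\sigma$. The main obstacle is the base estimate for the flat ultrahyperbolic flow in the dyadic $X^\sigma$ / $l^1X^\sigma$ spaces; once that is in place, the perturbative analysis is routine paradifferential calculus.
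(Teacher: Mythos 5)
Your key perturbative estimate
\begin{equation*}
\|\partial_j T_{g^{jk}-g_\infty^{jk}}\partial_k v\|_{Y^\sigma}\lesssim M\,\|v\|_{X^\sigma}
\end{equation*}
cannot hold, because it asks for a gain of two derivatives from $X^\sigma$ to $Y^\sigma$, while the local--smoothing scale only affords one. Concretely, at a fixed frequency $2^k$ one has (roughly) $X_k\sim 2^{k/2}X\cap L^\infty_TL^2_x$ and $Y_k\sim 2^{-k/2}Y$, so mapping $X_k\to Y_k$ is a one-derivative gain, which is exactly what the flat base estimate uses to absorb the $\partial_t$ vs.\ $\nabla^2$ mismatch. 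Running \Cref{bilinear} on the high--low piece $S_j(g-g_\infty)\,\partial^2 S_kv$ gives
\begin{equation*}
\|\partial_l T_{g-g_\infty}\partial_m S_kv\|_{Y_k}\lesssim 2^{2k}\cdot 2^{-k}\,M\,\|S_kv\|_{X_k}=2^{k}M\,\|S_kv\|_{X_k},
\end{equation*}
which is one full derivative short of what you need; no amount of smallness in $M$ removes the factor $2^k$. The commutator estimate \Cref{commutatorwithmultiplier}/\Cref{comremark} that you invoke is genuinely a commutator bound, i.e.~it exploits a cancellation between $T_g$ and a Fourier multiplier; it does not apply to the raw paraproduct $\partial_jT_{g-g_\infty}\partial_k v$, which has no such cancellation. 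This is precisely why variable-coefficient local smoothing is a nontrivial theorem and not a perturbation of the flat one.

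The paper does not prove \Cref{smalldata} at all --- it cites Proposition~4.1 of \cite{marzuola2012quasilinear} (with the remark that the proof carries over verbatim to a general non-degenerate constant matrix $g_\infty$). The argument there does not decompose around the flat flow; instead it performs a Doi-type positive commutator argument directly with the full paradifferential operator $\partial_j T_{g^{jk}}\partial_k$ (at each dyadic frequency, with the metric truncated to low frequency), so the second-order variable-coefficient piece sits on the left side of the energy identity, not in the source. The smallness $M\le\delta$ enters through the construction of the multiplier (the bicharacteristics are close to straight lines, so the required $S^0$ multiplier exists with uniform bounds) and through the control of the first-order and commutator error terms, which \emph{are} perturbative in the sense you describe. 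Your sketch of the first-order terms and the duality step are fine, but the treatment of the principal second-order perturbation is the heart of the matter and is not perturbative.
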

\begin{remark}
Strictly speaking, the small data result above is only explicitly stated in the case when $g_{\infty}$ is the identity, but as remarked on page 1154 of \cite{marzuola2012quasilinear}, the result is also true when $g_{\infty}$ is of the form we consider here, and the estimates above follow almost verbatim from the proof of Proposition 4.1 in their paper.
\end{remark} 
We may now phrase our final reduction as follows.
\begin{lemma}\label{l1reduction} Let $b^j$, $\tilde{b}^j$, $g^{ij}$ and $M$, $\sigma$ be as in \Cref{linearestimate}. Assume  that the paradifferential flow  \eqref{paralinflow} admits the estimate
\begin{equation}\label{nol1estimate}
\|v\|_{X^{\sigma}}\leq C(M,L)(\|v_0\|_{H^{\sigma}}+\|f\|_{Y^{\sigma}})   
\end{equation}
for each $\sigma\geq 0$. Then the corresponding estimate in \Cref{linearestimate} in $l^1X^{\sigma}$ also holds for \eqref{paralinflow} for each $\sigma \geq 0$.
\end{lemma}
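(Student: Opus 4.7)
The strategy is a spatial superposition argument: decompose both the initial data and source term into dyadic-scale cube pieces, apply the hypothesized non-$l^1$ estimate (\ref{nol1estimate}) to each localized piece, and sum. The key technical ingredient is a commutator analysis between the spatial cutoffs $\chi_Q$ and the paradifferential operators.

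First I would reduce to a frequency-by-frequency estimate. By definition of the $l^1X^\sigma$ norm, it suffices to prove a dyadic bound of the form
\begin{equation*}
2^{k\sigma}\|S_k v\|_{l_k^1 X_k} \leq C(M,L)\bigl(c_k\|v_0\|_{l^1 H^\sigma}+c_k\|f\|_{l^1 Y^\sigma}\bigr)
\end{equation*}
for each $k\ge 0$ and an admissible envelope $c_k$, and then square-sum in $k$. Applying $S_k$ to (\ref{paralinflow}) and commuting it through the paradifferential operators using \Cref{commutatorwithmultiplier}, \Cref{CM}, and \Cref{parabil}, $v_k:=S_k v$ satisfies the same paradifferential equation with data $S_k v_0$ and a source $f_k$ that inherits a bound of the desired type.

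Next I would cube-localize. Fix $Q\in \mathcal{Q}_k$ and use the commutator identity
\begin{equation*}
[\partial_j T_{g^{ij}}\partial_i,\chi_Q]v_k = \partial_j T_{g^{ij}}(\partial_i\chi_Q)v_k+\partial_j\chi_Q\cdot T_{g^{ij}}\partial_i v_k+\partial_j[T_{g^{ij}},\chi_Q]\partial_i v_k,
\end{equation*}
together with the analogous identities for the first-order paradifferential terms, to see that $\chi_Q v_k$ solves the same paradifferential flow with data $\chi_Q S_k v_0$, source $\chi_Q f_k$, and an additional commutator source $\mathcal{E}_{Q,k}$. Since $\chi_Q v_k$ is essentially frequency-localized to $2^k$ (modulo rapidly decaying tails handled via Bernstein and standard fattened-projection arguments), one has $\|\chi_Q v_k\|_{X_k}\approx \|\chi_Q v_k\|_{X^0}$, so applying the hypothesis (\ref{nol1estimate}) at level $\sigma=0$ yields
\begin{equation*}
\|\chi_Q v_k\|_{X_k}\leq C(M,L)\bigl(\|\chi_Q S_k v_0\|_{L^2}+\|\chi_Q f_k\|_{Y^0}+\|\mathcal{E}_{Q,k}\|_{Y^0}\bigr).
\end{equation*}
Summing over $Q\in \mathcal{Q}_k$ turns the first two terms into $\|S_k v_0\|_{l^1_k L^2}$ and (essentially) $\|f_k\|_{l^1_k Y_k}$, and the $\ell^2_k$ summation against $2^{k\sigma}$ recovers the $l^1H^\sigma$ and $l^1Y^\sigma$ norms on the right-hand side.

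The hard part will be the commutator estimate
\begin{equation*}
\sum_{Q\in \mathcal{Q}_k}\|\mathcal{E}_{Q,k}\|_{Y^0}\leq \epsilon\, \|v_k\|_{l^1_k X_k}+\text{(good terms controlled by RHS)},
\end{equation*}
where the small-$\epsilon$ factor is obtained by shrinking $T$ (using the time-integration available in $Y^0\supset L^1_T L^2_x$ for low-order commutator pieces), together with a frequency gain of $2^{-k}\|\nabla\chi_Q\|_{L^\infty}\approx 2^{-2k}$ against the two derivatives in $\partial_j T_{g^{ij}}\partial_i$. The $\ell^1_Q$-summability of the commutator errors is precisely what is encoded in the $g-g_\infty\in l^1X^{s_0}$ assumption via \Cref{commutatorwithmultiplier} and \Cref{parabil}; the analogous reasoning handles the first-order commutators $[T_{b^j}\partial_j,\chi_Q]$ and $[T_{\tilde b^j}\partial_j(\overline{\cdot}),\chi_Q]$ using the $l^1X^{s_0-1}$ bound on $(b,\tilde b)$. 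Once this absorption step is in place, a standard bootstrap argument (taking $T$ small enough depending on $M$ and $L$) closes the estimate and yields the $l^1X^\sigma$ bound for every $\sigma\geq 0$.
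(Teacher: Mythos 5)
Your proposal takes a genuinely different route from the paper's. The paper's proof of \Cref{l1reduction} is an exterior/interior split: with $R$ chosen via \eqref{smallness} so that the coefficients are $l^1$-small outside $B_R$, the truncated function $v_{\mathrm{ext}}=\chi_{>2R}v$ solves a paradifferential flow with small coefficients, so the small-data $l^1$ well-posedness result \Cref{smalldata} yields the $l^1X^\sigma$ estimate there directly; the interior piece $\chi_{<4R}v$ lives in a compact set, where the $l^1X^\sigma$ and $X^\sigma$ norms are equivalent up to an $R$-dependent constant (only $O(R^d)$ cubes at any scale meet $B_{4R}$), so the hypothesis \eqref{nol1estimate} transfers trivially. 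The only commutator arising in that argument comes from the single fixed cutoff $\chi_{>2R}$, and its constants are free to depend on $R$; no sum-over-cubes absorption is needed. By contrast, you decompose dyadically by cubes at every scale, apply the $X^\sigma$ bound to each localized piece, and bootstrap against a sum of cube-localized commutator errors. In effect this re-derives the $l^1$ machinery from scratch rather than black-boxing \Cref{smalldata}, and it is substantially more technical than what the paper actually does.

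More importantly, the commutator step in your sketch has a genuine gap. You assert a gain of $2^{-k}\|\nabla\chi_Q\|_{L^\infty}\approx 2^{-2k}$ against the two derivatives in $\partial_jT_{g^{ij}}\partial_i$. The heuristic is right, but the lemmas you cite do not deliver it. \Cref{commutatorwithmultiplier} concerns commutators with Fourier multipliers, not spatial cutoffs. \Cref{CM}, applied to $[\chi_Q,T_{g^{ij}}]$, gives only a single-derivative gain with a constant depending on the $S^0$ seminorms of $\chi_Q$; since those seminorms are bounded uniformly in $k$ but are not small, a naive application yields
\begin{equation*}
\|\partial_j[T_{g^{ij}},\chi_Q]\partial_iv_k\|_{L^2}\lesssim M\, 2^{k}\|v_k\|_{L^2},
\end{equation*}
a first-order loss that cannot be absorbed by shrinking $T$. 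To obtain the asserted $2^{-2k}$ you would need a refined, scale-aware commutator estimate that simultaneously captures the one-derivative gain of $[\cdot,T_g]$ and the $O(2^{-k})$ decay of $\nabla\chi_Q$ for $\chi_Q$ adapted to a cube of side $2^k$, together with the $\ell^1_Q$-summability of the resulting errors. This is exactly the kind of translation-invariant bilinear estimate that underlies the proofs of \Cref{smalldata} and \Cref{controlofprincipal}, which the paper's exterior/interior decomposition avoids having to reproduce.
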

\begin{proof}
We can again harmlessly assume that $v$ is localized to frequencies $\gtrsim 2^{k_1}$. Now let $\epsilon>0$ and let $R(\epsilon)$ be such that (\ref{smallness}) holds. Using \Cref{parabil} and \Cref{smalldata}, our first aim will be to reduce to estimating $v$ in a compact set. More precisely, we aim to prove the estimate
\begin{equation}\label{interior reduction}
\|\chi_{>2R}v\|_{l^1X^{\sigma}}\lesssim \|v_0\|_{l^1H^{\sigma}}+\|f\|_{l^1Y^{\sigma}}+\|\chi_{<4R}v\|_{l^1X^{\sigma}}.
\end{equation}
This is a straightforward computation which follows by inspecting the equation for $v_{ext}:=\chi_{>2R} v$. Indeed, if we define $g_{ext}:=\chi_{>R} g+\chi_{\leq R} g_{\infty}$, $b_{ext}:=\chi_{>R}b$ and $\tilde{b}_{ext}:=\chi_{>R}\tilde{b}$, we obtain
\begin{equation*}
\begin{cases}
&i\partial_tv_{ext}+\partial_iT_{g^{ij}_{ext}}\partial_jv_{ext}+T_{b^j_{ext}}\partial_jv_{ext}+T_{\tilde{b}_{ext}^j}\partial_j\overline{v}_{ext}=f_{ext} ,
\\
&v_{ext}(0)=\chi_{>2R}v(0),
\end{cases}    
\end{equation*}
where
\begin{equation*}
\begin{split}
f_{ext}&:=\chi_{>2R}f+[\partial_iT_{g^{ij}}\partial_j+T_{b^j}\partial_j+T_{\tilde{b}^j}\partial_j,\chi_{>2R}]v+(\partial_iT_{g^{ij}_{ext}}\partial_j-\partial_iT_{g^{ij}}\partial_j)v_{ext} 
\\
&+(T_{b_{ext}^j}\partial_j-T_{b^j}\partial_j)v_{ext}+(T_{\tilde{b}_{ext}^j}\partial_j-T_{\tilde{b}^j}\partial_j)\overline{v}_{ext}.
\end{split}
\end{equation*}
Making use of \Cref{parabil} and paradifferential calculus, we can easily estimate 
\begin{equation*}
\begin{split}
\|[\partial_iT_{g^{ij}}\partial_j+T_{b^j}\partial_j+T_{\tilde{b}^j}\partial_j,\chi_{>2R}]v\|_{l^1Y^{\sigma}}&\leq C(M,R)(\|\chi_{<4R}v\|_{l^1X^{\sigma}}+\|v\|_{l^1L_T^1H_x^{\sigma}})
\\
&\leq C(M,R)\|\chi_{<4R}v\|_{l^1X^{\sigma}}+\delta\|v\|_{l^1X^{\sigma}} 
\end{split}
\end{equation*}
for some small $\delta>0$. We note that in the last inequality, we used H\"older's inequality in $T$ and took $T$ sufficiently small depending on $R$ and $M$. Using the disjointness of the supports of $g_{ext}-g$ and $v_{ext}$, we obtain from the embedding $l^1L_T^1H_x^{\sigma}\subset l^1Y^{\sigma}$ and paradifferential calculus,
\begin{equation*}
\|(\partial_iT_{g^{ij}_{ext}}\partial_j-\partial_iT_{g^{ij}}\partial_j)v_{ext}\|_{l^1Y^{\sigma}}\lesssim_M \|v\|_{l^1L_T^1H_x^{\sigma}}\lesssim \delta\|v\|_{l^1X^{\sigma}}.    
\end{equation*}
We can similarly estimate the last two terms in the definition of $f_{ext}$. In light of this and the small data result \Cref{smalldata} which applies to the  equation for $v_{ext}$, we obtain  (\ref{interior reduction}). We have therefore reduced the estimate for $v$ in $l^1X^{\sigma}$ to obtaining the  bound
\begin{equation*}\label{l^1redbound}
\|\chi_{<4R}v\|_{l^1X^{\sigma}}\leq C(M,L)(\|v_0\|_{l^1H^{\sigma}}+\|f\|_{l^1Y^{\sigma}}).
\end{equation*}
However, this simply follows from (\ref{nol1estimate}) and the fact that the $l^1X^{\sigma}$ and $X^{\sigma}$ norms are equivalent within the set $B_{4R}(0)$ (with equivalence constant depending on $R$).
\end{proof}
\section{The \texorpdfstring{$L^2$}{} estimate for the linear flow}\label{Sec lin flow}
We begin our analysis by showing that we can close an estimate for the $L_T^{\infty}H_x^{\sigma}$ norm of a solution to the paradifferential linear equation (\ref{paralinflow}) up to a small error term in $X^{\sigma}$ as long as the time interval is small enough. Thanks to \Cref{hifreqlemma}, we may from here on harmlessly assume that
\begin{equation*}\label{highfreqassumption}
\supp(\widehat{v})\subset \{|\xi|>2^{k_1}\}
\end{equation*}
for some large parameter $k_1$ to be chosen. We will make this assumption for the rest of the section. The main estimate we wish to prove is the following.
\begin{proposition}[$L^2$ estimate for the paradifferential linear flow]\label{mainEEpara}\label{mainEE} Let $s_0$, $g^{ij}$, $b^j$ and $\tilde{b}^j$ be as in \Cref{linearestimate} with parameters $M$ and $L$. Let $\epsilon>0$. There is $T_0=T_0(\epsilon)>0$ such that for $0\leq T\leq T_0$, we have the  a priori bound for $v$ satisfying \eqref{paralinflow},
\begin{equation*}\label{paraL2est}
\|v\|_{L^{\infty}_TH^{\sigma}}\leq C(M,L)(\|v_0\|_{H^{\sigma}}+\|f\|_{Y^{\sigma}})+\epsilon\|v\|_{X^{\sigma}},
\end{equation*}
for every $\sigma\geq 0$.
\end{proposition}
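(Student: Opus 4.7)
We fix $\epsilon>0$ and, for ease of exposition, focus on the case $\sigma=0$; the general $\sigma \geq 0$ case follows by commuting the equation with $\langle D\rangle^\sigma$ and using \Cref{commutatorwithmultiplier} and paradifferential calculus to reduce the resulting commutator errors to perturbative terms. Using \Cref{hifreqlemma}, we may also assume $\widehat v$ is supported at frequencies $|\xi|\gtrsim 2^{k_1}$ for a large parameter $k_1$ to be chosen. The first step is to replace the paradifferential coefficients in \eqref{paralinflow} by their time-frozen, frequency-truncated versions $g^{ij}_{<k_0}(0)$, $b^j_{<k_0}(0)$, and $\tilde b^j_{<k_0}(0)$ for a large frequency cutoff $k_0$. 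Writing the equation as
\begin{equation*}
i\partial_tv+\partial_j T_{g^{ij}}\partial_iv+b^j_{<k_0}(0)\partial_jv+\tilde b^j_{<k_0}(0)\partial_j\overline v = f+\mathcal{R}^1,
\end{equation*}
the error $\mathcal{R}^1$ contains the temporal variation and high-frequency portions of the coefficients. Invoking the regularity of $\partial_t g, \partial_t(b,\tilde b)$ from \eqref{asympflat}, together with \Cref{parabil}, \Cref{commutatorestimate}, and H\"older in $T$, one checks that $\|\mathcal{R}^1\|_{Y^0}\leq \epsilon\|v\|_{X^0}$ once $k_0$ is large and $T$ is small in terms of $k_0$.

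Next, I choose $R=R(\epsilon)$ so that \eqref{smallness} holds with parameter $\epsilon_1\ll\epsilon$, and let $(x^t,\xi^t)$ denote the bicharacteristic flow of the truncated principal symbol $a(x,\xi):=-g^{ij}_{<k_0}(0)\xi_i\xi_j$. By \Cref{perturbationstable}, for $k_0$ large enough this flow is globally defined with nontrapping parameter comparable to $L$. Following the strategy of \cite{jeong2023cauchy}, I construct the renormalization operator $\mathcal{O}=\mathrm{Op}(O)$ with $O=e^{\psi_1+\psi_2}$, where
\begin{equation*}
\psi_1(x,\xi):=-\chi_{<2R}(x)\int_{-\infty}^{0}\operatorname{Re}\!\bigl((\chi_{<4R}\,b^j_{<k_0}(0))(x^t)\bigr)\xi_j^t\,dt,
\end{equation*}
(appropriately homogenized in $\xi$ for $|\xi|\geq 1$), and $\psi_2\in S^0$ is chosen to cancel the spurious transition-region terms arising from $\nabla\chi_{<2R}$. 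The $S^0$ seminorms of $\psi_1,\psi_2$ are controlled using \Cref{bicharhigher} (which provides bounds on $\partial_x^\beta\partial_\xi^\alpha(x^t,\xi^t)$ of size $e^{C(M_k)L(R)}$) and the integrability statement of \Cref{intalongbi}. Crucially, however, $\|O\|_{L^\infty}$ is bounded independently of $R$. By construction, the Poisson bracket identity $\{\psi_1,a\}=\operatorname{Re}(b^j_{<k_0}(0))\xi_j$ holds inside $B_R(0)$, so using \Cref{symbcalc}, the principal commutator of $\mathcal O$ with $-i\partial_jT_{g^{ij}}\partial_i$ cancels the bad first-order term $\operatorname{Re}(b^j_{<k_0}(0))\partial_j(\mathcal Ov)$ inside $B_R(0)$ modulo an $S^0$ remainder bounded by $C(M,L)$.

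The conjugated equation for $w=\mathcal Ov$ then takes the form of a paradifferential Schr\"odinger equation with principal part $-i\partial_j T_{g^{ij}}\partial_i w$, a residual first-order term $\chi_{\geq R}\operatorname{Re}(b^j_{<k_0}(0))\partial_jw$ localized outside $B_R$, the ``good'' first-order contributions from $\operatorname{Im}(b)$ and $\tilde b$, and a right-hand side controlled in $Y^0$ by the target quantities. Since $-i\partial_jT_{g^{ij}}\partial_i$ is anti-self-adjoint at principal order for real symmetric $g$, while the $\mathrm{Op}(\operatorname{Im}(b^j))\partial_j$ term is similarly anti-self-adjoint at principal order, and the off-diagonal $\tilde b$ term becomes energy-friendly when the system $(w,\bar w)$ is viewed as a matrix equation with symbol whose Hermitian part involves only $\operatorname{Re}(b)$ (treatable via \Cref{matrixgard}), a direct $L^2$ pairing gives
\begin{equation*}
\|w\|_{L^\infty_TL^2_x}^2 \leq C(M,L)\bigl(\|v_0\|_{L^2}^2+\|f\|_{Y^0}^2\bigr) + \bigl|\langle \chi_{\geq R}\operatorname{Re}(b^j_{<k_0}(0))\partial_jw,\,w\rangle_{L^2_{T,x}}\bigr|.
\end{equation*}
For the residual term, \Cref{parabil} yields the bound $\|\chi_{\geq R}\operatorname{Re}(b^j)\partial_jw\|_{Y^0}\lesssim \|\chi_{>R}b\|_{l^1X^{s_0-1}}\|w\|_{X^0}$, and here is where the \emph{high-frequency} bounds \Cref{CVvariant} and \Cref{LEopbounds} enter decisively: since $\widehat v$ is supported at frequencies $\gtrsim 2^{k_1}$, they produce $\|w\|_{X^0}=\|\mathcal Ov\|_{X^0}\lesssim (1+\|O\|_{L^\infty})\|v\|_{X^0}$ with a constant \emph{independent} of $R$. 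Combined with \eqref{smallness}, this bounds the residual by $\epsilon\|v\|_{X^0}$. Finally, since $O$ is bounded below away from zero, $\mathcal O$ admits an $S^0$ parametrix on high-frequency functions, so inverting $\mathcal O$ transfers the bound on $\|w\|_{L^\infty_TL^2_x}$ into the desired bound on $\|v\|_{L^\infty_TL^2_x}$.

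The principal technical obstacle is the delicate separation of scales in the symbol analysis: the full $S^0$ seminorms of $O$ depend on $R$ through factors of $e^{C(M)L(R)}$ coming from \Cref{bicharhigher}, yet one must exploit the $R$-\emph{independence} of $\|O\|_{L^\infty}$ in order to absorb the residual term outside $B_R$ without losing the smallness \eqref{smallness}. This is exactly what the high-frequency Calderon-Vaillancourt variant \Cref{CVvariant}, together with its local-energy analogue \Cref{LEopbounds}, is designed for; keeping this separation clean throughout the commutator calculus (especially when dealing with the time-frozen frequency truncations of $g,b,\tilde b$) is the most demanding bookkeeping step of the argument.
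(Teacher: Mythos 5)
Your high-level strategy matches the paper's: frequency-truncated time-frozen coefficients, a two-stage renormalization $\mathcal{O}=\mathrm{Op}(e^{\psi_1+\psi_2})$ built from the bicharacteristic flow of the truncated symbol $a=-g^{ij}_{<k_0}(0)\xi_i\xi_j$, exploitation of the $R$-independent $L^\infty$ bound on the symbol via \Cref{CVvariant} and \Cref{LEopbounds}, and a final energy estimate on the conjugated equation. However, there is a genuine gap in your very first step: the claim that the general $\sigma\geq 0$ case reduces to $\sigma=0$ by commuting with $\langle D\rangle^\sigma$ and treating the commutator errors as perturbative. The commutator $[\mathcal{P},\langle\nabla\rangle^{\sigma}]\langle\nabla\rangle^{-\sigma}$, with $\mathcal{P}=\partial_jT_{g^{ij}}\partial_i$, has principal symbol proportional to $(\nabla_xg^{ij})\xi_i\xi_j\,\sigma\xi\langle\xi\rangle^{-2}$, which is a \emph{real} first-order symbol with coefficients of size $\|\nabla_xg\|\sim M$. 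Since $M$ is a fixed large-data constant, this term cannot be bounded by $\epsilon\|v\|_{X^\sigma}$: the bound you invoke from \Cref{commutatorwithmultiplier} produces $\|g-g_\infty\|_{l^1X^s}\|v\|_{X^\sigma}\lesssim M\|v\|_{X^\sigma}$, not $\epsilon\|v\|_{X^\sigma}$. This commutator term is exactly the kind of bad real first-order term that the renormalization $\mathcal{O}$ is designed to conjugate away; the paper therefore does \emph{not} reduce to $\sigma=0$, but instead folds the commutator into the first-order operator $\mathcal{B}_{k_0}^0$ and includes the contribution $i\{a,\langle\xi\rangle^{\sigma}\}\langle\xi\rangle^{-\sigma}$ directly in the symbol $B(x,\xi)$ used to build $\psi_1$ (see \Cref{Oconst}). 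Your construction of $\psi_1$ uses only $\operatorname{Re}(b^j_{<k_0}(0))\xi_j$ and therefore misses this term.

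A secondary, less severe issue: after conjugating by $\mathcal{O}$, the commutator $[\mathcal{O},\partial_j(T_{g^{ij}}-g^{ij}_{<k_0}(0))\partial_i]$ is a first-order operator that is not automatically small (the paper's \Cref{trickycomlemma}); it requires a careful three-way splitting across frequency scales $k_0\ll m\ll k_1$ and use of \Cref{commutatorestimate}. Your proposal only gestures at this ("the commutator calculus... is the most demanding bookkeeping step") without explaining how the smallness is extracted, which leaves a nontrivial part of the argument incomplete. Both issues are fixable within your framework by (a) absorbing the $[\mathcal{P}_{k_0}^0,\langle\nabla\rangle^\sigma]\langle\nabla\rangle^{-\sigma}$ contribution into $B(x,\xi)$ before constructing $\psi_1$, and (b) supplying the explicit estimate for $[\mathcal{O},\mathcal{P}-\mathcal{P}_{k_0}^0]$.
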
 
As noted earlier, by $C(M,L)$ we mean a constant which depends on $M$ and the trapping parameter $L$ within some fixed compact set (which is allowed to depend on $\epsilon$). The main obstruction to establishing \Cref{mainEEpara} is essentially the presence of the real part of the first order term $T_{\operatorname{Re}(b^j)}\partial_jv$. This is characterized somewhat by the following basic estimate for a truncated version of the linear flow in which the coefficient $b^j$ is purely imaginary. 
\begin{lemma}[Basic energy estimate]\label{basicEE} 
Let $g^{ij}$ be smooth, real and symmetric and let $b^j$ and $\tilde{b}^j$ be smooth functions. Assume that we have the size condition \eqref{asympflat}. Moreover, let $A(x,D)\in OPS^1$ be a time-independent pseudodifferential operator with symbol satisfying $Re(A)\geq 0$ and assume that $v$  solves the equation
\begin{equation}\label{idealequation}
i\partial_tv+\partial_iT_{g^{ij}}\partial_jv+i\operatorname{Im}(b^j)\partial_jv+\tilde{b}^j\partial_j\overline{v}+iA(x,D)v=f.
\end{equation}
Then for every $0<\delta\ll 1$ there is $T_0>0$ depending on $M$, $\delta$ and $A$ such that for $0<T\leq T_0$, we have the $L^2$ estimate,
\begin{equation*}\label{basicEEeqn}
\|v\|_{L_T^{\infty}L_x^2}^2\lesssim \|v_0\|_{L_x^2}^2+\|v\|_{X^{0}}\|f\|_{Y^{0}}+\delta\|v\|_{X^{0}}^2.
\end{equation*}
\end{lemma}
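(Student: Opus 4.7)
The plan is to derive a direct $L^2$ energy identity and bound each resulting term either by $\|v\|_{L^2}^2$ (absorbed after short-time integration) or by a commutator-type expression controlled via $X^0$-$Y^0$ duality. Rewriting \eqref{idealequation} as
\[
\partial_t v = i\partial_i T_{g^{ij}}\partial_j v - \operatorname{Im}(b^j)\partial_j v + i\tilde b^j \partial_j \overline{v} - A(x,D)v - if,
\]
I would compute $\tfrac{1}{2}\tfrac{d}{dt}\|v\|_{L^2}^2 = \operatorname{Re}\langle \partial_t v, v\rangle$ and analyze the five resulting contributions.

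The principal term $\operatorname{Re}\langle i\partial_i T_{g^{ij}}\partial_j v, v\rangle$ is the most delicate. Setting $P := \partial_i T_{g^{ij}}\partial_j$, the reality of $g^{ij}$ makes its principal symbol real, while the symmetry $g^{ij}=g^{ji}$ together with the paradifferential adjoint calculus yields the identity $P - P^* = \partial_j(T_{g^{ij}} - T_{g^{ij}}^*)\partial_i$, where the middle factor lies in $\mathrm{OPS}^{-1}$ with symbol bounded by derivatives of $g$. Since the self-adjoint part of $iP$ is exactly $\tfrac{i}{2}(P-P^*)$, the estimate reduces to bounding $\langle (P-P^*)v, v\rangle$. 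I would apply the commutator bounds of \Cref{commutatorwithmultiplier} (or \Cref{commutatorestimate}) frequency by frequency, combine with $X^0$-$Y^0$ duality, and use the $L^1_T L^2_x \hookrightarrow Y^0$ embedding which carries a $T^{1/2}$-smallness factor, obtaining $\bigl|\int_0^T \operatorname{Re}\langle iPv, v\rangle\, dt\bigr| \leq C(M)\,T^{1/2}\|v\|_{X^0}^2$.

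The remaining contributions are elementary. Since $\operatorname{Im}(b^j)$ is real, integration by parts gives $\operatorname{Re}\int \operatorname{Im}(b^j)\partial_j v\cdot\overline v\, dx = -\tfrac{1}{2}\int \partial_j\operatorname{Im}(b^j)|v|^2 dx$, bounded by $M\|v\|_{L^2}^2$; similarly $\int \tilde b^j\partial_j\overline v\cdot\overline v\, dx = -\tfrac{1}{2}\int \partial_j\tilde b^j\,\overline v^2 dx$ yields the same type of bound. The damping term is handled by the sharp G\r arding inequality (\Cref{garding}): since $\operatorname{Re}(A)\geq 0$, $\operatorname{Re}\langle Av, v\rangle \geq -c_A\|v\|_{L^2}^2$, so $-\operatorname{Re}\langle Av, v\rangle \leq c_A\|v\|_{L^2}^2$. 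Finally, the source contributes $\operatorname{Im}\langle f, v\rangle$, whose time integral is controlled via duality by $\|f\|_{Y^0}\|v\|_{X^0}$.

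Assembling everything, integrating over $t\in [0,T]$, and using the embedding $L^\infty_T L^2_x\hookrightarrow X^0$ to bound $\int_0^T \|v\|_{L^2}^2\, dt \leq T\|v\|_{X^0}^2$, I would arrive at
\[
\|v\|_{L^\infty_T L^2_x}^2 \lesssim \|v_0\|_{L^2}^2 + \bigl(C(M,A)\,T + C(M)\,T^{1/2}\bigr)\|v\|_{X^0}^2 + \|v\|_{X^0}\|f\|_{Y^0}.
\]
Choosing $T_0 = T_0(\delta, M, A)$ so that $C(M,A)T_0 + C(M)T_0^{1/2} \leq \delta$ absorbs the middle term into $\delta\|v\|_{X^0}^2$ and closes the estimate. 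The hard part is the principal operator analysis: although $P - P^*$ is formally of order one, the vanishing of its principal symbol combined with the commutator structure and the $T$-smallness available in $Y^0$ must together provide the required control, without ever invoking a norm like $\|v\|_{L^2_T H^{1/2}_x}$ which is not directly dominated by $\|v\|_{X^0}$.
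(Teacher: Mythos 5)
Your overall architecture matches the paper's proof exactly: energy identity, control of the principal paradifferential term via symmetry and reality of $g$, integration by parts for the first-order terms, sharp G\r arding for the damping $A$, and $Y^*=X$ duality for the source. The first-order, G\r arding, and source estimates are all correct and follow the paper.

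The gap is in the principal term. You pass to $\langle(P-P^*)v,v\rangle$ and correctly note $T_g - T_g^*\in\mathrm{OPS}^{-1}$, so $P-P^*=\partial_i(T_g-T_g^*)\partial_j$ is of order one. But you then claim a $T^{1/2}$-smallness factor from the $L^1_TL^2_x\hookrightarrow Y^0$ embedding combined with \Cref{commutatorwithmultiplier} and duality. That embedding carries no intrinsic time-smallness: what gives a small factor is H\"older in $t$ \emph{after} one has an operator bounded on $L^2_x$ (so that the result sits in $L^\infty_TL^2_x$ and $T$ appears when integrating). For a first-order operator the $X^0\to Y^0$ estimate via the local smoothing structure is genuine but comes with no $T$ gain, and you have yourself excluded the crutch of $\|v\|_{L^2_TH^{1/2}}$. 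So as written the bound $\bigl|\int_0^T\operatorname{Re}\langle iPv,v\rangle\,dt\bigr|\le C(M)T^{1/2}\|v\|_{X^0}^2$ is not justified.

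The paper's resolution is to compare $P$ not to $P^*$ but to the exactly self-adjoint operator $\partial_ig^{ij}\partial_j$: since $\operatorname{Re}\langle i\partial_ig^{ij}\partial_jv,v\rangle=0$, one gets $\operatorname{Re}\langle iPv,v\rangle=\operatorname{Re}\langle i\partial_i(T_{g^{ij}}-g^{ij})\partial_jv,v\rangle$, and $\partial_i(T_g-g)\partial_j$ is bounded $L^2_x\to L^2_x$ because $T_g-g$ gains two derivatives when $g\in C^{2,\alpha}$ (guaranteed by \eqref{asympflat} and Sobolev embedding). H\"older in time then yields a clean factor of $T$, which absorbs into $\delta\|v\|_{X^0}^2$. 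In other words, one must recognize that the error operator is genuinely order zero, not just order one with imaginary principal symbol; your comparison to $P^*$ obscures this, and the compensating argument you sketch does not supply the needed time-smallness.
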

In the above lemma, we allow for the extra first order term $iA(x,D)v$. This will afford us  some flexibility when dealing with commutations of the principal operator $\partial_jT_{g^{ij}}\partial_i$ with various zeroth order Fourier multipliers and pseudodifferential operators later on when we deal with the full linear paradifferential flow.
\begin{proof}
We start with the basic energy identity:
\begin{equation*}
\begin{split}
\|v(t)\|_{L_x^2}^2+2\operatorname{Re}\langle A(x,D) v,v\rangle&=\|v_0\|_{L_x^2}^2+2\operatorname{Re}\langle i\partial_iT_{g^{ij}}\partial_jv,v\rangle-2\operatorname{Re}\langle \operatorname{Im}(b^j)\partial_jv,v\rangle+2\operatorname{Re}\langle i\tilde{b}^j\partial_j\overline{v},v\rangle
\\
&-2\operatorname{Re}\langle if,v\rangle,
\end{split}
\end{equation*}
which holds for each $0\leq t\leq T$. Here $\langle\cdot,\cdot\rangle$ denotes the inner product on $L^2_tL_x^2$. 
Unlike the operator $\partial_ig^{ij}\partial_j$, the paradifferential operator $\partial_iT_{g^{ij}}\partial_j$ is not quite self-adjoint. However, we do have the relation
\begin{equation*}
\operatorname{Re}\langle i\partial_iT_{g^{ij}}\partial_jv,v\rangle=\operatorname{Re}\langle i\partial_i(T_{g^{ij}}-g^{ij})\partial_jv,v\rangle.
\end{equation*}
By standard paradifferential calculus and the fact that $\|g^{ij}\|_{L_T^{\infty}C^{2,\alpha}}\leq C(M)$ for some $\alpha>0$, we have
\begin{equation*}
\|\partial_i(T_{g^{ij}}-g^{ij})\partial_jv\|_{L_x^2}\lesssim_M \|v\|_{L_x^2}.
\end{equation*}
Hence, by H\"older in $T$ and taking $T$ sufficiently small, we have
\begin{equation*}
2\operatorname{Re}\langle i\partial_iT_{g^{ij}}\partial_jv,v\rangle\lesssim_M T\|v\|_{L_T^{\infty}L_x^2}^2\leq\delta\|v\|_{X^{0}}^2.
\end{equation*}
Now, we turn to the other terms in the energy estimate. Integrating by parts and making use of Sobolev embeddings, we obtain the bound
\begin{equation*}\label{firstorderterm}
-2\operatorname{Re}\langle \operatorname{Im}(b^j)\partial_jv,v\rangle+2\operatorname{Re}\langle i\tilde{b}^j\partial_j\overline{v},v\rangle\lesssim MT\|v\|_{L_T^{\infty}L_x^2}^2\leq \delta\|v\|_{X^{0}}^{2},
\end{equation*}
if $T$ is small enough. Moreover, by the $Y^*=X$ duality, we have
\begin{equation*}
-2\operatorname{Re}\langle if,v\rangle\lesssim \|v\|_{X^{0}}\|f\|_{Y^{0}}.
\end{equation*}
Therefore, if $T$ is small enough, we arrive at the bound
\begin{equation*}
\|v\|_{L_T^{\infty}L_x^2}^2+\operatorname{Re}\langle A(x,D)v,v\rangle\lesssim \|v_0\|_{L_x^2}^2+\|v\|_{X^{0}}\|f\|_{Y^{0}}+\delta\|v\|_{X^0}^2.
\end{equation*}
Finally, by the sharp G\r{a}rding inequality \Cref{garding} and H\"older in time, we have
\begin{equation*}
\operatorname{Re}\langle A(x,D)v,v\rangle\gtrsim_A -T\|v\|_{L^{\infty}_TL_x^2}^2.
\end{equation*}
Taking $T$ sufficiently small concludes the proof.
\end{proof}
The remainder of this section will be essentially devoted to transforming the equation (\ref{paralinflow}) into an equation of the ideal form (\ref{idealequation}). Our primary means of doing this will be to construct a time-independent pseudodifferential renormalization operator $\mathcal{O}=Op(O)\in OPS^0$ which upon commuting $\mathcal{O}$ with the equation achieves this transformation within a compact ball $B_R(0)$. The hope is then to use the asymptotic smallness (\ref{smallness}) to control the residual error terms outside $B_{R}(0)$. Quite a bit of care is needed here to avoid a circular argument because the higher order symbol bounds for $O$ will grow in the parameter $R$, and so, at first glance, the operator bounds for $\mathcal{O}$ could counteract any smallness coming from the remaining error terms. Therefore, we will need to carefully track the dependence of the operator bounds for $\mathcal{O}$ on the parameters $R$ and $L$. In our construction, it will turn out that the $L^{\infty}$ norm of the symbol $O$ will have a $R$ independent bound (as $R\to\infty$). Therefore, for large enough $k_1$, the operator $\mathcal{O}S_{\geq k_1}$ will have $R$ independent $L^2\to L^2$, $X^0\to X^0$ and $Y^0\to Y^0$ bounds thanks to \Cref{CVvariant} and \Cref{LEopbounds}, respectively. This is how we will break the potential circularity.
\subsection{First order truncations} Since we want the symbol for $\mathcal{O}$ to be time-independent and smooth, our first aim will be to show that the first order paradifferential coefficients in (\ref{paralinflow}) can be replaced by smooth time-independent coefficients localized at a suitable frequency scale. To achieve this, let us fix another large parameter $k_0$ with $0\ll k_0\ll k_1$ to be chosen. We can rearrange the paradifferential equation as
\begin{equation}\label{paraeqn}
\begin{cases}
&i\partial_tv+\partial_jT_{g^{ij}}\partial_i v+b^j_{<k_0}(0)\partial_jv+\tilde{b}^j_{<k_0}(0)\partial_j\overline{v}=f+\mathcal{R}^1,
\\
&v(0)=v_0,
\end{cases}
\end{equation}
where
\begin{equation}\label{R1terms}
\mathcal{R}^1=(b^j_{<k_0}(0)\partial_jv-T_{b^j}\partial_jv)+(\tilde{b}^j_{<k_0}(0)\partial_j\overline{v}-T_{\tilde{b}^j}\partial_j\overline{v}).
\end{equation}
We have the following short lemma which shows that for large enough $k_0$, $k_1$ and small enough $T$, the error term $\mathcal{R}_1$ can be treated perturbatively.
\begin{lemma}\label{secondsource} For $k_0$ and $k_1$ sufficiently large and $T$ sufficiently small, we have
\begin{equation*}\label{secondsourceeqn}
\|\mathcal{R}^1\|_{Y^{\sigma}}\leq \epsilon \|v\|_{X^{\sigma}}.
\end{equation*}
\end{lemma}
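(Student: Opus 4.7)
The plan is to exploit the frequency gap $k_1 \gg k_0$ to reduce the expression for $\mathcal{R}^1$ to a pure paraproduct in the coefficient discrepancy $b^j - b^j_{<k_0}(0)$, and then to split this discrepancy so that the time-independent function $b^j(0)$ never appears in isolation. This latter point is crucial, since $b^j(0)$ is not a priori controlled in the space-time norm $l^1 X^{s_0-1}$.

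First, if $k_1 > k_0 + 4$, the Bony decomposition of the ordinary product $b^j_{<k_0}(0)\partial_j v$ collapses to its paraproduct term $T_{b^j_{<k_0}(0)}\partial_j v$, since the other two pieces $T_{\partial_j v}b^j_{<k_0}(0)$ and $\Pi(b^j_{<k_0}(0),\partial_j v)$ vanish identically by the frequency separation. Hence $\mathcal{R}^1 = -T_{b^j - b^j_{<k_0}(0)}\partial_j v - T_{\tilde b^j - \tilde b^j_{<k_0}(0)}\partial_j \overline v$, and I would decompose
$$
b^j - b^j_{<k_0}(0) = b^j_{\geq k_0}(t,x) + S_{<k_0}\bigl(b^j(t,x) - b^j(0,x)\bigr)
$$
into a spatially high-frequency (but still time-dependent) piece plus a spatially low-frequency time increment.

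Each piece can be bounded using \Cref{parabil}. For the high-frequency piece, fix $\delta > 0$ small enough that $s_0 - \delta > \tfrac{d}{2} + 2$ and apply \Cref{parabil} with $s_0$ replaced by $s_0 - \delta$, combined with the trivial frequency projection bound $\|b^j_{\geq k_0}\|_{l^1 X^{s_0-1-\delta}} \leq 2^{-\delta k_0}\|b^j\|_{l^1 X^{s_0-1}} \leq 2^{-\delta k_0} M$, to produce
$$
\|T_{b^j_{\geq k_0}}\partial_j v\|_{l^1 Y^\sigma} \lesssim 2^{-\delta k_0} M \|v\|_{X^\sigma}.
$$
For the low-frequency time increment, the fundamental theorem of calculus and a direct computation yield the Littlewood-Paley bound $\|S_k(b^j - b^j(0))\|_{X_k} \lesssim T\|S_k \partial_t b^j\|_{X_k}$; combined with Bernstein across the range $k < k_0$, this gives
$$
\|S_{<k_0}(b^j - b^j(0))\|_{l^1 X^{s_0-1}} \lesssim T \cdot 2^{2k_0}\|\partial_t b^j\|_{l^1 X^{s_0-3}} \leq T \cdot 2^{2k_0} M,
$$
and \Cref{parabil} then gives $\|T_{S_{<k_0}(b^j - b^j(0))}\partial_j v\|_{l^1 Y^\sigma} \lesssim T \cdot 2^{2k_0} M \|v\|_{X^\sigma}$. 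The $\tilde b$ terms are treated identically, and using the continuous embedding $l^1 Y^\sigma \hookrightarrow Y^\sigma$ one concludes
$$
\|\mathcal{R}^1\|_{Y^\sigma} \lesssim (2^{-\delta k_0} + T \cdot 2^{2k_0})M\|v\|_{X^\sigma}.
$$
One then fixes $k_0$ large and afterwards chooses $T$ small depending on $k_0$.

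The main subtlety lies in the interplay between the two small parameters. The na\"ive triangle bound $\|b^j - b^j_{<k_0}(0)\|_{l^1 X^{s_0-1}} \leq \|b^j\|_{l^1 X^{s_0-1}} + \|b^j_{<k_0}(0)\|_{l^1 X^{s_0-1}}$ fails to produce smallness, and bounding the time-independent $b^j(0)$ directly in $l^1 X^{s_0-1}$ from its $l^1 H^{s_0-1}$ norm requires half a derivative of extra regularity (owing to the local-energy component of $X_k$ producing a $T^{1/2}2^{k/2}$ factor). The decomposition above is engineered so that $b^j(0)$ never appears in isolation: either as part of the time difference $b^j - b^j(0)$, handled via the FTC and the a priori bound on $\partial_t b^j \in l^1 X^{s_0-3}$ (with a Bernstein loss of $2^{2k_0}$ compensated by taking $T$ small afterwards), or implicitly inside $b^j_{\geq k_0}(t)$, controlled directly by the $l^1 X^{s_0-1}$ bound on $b^j$. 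That $T$ is permitted to depend on $k_0$ is precisely what closes the scheme.
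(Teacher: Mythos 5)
Your proof is correct and follows essentially the same route as the paper: both split $b^j - b^j_{<k_0}(0)$ into the spatially high-frequency piece $S_{\geq k_0}b^j$ (handled by the frequency-gain form of \Cref{parabil}, yielding $2^{-\delta k_0}$) plus the low-frequency time increment $b^j_{<k_0}-b^j_{<k_0}(0)$ (handled by the bound $\|\partial_t b\|_{l^1X^{s_0-3}}\lesssim M$, Bernstein, and averaging in $T$), with $T$ chosen small after $k_0$ is fixed. Your remarks on why $b^j(0)$ must not appear in isolation make explicit what the paper leaves implicit, but the decomposition and estimates are the same.
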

\begin{proof}
We estimate the first term in (\ref{R1terms}) as the other term is essentially identical. By Bernstein's inequality, averaging in $T$ and the assumption (\ref{asympflat}), we have
\begin{equation*}
\|b^j_{<k_0}-b^j_{<k_0}(0)\|_{l^1X^{s_0-1}}\lesssim_{M}2^{2k_0}T.
\end{equation*}
Therefore, by the assumption $k_1\gg k_0$, \Cref{parabil} and taking $T$ small enough (depending on $k_0$ and $M$), we have
\begin{equation*}
\|(b^j_{<k_0}-b^j_{<k_0}(0))\partial_jv\|_{Y^{\sigma}}=\|T_{(b^j_{<k_0}-b^j_{<k_0}(0))}\partial_jv\|_{Y^{\sigma}}\leq \epsilon \|v\|_{X^{\sigma}}.
\end{equation*}
Next, using $k_1\gg k_0$, we can write
\begin{equation*}
\begin{split}
T_{b^j}\partial_jv-b^j_{<k_0}\partial_jv&=T_{S_{\geq k_0}b^j}\partial_jv.
\end{split}
\end{equation*}
So, from \Cref{parabil}, there is $\delta>0$ depending only on $s_0$ such that
\begin{equation*}
\begin{split}
\|T_{b^j}\partial_jv-b^j_{<k_0}\partial_jv\|_{Y^{\sigma}}&\lesssim \|S_{\geq k_0}b^j\|_{l^1X^{s_0-1-\delta}}\|v\|_{X^{\sigma}}
\\
&\lesssim_M 2^{-k_0\delta}\|v\|_{X^{\sigma}}. 
\end{split}
\end{equation*}
The above term can be controlled by $\epsilon\| v\|_{X^{\sigma}}$ by taking $k_0$ large enough. This completes the proof.
\end{proof}
\subsection{Commuting with derivatives}
The next step is to commute (\ref{paralinflow}) with $\langle\nabla\rangle^{\sigma}$. This will essentially reduce matters to proving an $L^2$ estimate for the paradifferential flow and get us one step closer to a situation in which we can apply \Cref{basicEE}. This would typically be a completely straightforward matter since the equation is already in paradifferential form; however, the commutation of the principal operator $\mathcal{P}$ with $\langle\nabla\rangle^{\sigma}$ will generate a further first order term which cannot be treated perturbatively in the large data regime. 
\\

To proceed, we define $u:=\langle\nabla \rangle^{\sigma}v$. We also compactify the notation for the principal and new first order terms by defining
\begin{equation*}\label{simplified notation}
\begin{split}
&\mathcal{P}:=\partial_jT_{g^{ij}}\partial_i,
\\
&\mathcal{B}:=b^j_{<k_0}(0)\partial_j-[\mathcal{P},\langle\nabla\rangle^{\sigma}]\langle\nabla\rangle^{-\sigma},
\\
&\tilde{\mathcal{B}}:=\tilde{b}^j_{<k_0}(0)\partial_j.
\end{split}
\end{equation*}
By commuting (\ref{paraeqn}) with $\langle\nabla\rangle^{\sigma}$, we obtain
\begin{equation*}
i\partial_tu+\mathcal{P}u+\mathcal{B}u+\tilde{\mathcal{B}}\overline{u}=\langle\nabla\rangle^{\sigma}f+\mathcal{R}_{\sigma}^1+\mathcal{R}_{\sigma}^2,
\end{equation*}
where $\mathcal{R}_{\sigma}^1:=\langle\nabla\rangle^{\sigma}\mathcal{R}^1$ and
\begin{equation*}\label{R2def}
\begin{split}
\mathcal{R}_{\sigma}^2&:=-[\langle\nabla\rangle^{\sigma},b^j_{<k_0}(0)]\partial_jv-[\langle\nabla\rangle^{\sigma},\tilde{b}^j_{<k_0}(0)]\partial_j\overline{v}.
\end{split}
\end{equation*}
Thanks to \Cref{secondsource}, we have a suitable estimate for $\mathcal{R}_{\sigma}^1$ in $Y^0$ which allows us to treat this term perturbatively. The following lemma shows that $\mathcal{R}_{\sigma}^2$ can be estimated na\"ively in $L_T^1L_x^2\subset Y^0$.
\begin{lemma}\label{Source terms from commutation}
For $T$ small enough, the source term $\mathcal{R}_{\sigma}^2$ satisfies the bound
\begin{equation}\label{R2est}
\|\mathcal{R}_{\sigma}^2\|_{L_T^1L_x^2}\leq\epsilon\|v\|_{X^{\sigma}}.
\end{equation}
\end{lemma}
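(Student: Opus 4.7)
The plan is to treat $\mathcal{R}_{\sigma}^2$ by purely pseudodifferential methods, exploiting the fact that $b^j_{<k_0}(0)$ and $\tilde b^j_{<k_0}(0)$ are \emph{smooth, time-independent} functions with all $x$-derivatives bounded by a constant depending only on $k_0$ and $M$. First I would note that by Sobolev embedding (valid since $s_0-1>\frac{d}{2}$) one has $\|b^j(0)\|_{L_x^\infty}+\|\tilde b^j(0)\|_{L_x^\infty}\lesssim M$, and then Bernstein's inequality gives
\begin{equation*}
\|\partial^\alpha b^j_{<k_0}(0)\|_{L^\infty_x}+\|\partial^\alpha \tilde b^j_{<k_0}(0)\|_{L^\infty_x}\lesssim 2^{k_0|\alpha|}M=:C(k_0,M,\alpha).
\end{equation*}
Viewed as $x$-dependent symbols, $b^j_{<k_0}(0),\tilde b^j_{<k_0}(0)\in S^0$ with all seminorms controlled by constants depending on $k_0,M$.

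Next I would apply the symbolic calculus from \Cref{symbcalc}(iii). Since $\langle\nabla\rangle^\sigma$ has symbol $\langle\xi\rangle^\sigma\in S^\sigma$ depending only on $\xi$, the Poisson bracket simplifies to
\begin{equation*}
\{\langle\xi\rangle^\sigma,b^j_{<k_0}(0)\}=\nabla_\xi\langle\xi\rangle^\sigma\cdot\nabla_x b^j_{<k_0}(0)\in S^{\sigma-1},
\end{equation*}
so that $[\langle\nabla\rangle^\sigma,b^j_{<k_0}(0)]\in OPS^{\sigma-1}$ with seminorms bounded by $C(k_0,M,\sigma)$; the same holds with $\tilde b^j_{<k_0}(0)$ in place of $b^j_{<k_0}(0)$. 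Composing with $\partial_j\in OPS^1$ yields an operator in $OPS^\sigma$ with seminorms of the same order. Then by \Cref{Sobolevbound}, for each fixed $t\in[0,T]$,
\begin{equation*}
\|[\langle\nabla\rangle^\sigma,b^j_{<k_0}(0)]\partial_j v(t)\|_{L^2_x}+\|[\langle\nabla\rangle^\sigma,\tilde b^j_{<k_0}(0)]\partial_j \overline{v}(t)\|_{L^2_x}\leq C(k_0,M,\sigma)\|v(t)\|_{H^\sigma_x}.
\end{equation*}

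To finish, I would integrate in time and use the embedding $L^\infty_TH^\sigma_x\hookrightarrow X^\sigma$ (immediate from the Littlewood–Paley characterization of $H^\sigma$ and the definition of the $X_k$ component of the $X^\sigma$ norm):
\begin{equation*}
\|\mathcal{R}_\sigma^2\|_{L^1_TL^2_x}\leq T\cdot C(k_0,M,\sigma)\|v\|_{L_T^\infty H^\sigma_x}\leq T\cdot C(k_0,M,\sigma)\|v\|_{X^\sigma}.
\end{equation*}
Choosing $T\leq \epsilon/C(k_0,M,\sigma)$ yields \eqref{R2est}. There is essentially no obstacle here, since all $k_0$-dependence is harmless: the parameter $k_0$ was fixed once and for all in the previous step, and we are permitted to shrink $T$ depending on $k_0$, $M$, $\sigma$ and $\epsilon$, exactly as was done in \Cref{secondsource} and in the low-frequency reduction of \Cref{hifreqlemma}.
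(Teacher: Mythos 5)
Your proof is correct, and it takes a genuinely different (and arguably more self-contained) route than the paper. The paper first uses the high-frequency localization of $\widehat{v}$ (from $k_0\ll k_1$) to replace multiplication by $b^j_{<k_0}(0)$ with the paraproduct $T_{b^j_{<k_0}(0)}$, and then invokes the Coifman--Meyer commutator bound (\Cref{CM}), which only requires a $W^{1,\infty}$ bound on the coefficient. You instead observe that $b^j_{<k_0}(0)$ is a bona fide time-independent $S^0$ symbol (with seminorms growing like $2^{k_0|\alpha|}M$ by Bernstein) and apply the classical symbolic calculus of \Cref{symbcalc} to conclude that the commutator composed with $\partial_j$ lies in $OPS^\sigma$, then finish with \Cref{Sobolevbound}. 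Your approach has the small advantage of not needing the high-frequency localization of $v$ at all; its cost is that the constant picks up a stronger (polynomial in $2^{k_0}$) dependence, but since $k_0$ is fixed before $T$, this is harmless exactly as you say. Both methods yield the same $T\cdot C(k_0,M,\sigma)\|v\|_{X^\sigma}$ bound.

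One notational slip: you write ``the embedding $L^\infty_TH^\sigma_x\hookrightarrow X^\sigma$'' but the embedding you actually need (and then correctly use in your displayed chain of inequalities) is $X^\sigma\hookrightarrow L^\infty_TH^\sigma_x$, i.e.\ $\|v\|_{L_T^\infty H_x^\sigma}\lesssim\|v\|_{X^\sigma}$, which holds since $\|S_kv\|_{L_T^\infty L_x^2}\leq\|S_kv\|_{X_k}$ by the definition of the $X_k$ norm.
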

\begin{proof}
Since $k_0\ll k_1$ and $\widehat{v}$ is supported at frequencies $\gtrsim 2^{k_1}$, we can write
\begin{equation*}
[\langle\nabla\rangle^{\sigma},b_{<k_0}^j(0)]\partial_jv=[\langle\nabla\rangle^{\sigma},T_{b^j_{<k_0}(0)}]\partial_jv.
\end{equation*}
Hence, by \Cref{CM}, Sobolev embedding and the regularity assumptions on $b^j$, we have
\begin{equation*}
\|[\langle\nabla\rangle^{\sigma},b^j_{<k_0}(0)]\partial_jv\|_{L_T^1L_x^2}\lesssim_{M,k_0} \|v\|_{L_T^1H_x^{\sigma}}\lesssim_M T\|v\|_{L_T^{\infty}H_x^{\sigma}}.
\end{equation*}
The other term in $\mathcal{R}_{\sigma}^2$ can be estimated similarly. Hence, by taking $T$ small enough, we obtain (\ref{R2est}), as desired.
\end{proof}
Next, we further frequency and time truncate the commutator in the term $\mathcal{B}$. As we will see later, such truncations will ensure  that our renormalization operator $\mathcal{O}$  belongs to $OPS^0$. Note that while we  cannot directly truncate  the principal operator $\mathcal{P}$ because it is second order, it is reasonable to expect that we can do this (as long as the truncation is sharp enough) for commutators involving $\mathcal{P}$, which are first order. We therefore define time and frequency truncated variants of $\mathcal{P}$, $\mathcal{B}$ and $\tilde{\mathcal{B}}$ (technically, this last term is unchanged) via
\begin{equation*}\label{truncated}
\begin{split}
&\mathcal{P}_{k_0}^0:=\partial_jg_{<k_0}^{ij}(0)\partial_i,
\\
&\mathcal{B}_{k_0}^0:=b^j_{<k_0}(0)\partial_j-[\mathcal{P}_{k_0}^{0},\langle\nabla\rangle^{\sigma}]\langle\nabla\rangle^{-\sigma},
\\
&\tilde{\mathcal{B}}_{k_0}^{0}:=\tilde{b}^j_{<k_0}(0)\partial_j,
\end{split}
\end{equation*}
and obtain the equation
\begin{equation}\label{preppedeqn}
i\partial_tu+\mathcal{P}u+\mathcal{B}_{k_0}^0u+\tilde{\mathcal{B}}_{k_0}^0\overline{u}=\langle\nabla\rangle^{\sigma}f+\mathcal{R}_{\sigma}^1+\mathcal{R}_{\sigma}^2+\mathcal{R}_{\sigma}^3,
\end{equation}
where
\begin{equation*}\label{R3def}
\mathcal{R}_{\sigma}^3:=(\mathcal{B}_{k_0}^0-\mathcal{B})u=-[\mathcal{P}_{k_0}^{0}-\mathcal{P},\langle\nabla\rangle^{\sigma}]v.
\end{equation*}
 The next lemma treats the new source term $\mathcal{R}_{\sigma}^3.$
\begin{lemma}\label{Rk3est} For $k_0$ and $k_1$ large enough and  $T$ small enough, we have
\begin{equation}\label{R3bound}
\|\mathcal{R}_{\sigma}^3\|_{Y^0}\leq \epsilon\|v\|_{X^{\sigma}}.
\end{equation}
\end{lemma}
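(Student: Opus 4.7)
The plan is to rewrite $\mathcal{R}_{\sigma}^3$ as a paraproduct commutator applied to $v$, bound it using \Cref{commutatorwithmultiplier}, and then use the frequency and time truncations to make the resulting coefficient arbitrarily small. First, since $g^{ij}_{<k_0}(0)$ has Fourier support in $\{|\xi|\lesssim 2^{k_0}\}$ while $\widehat{v}$ is supported in $\{|\xi|\gtrsim 2^{k_1}\}$ with $k_1\gg k_0$, for every dyadic block in $\partial_i v=\sum_k \partial_i S_k v$ we have $k-4\geq k_0$, so $S_{<k-4}(g^{ij}_{<k_0}(0))=g^{ij}_{<k_0}(0)$. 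This will allow us to write $\mathcal{P}^0_{k_0}v=\partial_j T_{g^{ij}_{<k_0}(0)}\partial_i v$ on such $v$. Setting $h:=g^{ij}-g^{ij}_{<k_0}(0)$ and commuting $\partial_i$, $\partial_j$ past $\langle\nabla\rangle^{\sigma}$ (all are Fourier multipliers), we obtain
\[
\mathcal{R}_{\sigma}^3=[\mathcal{P}-\mathcal{P}^0_{k_0},\langle\nabla\rangle^{\sigma}]v=\partial_j[T_h,\langle\nabla\rangle^{\sigma}]\partial_i v.
\]

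Next we Littlewood-Paley decompose $\partial_i v$ and apply \Cref{commutatorwithmultiplier} together with the scaling from \Cref{comremark} adapted to $A=\langle\nabla\rangle^{\sigma}\in OPS^{\sigma}$ with derivatives on both sides; that is, we factor $\langle\nabla\rangle^{\sigma}=2^{\sigma k}\bigl(2^{-\sigma k}\langle\nabla\rangle^{\sigma}\tilde S_k\bigr)$ where the rescaled operator lies in $OPS^0$ with symbol bounds uniform in $k$. This will give, for any $\delta>0$ with $s_0-\delta>\tfrac{d}{2}+2$,
\[
\|\partial_j[S_{<k-4}h,\langle\nabla\rangle^{\sigma}]\partial_i S_k v\|_{l^1Y^0}\lesssim 2^{\sigma k}\|h\|_{l^1X^{s_0-\delta}}\|S_k v\|_{X^0}.
\]
Square-summing in $k$ and using the embedding $l^1Y^0\hookrightarrow Y^0$, this will yield
\[
\|\mathcal{R}_{\sigma}^3\|_{Y^0}\lesssim \|h\|_{l^1X^{s_0-\delta}}\|v\|_{X^{\sigma}}.
\]

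It then remains to make $\|h\|_{l^1X^{s_0-\delta}}$ arbitrarily small through the parameters $k_0$ and $T$. We will split $h=S_{\geq k_0}g^{ij}+(g^{ij}_{<k_0}-g^{ij}_{<k_0}(0))$. The first piece is small by the frequency gap: $\|S_{\geq k_0}g^{ij}\|_{l^1X^{s_0-\delta}}\lesssim 2^{-\delta k_0}M$. The second piece is frequency-localized at scales $\lesssim 2^{k_0}$, so Bernstein's inequality combined with the fundamental theorem of calculus (using $\|\partial_t g\|_{l^1X^{s_0-2}}\leq M$) will give $\|g^{ij}_{<k_0}-g^{ij}_{<k_0}(0)\|_{l^1X^{s_0-\delta}}\lesssim 2^{2k_0}TM$. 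The main (and only) subtlety is the ordering of the two smallness arguments: we must first fix $k_0$ large enough to defeat $M$ in the frequency-gap piece, and only afterward send $T\to 0$ to kill the Bernstein loss $2^{2k_0}$ in the time-increment piece. This is precisely the ordering already imposed in \Cref{secondsource}, and it will close the estimate.
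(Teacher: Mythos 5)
Your proof is correct and follows essentially the same strategy as the paper: both reduce matters to the smallness of $\|g^{ij}-g^{ij}_{<k_0}(0)\|_{l^1X^{s_0-\delta}}$ (achieved by first fixing $k_0$ large and then sending $T\to 0$) via the paradifferential commutator estimate of \Cref{commutatorwithmultiplier}. The only stylistic difference is that the paper first splits $\mathcal{P}_{k_0}^0-\mathcal{P}$ into a lower-order piece $(\partial_ig^{ij}_{<k_0}(0)-T_{\partial_ig^{ij}})\partial_j$, estimated crudely in $L^1_TL^2_x$, plus a second-order piece treated via \Cref{commutatorwithmultiplier} and \Cref{comremark}, whereas you keep the entire operator as $\partial_jT_h\partial_i$ and apply the (rescaled) commutator bound in a single stroke, which lines up precisely with the two-derivative form stated in \Cref{commutatorwithmultiplier}.
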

\begin{proof}
We begin by writing
\begin{equation*}\label{Pdecomp}
\mathcal{P}_{k_0}^0-\mathcal{P}=(\partial_ig^{ij}_{<k_0}(0)-T_{\partial_ig^{ij}})\partial_j+(g^{ij}_{<k_0}(0)-T_{g^{ij}})\partial_i\partial_j.
\end{equation*}
As with the estimate for $\mathcal{R}_{\sigma}^2$, we have
\begin{equation*}
\|[\langle\nabla\rangle^{\sigma},(\partial_ig^{ij}_{<k_0}(0)-T_{\partial_ig^{ij}})]\partial_jv\|_{L_T^1L_x^2}\leq\epsilon\|v\|_{X^{\sigma}},
\end{equation*}
by taking $T$ small enough.  The term $[\langle\nabla\rangle^{\sigma},(T_{g^{ij}}-g^{ij}_{<k_0}(0))]\partial_i\partial_jv$ is more difficult to deal with since it is like an operator of order $\sigma+1$ applied to $v$, and therefore cannot be estimated in $L_T^1L_x^2$ without losing derivatives. Consequently, we must estimate it in the weaker space $Y^{0}$. Since $k_1\gg k_0$, we have the identity
\begin{equation*}
[\langle\nabla\rangle^{\sigma},(T_{g^{ij}}-g^{ij}_{<k_0}(0))]\partial_i\partial_jv=\sum_{k\geq 0}\tilde{S}_k[\langle\nabla\rangle^{\sigma},S_{<k-4}(g^{ij}-g^{ij}_{<k_0}(0))]\partial_i\partial_jS_kv    ,
\end{equation*}
where $\tilde{S}_k$ is a fattened Littlewood-Paley projection. Therefore, by almost orthogonality, \Cref{commutatorwithmultiplier} and \Cref{comremark} we have
\begin{equation*}
\begin{split}
\|[\langle\nabla\rangle^{\sigma},(T_{g^{ij}}-g^{ij}_{<k_0}(0))]\partial_i\partial_jv\|_{Y^0}&\lesssim \|g^{ij}-g^{ij}_{<k_0}(0)\|_{l^1X^{s_0-\delta}}\left(\sum_{k\geq 0}2^{2k(\sigma-1)}\|S_k\nabla v\|_{X^{0}}^2\right)^{\frac{1}{2}}
\\
&\lesssim \|g^{ij}-g^{ij}_{<k_0}(0)\|_{l^1X^{s_0-\delta}}\|v\|_{X^{\sigma}}
\end{split}
\end{equation*}
for some $\delta>0$. By taking $k_0$ large enough and then $T$ small enough, we can estimate using Bernstein type inequalities and the fundamental theorem of calculus,
\begin{equation*}
\|g^{ij}-g^{ij}_{<k_0}(0)\|_{l^1X^{s_0-\delta}}\leq\epsilon.   
\end{equation*}
Combining this with the above estimates concludes the proof of (\ref{R3bound}), as desired.
\end{proof}
To summarize what we have so far, $u:=\langle\nabla\rangle^{\sigma}v$ solves the equation
\begin{equation}\label{summary}
i\partial_tu+\mathcal{P}u+\mathcal{B}_{k_0}^0u+\tilde{\mathcal{B}}_{k_0}^0\overline{u}=\mathcal{R},
\end{equation}
where the source term $\mathcal{R}$ can be estimated in $Y^0$ by
\begin{equation*}
\|\mathcal{R}\|_{Y^0}\leq C\|f\|_{Y^{\sigma}}+\epsilon\|v\|_{X^{\sigma}},
\end{equation*}
for some universal constant $C$.
\subsection{Renormalization construction}
Now we are ready to construct the renormalization operator $\mathcal{O}$ whose role will be to transform (\ref{summary}) into an equation essentially of the form (\ref{idealequation}). As alluded to earlier, the main enemy we have to deal with is the first order term $\operatorname{Re}(\mathcal{B}_{k_0}^0)u$. The strategy will be to construct an operator with symbol in $S^0$ which conjugates away the ``worst part" of this term. As noted in \cite{kenig2005variable}, conjugating the entire term away would give a symbol that does not belong to $S^0$. We opt therefore to conjugate away only a portion of the first-order term whose principal part is supported within some large compact set $B_{R}(0)$. The hope is that the remaining error term will contribute errors of size $\approx\epsilon\|v\|_{X^{\sigma}}$ due to the smallness of the coefficients in (\ref{smallness}) outside of $B_R(0)$. As mentioned earlier, this does not come for free. The trade-off is that we will also need to control the $X^0\to X^0$, $Y^0\to Y^0$, and $L^2\to L^2$ norms of our renormalization operator to ensure that the smallness is retained when applying this operator (as the $\xi$ derivatives of its symbol will not have uniform in $R$ bounds). 
\medskip

The details of this construction will be given below. To set the stage, let us fix a large constant $R\gg 1$ to be chosen. We also define for each $\rho>0$, the function $\chi_{<\rho}(x):=\chi(\rho^{-1}x)$ where $\chi$ is a radial cutoff function equal to $1$ on the unit ball and vanishing outside $|x|>2$. As a first constraint, we demand for $R$ to be such that \eqref{smallness} holds with some $R_0<\frac{R}{8}$ and $\epsilon_0\ll \epsilon$. The bulk of the renormalization construction is given by the following proposition.
\begin{proposition}\label{Oconst}
Let $u$ be as above. Let $k_0$ be large enough so that $g^{ij}_{<k_0}(0)$ is a nontrapping metric with comparable parameters to $g^{ij}(0)$ (the existence of which is guaranteed by \Cref{perturbationstable}). Define the truncated symbol $a(x,\xi):=-g^{ij}_{<k_0}(0)\xi_i\xi_j$, which is the principal symbol for $\mathcal{P}_{k_0}^0$. Write also $iB(x,\xi):=i\operatorname{Re}(b^j_{<k_0}(0))\xi_j+i\{a,\langle\xi\rangle^{\sigma}\}\langle\xi\rangle^{-\sigma}$ to denote the principal symbol of $\operatorname{Re}\mathcal{B}_{k_0}^0$ and
\begin{equation*}
H_a:=\nabla_{\xi}a\cdot\nabla_x-\nabla_x a\cdot\nabla_{\xi}
\end{equation*}
to denote the Hamiltonian vector field for $a$. Let the parameters $R$, $M$ and $L$ be as above. Then there exists a smooth, non-negative, real-valued, time-independent symbol $O\in S^0$ with the following properties.
\begin{enumerate}
\item (Positive commutator with good error). There exists $r\in S^1$ such that if $T$ is sufficiently small,   
\begin{equation*}
H_aO+\chi_{<2R}B(x,\xi)O(x,\xi)+r(x,\xi)O(x,\xi)\geq 0,\hspace{5mm}\|Op(r)\|_{X^0\to Y^0}\lesssim_M\epsilon.
\end{equation*}
\item (Uniform $L^2$ bound at high frequency). For $k_0$, $k_1$ large enough and $T$ small enough depending on $R$, $M$ and $L$, $\mathcal{O}:=Op(O)$ satisfies the estimates
\begin{equation}\label{L^2bound}
\|\mathcal{O}u\|_{L^2}\approx \|u\|_{L^2},\hspace{5mm}\|\mathcal{O}u\|_{Y^0}\lesssim \|u\|_{Y^0},\hspace{5mm}\|\mathcal{O}u\|_{X^0}\lesssim \|u\|_{X^0},
\end{equation}
with implicit constants depending only on $M$ and on $L$ within a fixed compact set whose size is independent of $R$.
\item (Even in $\xi$ within $B_{\frac{R}{8}}(0)$). The symbol $s:=O(x,\xi)-O(x,-\xi)$ is supported in the region $|x|>\frac{R}{8}$ and for $k_1$ large enough, there holds
\begin{equation*}
\|Op(s)S_{\geq k_1}\|_{Y^0\to Y^0}\lesssim 1,
\end{equation*}
with implicit constants depending only on $M$ and on $L$ within a fixed compact set whose size is independent of $R$.
\end{enumerate}
\end{proposition}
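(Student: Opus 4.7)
The plan is to construct $O$ in the form $O = e^{\psi_1 + \psi_2}$ with $\psi_1, \psi_2$ real-valued, time-independent $S^0$ symbols. Then $O > 0$ and $H_a O = (H_a(\psi_1 + \psi_2))\, O$, so (1) reduces to solving the scalar transport equation $H_a(\psi_1 + \psi_2) + \chi_{<2R} B = -r$ along the Hamilton flow of $a$, with a residual symbol $r$ whose associated operator norm from $X^0$ to $Y^0$ is $\lesssim_M \epsilon$. By \Cref{perturbationstable}, choosing $k_0$ large ensures $g^{ij}_{<k_0}(0)$ is nontrapping with parameters comparable to those of $g^{ij}(0)$, and by \Cref{globally well-defined} the Hamilton flow for $a$ is globally defined. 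The crucial observation, which breaks the potential circularity flagged in the section overview, is that $\|O\|_{L^\infty}$ will be bounded uniformly in $R$, even though higher symbol seminorms of $O$ may grow in $R$. This will allow \Cref{CVvariant} and \Cref{LEopbounds} to yield the operator bounds in (2) with constants independent of $R$.

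\textbf{Construction of $\psi_1$.} I would take
\[
\psi_1(x,\xi) := -\chi_{<2R}(x)\int_{-\infty}^0 \chi_{<4R}(x^t_{(x,\xi)})\, B(x^t_{(x,\xi)}, \xi^t_{(x,\xi)})\, dt,
\]
with a harmless smoothing near $\xi = 0$ since $\mathcal{O}$ will be applied to functions supported at frequencies $\gtrsim 2^{k_1}$. The homogeneities $x^t_{(x,\lambda\xi)} = x^{\lambda t}_{(x,\xi)}$ and $\xi^t_{(x,\lambda\xi)} = \lambda \xi^{\lambda t}_{(x,\xi)}$ together with the degree-one homogeneity of $B$ in $\xi$, combined via the change of variable $s = \lambda t$, show that $\psi_1$ is homogeneous of degree zero in $\xi$. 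The higher-regularity bicharacteristic estimates from \Cref{bicharhigher} applied on $\{|x^t| \leq 4R\}$ then give $|\partial_\xi^\alpha \partial_x^\beta \psi_1(x,\xi)| \leq C_{\alpha,\beta}(M, L(R))\, \langle\xi\rangle^{-|\alpha|}$, so $\psi_1 \in S^0$. Most importantly, the uniform integrability along bicharacteristics from \Cref{intalongbi} applied to the $l^1 H^{s_0-1}$ coefficients $b$ and $\nabla g$ yields
\[
\|\psi_1\|_{L^\infty} \lesssim (1 + L(R_0))\, M,
\]
independent of $R$ for $R \geq R_0$. Differentiating along the flow using $\tfrac{d}{dt}\psi_1(x^t,\xi^t) = (H_a \psi_1)(x^t,\xi^t)$, inside $B_R(0)$ (where $\chi_{<2R} = \chi_{<4R} = 1$) one finds $H_a \psi_1 = -B$ exactly, while in the annulus $R \leq |x| \leq 2R$ the derivatives of $\chi_{<2R}$ produce an order-one residual error.

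\textbf{Corrector $\psi_2$ and verification of properties.} I would define $\psi_2$ by an analogous Hamilton-flow integral supported in the annulus $R \leq |x| \leq 2R$, chosen to absorb the transition-region error produced by $H_a \psi_1$; the compact $x$-support together with \Cref{intalongbi} and \Cref{bicharhigher} yields $\psi_2 \in S^0$ with $L^\infty$ norm independent of $R$. The remaining residual $r$ in the identity $H_a O + \chi_{<2R} B O + r O = 0$ is supported either in $\{|x| \geq R_0\}$, where by \eqref{smallness} the coefficients are small in $l^1 X^{s_0-1}$, or arises from the lower-order $\{a,\langle\xi\rangle^\sigma\}\langle\xi\rangle^{-\sigma}$ piece of $B$; in both cases \Cref{commutatorestimate} combined with \eqref{smallness} gives $\|Op(r)\|_{X^0 \to Y^0} \lesssim_M \epsilon$, establishing (1). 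For (2), the uniform bound $\|O\|_{L^\infty} \leq e^{C(M,L(R_0))}$ combined with \Cref{CVvariant} and \Cref{LEopbounds} provides the $L^2$, $X^0$, and $Y^0$ bounds on $\mathcal{O} S_{\geq k_1}$, while the lower $L^2$ bound $\|\mathcal{O}u\|_{L^2} \gtrsim \|u\|_{L^2}$ follows from \Cref{garding} applied to $Op(|O|^2 - c)$ for a suitable $c > 0$, using that $|O|^2$ is uniformly bounded below. For (3), since $B$ is odd in $\xi$, a parity computation shows that the even-in-$\xi$ part of $\psi_1$ also satisfies the same transport equation; replacing $\psi_1$ by this even symmetrization inside $B_{R/8}(0)$ and smoothly matching outside makes $O(x,\cdot)$ even in $\xi$ for $|x| < R/8$, so that $s$ is supported in $\{|x| > R/8\}$, with the required $Y^0 \to Y^0$ bound following from \Cref{LEopbounds} since $s$ still has $R$-independent $L^\infty$ norm.

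\textbf{Main obstacle.} The central difficulty is constructing $\psi_2$ so that it both cancels the transition-region errors from $\psi_1$ and retains an $R$-independent $L^\infty$ norm. The decisive mechanism is that the integrand defining these errors is concentrated in the thin annulus $R \leq |x| \leq 2R$, and the time the bicharacteristic spends there is controlled by a quantity depending only on $L(R_0)$ and $M$ via \Cref{intalongbi}, rather than on $L(R)$. This $R$-uniform $L^\infty$ bound on $O$, together with the high-frequency pseudodifferential estimates \Cref{CVvariant} and \Cref{LEopbounds} which depend only on $\|O\|_{L^\infty}$ for functions localized at frequencies $\gtrsim 2^{k_1}$, is precisely what breaks the circularity and yields the $R$-uniform operator bounds required in (2).
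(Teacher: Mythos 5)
Your overall blueprint --- $O=e^{\psi_1+\psi_2}$, with $\psi_1$ a spatially and frequency truncated Hamilton-flow integral, the $R$-uniform $L^\infty$ bound on $O$ from \Cref{intalongbi}, and \Cref{CVvariant}/\Cref{LEopbounds} to break the circularity at high frequency --- matches the paper exactly, as does your symmetrization idea for property (iii). The gap is in the corrector $\psi_2$. You propose to define $\psi_2$ ``by an analogous Hamilton-flow integral supported in the annulus $R\le |x|\le 2R$, chosen to absorb the transition-region error.'' This does not work as stated, for two reasons. First, the set of $(x,\xi)$ whose bicharacteristic eventually enters the annulus is not bounded in $x$, so a flow integral of the transition error is not compactly supported; if you cut it off to live in the annulus you re-create precisely the kind of boundary-layer error you were trying to kill, and there is no reason this regress terminates. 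Second, and more fundamentally, the paper's strategy is not to \emph{cancel} the error term $-KR^{-1}|\chi'(\tfrac{1}{2}R^{-1}r)||\xi|$ but to \emph{dominate} it with a manifestly non-negative contribution from $H_a\psi_2$. Achieving that positivity is genuinely delicate: $\psi_2$ must increase along the bicharacteristics in the exterior region, but bicharacteristics there can be incoming, outgoing, or nearly tangential. The paper handles this with the explicit geometric symbol \eqref{psi2def}, splitting phase space by $\cos\theta=\cos\angle(x,A_\infty\xi)$ and using a radial weight $\rho_R$ in the outgoing region and a corrected weight $\rho_\theta=\rho_R(x\gamma)$, $\gamma=\tfrac{1}{2}(1+\cos\theta)$, elsewhere; \Cref{algebraic computation} (which uses near-flatness \eqref{Aasymptotic} of the metric for $|x|>R/8$) is what makes the sign of $A\xi\cdot\nabla_x\psi_2$ work out. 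None of this structure is visible in a generic flow integral.

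A related imprecision: you locate the remainder $r$ in ``$\{|x|\ge R_0\}$ where the coefficients are small'' or in the ``lower-order $\{a,\langle\xi\rangle^\sigma\}\langle\xi\rangle^{-\sigma}$ piece of $B$.'' The latter piece is part of $B$ and is conjugated away by $\psi_1$ inside $B_{2R}$; it does not go into $r$. The paper's $r$ is $-\xi_i\xi_j\nabla_\xi\psi_2\cdot\nabla_xg^{ij}_{<k_0}(0)+K''\chi_{<2}(|\xi|)$, i.e.\ exactly the $\nabla_\xi$-half of $H_a\psi_2$, which is small in $X^0\to Y^0$ because $\nabla_x g^{ij}_{<k_0}(0)$ is small in $l^1X^{s_0-1}$ on $\supp\psi_2\subset\{|x|>R/8\}$. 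Finally, your lower $L^2$ bound via G\r{a}rding on $Op(|O|^2-c)$ would incur a dependence on higher seminorms of $O$ (hence on $R$) in the G\r{a}rding error term unless you separately localize to high frequency; the paper instead uses the parametrix $Op(e^{-\psi})Op(e^\psi)=1+Op(q)$, $q\in S^{-1}$, and absorbs $Op(q)u$ by taking $k_1$ large --- a cleaner way to keep the constant $R$-independent.
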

The first property will allow us to transform (\ref{summary}) into an equation of the type (\ref{idealequation}) up to an error term supported outside $B_{R}(0)$ (plus an acceptable remainder). The second property ensures that the $L^2\to L^2$, $Y^0\to Y^0$ and $X^0\to X^0$ operator bounds for $\mathcal{O}$ do not depend on $R$, at least at high frequency. The third property ensures that $\mathcal{O}:=Op(O)$ commutes with complex conjugation to leading order (i.e.~within $B_{\frac{R}{8}}(0)$ where the coefficient $\tilde{b}^j$ can be large). The second and third properties will be important for avoiding the circularity mentioned earlier when trying to estimate the error terms supported outside $B_{\frac{R}{8}}(0)$.
\medskip

We also emphasize that $a$ is the principal symbol for the truncated operator $\mathcal{P}_{k_0}^0$ and not $\mathcal{P}$. This is to ensure that $O$ will be a classical (time-independent) $S^0$ symbol with bounds not depending on higher derivatives of $g^{ij}$ (however, they will depend on the frequency truncation scale $2^{k_0}$). The trade-off is that when commuting the equation for $u$ with $\mathcal{O}$, we will need to estimate an additional first order error term of the form
\begin{equation*}
[\mathcal{P}-\mathcal{P}_{k_0}^0,\mathcal{O}]u
\end{equation*}
in $Y^0$. It will turn out that this can be made small by taking $k_0, k_1$ large enough and $T$ small enough. We will discuss how to estimate this term later. For now, we start by proving \Cref{Oconst}.
\begin{proof}
 We make the ansatz $O(x,\xi)=e^{\psi(x,\xi)}$ where $\psi$ is some smooth real-valued function to be chosen. We begin by trying to enforce condition (i). For this, we recall that the vector field $H_a$ corresponds to differentiation along the Hamilton flow of $a$, which is given by (\ref{bicharacteristic}). That is,
\begin{equation*}
(H_a\psi)(x,\xi)=\frac{d}{dt}\psi(x^{t},\xi^{t})_{|t=0},
\end{equation*}
where $(x^{t},\xi^{t})$ are the bicharacteristics for $a$ with initial data $(x,\xi)$. We will perform our construction in two stages. That is, we will define two symbols $\psi_1$ and $\psi_2$ in $S^0$. The symbol $\psi_1$ will be chosen so that $H_a\psi_1$ cancels the bulk of the term $\chi_{<2R}B(x,\xi)$ but possibly with an additional error term which isn't small but has the redeeming feature that it is supported in the transition region $|x|\approx R$ where $g^{ij}_{<k_0}(0)$ is close to the corresponding flat metric. The second symbol $\psi_2$ will be chosen to correct $\psi_1$ so that the error term can be made sufficiently small. The full symbol $\psi$ will then be defined by $\psi:=\psi_1+\psi_2$. Inspired by the previous works \cite{craig1995microlocal, ichi1996remarks, hayashi1994remarks,MR2263709}, our starting point is to consider the ideal ``symbol"
\begin{equation*}
\psi_{ideal}(x,\xi):=-\frac{1}{2}\chi_{>1}(|\xi|)\int_{-\infty}^{0}B(x^{t}_{(x,\xi)},\xi^{t}_{(x,\xi)})+B(x^{t}_{(x,-\xi)},\xi^{t}_{(x,-\xi)})dt,
\end{equation*}
where $\chi_{>1}(|\xi|)$ is an increasing Fourier multiplier selecting frequencies $\geq 1$. We note that since $g^{ij}_{<k_0}(0)$ is nontrapping and $b^j,\nabla_xg^{ij}\in l^1X^{s_0-1}$, the integral in $\psi_{ideal}$ is well-defined. On a formal level, the commutator of the principal part of the equation with $Op(e^{\psi_{ideal}})$  conjugates away the leading part of the term $\operatorname{Re}\mathcal{B}_{k_0}^0u$, but as mentioned above, the symbol $\psi_{ideal}$ is not a classical $S^0$ symbol,  so it is not ideal to work with such a construction directly. In order to resolve this issue, we localize this symbol to the compact set $B_{2R}(0)$ by  instead defining
\begin{equation*}
\begin{split}
\psi_1(x,\xi):=-\frac{1}{2}\chi_{>1}(|\xi|)\chi_{<2R}(x)\int_{-\infty}^{0}(\chi_{<4R}B)(x^{t}_{(x,\xi)},\xi^{t}_{(x,\xi)})+(\chi_{<4R}B)(x^{t}_{(x,-\xi)},\xi^{t}_{(x,-\xi)})dt.
\end{split}
\end{equation*}
The corresponding pseudodifferential operator $Op(e^{\psi_1})$ will conjugate away the leading part of the first order term $\operatorname{Re}\mathcal{B}_{k_0}^0u$ within the ball $B_{2R}(0)$, which is the region where the $X^0\to Y^0$ operator bounds for $\mathcal{B}_{k_0}^0$ are expected to be large. The difficulty is then shifted to controlling the remaining errors in the exterior region, but now we have the benefit of $\psi_1$ being 
a genuine $S^0$ symbol (this fact will be confirmed below). We  remark that since $B(x,\xi)$ is real, $\psi_{1}$ is as well. Moreover, $\psi_1$ is even in $\xi$. 
\medskip

Since $B$ is odd in $\xi$, it is straightforward to verify that we have the leading order cancellation,  
\begin{equation}\label{firstsymb}
H_a\psi_1+\chi_{<2R}B(x,\xi)\geq -KR^{-1}|\chi'(\frac{1}{2}R^{-1}r)||\xi|-K\chi_{<2}(|\xi|),
\end{equation}
where $K>0$ is such that $K\gg_M \|\psi_{ideal}\|_{L^{\infty}}$. We remark that $K$ is uniformly bounded in $R$ because of \Cref{intalongbi}. The term on the right-hand side of \eqref{firstsymb} is not quite suitable for defining a symbol $r$ ensuring the bound in (i) (the corresponding operator need not have small $X^0\to Y^0$ bound due to the insufficient spatial decay in the first term). For this reason, we seek to further correct $\psi_1$ by a symbol $\psi_2$ which is supported in the region $|x|\gtrsim R$. Precisely, our aim will be to construct $\psi_2$ so that  
\begin{equation}\label{positivitybound1}
H_a\psi_2-KR^{-1}|\chi'(\frac{1}{2}R^{-1}r)||\xi|-K\chi_{<2}(|\xi|)+r(x,\xi)\geq 0
\end{equation}
where $r\in S^1$ is a suitable remainder term satisfying the bound in (i). Before proceeding, to simplify the notation somewhat, for the remainder of the proof we will write $A:=A(x)$ as a shorthand for $g^{ij}_{<k_0}(0)$ and $A_{\infty}$ as a shorthand for $g_{\infty}^{ij}$. We also define the functions $\theta(x,\xi):=\angle (x,A_{\infty}\xi)$, $\alpha(x,\xi):=\angle(x,A\xi)$, $\beta(\xi):=\angle(A\xi,A_{\infty}\xi)$ and $\gamma(x,\xi):=\frac{1}{2}(1+\cos(\theta))$. 
\medskip

Now, to proceed, we begin by recalling that the assumption (\ref{smallness}) ensures that we have the bounds
\begin{equation}\label{Aasymptotic}
|A-A_{\infty}|+|\nabla A|\ll \epsilon,\hspace{5mm} |x|>\frac{R}{8}.
\end{equation}
In particular, $A$ is close to the flat metric in $L^{\infty}$ when $|x|>\frac{R}{8}$. Now, let:
\begin{enumerate}
\item $\rho$ be a smooth, increasing function such that $\rho'\approx 1$ for $\frac{1}{7}\leq r\leq 2$, $\rho=0$ for $r\leq \frac{1}{8}$ and $\rho=1$ for $r\geq 3$. Define $\rho_R(x)=\rho(R^{-1}r)$ and $\rho_{\theta}(x,\xi)=\rho_R(x\gamma)$.
\item For $c\in [-1,1]$ and some fixed positive $\delta_0\ll 1$, let $\varphi_{<c}$ be a decreasing smooth function which vanishes for $x>c+\delta_0$ and is identically one for $x\leq c$. Define also $\varphi_{>c}:=1-\varphi_{<c}$. 
\end{enumerate}
We then define the symbol $\psi_2$ by
\begin{equation}\label{psi2def}
\begin{split}
\psi_2(x,\xi)&:=K'\chi_{>1}(|\xi|)\left(\rho_R\varphi_{<-\frac{1}{2}}(\cos(\theta))-\rho_{\theta}\varphi_{>-\frac{1}{2}}(\cos(\theta))\right)
\end{split}
\end{equation}
where $K'\gg K$ is a constant to be chosen.  We note that the weight $\rho_R$ is increasing in the direction of the bicharacteristics in the regions of phase space where they are outgoing with respect to the flat metric. In such regions, this will give a good bound from below for the bulk of $H_a\psi_2$. The purpose of $\rho_{\theta}$ will be to accomplish the same task in the incoming region as well as the regions of phase space where $A_\infty\xi$ is nearly orthogonal to $x$. In such regions, a purely radially increasing cutoff (such as $\rho_R$) would be insufficient. The reason we use the average $\frac{1}{2}(1+\cos(\theta))$ in the definition of $\rho_{\theta}$ is to ensure that $\rho_{\theta}$ still vanishes for a suitable range of $r$ on the support of $\varphi_{>-\frac{1}{2}}(\cos(\theta))$ ($r<\frac{R}{8}$, say). This, in particular,  ensures that the pointwise error between $A$ and $A_{\infty}$ is small on the support of $\rho_{\theta}\varphi_{>-\frac{1}{2}}(\cos(\theta))$. To verify that $\psi_2$ has the required properties, we first make note of the following simple algebraic computation.
\begin{lemma}\label{algebraic computation}
For $r>\frac{R}{8}$, we have
\begin{equation*}
A\xi\cdot \nabla_x \cos(\theta)=|A\xi|\left(\frac{\sin^2(\theta)}{r}+\delta(x,\xi)\right),
\end{equation*}
where $\delta(x,\xi)$ is an error term with  $|\delta(x,\xi)|\ll \frac{1}{r}$.
\end{lemma}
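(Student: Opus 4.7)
The plan is to compute $A\xi\cdot\nabla_x\cos(\theta)$ directly from the formula $\cos\theta=\frac{x\cdot A_\infty\xi}{|x|\,|A_\infty\xi|}$ (treating $\xi$ as a parameter for the purpose of the $x$-derivative) and then exploit the asymptotic flatness bound \eqref{Aasymptotic} to replace $A_\infty$ by $A$ up to acceptable errors. Since $\theta$ depends on $x$ only through the unit vector $x/|x|$, a straightforward computation gives
\begin{equation*}
\nabla_x\cos\theta=\frac{1}{|x|\,|A_\infty\xi|}\Big(A_\infty\xi-\frac{x\cdot A_\infty\xi}{|x|^2}\,x\Big),
\end{equation*}
which is a vector of length $|x|^{-1}|\sin\theta|$ perpendicular to $x$.

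Next, I would split $A\xi=A_\infty\xi+(A-A_\infty)\xi$ and pair each piece against $\nabla_x\cos\theta$. The ``main'' pairing is
\begin{equation*}
A_\infty\xi\cdot\nabla_x\cos\theta=\frac{1}{|x|\,|A_\infty\xi|}\Big(|A_\infty\xi|^2-\frac{(x\cdot A_\infty\xi)^2}{|x|^2}\Big)=\frac{|A_\infty\xi|}{r}\sin^2\theta,
\end{equation*}
which already gives the leading term $\frac{|A\xi|}{r}\sin^2\theta$ modulo the replacement $|A_\infty\xi|\to|A\xi|$. The remaining pairing satisfies
\begin{equation*}
\big|(A-A_\infty)\xi\cdot\nabla_x\cos\theta\big|\lesssim\frac{|(A-A_\infty)\xi|}{r},
\end{equation*}
and the replacement error $\big||A_\infty\xi|-|A\xi|\big|/r$ is bounded by the same quantity.

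Finally, to package the errors into the claimed $\delta$, I would factor out $|A\xi|$ and use the uniform non-degeneracy \eqref{nondegenerunif} of $A$, which gives $|(A-A_\infty)\xi|\lesssim\|A-A_\infty\|_{L^\infty}|\xi|\lesssim \|A-A_\infty\|_{L^\infty}|A\xi|$. Since $r>R/8$, the hypothesis \eqref{Aasymptotic} ensures $\|A-A_\infty\|_{L^\infty(\{|x|>R/8\})}\ll\epsilon\ll 1$ on the relevant region, so the total error can be written as $|A\xi|\,\delta(x,\xi)$ with $|\delta(x,\xi)|\lesssim \|A-A_\infty\|_{L^\infty}/r\ll 1/r$, as required. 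No step is really an obstacle here: the whole lemma is an algebraic identity plus an application of the exterior smallness \eqref{Aasymptotic}, and the only thing to be careful about is normalizing the error correctly so that the smallness appears as a factor, not just boundedness.
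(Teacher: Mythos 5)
Your proof is correct and takes essentially the same approach as the paper: both compute $\nabla_x\cos\theta$ from the explicit formula $\cos\theta = (x\cdot A_\infty\xi)/(r|A_\infty\xi|)$ and then reduce the error to the smallness of $A-A_\infty$ on $\{|x|>R/8\}$ via \eqref{Aasymptotic}. The only cosmetic difference is bookkeeping: you decompose $A\xi = A_\infty\xi + (A-A_\infty)\xi$ and bound the two resulting error terms directly, whereas the paper packages the same quantities as $\frac{|A\xi|}{r}(\cos\beta-\cos\alpha\cos\theta)$ and then isolates $\sin^2\theta$ using the angle variables $\alpha,\beta$.
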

\begin{proof}
This is a simple computation. We have
\begin{equation}\label{algebraiccomp}
\begin{split}
A\xi\cdot\nabla_x \cos(\theta)&=\frac{|A\xi|}{r}(\cos(\beta)-\cos(\alpha)\cos(\theta))
\\
&=\frac{|A\xi|}{r}\sin^2(\theta)+\frac{|A\xi|}{r}((\cos(\beta)-1)+\cos(\theta)(\cos(\theta)-\cos(\alpha))).
\end{split}
\end{equation}
By non-degeneracy of $A$ and $A_{\infty}$ and (\ref{Aasymptotic}), we have 
\begin{equation*}
|\cos(\alpha)-\cos(\theta)|+|\cos(\beta)-1|\ll 1,\hspace{5mm}r\geq\frac{R}{8}.
\end{equation*}
Taking $\delta$ to be the coefficient of $|A\xi|$ in the second term in the second line of (\ref{algebraiccomp}) concludes the proof.
\end{proof}
Now, we compute the Hamilton vector field applied to $\psi_2$. We define the remainder symbol $r\in S^1$ by
\begin{equation}\label{rdefinition}
r(x,\xi):=-\xi_i\xi_j\nabla_{\xi}\psi_2\cdot \nabla_x A^{ij}+K''\chi_{<2}(|\xi|),
\end{equation}
where $K''\gg K'$ is some sufficiently large constant. We note that $r$ essentially consists of the part of $H_a\psi_2$ in which $\psi_2$ is differentiated in $\xi$. This is expected to contribute a small $X^0\to Y^0$ operator norm because its principal part includes a factor of $\nabla_x A$ which is small in $l^1X^{s_0-1}$ when $|x|>\frac{R}{8}$. The subprincipal terms will contribute small $L_T^1L_x^2\to L_T^1L_x^2$ operator bounds by taking $T$ to be sufficiently small. We then have
\begin{equation}\label{Com step}
H_{a}\psi_2+r(x,\xi)\geq -2A\xi\cdot\nabla_x\psi_2+K''\chi_{<2}(|\xi|).
\end{equation}
 We now expand the first term on the right-hand side of \eqref{Com step} to obtain
\begin{equation*}\label{commutatoridentity}
\begin{split}
-A\xi\cdot\nabla_x\psi_2=-\frac{K'}{R}\chi_{>1}(|\xi|)|A\xi|\bigg(\cos(\alpha)\rho'(R^{-1}r)&\varphi_{<-\frac{1}{2}}(\cos(\theta))+\frac{R}{r}\left(\sin^2(\theta)+r\delta\right)\rho(R^{-1}r)\varphi'_{<-\frac{1}{2}}(\cos(\theta))\bigg)
\\
+\frac{K'}{R}\chi_{>1}(|\xi|)|A\xi|\bigg(\frac{1}{2}(\cos(\alpha)+\cos(&\beta))\rho'(R^{-1}r\gamma)\varphi_{>-\frac{1}{2}}(\cos(\theta))
\\
+\frac{R}{r}\big(\sin^2(\theta)+&r\delta\big)\rho(R^{-1}r\gamma)\varphi'_{> -\frac{1}{2}}(\cos(\theta))\bigg),
\end{split}
\end{equation*}
where $\alpha$ and $\beta$ are as in \Cref{algebraic computation}. If $\epsilon_0$ is small enough in (\ref{smallness}), we observe that on the support of $\rho'(R^{-1}r)\varphi_{<-\frac{1}{2}}(\cos(\theta))$, we have $\cos(\alpha)<-\frac{1}{3}$. Additionally, $(\sin^2(\theta)+r\delta)$ is non-negative on the support of $\rho(R^{-1}r)\varphi'_{<-\frac{1}{2}}(\cos(\theta))$ and $\rho(R^{-1}r\gamma)\varphi'_{>-\frac{1}{2}}(\cos(\theta))$.  Moreover, on the support of $\rho'(R^{-1}r\gamma)\varphi_{>-\frac{1}{2}}(\cos(\theta))$, we have $(\cos(\alpha)+\cos(\beta))>\frac{1}{3}$. 
\medskip

 By non-degeneracy of $A$, we can choose $K'$ depending only on $g$ so that 
\begin{equation*}
K'|A\xi|\gg K|\xi|.
\end{equation*}
Combining the above, we can arrange for
\begin{equation*}
-A\xi\cdot \nabla_x\psi_2\geq KR^{-1}|\chi'(\frac{r}{2R})||\xi|-\frac{K''}{2}\chi_{<2}(|\xi|),
\end{equation*}
where $K''$ is as in (\ref{rdefinition}). We then define the full symbol $\psi$ by
\begin{equation*}
\psi:=\psi_1+\psi_2.
\end{equation*}
It is left to verify the properties (i), (ii) and (iii) in \Cref{Oconst}. The positive commutator bound
\begin{equation*}
H_aO+\chi_{<2R}B(x,\xi)O(x,\xi)+r(x,\xi)O(x,\xi)\geq 0
\end{equation*}
follows easily from the chain rule and the above construction if $K'$ is large enough. Next, we verify that $r\in S^1$ and that $r$ has the operator bound
\begin{equation}\label{rbound}
\|Op(r)\|_{X^0\to Y^0}\leq\epsilon.    
\end{equation}
The fact that $r\in S^1$ is clear so we turn our attention to (\ref{rbound}). Using the definition of $r$, we can write
\begin{equation*}
Op(r)=(\chi_{>\frac{R}{8}}\nabla_xA^{ij})\cdot Op(\xi_i\xi_j\nabla_{\xi}\psi_2)+K''\chi_{<2}(|D|). 
\end{equation*}
Using the embedding $L_T^1L_x^2\subset Y^0$ and  that $\xi_i\xi_j\nabla_{\xi}\psi_2\in S^1$, we can estimate using simple paradifferential calculus and \Cref{symbcalc},
\begin{equation*}
\|(\chi_{>\frac{R}{8}}\nabla_xA^{ij}-T_{\chi_{>\frac{R}{8}}\nabla_xA^{ij}})\cdot Op(\xi_i\xi_j\nabla_{\xi}\psi_2)\|_{L_T^1L_x^2\to Y^0}\lesssim_{M,k_0} 1  .
\end{equation*}
Therefore, by H\"older's inequality in $T$, we have for $T$ small enough (depending on $M$ and $k_0$), 
\begin{equation*}
\|(\chi_{>\frac{R}{8}}\nabla_xA^{ij}-T_{\chi_{>\frac{R}{8}}\nabla_xA^{ij}})\cdot Op(\xi_i\xi_j\nabla_{\xi}\psi_2)\|_{X^0\to Y^0}\leq \epsilon. 
\end{equation*}
Hence, we now only need to show that the $X^0\to Y^0$ norm for $T_{\chi_{>\frac{R}{8}}\nabla_xA^{ij}}\cdot Op(\xi_i\xi_j\nabla_{\xi}\psi_2)$ can be made small. For this, let us define $\tilde{r}\in S^0$ by
\begin{equation*}
\tilde{r}(x,\xi)=\langle\xi\rangle^{-1}\xi_i\xi_j\nabla_{\xi}\psi_2.
\end{equation*}
By \Cref{symbcalc}, one can verify that the operator $\langle\nabla\rangle Op(\tilde{r})-Op(\xi_i\xi_j\nabla_{\xi}\psi_2)$ is bounded from $L_T^1L_x^2\to L_T^1L_x^2$ with norm depending only on $M$ and $k_0$. Therefore, by taking $T$ small, we can make the $X^0\to Y^0$ bound of this operator smaller than $\epsilon$. From \Cref{parabil} and the smallness assumption \eqref{smallness}, we then have
\begin{equation*}
\begin{split}
\|T_{\chi_{>\frac{R}{8}}\nabla_xA^{ij}}\cdot Op(\xi_i\xi_j\nabla_{\xi}\psi_2)\|_{X^0\to Y^0}&\lesssim \|\chi_{>\frac{R}{8}}\nabla_xA^{ij}\|_{l^1X^{s_0-1}}\|\langle\nabla\rangle\|_{X^0\to X^{-1}}\|Op(\tilde{r})\|_{X^0\to X^0}+\epsilon
\\
&\lesssim \epsilon \|Op(\tilde{r})\|_{X^0\to X^0}+\epsilon
\\
&\lesssim_{M} \epsilon.
\end{split}
\end{equation*}
Clearly, the $X^0\to Y^0$ bound for the remaining subprincipal term $K''\chi_{<2}(|D|)$ can be made small by taking $T$ small. This concludes the proof of (i). Now, we turn to (ii) and (iii). For this, we need the following lemma involving symbol bounds for $O$.
\begin{lemma}\label{Osymbounds}
The symbol $O$ constructed above satisfies the following bounds.
\begin{enumerate}
\item ($R$ independent $L^{\infty}$ bound). There is a constant $C_0$ depending only on the profile of $g(0)$ and on $M$ but not on $R$ or $k_0$ such that
\begin{equation*}
\|O(x,\xi)\|_{L^{\infty}_{x,\xi}}\lesssim C_0.
\end{equation*}
\item (Higher order symbol bounds). For every $|\alpha+\beta|\geq 2$, there is a constant $C_{\alpha,\beta}$ depending on $M$, $L(R)$, $R$ and $k_0$ such that
\begin{equation*}
\|\langle\xi\rangle^{|\alpha|}\partial^{\alpha}_{\xi}\partial^{\beta}_xO(x,\xi)\|_{L^{\infty}_{x,\xi}}\leq C_{\alpha,\beta}.
\end{equation*}
If $|\alpha+\beta|=1$, the constant can be taken to be uniform in $k_0$.
\end{enumerate}
\end{lemma}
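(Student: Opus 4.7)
Since $O = e^{\psi_1 + \psi_2}$, my plan is to reduce both conclusions to symbol bounds on $\psi_1$ and $\psi_2$ via Fa\`a di Bruno's formula: it suffices to show $\|\psi_j\|_{L^\infty} \lesssim 1$ uniformly in $R$ and $k_0$, and $|\partial^\alpha_\xi \partial^\beta_x \psi_j| \lesssim_{M, L(R), R, k_0} \langle\xi\rangle^{-|\alpha|}$ for $|\alpha+\beta|\geq 1$, with the first-order bound uniform in $k_0$. The contribution from $\psi_2$ is elementary throughout: by construction $|\psi_2|\leq 2K'$ where $K'$ depends only on $M$ and $g$; spatial derivatives fall either on $\rho_R$ or on $\rho_\theta$, each contributing an $R^{-1}$ factor, while the $\chi_{>1}(|\xi|)$ cutoff together with the $0$-homogeneity of $\cos\theta$ in $\xi$ supplies the required $\langle\xi\rangle^{-|\alpha|}$ decay under $\xi$-differentiation.

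The substantive analysis is for $\psi_1$. For the $L^\infty$ bound I would exploit the homogeneity $(x^t_{(x,\lambda\xi)},\xi^t_{(x,\lambda\xi)}) = (x^{\lambda t}_{(x,\xi)},\lambda \xi^{\lambda t}_{(x,\xi)})$ for $\lambda>0$; combined with the linearity of $B(x,\cdot)$ and the fact that $\chi_{<4R}$ is purely spatial, a change of variables $s=|\xi|t$ shows that for $|\xi|\geq 1$, $\psi_1(x,\xi)$ equals the corresponding integral along the unit-speed bicharacteristic starting from $(x,\hat\xi)$ with $\hat\xi = \xi/|\xi|$. Along the unit-speed flow, \Cref{xibond} gives $|\xi^s|\lesssim_M 1$, so
\begin{equation*}
|\psi_1(x,\xi)| \lesssim \int_{-\infty}^{0}\bigl(|b_{<k_0}(0)| + |\nabla g_{<k_0}(0)|\bigr)(x^s_{(x,\hat\xi)})\, ds.
\end{equation*}
Since $\|b_{<k_0}(0)\|_{l^1H^{s_0-1}}$ and $\|\nabla g_{<k_0}(0)\|_{l^1H^{s_0-1}}$ are uniformly bounded by $M$ independent of $k_0$ (projection is bounded on $l^1H^\sigma$), \Cref{intalongbi} closes the bound $\|\psi_1\|_{L^\infty}\lesssim (1+L(R_0))M$ uniformly in $R$ and $k_0$.

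For the higher-order derivatives of $\psi_1$ I would differentiate under the integral, using \Cref{bicharhigher} to control $|\partial^\alpha_\xi\partial^\beta_x x^t| + |\partial^\alpha_\xi\partial^\beta_x \xi^t| \leq e^{C(M_k)L(R)}$ on the portion of the bicharacteristic in $B_{4R}$, where $M_k := \|g_{<k_0}(0)\|_{C^{k+1}}$ is bounded polynomially in $2^{k_0}$ via Bernstein's inequality. The chain rule, together with the same rescaling as above to convert $\xi$-homogeneity into $\langle\xi\rangle^{-|\alpha|}$ decay, and the observation that derivatives of $\chi_{<2R}, \chi_{<4R}$ each yield a harmless $R^{-1}$, then closes the estimate with some constant $C_{\alpha,\beta}=C_{\alpha,\beta}(M,L(R),R,k_0)$. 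For the special $|\alpha+\beta|=1$ case, only first derivatives of the bicharacteristic flow enter, which by \Cref{bicharhigher} depend only on $\|g_{<k_0}(0)\|_{C^2}$; this in turn is controlled by $\|g(0)\|_{C^{2,\delta}}\lesssim 1+M$ uniformly in $k_0$ via Sobolev embedding from $l^1H^{s_0}$ (using $s_0>d/2+2$), which yields the stated uniformity in $k_0$.

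The main obstacle I anticipate is the bookkeeping when many $x$-derivatives fall on the spatial cutoffs $\chi_{<2R}, \chi_{<4R}$ simultaneously with $\xi$-derivatives being produced by the homogeneity rescaling, since one must then verify that the sharp $\langle\xi\rangle^{-|\alpha|}$ symbol decay survives the truncation by $\chi_{>1}(|\xi|)$ — a point which is only delicate in the low-frequency regime $|\xi|\lesssim 2$ where the claimed bound is in any case trivial by compactness.
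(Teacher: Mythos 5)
Your proof is correct and follows essentially the same route as the paper: reduce to separate symbol bounds for $\psi_1$ and $\psi_2$, use the homogeneity of the Hamilton flow plus a change of variables together with \Cref{xibond} and \Cref{intalongbi} to obtain the $R,k_0$-independent $L^\infty$ bound for $\psi_1$, and invoke \Cref{bicharhigher} with the chain rule for the higher-order symbol bounds. Your additional observation that the $|\alpha+\beta|=1$ case is uniform in $k_0$ because first-order variations of the bicharacteristic flow depend only on $\|g_{<k_0}(0)\|_{C^2}\lesssim 1+M$ is a correct elaboration of a point the paper leaves implicit.
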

The crucial thing to note here is that only the higher order symbol bounds for $O$ depend on $R$ and $k_0$ while the $L^{\infty}$ bound does not. 
\begin{proof} 
Clearly, it suffices to show each of the above symbol bounds for $\psi_1$ and $\psi_2$. Given the requisite bounds for $\psi_1$, the bounds for $\psi_2$ are clear. Therefore, we focus on $\psi_1$. We begin with the $L^{\infty}$ bound. By homogeneity of the bicharacteristic flow, it further suffices to show that
\begin{equation}\label{CKSbound}
\int_{\mathbb{R}}|B(x^{t},\xi^{t})|dt\lesssim_M C_0.
\end{equation}
By homogeneity and a change of variables, we have
\begin{equation*}
\int_{\mathbb{R}}|B(x^{t},\xi^{t})|dt=\int_{\mathbb{R}}|\xi|^{-1}|B(x^{t}_{\omega},|\xi|\xi^{t}_{\omega})|dt,
\end{equation*}
where $(x^t_{\omega},\xi^t_{\omega})$ denote the bicharacteristics with data $(x,\omega):=(x,\xi|\xi|^{-1})$. Then, we use \Cref{xibond} and the definition of the symbol $B$ to obtain 
\begin{equation*}
\begin{split}
|B(x^{t}_{\omega},|\xi|\xi^{t}_{\omega})|&\leq C_0|\xi||(b^j_{<k_0}(0))(x^{t}_{\omega})|+C_0|\xi||(\nabla_xA)(x^{t}_{\omega})|.
\end{split}
\end{equation*}
The estimate (\ref{CKSbound}) then follows (after possibly relabelling $C_0$) from \Cref{intalongbi}, using the fact that $b^{j}_{<k_0}(0), \nabla_xA\in l^1H^{s_0-1}$ with norm $\lesssim_M 1$. This yields the $L^{\infty}$ bound for $\psi_1$. The higher order symbol bounds follow immediately from \Cref{bicharhigher} and repeated applications of the chain rule.
\end{proof}
Now, we return to the proof of (\ref{L^2bound}). From the above lemma and \Cref{CVvariant}, we have the $L^2$ bound,
\begin{equation}\label{L^2bound1}
\|\mathcal{O}u\|_{L^2}\lesssim C_0\|u\|_{L^2}
\end{equation}
for $k_1$ large enough, with universal implicit constant. We next aim to establish the bound
\begin{equation}\label{inversebound}
\|u\|_{L^2}\lesssim C_0\|\mathcal{O}u\|_{L^2}.
\end{equation}
Using \Cref{symbcalc}, we see that $Op(e^{-\psi})$ is an approximate inverse for $Op(e^{\psi})$ in the sense that we have
\begin{equation*}
Op(e^{-\psi})Op(e^{\psi})=1+Op(q),
\end{equation*}
where $q\in S^{-1}$ with symbol bounds depending only on the symbol bounds for $\psi$. Therefore, we have
\begin{equation*}
u=S_{\geq k_1-4}u=Op(e^{-\psi})S_{\geq k_1-4}\mathcal{O}u+Op(\tilde{q})u ,   
\end{equation*}
where $\tilde{q}\in S^{-1}$ with uniform in $k_1$ symbol bounds. Hence, from \Cref{Sobolevbound} we obtain
\begin{equation*}
\|u\|_{L^2_x}\lesssim C_0\|\mathcal{O}u\|_{L^2_x}+C_1\|u\|_{H^{-1}_x},
\end{equation*}
where $C_0$ depends only on $M$ and $g(0)$ and $C_1$ depends on a finite collection of semi-norms $|O|_{S^0}^{(j)}$. Since $\|u\|_{H^{-1}_x}\lesssim 2^{-k_1}\|u\|_{L^2_x}$, we can take $k_1$ large enough so that
\begin{equation*}
\|u\|_{L^2_x}\lesssim C_0\|\mathcal{O}u\|_{L^2}.
\end{equation*}
This gives (\ref{inversebound}). The $Y^0\to Y^0$ and $X^0\to X^0$ bounds for $\mathcal{O}$ follow from \Cref{LEopbounds}. This establishes property (ii) of \Cref{Oconst}. The proof of property (iii) follows almost identical reasoning to the proof of (ii), using the fact that $\psi$ is even in $\xi$ for $|x|<\frac{R}{8}$. This completes the proof of \Cref{Oconst}. 
\end{proof}
\subsection{Proof of \Cref{mainEE}}
Now, we  complete the proof of \Cref{mainEE}. We will slightly abuse notation from here on and write $\lesssim_M$ to mean that the implicit constant in the corresponding estimate depends on $M$ and $C_0$ as above (but not on $R$). Moreover, we let $\mathcal{R}$  generically denote an error term such that
\begin{equation}\label{remainderform}
\|\mathcal{R}\|_{Y^{0}}\leq C(M,L)(\|f\|_{Y^{\sigma}}+\|v_0\|_{H^{\sigma}})+\epsilon\|v\|_{X^{\sigma}}.
\end{equation}
We apply $\mathcal{O}:=Op(O)$ from \Cref{Oconst} to equation \eqref{preppedeqn}. Writing $w:=\mathcal{O}u$, we obtain 
\begin{equation*}
\begin{split}
&i\partial_tw+\mathcal{P}w+\mathcal{O}\mathcal{B}_{k_0}^{0}u+[\mathcal{O},\mathcal{P}]u+\mathcal{O}\tilde{\mathcal{B}}_{k_0}^0\overline{u}=\mathcal{R},
\end{split}
\end{equation*}
where by the $Y^0\to Y^0$ bound for $\mathcal{O}$ (see (ii) in \Cref{Oconst}), $\mathcal{R}$ still satisfies the estimate (\ref{remainderform}) as long as $k_1$ is large enough. Performing similar frequency and time truncations as before and commuting $\mathcal{O}$ with the first order terms, we obtain 
\begin{equation*}
\begin{split}
&i\partial_tw+\mathcal{P}w+i\operatorname{Im}(\mathcal{B}_{k_0}^{0})w+\tilde{\mathcal{B}}_{k_0}^0\overline{w}+[\mathcal{O},\mathcal{P}_{k_0}^0]u+\chi_{<2R}\operatorname{Re}(\mathcal{B}_{k_0}^{0})\mathcal{O}u=\tilde{\mathcal{R}},
\end{split}
\end{equation*}
where
\begin{equation}\label{moreremainder}
\begin{split}
\tilde{\mathcal{R}}&=\mathcal{R}-[\mathcal{O},\tilde{\mathcal{B}}_{k_0}^0]\overline{u}-[\mathcal{O},\mathcal{B}_{k_0}^0]u-\chi_{\geq 2R}\operatorname{Re}(\mathcal{B}_{k_0}^0)\mathcal{O}u+\tilde{B}_{k_0}^0(\overline{\mathcal{O}u}-\mathcal{O}\overline{u})+[\mathcal{O},(\mathcal{P}_{k_0}^0-\mathcal{P})]u.
\end{split}
\end{equation}
We next estimate $\tilde{\mathcal{R}}$. To begin, note that the second and third terms in (\ref{moreremainder}) are zeroth order and can be estimated in $L_T^1L_x^2\subset Y^0$, so that
\begin{equation*}
\|[\mathcal{O},\tilde{\mathcal{B}}_{k_0}^0]\overline{u}+[\mathcal{O},\mathcal{B}_{k_0}^0]u\|_{Y^0}\lesssim_{M,L}T\|v\|_{X^{\sigma}},
\end{equation*}
which by taking $T$ small can be controlled by $\epsilon \|v\|_{X^{\sigma}}$. To get a suitable error estimate for the fourth term in (\ref{moreremainder}), we first note that by property (ii) in \Cref{Oconst}, we have
\begin{equation*}
\|\mathcal{O}u\|_{X^0}\lesssim C_0\|u\|_{X^0}  
\end{equation*}
if $k_1$ is large enough. Here, we recall crucially that $C_0$ is a $R$ independent constant. Therefore, it suffices to establish the bound
\begin{equation}\label{fourthtermbound}
\begin{split}
\|\chi_{\geq 2R}\operatorname{Re}(\mathcal{B}_{k_0}^0)\|_{X^0\to Y^0}\leq\epsilon. 
\end{split}
\end{equation}
Clearly, it suffices to work with the principal part of $\chi_{\geq 2R}\operatorname{Re}(\mathcal{B}_{k_0}^0)$ as the error term is bounded from $L_T^1L^2_x\to L_T^1L^2_x$. We can expand the principal part as
\begin{equation*}
\begin{split}
\chi_{\geq 2R}\operatorname{Re}(b_{<k_0}(0))m_1(D)+\chi_{\geq 2R}\nabla_x Am_2(D) 
\end{split}
\end{equation*}
 where $m_1, m_2\in S^1$ are suitable (matrix-valued) Fourier multipliers with symbol bounds independent of $M$, $L$ and $R$. We can replace the coefficients of $m_1$ and $m_2$ above with  the paradifferential operators $T_{\chi_{\geq 2R}\operatorname{Re}(b_{<k_0}(0))}$ and $T_{\chi_{\geq 2R}\nabla A}$, as the error is an operator which maps $L_T^1L_x^2$ to $L_T^1L_x^2$ with norm depending only on $M$ and $k_0$. Therefore, if $T$ is small enough, such an error term can be discarded. Using \Cref{parabil} and the asymptotic smallness (\ref{smallness}), the remaining term satisfies
\begin{equation*}
\|T_{\chi_{\geq 2R}\operatorname{Re}(b_{<k_0}(0))}m_1(D)+T_{\chi_{\geq 2R}\nabla A}m_2(D)\|_{X^0\to Y^0}\lesssim_M \epsilon (\|m_1(D)\|_{X^{0}\to X^{-1}}+\|m_2(D)\|_{X^{0}\to X^{-1}})\lesssim\epsilon.    
\end{equation*}
To deal with the fifth term in (\ref{moreremainder}), we do a similar analysis. Using the definition of $\tilde{B}_{k_0}^0$ and property (iii) in \Cref{Oconst}, we can write
 \begin{equation*}
\tilde{B}_{k_0}^0(\overline{\mathcal{O}u}-\mathcal{O}\overline{u})=\chi_{>\frac{R}{8}}\tilde{b}^j_{<k_0}(0)\partial_j(\overline{\mathcal{O}u}-\mathcal{O}\overline{u})     .
 \end{equation*}
 If $\widehat{u}$ is supported at high enough frequency, we can estimate using (ii) in \Cref{Oconst},
 \begin{equation*}
 \|\partial_j(\overline{\mathcal{O}u}-\mathcal{O}\overline{u})\|_{X^{-1}}\lesssim_M \|u\|_{X^0}.    
 \end{equation*}
 Combining this with the smallness
 \begin{equation*}
 \|\chi_{>\frac{R}{8}}\tilde{b}_{<k_0}(0)\|_{l^1X^{s_0-1}}\leq\epsilon   , 
 \end{equation*}
 we can argue as with the previous term to obtain
 \begin{equation*}
 \|\tilde{B}_{k_0}^0(\overline{\mathcal{O}u}-\mathcal{O}\overline{u})\|_{Y^0}\lesssim_M \epsilon\|v\|_{X^{\sigma}}.    
 \end{equation*}
Now, we turn to the most tricky part, which is estimating the last term in (\ref{moreremainder}). For this, we have the following lemma.
\begin{lemma}[Commutator bound]\label{trickycomlemma} For $k_0$ large enough and $T$ sufficiently small, there holds
\begin{equation}\label{trickycommutator}
\|[\mathcal{O},(\mathcal{P}_{k_0}^0-\mathcal{P})]u\|_{Y^0}\leq \epsilon\|v\|_{X^{\sigma}}.
\end{equation}
\end{lemma}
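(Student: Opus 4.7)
The plan is to convert the difference $\mathcal{P} - \mathcal{P}_{k_0}^0$ into a paradifferential operator whose coefficient can be made small, and then to show that commuting with $\mathcal{O}$ preserves this smallness in $Y^0$. Since $\widehat{u}$ is supported in $\{|\xi| \gtrsim 2^{k_1}\}$ with $k_1 \gg k_0$, multiplication by $g^{ij}_{<k_0}(0)$ agrees with $T_{g^{ij}_{<k_0}(0)}$ on $u$, so
\begin{equation*}
(\mathcal{P} - \mathcal{P}_{k_0}^0) u = \partial_j T_{\phi^{ij}} \partial_i u, \qquad \phi^{ij} := g^{ij} - g^{ij}_{<k_0}(0).
\end{equation*}
I split $\phi^{ij} = (g^{ij}(t) - g^{ij}(0)) + S_{\geq k_0} g^{ij}(0) =: \phi_1^{ij} + \phi_2^{ij}$. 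For any small $\delta > 0$ (available because $s_0 > \tfrac{d}{2}+2$ strictly), Bernstein gives $\|\phi_2^{ij}\|_{l^1 X^{s_0 - \delta}} \lesssim 2^{-k_0 \delta} M$, while the fundamental theorem of calculus, \eqref{asympflat}, and interpolation between $l^1 X^{s_0-2}$ and $l^1 X^{s_0}$ yield $\|\phi_1^{ij}\|_{l^1 X^{s_0 - \delta}} \lesssim M T^{\delta/2}$. Hence $\|\phi^{ij}\|_{l^1 X^{s_0 - \delta}}$ can be made as small as desired by first taking $k_0$ large and then $T$ small.

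Next, I would expand the commutator via Leibniz as
\begin{equation*}
[\mathcal{O}, \partial_j T_{\phi^{ij}} \partial_i] u = [\mathcal{O}, \partial_j]\, T_{\phi^{ij}} \partial_i u + \partial_j\, [\mathcal{O}, T_{\phi^{ij}}]\, \partial_i u + \partial_j T_{\phi^{ij}}\, [\mathcal{O}, \partial_i] u.
\end{equation*}
The outer two pieces are the easy ones: $[\mathcal{O}, \partial_i]$ and $[\mathcal{O}, \partial_j]$ lie in $OPS^0$ with symbol bounds controlled by the first $x$-derivative of $O$, which by part~(ii) of \Cref{Osymbounds} is uniform in $k_0$. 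After further expanding $\partial_j T_{\phi^{ij}} = T_{\partial_j \phi^{ij}} + T_{\phi^{ij}} \partial_j$, applications of \Cref{parabil} give bounds of the form $C(M,L)\,\|\phi^{ij}\|_{l^1 X^{s_0 - 1}}\,\|u\|_{X^0}$ with constants uniform in $k_0$, which are $\leq \epsilon\,\|v\|_{X^\sigma}$ by the smallness above.

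The main difficulty is the middle term $\partial_j [\mathcal{O}, T_{\phi^{ij}}] \partial_i u$, because a direct application of \Cref{commutatorestimate} produces the factor $\|\phi^{ij}\|_{l^1 X^{s_0}}$, which is only $O(M)$ and not small. To bypass this I would follow the proof of \Cref{commutatorestimate} and split the principal symbol of $[\mathcal{O}, T_{\phi^{ij}}]$ into the two pieces $P_1$ and $P_2$ involving $\nabla_x O$ and $\nabla_\xi O$ respectively. The crucial point is that, when applied to inputs at frequencies $\gtrsim 2^{k_1}$, the operator norms of $Op(\nabla_x O)$ and $Op(\nabla_\xi O)$ that enter the argument are controlled purely by the $L^\infty$ norms of these first derivatives via the high-frequency Calder\'on-Vaillancourt bound \Cref{CVvariant} and its local-energy analogue \eqref{hifreqLEop}; by part~(ii) of \Cref{Osymbounds}, these $L^\infty$ norms are uniform in $k_0$. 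This yields $\|\partial_j[\mathcal{O}, T_{\phi^{ij}}]\partial_i u\|_{Y^0} \leq C(M,L)\,\|\phi^{ij}\|_{l^1 X^{s_0-\delta}}\,\|u\|_{X^0}$ up to a lower-order remainder which involves higher seminorms of $O$ (and hence depends on $k_0$) but maps $H^{-2}_x \to L^2_x$ with norm $\lesssim \|g\|_{L^\infty_T C^{2,\epsilon}}$, and is therefore perturbative after taking $T$ small depending on $k_0$. Combining with the smallness of $\|\phi^{ij}\|_{l^1 X^{s_0 - \delta}}$ closes the estimate; the hardest aspect is precisely the extraction of a $k_0$-uniform constant in this middle commutator, which is what forces the combined use of the symbol construction from \Cref{Oconst}, part~(ii) of \Cref{Osymbounds}, and the high-frequency refinements \Cref{CVvariant} and \eqref{hifreqLEop}.
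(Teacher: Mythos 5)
Your smallness estimates for $\phi^{ij}=g^{ij}-g^{ij}_{<k_0}(0)$ and the treatment of the outer two pieces $[\mathcal{O},\partial_j]T_{\phi^{ij}}\partial_i u$ and $\partial_j T_{\phi^{ij}}[\mathcal{O},\partial_i]u$ are fine, and your idea of re-opening the proof of \Cref{commutatorestimate} to extract $k_0$-uniform constants via \Cref{CVvariant} and \eqref{hifreqLEop} is a plausible alternative to the paper's strategy. However, there is a genuine gap in the middle term, and it comes from your choice of Leibniz decomposition. The paper expands $\partial_j T_{g^{ij}}\partial_i = T_{\partial_j g^{ij}}\partial_i + T_{g^{ij}}\partial_i\partial_j$ \emph{before} commuting with $\mathcal{O}$, so that the delicate commutator $[\mathcal{O}, T_{g^{ij}}-g^{ij}_{<k_0}(0)]$ always lands on $\partial_i\partial_j u \in X^{-2}$, which is precisely the structure that \Cref{commutatorestimate} and its proof accommodate. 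Your expansion instead puts a derivative outside the commutator: $\partial_j[\mathcal{O},T_{\phi^{ij}}]\partial_i u$.

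This is not a cosmetic difference. When you say you will "follow the proof of \Cref{commutatorestimate}", you are relying on the fact that the lower-order remainder $R$ in that proof maps $H^{-2}_x\to L^2_x$. To put $\partial_j R\,\partial_i u$ into $L^1_T L^2_x$, you would instead need $R:H^{-1}_x\to H^1_x$. For an operator whose symbol involves $\nabla_x^2 g$, and hence is only $C^\epsilon$ in $x$ (since $g\in l^1H^{s_0}\hookrightarrow C^{2,\epsilon}$ with $\epsilon$ small), these two boundedness statements are not interchangeable; $R:H^{s-2}\to H^s$ is only available for $0\le s<\epsilon$, far from $s=1$. Commuting $\partial_j$ past $R$ to recover the $\partial_i\partial_j u$ structure does not help either, since $[\partial_j,R]$ would require a third spatial derivative of $g$. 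The fix is simply to adopt the paper's arrangement (or equivalently write $\partial_j[\mathcal{O},T_{\phi^{ij}}]\partial_i u=[\mathcal{O},T_{\phi^{ij}}]\partial_i\partial_j u+[\partial_j,[\mathcal{O},T_{\phi^{ij}}]]\partial_i u$ and handle the double commutator via the paraproduct structure of the \emph{principal} symbol only, absorbing the low-order part by taking $T$ small). One further note: the paper introduces the intermediate frequency scale $m$ with $k_0\ll m\ll k_1$ precisely because it applies \Cref{commutatorestimate} as a black box with its $k_0$-dependent constant and beats that constant by the extra factor $2^{-\delta m}$; your plan of reopening the proof to get a $k_0$-uniform constant would indeed remove the need for $m$ and is a genuine simplification, but it only becomes available once the outer $\partial_j$ is handled correctly.
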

\begin{proof}
Clearly, we can write
\begin{equation}\label{threeterms}
\begin{split}
[\mathcal{O},(\mathcal{P}-\mathcal{P}_{k_0}^0)]&=[\mathcal{O},(T_{\partial_ig^{ij}}-\partial_i g^{ij}_{<k_0}(0))\partial_j]+(T_{g^{ij}}-g^{ij}_{<k_0}(0))[\mathcal{O},\partial_i\partial_j]
\\
&+[\mathcal{O},(T_{g^{ij}}-g^{ij}_{<k_0}(0))]\partial_i\partial_j.
\end{split}
\end{equation}
The first term is zeroth order and is bounded from $L_T^1L_x^2\to L_T^1L_x^2$. Indeed, by taking $T$ small enough, it is a straightforward consequence of \Cref{symbcalc} and \Cref{CM} that
\begin{equation*}
\|[\mathcal{O},(T_{\partial_ig^{ij}}-\partial_i g^{ij}_{<k_0}(0))\partial_j]\|_{L_T^{\infty}L_x^2\to Y^0}\leq \epsilon.
\end{equation*}
The second and third terms are first order, and, as usual, must be dealt with carefully to extract the necessary smallness. We start with the second term which is a bit easier. Since $(T_{g^{ij}_{<k_0}(0)}-g_{<k_0}^{ij}(0))[\mathcal{O},\partial_i\partial_j]$ is bounded from $L_T^1L_x^2\to L_T^1L_x^2$, we can replace $T_{g^{ij}}-g_{<k_0}^{ij}(0)$ with $T_{g^{ij}-g_{<k_0}^{ij}(0)}$. Then by \Cref{parabil} and Bernstein inequalities, we have
\begin{equation*}
\begin{split}
\|T_{g^{ij}-g_{<k_0}^{ij}(0)}[\mathcal{O},\partial_i\partial_j]u\|_{Y^0}&\lesssim (\|S_{\geq k_0}g^{ij}\|_{l^1X^{s_0-1-\delta}}+\|g^{ij}_{<k_0}-g^{ij}_{<k_0}(0)\|_{l^1X^{s_0-1-\delta}})\|\langle\nabla\rangle^{-1}[\mathcal{O},\partial_i\partial_j]u\|_{X^{0}}
\\
&\lesssim_M 2^{-(1+\delta)k_0}\|\langle\nabla\rangle^{-1}[\mathcal{O},\partial_i\partial_j]u\|_{X^{0}},
\end{split}
\end{equation*}
for some $\delta>0$. As $\langle\nabla\rangle^{-1}[\mathcal{O},\partial_i\partial_j]\in OPS^0$, it suffices to consider its principal part when estimating the last term. This is because the subprincipal part is (crudely) bounded from $L_T^{\infty}H_x^{-\frac{1}{2}}$ to $L_T^{\infty}H_x^{\frac{1}{2}}\subset X^0$, so we can control such terms by using the fact that $u$ is localized to frequencies $\gtrsim 2^{k_1}$ to gain a smallness factor $2^{-\frac{k_1}{2}}$, and then take $k_1$ sufficiently large. To estimate the principal symbol $c_p$ for $\langle\nabla\rangle^{-1}[\mathcal{O},\partial_i\partial_j]$, we can use that $\nabla g^{ij}(0), b^j\in C^{1,\delta}$ and \Cref{bicharhigher} to obtain the bound
\begin{equation*}
\|c_p\|_{L^{\infty}}\lesssim \|\nabla_xO\|_{L^{\infty}}\lesssim_{M,R,L} 1,
\end{equation*}
with implicit constant independent of $k_0$. Therefore, by taking $k_0$ and $k_1$ large enough and applying \Cref{LEopbounds}, we obtain 
\begin{equation*}
2^{-(1+\delta)k_0}\|Op(c_p)u\|_{X^{0}}\lesssim 2^{-(1+\delta)k_0}\|c_p\|_{L^{\infty}}\|u\|_{X^0}\leq \epsilon \|v\|_{X^{\sigma}}.
\end{equation*}
Consequently, the second term in (\ref{threeterms}) can be estimated by
\begin{equation*}
\|(T_{g^{ij}}-g^{ij}_{<k_0}(0))[\mathcal{O},\partial_i\partial_j]u\|_{Y^0}\leq \epsilon\|v\|_{X^{\sigma}}.
\end{equation*}
It remains to estimate the third term in (\ref{threeterms}) which is the most delicate because the commutator itself involves the metric $g^{ij}$ at high frequencies. Our aim as above is to show that 
\begin{equation*}
\|[\mathcal{O},(T_{g^{ij}}-g^{ij}_{<k_0}(0))]\partial_i\partial_j u\|_{Y^0}\leq\epsilon\|v\|_{X^{\sigma}}.
\end{equation*}
Intuitively, this should be possible by taking $k_0$ large enough. There are, however, two complications in dealing with this. Firstly, the symbol bounds for $O$ depend on $k_0$. Secondly, the coefficient in the paradifferential operator $T_{g^{ij}}$ has limited regularity, so the standard pseudodifferential calculus cannot be directly applied. Our strategy is to split this term into three parts to separate the issues. We write
\begin{equation}\label{twoparts}
\begin{split}
[\mathcal{O},(T_{g^{ij}}-g^{ij}_{<k_0}(0))]\partial_i\partial_ju&=[\mathcal{O},(g^{ij}_{<m}-g^{ij}_{<k_0})]\partial_i\partial_ju
\\
&+[\mathcal{O},(T_{g^{ij}}-g^{ij}_{<m})]\partial_i\partial_ju
\\
&+[\mathcal{O},(g^{ij}_{<k_0}-g^{ij}_{<k_0}(0))]\partial_i\partial_ju,
\end{split}
\end{equation}
where $m$ is some universal parameter with $k_0\ll m\ll k_1$. For the first term, we do not need to worry about the presence of any functions of limited regularity, but we still need to worry about the dependence of $O$ on $k_0$. For the second term, by taking $m$ large enough, the $k_0$ dependence in $O$ should be a non-issue, which puts us in a position to use \Cref{commutatorestimate}. Control of the final term follows by taking $T\ll 2^{-2k_0}$ and averaging in $T$.
\medskip

Let us begin by analyzing the first term. The principal symbol $c_{p}$ for $[\mathcal{O},(g^{ij}_{<m}-g^{ij}_{<k_0})]\partial_i $ is given by 
\begin{equation*}
c_p=\{O,(g^{ij}_{<m}-g^{ij}_{<k_0})\}\xi_i.
\end{equation*}
Analogously to the principal part for the second term in (\ref{threeterms}), we have the bound
\begin{equation*}
\|\langle\xi\rangle\nabla_{\xi}O\|_{L^{\infty}}\lesssim_{M,R,L} 1.
\end{equation*}
To estimate the full commutator, we then use \Cref{symbcalc} to write
\begin{equation*}
[\mathcal{O},(g^{ij}_{<m}-g^{ij}_{<k_0})]\partial_i=\nabla_x (g^{ij}_{<m}-g^{ij}_{<k_0})\cdot Op(\nabla_{\xi}O\xi_i)+Op(r)
\end{equation*}
where $r\in S^0$ (with symbol bounds depending on $m$). Arguing as in the estimate for the second term in (\ref{threeterms}), it follows by using  \Cref{parabil}, then \Cref{LEopbounds}, then  taking $k_0$ large enough and $T$ small enough (depending on $m$, $M$, $R$ and $L$) that
\begin{equation*}
\|[\mathcal{O},(g^{ij}_{<m}-g^{ij}_{<k_0})]\partial_i\partial_ju\|_{Y^0}\leq \epsilon \|v\|_{X^{\sigma}}.
\end{equation*}
This takes care of the first term in (\ref{twoparts}). For the second term, it suffices to estimate $[\mathcal{O},T_{g^{ij}-g^{ij}_{<m}}]\partial_i\partial_ju$, as the error will be bounded from $L_T^1L_x^2\to L_T^1L_x^2$. To estimate this term, we simply use \Cref{commutatorestimate} to obtain
\begin{equation*}
\|[\mathcal{O},T_{g^{ij}-g^{ij}_{<m}}]\partial_i\partial_ju\|_{Y^0}\lesssim_{M,L,R,k_0}\|g^{ij}_{<m}-g^{ij}\|_{l^1X^{s_0-\delta}}\|u\|_{X^{0}}.
\end{equation*}
We then recall the smallness bound
\begin{equation*}
\|g^{ij}_{<m}-g^{ij}\|_{l^1X^{s_0-\delta}}\lesssim_M 2^{-\delta m},
\end{equation*}
which tells us that if $m$ is large enough relative to $k_0$, $R$, $L$ and $M$ then we have the estimate
\begin{equation*}
\|[\mathcal{O},T_{g^{ij}-g^{ij}_{<m}}]\partial_i\partial_ju\|_{Y^0}\leq\epsilon\|u\|_{X^0}\lesssim \epsilon \|v\|_{X^{\sigma}}.
\end{equation*}
 Finally, by averaging in $T$ and arguing similarly to the above, the last term in (\ref{twoparts}) can be controlled by $\epsilon\|v\|_{X^{\sigma}}$ by taking $T$ small enough. This completes the proof of \Cref{trickycomlemma}.
\end{proof}
Using the above lemma and \Cref{Oconst}, we now arrive at the following equation for $w$:
\begin{equation*}
\begin{split}
&i\partial_tw+\mathcal{P}w+i\operatorname{Im}(\mathcal{B}_{k_0}^{0})w+\tilde{\mathcal{B}}_{k_0}^0\overline{w}+[\mathcal{O},\mathcal{P}_{k_0}^0]u+\chi_{<2R}\operatorname{Re}(\mathcal{B}_{k_0}^{0})\mathcal{O}u=\mathcal{R},
\end{split}
\end{equation*}
where $\mathcal{R}$ is as in \eqref{remainderform}. To conclude, we make one final reduction. From \Cref{symbcalc}, $\mathcal{O}^{-1}:=Op(e^{-\psi})$ is an approximate inverse for $\mathcal{O}$ in the sense that we have $Op(\mathcal{O})Op(\mathcal{O}^{-1})=1+Op(q)$ for $q\in S^{-1}$. Therefore,  by estimating the error term generated by $Op(q)$ in $L_T^1L_x^2$, we can write
\begin{equation*}
\begin{split}
&i\partial_tw+\mathcal{P}w+i\operatorname{Im}(\mathcal{B}_{k_0}^{0})w+\tilde{\mathcal{B}}_{k_0}^0\overline{w}+iA(x,D)\mathcal{O}^{-1}w=\mathcal{R},
\end{split}
\end{equation*}
where
\begin{equation*}
A(x,D):=-i[\mathcal{O},\mathcal{P}_{k_0}^0]-i\chi_{<2R}\operatorname{Re}(\mathcal{B}_{k_0}^0)\mathcal{O}+Op(r)\mathcal{O},
\end{equation*}
and  $\mathcal{R}$ is again of the form (\ref{moreremainder}) (as long as $T$ is small enough).  By construction, $A(x,D)\mathcal{O}^{-1}$ is a time-independent pseudodifferential operator of order $1$ with non-negative principal symbol in $S^1$. Therefore, the above equation for $w$ is now in the form (\ref{idealequation}) with a source term $\mathcal{R}$ satisfying (\ref{remainderform}). Hence, \Cref{mainEEpara} easily follows by applying \Cref{basicEE}. 
\section{The local energy decay estimate}\label{LED sec} 
In this section, we complement the $L^2$ estimate in the previous section with an estimate for the local energy component of the norm $\|v\|_{X^{\sigma}}$ for a solution $v$ to (\ref{paralinflow}). For every $\sigma\geq 0$, we denote the local energy component of $X^{\sigma}$ by 
\begin{equation*}
\|v\|_{\mathcal{X}^{\sigma}}=\left(\sum_{j\geq 0}2^{2j(\sigma+\frac{1}{2})}\|S_ju\|_{X}^2\right)^{\frac{1}{2}}.    
\end{equation*}
We remark that we have the obvious embedding $\|v\|_{\mathcal{X}^{\sigma}}\lesssim \|v\|_{L_T^2H_x^{\sigma+\frac{1}{2}}}$.
\subsection{The local energy estimate}
The local energy estimate we will need for (\ref{paralinflow}) is given by the following proposition.
\begin{proposition}\label{LEreduction}
Let $\sigma\geq 0$ and let $s_0$, $g^{ij}$, $b^j$ and $\tilde{b}^j$ be as in \Cref{linearestimate} with parameters $M$ and $L$. Suppose that $v$ solves \eqref{paralinflow} and let $\epsilon>0$. There is $T_0=T_0(\epsilon)>0$ such that for $0\leq T\leq T_0$, we have the  local energy bound 
\begin{equation}\label{LEestimate_ref}
\|v\|_{\mathcal{X}^{\sigma}}\leq C(M,L)(\|v\|_{L_T^{\infty}H_x^{\sigma}}+\|f\|_{Y^{\sigma}})+\epsilon\|v\|_{X^{\sigma}},
\end{equation}
where $C(M,L)$ depends on $M$ and on the parameter $L$ within some fixed compact set depending on $\epsilon$. 
\end{proposition}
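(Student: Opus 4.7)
My plan is to follow the blueprint sketched in Section \ref{OOTP} and implement a spatially truncated positive commutator argument in the spirit of Doi \cite{ichi1996remarks}. First, I would reduce to the case $\sigma = 0$, since commuting the paradifferential equation with $\langle \nabla \rangle^\sigma$ produces only error terms of the same type handled in Section \ref{Sec lin flow}. By \Cref{hifreqlemma} I may further assume $\widehat v$ is supported in $\{|\xi| \gtrsim 2^{k_1}\}$ for $k_1$ large, which unlocks the high-frequency operator bounds in \Cref{CVvariant} and \Cref{LEopbounds}. Fixing $R>0$ so that the asymptotic smallness \eqref{smallness} holds outside some $B_{R_0}(0)$ with $R_0 \ll R$, I would apply the small data result \Cref{smalldata} to $\chi_{>2R}v$ exactly as in the proof of \Cref{l1reduction}; this reduces \eqref{LEestimate_ref} to the interior bound
\begin{equation*}
\|\chi_{<4R}v\|_{L_T^2 H_x^{1/2}} \leq C(M,L)(\|v\|_{L_T^{\infty}L_x^2} + \|f\|_{Y^0}) + \epsilon \|v\|_{X^0}.
\end{equation*}

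Second, I would rewrite the paradifferential equation as a system for $\mathbf{v} := (v,\bar v)$ and, as in \Cref{Sec lin flow}, freeze the metric and the first order coefficients at $t=0$ and truncate them below frequency $2^{k_0}$. This yields a time-independent principal operator $\mathbf{P}_{k_0}^{0}$ with symbol $a(x,\xi) = -g_{<k_0}^{ij}(0)\xi_i\xi_j$ and a time-independent matrix first order operator $\mathbf{B}_{k_0}^{0}$, the truncation errors being absorbed into a remainder $\mathbf R$ estimated by Lemmas \ref{secondsource}, \ref{Source terms from commutation}, \ref{Rk3est}. The heart of the proof is then the construction of a matrix-valued, real, time-independent symbol $q \in S^0$ of the form
\begin{equation*}
q(x,\xi) = \exp\bigl(C(M)(p_1 + p_2 + p_3)(x,\xi)\bigr),
\end{equation*}
where, for an auxiliary parameter $R' \gg R$ to be chosen,
\begin{equation*}
p_1(x,\xi) := -\chi_{<R'}(x)\int_0^\infty \chi_{<R}(x^t)\,|\xi^t|\,dt,
\end{equation*}
\begin{equation*}
p_2(x,\xi) := -\chi_{<R'}(x)\int_0^\infty \chi_{<R'}(x^t)\sqrt{|b^{j}_{<k_0}(0)(x^t)|^2 + |\tilde b^{j}_{<k_0}(0)(x^t)|^2 + L(R')^{-2}}\,\langle \xi^t\rangle\,dt,
\end{equation*}
and $p_3$ is a transition-layer correction (analogous to $\psi_2$ in \Cref{Oconst}) designed to absorb the errors produced by $\nabla_x \chi_{<R'}$ within $|x|\approx R'$. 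By \Cref{globally well-defined}, \Cref{xibond}, \Cref{bicharhigher}, and \Cref{intalongbi}, each $p_j \in S^0$; the crucial point, as in \Cref{Oconst}, is that although the higher $S^0$ seminorms of $q$ grow with $R'$, its $L^\infty$ norm is uniform in $R'$, thanks to the $L(R')^{-2}$ regularization in $p_2$ and the bicharacteristic integrability from \Cref{intalongbi}.

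Third, I would execute the positive commutator argument. Differentiating $\langle \mathbf Q \mathbf v, \mathbf v\rangle$ in time and using the system, one obtains
\begin{equation*}
\frac{d}{dt}\langle \mathbf Q\mathbf v,\mathbf v\rangle = \operatorname{Re}\langle i[\mathbf Q,\mathbf P_{k_0}^{0}]\mathbf v,\mathbf v\rangle + 2\operatorname{Re}\langle i \mathbf Q \mathbf B_{k_0}^{0}\mathbf v,\mathbf v\rangle + 2\operatorname{Re}\langle \mathbf Q \mathbf R,\mathbf v\rangle.
\end{equation*}
By construction the principal symbol of $i[\mathbf Q,\mathbf P_{k_0}^{0}] + \mathbf Q \mathbf B_{k_0}^{0} + (\mathbf Q\mathbf B_{k_0}^{0})^*$ is bounded below, inside $B_R(0)$, by an elliptic nonnegative symbol of order one (this is the role of $p_1$) and, inside $B_{R'}(0)$, it dominates $\chi_{<R'}\mathbf B_{k_0}^{0}$ up to a nonnegative remainder (this is the role of $p_2$). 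The matrix-valued sharp G\r{a}rding inequality from \Cref{garding} and \Cref{matrixgard}, together with integration in time, then yields the desired control of $\|\chi_{<R}\mathbf v\|_{L_T^2 H_x^{1/2}}$ and of the first order tail $\chi_{<R'}\mathbf B_{k_0}^{0}\mathbf Q\mathbf v$. The residual exterior piece $\chi_{\geq R'}\mathbf B_{k_0}^{0}\mathbf Q \mathbf v$, which the positive commutator does \emph{not} absorb, is controlled by combining the smallness $\|(b,\tilde b)\chi_{>R_0}\|_{l^1X^{s_0-1}}\ll \epsilon$ from \eqref{smallness} with the high-frequency bound $\|\mathbf Q S_{\geq k_1-4}\|_{X^0\to X^0} \lesssim 1 + \|q\|_{L^\infty}$ from \Cref{LEopbounds}, which by the previous paragraph is independent of $R'$. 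This is what prevents the argument from becoming circular.

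The main obstacle, both conceptually and technically, is precisely the last step: engineering $q$ so that its $L^\infty$ norm is uniform in $R'$ while simultaneously ensuring that $H_a q$ dominates the first order perturbation $\mathbf B_{k_0}^{0}\cdot q$ throughout the large ball $B_{R'}(0)$. This is the decisive mechanism that trades the $R'$-dependent higher seminorms of $q$ against the $R'$-decaying smallness of the coefficients. Once $q$ has been built, the remaining tasks — estimating $[\mathbf Q,\mathbf P - \mathbf P_{k_0}^{0}]\mathbf v$ in $Y^0$ via the now-familiar splitting developed in \Cref{trickycomlemma} and \Cref{commutatorestimate}, treating the non-principal parts of the commutator, and bounding $\langle \mathbf Q \mathbf R,\mathbf v\rangle$ via $X^0$--$Y^0$ duality — are handled by exactly the same machinery as in Sections \ref{Prelims} and \ref{Sec lin flow} and should be essentially routine.
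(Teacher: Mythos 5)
Your proposal tracks the paper's argument closely: the exterior reduction via \Cref{smalldata} and the argument in \Cref{l1reduction}, the time and frequency truncation of the coefficients, the construction of $q=e^{C(M)(p_1+p_2+p_3)}$ with $L^\infty$ norm uniform in $R'$, and the interplay between the positive commutator, G\r{a}rding, and the $R'$-independent high-frequency $X^0\to X^0$ bound on $\mathbf Q$ from \Cref{LEopbounds} are all the right ingredients, and the paper's proof does exactly this. There is, however, a genuine gap.

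You propose to ``reduce to $\sigma=0$'' on the grounds that the $\langle\nabla\rangle^\sigma$-commutator only produces errors of the type handled in \Cref{Sec lin flow}. This is not correct for $\sigma>0$. Commuting $\mathcal{P}_{k_0}^0$ with $\langle\nabla\rangle^\sigma$ produces a \emph{first-order} operator $-[\mathcal{P}_{k_0}^0,\langle\nabla\rangle^\sigma]\langle\nabla\rangle^{-\sigma}$ whose coefficients are of size $\sim|\nabla_x g_{<k_0}^{ij}(0)|$, which is $O(M)$, not small. If you place it in the source and estimate in $Y^0$ via \Cref{parabil}, you obtain a contribution of order $M\|v\|_{X^\sigma}$ rather than $\epsilon\|v\|_{X^\sigma}$, and since $\mathcal{X}^\sigma$ is only the local-energy component of $X^\sigma$, the left-hand side of \eqref{LEestimate_ref} cannot absorb this. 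The paper instead keeps this commutator inside $\mathcal{B}_{k_0}^0$, and correspondingly the quantity $\eta_{R'}$ defining your $p_2$ must include $|\nabla_x g_{<k_0}^{ij}(0)|^2$ under the square root so that the positive commutator also dominates this first-order term inside $B_{R'}(0)$; your $p_2$ omits this piece, so as written the argument would only close for $\sigma=0$. A smaller point in the same vein: you call $q$ ``matrix-valued'' but give a scalar formula, and you never specify how the scalar $q$ becomes the matrix operator $\mathbf Q$. The paper forms $\mathbf q=\operatorname{diag}(q,-q)$, and the sign flip is essential: since $\mathbf P=i\operatorname{diag}(-\mathcal P,\mathcal P)$ itself flips sign between blocks, taking $q\,\mathbf I_{2\times 2}$ would give $[\mathbf Q,\mathbf P_{k_0}^0]$ the principal symbol $\operatorname{diag}(H_aq,-H_aq)$, which is indefinite and useless for the G\r{a}rding step; with $\mathbf q=\operatorname{diag}(q,-q)$ it becomes $H_aq\,\mathbf I_{2\times 2}$, which is the sign-definite symbol the argument requires.
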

Fix $\delta>0$ to be some small parameter to be chosen. From (\ref{summary}) in the previous section, we can choose $k_0$ sufficiently large and $T$ sufficiently small so that $u:=\langle\nabla\rangle^{\sigma}v$ solves the equation
\begin{equation*}\label{preppedeqn2}
i\partial_tu+\mathcal{P}u+\mathcal{B}_{k_0}^0u+\tilde{\mathcal{B}}_{k_0}^0\overline{u}=\mathcal{R},
\end{equation*}
with the remainder estimate
\begin{equation*}\label{gbound}
\|\mathcal{R}\|_{Y^0}\lesssim \|f\|_{Y^{\sigma}}+\delta\|v\|_{X^{\sigma}}.
\end{equation*}
Also, as in the previous section, we may assume that $u$ is localized to frequencies $\gtrsim 2^{k_1}$, where $k_1$ is some sufficiently large parameter to be chosen. Unlike with the $L^2$ estimate, however, we will not need the added energy structure coming from the complex-conjugate first order term. It is therefore convenient to write the equation as a system in $u$ and $\overline{u}$. In doing this, we obtain the following compact form of the paradifferential linear equation:
\begin{equation*}\label{systemform2}
\partial_t\bold{u}+\textbf{P}\bold{u}+\textbf{B}_{k_0}^0\bold{u}=\bold{R},
\end{equation*}
where
\begin{equation}\label{Refchange2}
\textbf{P}:=
i
\begin{pmatrix}
-\mathcal{P} & 0 \\
0 & \mathcal{P}
\end{pmatrix},\hspace{5mm}\textbf{B}_{k_0}^0:=i\begin{pmatrix}-\mathcal{B}_{k_0}^0 & -\mathcal{\tilde{B}}_{k_0}^0 \\
\hspace{2mm}\overline{\mathcal{\tilde{B}}_{k_0}^0} & \hspace{2mm}\overline{\mathcal{B}_{k_0}^0}
\end{pmatrix},\hspace{5mm}\bold{u}:=\begin{pmatrix}
u \\
\overline{u}
\end{pmatrix},
\end{equation}
and $\bold{R}$ is a source term satisfying the bound
\begin{equation}\label{sourceR1bound}
\|\bold{R}\|_{Y^0}\lesssim \|f\|_{Y^{\sigma}}+\delta\|v\|_{X^{\sigma}}.
\end{equation}
 We define analogously to before the truncated principal operator $\bold{P}_{k_0}^0$ by replacing the nonzero entries in $\bold{P}$ with $\mathcal{P}_{k_0}^0$ in the natural way.  
 \medskip
 
By using \Cref{smalldata} and arguing similarly to the proof of \Cref{l1reduction}, for each $R>0$ large enough and $T$ small enough, there holds
\begin{equation*}
\|\chi_{\geq R}\bold{u}\|_{\mathcal{X}^{0}}\leq C(M,R)(\|v\|_{L_T^{\infty}H_x^{\sigma}}+\|\bold{R}\|_{Y^{0}}+\|\chi_{<2R}\bold{u}\|_{L_T^2H_x^{\frac{1}{2}}})+\epsilon\|v\|_{X^{\sigma}}.
\end{equation*}
Therefore, to prove \eqref{LEestimate_ref},
it suffices to establish the bound 
\begin{equation}\label{inbound}
\|\chi_{<2R}\bold{u}\|_{L_T^2H_x^{\frac{1}{2}}}\leq C(M,L)(\|v\|_{L_T^{\infty}H_x^{\sigma}}+\|\bold{R}\|_{Y^{0}})+\epsilon\|v\|_{X^{\sigma}}.
\end{equation}
This latter estimate is where we will concentrate the bulk of our efforts in this section. 
\subsection{Interior estimate}
Now we turn to establishing the required interior estimate (\ref{inbound}). The main construction we will need is given by the following result, which can very loosely be thought of as a spatially truncated version of Doi's construction in \cite{ichi1996remarks}. Our method will work under far less stringent decay assumptions, however. For similar reasons to the previous section, we will again work with the principal symbol for the truncated operator $\bold{P}_{k_0}^0$ in our analysis rather than $\bold{P}$ directly (at the cost of estimating a term with the same flavor as (\ref{trickycommutator})). We will also write $|\bold{B}_{k_0}^0|$ to denote the maximum of the absolute values of the entries of the principal symbol for $\bold{B}_{k_0}^0$.
\begin{proposition}\label{compactregion} Let $C(M)>1$ be a constant depending on $M$ to be chosen. Moreover, let $k_0$ be large enough so that $g^{ij}_{<k_0}(0)$ is nontrapping with comparable parameters to $g^{ij}(0)$ (which is possible by \Cref{perturbationstable}). Define $a:=-g^{ij}_{<k_0}(0)\xi_i\xi_j$. Then for every $R'\gg R$ sufficiently large,  there is a smooth, non-negative, time-independent $S^0$ symbol $q\geq 1$ with the following properties: 
\begin{enumerate}
\item (Positive commutator in $B_{R'}(0)$ with small error). There exists $r\in S^1$ such that if $R'$ and $k_1$ are large enough and $T$ is sufficiently small relative to $R'$ and $k_1$, then we have 
\begin{equation*}
H_aq+C(M)rq\gtrsim C(M) \chi_{<R'}|\bold{B}_{k_0}^0|q,\hspace{5mm}\|Op(rq)S_{\geq k_1}\|_{X^0\to Y^0}\lesssim\frac{\epsilon}{C(M)}.
\end{equation*}
\item (Ellipticity in $B_{2R}(0)$). 
\begin{equation*} 
H_aq+C(M)rq\geq C(M)\chi_{<2R}|\xi|q,
\end{equation*}
where $r$ is as in (i).
\item (Zeroth order symbol bound). There is a constant $C_0(M,R)$ depending on $M$ and $R$ but not on $R'$ such that
\begin{equation*}
|q|\leq C_0.
\end{equation*}
\item (First order symbol bound). There is a constant $C_1(M,R,R')$ depending on $M$, $R$ and $R'$ such that
\begin{equation*}
|\xi||\nabla_{\xi}q|+|\nabla_xq|\leq C_1.    
\end{equation*}
\item (Higher order symbol bounds). There is a constant $C_2(M,R,R',k_0)$ depending on $M$, $R$, $R'$ and $k_0$ such that 
\begin{equation*}
\langle\xi\rangle^{|\alpha|}|\partial_{\xi}^{\alpha}\partial_x^{\beta}q|\lesssim_{\alpha,\beta} C_2,\hspace{5mm}|\alpha+\beta|\geq 2.
\end{equation*}
\end{enumerate}
\end{proposition}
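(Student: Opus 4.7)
The plan is to follow the outline in \Cref{OOTP} and adapt the construction of \Cref{Oconst}, but with a three-piece exponential designed to simultaneously deliver the ellipticity of (ii), the first-order domination of (i), and the $R'$-independent $L^\infty$ bound of (iii). I will set
\begin{equation*}
q(x,\xi) := \exp\!\bigl(C(M)\,[p_1(x,\xi)+p_2(x,\xi)+p_3(x,\xi)]\bigr),
\end{equation*}
with $p_1,p_2,p_3 \in S^0$ non-negative and $C(M)\geq 1$ large. By \Cref{perturbationstable}, for $k_0$ large enough the truncated metric $g^{ij}_{<k_0}(0)$ is nontrapping with parameter comparable to $L$, so by \Cref{globally well-defined} the Hamilton flow $(x^t,\xi^t)$ of $a=-g^{ij}_{<k_0}(0)\xi_i\xi_j$ is globally defined and satisfies $|\xi^t|\lesssim|\xi|$ (\Cref{xibond}). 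Using the flow identity $H_a\int_{-\infty}^0 f(x^t,\xi^t)\,dt=f(x,\xi)$, I define
\begin{equation*}
p_1(x,\xi) := \chi_{>1}(|\xi|)\,\chi_{<R'}(x)\!\int_{-\infty}^0 \tilde\chi_{<R}(x^t)\,|\xi^t|\,dt,
\end{equation*}
\begin{equation*}
p_2(x,\xi) := \chi_{>1}(|\xi|)\,\chi_{<R'}(x)\!\int_{-\infty}^0 \tilde\chi_{<R'}(x^t)\,w(x^t)\,\langle\xi^t\rangle\,dt,
\end{equation*}
where $w(x):=\sqrt{|b^j_{<k_0}(0)(x)|^2+|\tilde b^j_{<k_0}(0)(x)|^2+L(R')^{-2}}$ and $\tilde\chi_{<R}$, $\tilde\chi_{<R'}$ are smooth cutoffs equal to $1$ on $B_{2R}(0)$ and $B_{R'}(0)$. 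Flow homogeneity ($\xi\mapsto\lambda\xi$, $t\mapsto t/\lambda$) makes the integrands homogeneous of degree $1$ in $\xi$, so $p_1,p_2\in S^0$ thanks to the higher regularity bounds of \Cref{bicharhigher}.

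Applying $H_a$ to $p_1+p_2$ and invoking the flow identity gives, modulo the $\chi_{>1}(|\xi|)$ prefactor,
\begin{equation*}
H_a(p_1+p_2) = \chi_{<R'}(x)\bigl(\tilde\chi_{<R}(x)|\xi|+w(x)\langle\xi\rangle\bigr)+\mathcal E(x,\xi),
\end{equation*}
where $\mathcal E$ is supported in the transition annulus $|x|\sim R'$ and arises from $H_a$ falling on the outer $\chi_{<R'}$. Modelled on $\psi_2$ of \Cref{Oconst}, I choose $p_3$ as a radial-plus-angular cutoff built from the flat angle $\theta=\angle(x,g_\infty\xi)$, with an $L^\infty$ norm depending only on $M$, so that $H_ap_3$ pointwise majorises $|\mathcal E|$ in this annulus, up to subprincipal errors of the form $-\xi_i\xi_j\nabla_\xi p_3\cdot\nabla_x g^{ij}_{<k_0}(0)$. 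Collecting these together with the analogous $\nabla_\xi p_1,\nabla_\xi p_2$ errors into $r\in S^1$ and using $H_aq=C(M)\,q\,H_a(p_1+p_2+p_3)$, a choice of $C(M)$ sufficiently large yields (i) and (ii). The bound $\|Op(rq)S_{\geq k_1}\|_{X^0\to Y^0}\lesssim\epsilon/C(M)$ then follows from \Cref{parabil} and the asymptotic smallness (\ref{smallness}), using crucially the high-frequency variant of \Cref{LEopbounds}: because the $L^\infty$ norm of $q$ will be independent of $R'$, the operator $Op(q)S_{\geq k_1}$ has $X^0\to X^0$ and $Y^0\to Y^0$ norms independent of $R'$, preventing the bound on $r$ from being spoiled when multiplied against $q$. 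The first- and higher-order symbol bounds (iv)–(v) follow from direct differentiation combined with \Cref{bicharhigher}, with constants carrying exactly the allowed $(M,R,R')$ and $(M,R,R',k_0)$ dependencies.

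The main obstacle, and the whole point of the $L(R')^{-2}$ regulariser inside $w$, is the $R'$-independent $L^\infty$ bound (iii). For $p_1$, the definition of $L(R)$ combined with flow homogeneity and \Cref{xibond} implies that $x^t\in B_{2R}(0)$ on a time interval of length $\lesssim L(R)|\xi|^{-1}$, so $|p_1|\lesssim L(R)$ uniformly in $R'$. For $p_2$, I split $w\leq|b^j_{<k_0}(0)|+|\tilde b^j_{<k_0}(0)|+L(R')^{-1}$: the first two contributions integrate to $\lesssim_M 1$ by \Cref{intalongbi} applied to $b^j,\tilde b^j\in l^1H^{s_0-1}$ (using $s_0-1>d/2+1$), while the $L(R')^{-1}$ contribution is bounded by $L(R')^{-1}\cdot L(R')|\xi|^{-1}\cdot|\xi|\lesssim 1$ via the same flow-time estimate in $B_{R'}(0)$. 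Summing gives $|p_2|\lesssim_M 1$ uniformly in $R'$, yielding (iii) with $C_0=C_0(M,R)$. The regulariser $L(R')^{-2}$ thus plays a double role: it smooths out the possible zeros of $\sqrt{|b|^2+|\tilde b|^2}$ (so that $p_2\in S^0$) while remaining negligible as $R'\to\infty$. This $R'$-independent control on $q$ is precisely what breaks the potential circularity between the size of the renormalisation symbol and the smallness of the coefficient tails outside $B_R(0)$ — the same mechanism, now in the local energy setting, as the one used for the $L_T^\infty L_x^2$ estimate in \Cref{Sec lin flow}.
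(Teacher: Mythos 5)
Your construction follows the paper's own proof very closely — the same three-piece exponential built from flow integrals, the same $L(R')^{-2}$ regulariser, the same correction symbol modelled on $\psi_2$ for the transition annulus $|x|\sim R'$, and the same mechanism (Proposition~\ref{LEopbounds}) for getting $R'$-independent operator bounds from the $R'$-independent $L^\infty$ symbol bound. However, there is one concrete gap: your weight
\begin{equation*}
w(x)=\sqrt{|b^j_{<k_0}(0)(x)|^2+|\tilde b^j_{<k_0}(0)(x)|^2+L(R')^{-2}}
\end{equation*}
omits the term $|\nabla_x g^{ij}_{<k_0}(0)(x)|^2$ under the square root. This matters because $\mathcal{B}_{k_0}^0 = b^j_{<k_0}(0)\partial_j - [\mathcal{P}_{k_0}^0,\langle\nabla\rangle^\sigma]\langle\nabla\rangle^{-\sigma}$, and the principal symbol of the commutator piece is of the form $\xi_i\xi_j\nabla_x g^{ij}_{<k_0}(0)\cdot\nabla_\xi\langle\xi\rangle^\sigma\,\langle\xi\rangle^{-\sigma}$, which has size $\sim |\nabla_x g^{ij}_{<k_0}(0)|\,|\xi|$. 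So $|\bold{B}_{k_0}^0|$ is not dominated by $w\langle\xi\rangle$ on all of $B_{R'}(0)$: in the annulus $2R<|x|<R'$ the ellipticity from $p_1$ no longer helps, and $|\nabla_x g^{ij}_{<k_0}(0)|$ — while small there — need not be $\lesssim L(R')^{-1}$, so your lower bound fails to deliver (i) as stated. The paper's $\eta_{R'}$ includes $|\nabla_x g^{ij}_{<k_0}(0)|^2$ precisely to ensure $|\bold{B}_{k_0}^0|\lesssim \eta_{R'}|\xi|$ for $|x|\leq R'$.

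The fix is purely cosmetic: add $|\nabla_x g^{ij}_{<k_0}(0)|^2$ under the square root in $w$. Since $\nabla_x g^{ij}_{<k_0}(0)\in l^1H^{s_0-1}$, the $R'$-uniform $L^\infty$ bound (iii) still follows from Proposition~\ref{intalongbi} exactly as in your argument for the $b,\tilde b$ contributions. The rest of your write-up — the time-direction of the flow integral, the treatment of the $\chi_{>1}(|\xi|)$ and $\chi_{<R'}(x)$ cutoff errors via $p_3$ and $r$, the splitting of the $L^\infty$ bound into the $L(R)$ piece and the $l^1H^{s_0-1}$-integrability piece, and the high-frequency argument breaking the circularity — is in line with the paper's proof.
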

In the $R$ and $R'$ dependent constants above, we also allow for dependence on $L$ within $B_{R}(0)$ and $B_{R'}(0)$, respectively. The first property will allow us to control the contribution of the first-order terms in the equation within the larger compact set $B_{R'}(0)$, up to a small error term, as long as $\bold{u}$ is localized at high enough frequency. The second property will give us the required control of $\chi_{<2R}\bold{u}$ in $L_T^2H_x^{\frac{1}{2}}$ up to a suitable  error term. We importantly remark  that the zeroth order symbol bounds in (iii) for $q$ 
 depend only on $M$ and $R$ (more precisely, $L(R)$). This will ensure that the $Y^0\to Y^0$ bound for $Op(q)$  depends only on $M$ and $R$ as long as $\bold{u}$ is at sufficiently high frequency, thanks to \Cref{LEopbounds}. As a consequence, we may argue similarly to the previous section and treat the first-order terms in the region outside of $B_{R'}(0)$ perturbatively as long as $R'$ is large enough relative to $R$. We note that unlike in the construction in \Cref{Oconst}, this second parameter $R'$ is needed because the uniform norm of the symbol $q$ necessarily depends on the smaller radius $R$. 
 \medskip
 
 We emphasize that the first order symbol bounds in (iv) depend on $M$, $R$ and $R'$ but not on $k_0$. The purpose of this will be to control an error term that is similar to the commutator (\ref{trickycommutator}) from the previous section by taking $k_0$ large relative to $M$, $R$, and $R'$. The higher order symbol bounds in  property (v) will ensure that $q$ is a classical $S^0$ symbol and will allow us to estimate lower order error terms in $L_T^1L_x^2$ by taking $T$ small depending on $M$, $R$, $R'$ and $k_0$, similarly to the previous section.
\begin{proof}
We begin by defining a smooth function that will be suitable for controlling the size of the first order coefficients within the larger compact set $B_{R'}(0)$. A reasonable choice is the following:
\begin{equation*}
\eta_{R'}=\chi_{<2R'}\sqrt{|\tilde{b}_{<k_0}(0)|^2+|b_{<k_0}(0)|^2+|\nabla_x g_{<k_0}^{ij}(0)|^2+L(2R')^{-2}}    .
\end{equation*}
The term $L(2R')^{-2}$ is for technical convenience. It  ensures that $\eta_{R'}$ is smooth and allows us to invoke \Cref{intalongbi}  to obtain uniform integrability along the bicharacteristic flow for the truncated metric $g^{ij}_{<k_0}(0)$ with a bound independent of $R'$. Precisely, we have 
\begin{equation}\label{etaintalongbi}
\int_{\mathbb{R}}\eta_{R'}(x^{t},\xi^{t})|\xi^t|dt\leq C_0
\end{equation}
where $C_0$ is as above. Moreover, for $|x|\leq R'$, we clearly have $|\nabla_xg^{ij}_{<k_0}(0)||\xi|+|\bold{B}_{k_0}^0|\lesssim \eta_{R'} |\xi|$. Now, we move to constructing the symbol $q$. We start by defining a preliminary symbol $p_1$ via 
\begin{equation*}
p_1(x,\xi):=-\chi_{>1}(|\xi|)\chi_{<R'}\int_{0}^{\infty}(\chi_{<2R}+\eta_{R'})(x^{t},\xi^t)|\xi^{t}| dt,
\end{equation*}
where similarly to the construction for $O$ in \Cref{Oconst}, we localized the symbol in space to $B_{R'}(0)$ so that it will ultimately belong to $S^0$. As in \Cref{Oconst}, $H_ap_1$ will generate an error term coming from the localization $\chi_{<R'}$. To deal with this, we correct $p_1$ by another symbol $p_2$. To define $p_2$, we take our cue from the definition  (\ref{psi2def}) in the previous section. Using the same notation as in (\ref{psi2def}) with the parameter $R'$ replacing $R$ in all instances, we define
\begin{equation*}
p_2(x,\xi):=K'\chi_{>1}(|\xi|)\left(\rho_{R'}\varphi_{<-\frac{1}{2}}(\cos(\theta))-\rho_{\theta}\varphi_{>-\frac{1}{2}}(\cos(\theta))\right)    ,
\end{equation*}
where $K':=K'(R,M)$ is a constant such that 
\begin{equation}
K'\gg \sup_{(x,\xi)\in\mathbb{R}^{2d}}\chi_{>1}(|\xi|)\int_{0}^{\infty}(\chi_{<2R}+\eta_{R'})(x^{t},\xi^t)|\xi^{t}| dt.    
\end{equation}
We note that thanks to the nontrapping assumption and (\ref{etaintalongbi}), $K'$ can be chosen to depend only on $R$ and $M$, but not on $R'$. We then define $p:=p_1+p_2$ and analogously to (\ref{rdefinition}), we define the remainder symbol $r$ by
\begin{equation*}
r(x,\xi):=-\xi_i\xi_j\nabla_{\xi}p_2\cdot \nabla_x g^{ij}_{<k_0}(0)+K''\chi_{<2}(|\xi|)    ,
\end{equation*}
where $K''\gg K'$ is some sufficiently large constant. We then define the required symbol $q$ by
\begin{equation}\label{qdef}
q:=e^{C(M)p},
\end{equation}
for some sufficiently large constant $C(M)>0$. Now, we turn to establishing each property in \Cref{compactregion}. First, arguing similarly to the proof of the first property in \Cref{Oconst}, we compute directly that
\begin{equation}\label{commutatorqcomp}
\begin{split}
H_aq+C(M)rq\geq C(M)(\chi_{<2R}+\chi_{<R'}\eta_{R'})|\xi|q.
\end{split}
\end{equation}
From this, we immediately obtain the positive commutator bounds in (i) and (ii) in \Cref{compactregion}. The $X^0\to Y^0$ estimate for $Op(rq)S_{\geq k_1}$ follows from properties (iii)-(v) (to be established below), \Cref{LEopbounds} and  the fact that $\|\chi_{>R'}\nabla_xg^{ij}_{<k_0}(0)\|_{L^{\infty}}\to 0$ as $R'\to \infty$. Next, we verify the symbol bounds (iii)-(v). It clearly suffices to establish the analogous symbol bounds for the symbol $p_1$. To do this, we split
\begin{equation*}
p_1:=-\chi_{>1}(|\xi|)\chi_{<R'}\left(\int_{0}^{\infty}\eta_{R'}(x^{t},\xi^{t})|\xi^{t}| dt+\int_{0}^{\infty}\chi_{<2R}(x^{t},\xi^t)|\xi^t|dt\right)=:\chi_{>1}(|\xi|)\chi_{<R'}(a_1+a_2).    
\end{equation*}
By a change of variables and homogeneity, we have for each $\xi\neq 0$,
\begin{equation*}
a_i(x,\xi)=a_i\left(x,\frac{\xi}{|\xi|}\right),\hspace{5mm} i=1,2.
\end{equation*}
By (\ref{etaintalongbi}) and \Cref{xibond}, we then easily verify property (iii) for $\chi_{>1}(|\xi|) \chi_{<R'} a_1$. By the nontrapping assumption, one may verify (iii) for the symbol $\chi_{>1}(|\xi|) \chi_{<R'} a_2$ as well. Properties (iv) and (v) for both $\chi_{>1}(|\xi|)\chi_{<R'}a_1$ and $\chi_{>1}(|\xi|)\chi_{<R'}a_2$ are a straightforward consequence of \Cref{bicharhigher}. This completes the proof of \Cref{compactregion}.
\end{proof}
Now, we turn to establishing the main estimate (\ref{inbound}). We begin by defining the symbols $\bold{q}$ and $|\bold{q}|$:
\begin{equation*}
\bold{q}:=\begin{pmatrix}
q & 0 \\
0 & -q
\end{pmatrix},\hspace{5mm} |\bold{q}|:=q\bold{I}_{2\times 2}.
\end{equation*}
Define $\bold{Q}:=\frac{1}{2}Op(\bold{q})+\frac{1}{2}Op(\bold{q})^*$. Performing a similar calculation to \Cref{basicEE}, we note that $\bold{P}$ is skew-adjoint up to a $L_x^2\to L_x^2$ bounded error. Therefore, it is a straightforward algebraic manipulation to verify the inequality
\begin{equation*}
\operatorname{Re}\langle \bold{Q}\bold{P}\bold{u},\bold{u}\rangle\geq \frac{1}{2}\operatorname{Re}\langle  [\bold{Q},\bold{P}]\bold{u},\bold{u}\rangle-C_2\|v\|_{L_T^{\infty}H_x^{\sigma}}^2,
\end{equation*}
where $C_2$ is as in \Cref{compactregion}.
We then obtain the basic preliminary energy estimate, 
\begin{equation}\label{poscom}
\begin{split}
&\frac{1}{2}\operatorname{Re}\langle \bold{Q}\bold{u},\bold{u}\rangle(T)+ \int_{0}^{T}\operatorname{Re}\langle (\frac{1}{2}[\bold{Q},\bold{P}_{k_0}^0]+\bold{Q}\bold{B}_{k_0}^0)\bold{u},\bold{u}\rangle dt
\\
\leq &\frac{1}{2}\operatorname{Re}\langle \bold{Q}\bold{u},\bold{u}\rangle(0)+\frac{1}{2}\int_{0}^{T}\operatorname{Re}\langle [\bold{Q},(\bold{P}_{k_0}^0-\bold{P})]\bold{u},\bold{u}\rangle dt+\int_{0}^{T}\operatorname{Re}\langle 
\bold{Q}\bold{R},\bold{u}\rangle dt+C_2\|\bold{u}\|_{L_T^{\infty}L_x^2}^2.
\end{split}
\end{equation}
Now, we estimate each term in (\ref{poscom}). By Cauchy-Schwarz and \Cref{Sobolevbound}, we have
\begin{equation*}
|\operatorname{Re}\langle \bold{Q}\bold{u},\bold{u}\rangle(T)|+|\operatorname{Re}\langle \bold{Q}\bold{u},\bold{u}\rangle(0)|\leq C_2\|\bold{u}\|_{L_T^{\infty}L_x^2}^2. 
\end{equation*}
Next, by \Cref{compactregion}, the principal symbol $\bold{c}(x,\xi)$ of $[\bold{Q},\bold{P}_{k_0}^0]$ satisfies,
\begin{equation*}
\bold{c}(x,\xi)+C(M)rq\bold{I}_{2\times 2}
\geq \frac{1}{2}C(M)(\chi_{<2R}|\xi|q+\chi_{<R'}|\bold{B}_{k_0}^0|q)\bold{I}_{2\times 2}.
\end{equation*}
We can therefore choose $C(M)$ large enough so that
\begin{equation*}
\bold{c}(x,\xi)+C(M)rq\bold{I}_{2\times 2}-\chi_{<2R}|\xi|q\bold{I}_{2\times 2}-\frac{1}{2}C(M)\chi_{<R'}q|\bold{B}_{k_0}^0|\bold{I}_{2\times 2}\geq \bold{0}.
\end{equation*}
Then, the classical G\r{a}rding inequality \Cref{garding} along with its matrix version (see \Cref{matrixgard}) yields
\begin{equation}\label{gardingapplicationLE}
\begin{split}
\int_{0}^{T}\operatorname{Re}\langle (\frac{1}{2}[\bold{Q},\bold{P}_{k_0}^0]+\bold{Q}\bold{B}_{k_0}^0)\bold{u},\bold{u}\rangle dt&\gtrsim \|\chi_{<2R}\bold{u}\|_{L_T^2H_x^{\frac{1}{2}}}^2-C_2\|\bold{u}\|_{L_T^{\infty}L_x^2}^2-C(M)\|Op(rq)\bold{u}\|_{Y^0}\|\bold{u}\|_{X^0}
\\
&-\|\bold{Q}\chi_{>R'}\bold{B}_{k_0}^0\bold{u}\|_{Y^0}\|\bold{u}\|_{X^0},
\end{split}
\end{equation}
where we applied H\"older's inequality in $T$ to control the lower order error term in \Cref{matrixgard} by the $L_T^{\infty}L_x^2$ norm of $\bold{u}$ and the $Y^*=X$ duality to control the remaining first order terms. To control the first $Y^0$ error term on the right, we use property (i) from \Cref{compactregion} to estimate
\begin{equation*}
C(M)\|Op(rq)\bold{u}\|_{Y^0}\|\bold{u}\|_{X^0}\lesssim_M \epsilon\|v\|_{X^{\sigma}}^2,
\end{equation*}
which holds as long as $\bold{u}$ is localized at high enough frequency (i.e.~$k_1$ is large enough). To control the latter $Y^0$ error term, we first note that by \Cref{symbcalc}, the embedding $L_T^1L_x^2\subset Y^0$ and H\"older in $T$,  we have
\begin{equation*}
\|\bold{Q}\chi_{>R'}\bold{B}_{k_0}^0\bold{u}\|_{Y^0}\leq \|\chi_{>R'}\bold{B}_{k_0}^0\bold{Q}\bold{u}\|_{Y^0}+C_2\|\bold{u}\|_{L_T^{\infty}L_x^2}.   
\end{equation*}
Then by using \Cref{parabil} and arguing as with the analogous terms in the previous section, we have
\begin{equation*}
\|\chi_{>R'}\bold{B}_{k_0}^0\bold{Q}\bold{u}\|_{Y^0}\lesssim \|\chi_{>R'}(\tilde{b}_{<k_0}(0),b_{<k_0}(0),\nabla_xg^{ij}_{<k_0}(0))\|_{l^1X^{s_0-1}}\|\bold{Q}S_{>k_1-4}\|_{X^0\to X^0}\|\bold{u}\|_{X^0}+C_2\|\bold{u}\|_{L_T^{\infty}L_x^2}.    
\end{equation*}
Using the fact that the $L^{\infty}$ norm of the symbol $\bold{q}$ depends only on $R$ and $M$ and not on $R'$, we can take $k_1$ and $R'$ large enough so that \Cref{LEopbounds} and (\ref{smallness}) ensure that
\begin{equation*}
\|\chi_{>R'}(\tilde{b}_{<k_0}(0),b_{<k_0}(0),\nabla_xg^{ij}_{<k_0}(0))\|_{l^1X^{s_0-1}}\|\bold{Q}S_{>k_1-4}\|_{X^0\to X^0}\leq\epsilon.    
\end{equation*}
It  then follows by Cauchy-Schwarz and (\ref{gardingapplicationLE}) that we have
\begin{equation*}
\int_{0}^{T}\operatorname{Re}\langle (\frac{1}{2}[\bold{Q},\bold{P}_{k_0}^0]+\bold{Q}\bold{B}_{k_0}^0)\bold{u},\bold{u}\rangle dt\gtrsim \|\chi_{<2R}\bold{u}\|_{L_T^2H_x^{\frac{1}{2}}}^2-C_2\|\bold{u}\|_{L_T^{\infty}L_x^2}^2-\epsilon\|v\|_{X^{\sigma}}^2.    
\end{equation*}
Next, we estimate the contribution of the second term in the second line of (\ref{poscom}). The procedure here is essentially identical to the estimate in \eqref{trickycommutator}. Using the symbol bounds for $q$ in \Cref{compactregion} (specifically, that the derivatives of $q$ up to first order have uniform in $k_0$ bounds), we can estimate by taking $k_0$ large enough and $T$ small enough as in the proof of \Cref{trickycomlemma} to obtain
\begin{equation*}\label{trickycommutator2}
\|[\bold{Q},(\bold{P}_{k_0}^0-\bold{P}_k)]\bold{u}\|_{Y^0}\leq\epsilon \|v\|_{X^{\sigma}}.
\end{equation*} 
To estimate the third term in the second line of \eqref{poscom}, we use the $Y^*=X$ duality and \Cref{LEopbounds} to obtain
\begin{equation*}
\begin{split}
\int_{0}^{T}\operatorname{Re}\langle \bold{Q}\bold{R},\bold{u}\rangle dt &\leq C_0\|\bold{R}\|_{Y^0}\|v\|_{X^{\sigma}},
\end{split}
\end{equation*}
where the constant $C_0$ depends only on $M$ and $R$ if $k_1$ is large enough. Taking $T$ small enough in (\ref{sourceR1bound}) and using Cauchy-Schwarz, we  have 
\begin{equation*}\label{desiredsourcebound}
\int_{0}^{T}\operatorname{Re}\langle \bold{Q}\bold{R},\bold{u}\rangle dt\leq C_0\|f\|_{Y^{\sigma}}^2+\epsilon^2 \|v\|_{X^{\sigma}}^2.
\end{equation*}
Putting the above estimates together, we obtain
\begin{equation*}
\|\chi_{<2R}\bold{u}\|_{L_T^2H_x^{\frac{1}{2}}}\leq C_2(\|v\|_{L_T^{\infty}H_x^{\sigma}}+\|f\|_{Y^{\sigma}})+\epsilon\|v\|_{X^{\sigma}}.
\end{equation*}
This establishes (\ref{inbound}),
which completes the proof of 
\Cref{LEreduction}.
\section{Proof of the main linear estimate}\label{proofoflinest}
In this short section, we complete the proof of \Cref{linearestimate} by combining \Cref{mainEE} and \Cref{LEreduction}. First, note that by  \Cref{hifreqlemma}, \Cref{parasourcefirst} and \Cref{l1reduction}, it suffices to establish for small enough $T$, the bound
\begin{equation}\label{reducedbound}
\|v\|_{X^{\sigma}}\leq C(M,L)(\|v_0\|_{H^{\sigma}}+\|f\|_{Y^{\sigma}}),\hspace{5mm}\sigma\geq 0,    
\end{equation}
when $v$ is a solution to (\ref{paralinflow}) with $\widehat{v}$ supported at frequencies $\gtrsim 2^{k_1}$ for some arbitrarily large (but fixed) parameter $k_1$. Let $\epsilon>0$ be a small positive constant to be chosen. By \Cref{mainEEpara} and \Cref{LEreduction}, we have the initial estimate
\begin{equation}\label{startingest}
\|v\|_{L_T^{\infty}H_x^{\sigma}}+\|v\|_{\mathcal{X}^{\sigma}}\leq C(M,L)(\|v_0\|_{H^{\sigma}}+\|f\|_{Y^{\sigma}})+\epsilon\|v\|_{X^{\sigma}}.    
\end{equation}
We would like to strengthen this bound by replacing the left-hand side of (\ref{startingest}) with $\|v\|_{X^{\sigma}}$, which would suffice to complete the proof. For this, we require control of the slightly stronger (than the $L_T^{\infty}H_x^{\sigma}$) norm
\begin{equation*}
\|v\|_{\mathcal{Z}^{\sigma}}:=\left(\sum_{j\geq 0}2^{2j\sigma}\|S_jv\|_{L_T^{\infty}L_x^2}^2\right)^{\frac{1}{2}}.    
\end{equation*}
Since  $\|v\|_{X^\sigma}\lesssim \|v\|_{\mathcal{Z}^\sigma}+\|v\|_{\mathcal{X}^\sigma}$, (\ref{reducedbound}) will follow from (\ref{startingest}) and the following lemma, for $\epsilon$ small enough (depending on $M$ and $L$).
\begin{lemma}\label{controlofprincipal} Under the above assumptions, $v$ satisfies the following estimate in the space $\mathcal{Z}^{\sigma}$:
\begin{equation*}
\|v\|_{\mathcal{Z}^{\sigma}}\leq C(M,L)(\|v_0\|_{H^{\sigma}}+\|f\|_{Y^{\sigma}}+\|v\|_{\mathcal{X}^{\sigma}})+\epsilon\|v\|_{X^\sigma}.
\end{equation*}    
\end{lemma}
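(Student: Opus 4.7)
The strategy is to upgrade the $L^{\infty}_TH^{\sigma}_x$ bound of \Cref{mainEEpara} to the dyadically square-summable $\mathcal{Z}^{\sigma}$ bound by applying that estimate to each Littlewood-Paley piece $S_jv$ and then performing a weighted $\ell^2_j$-sum. Applying $S_j$ to \eqref{paralinflow}, we see that $S_jv$ satisfies a paradifferential equation of the same form, with data $S_jv_0$ and source $S_jf+\mathcal{E}_j$, where
\begin{equation*}
\mathcal{E}_j:=[S_j,\partial_kT_{g^{kl}}\partial_l]v+[S_j,T_{b^k}\partial_k]v+[S_j,T_{\tilde{b}^k}\partial_k]\overline{v}.
\end{equation*}
Since $\widehat{v}$, and hence $\widehat{S_jv}$, is supported at frequencies $\gtrsim 2^{k_1}$, we may apply \Cref{mainEEpara} at $\sigma=0$ (with the parameter $\epsilon$ there replaced by a suitably small multiple of the $\epsilon$ in the present lemma) to the equation for $S_jv$, giving
\begin{equation*}
\|S_jv\|_{L^{\infty}_TL^2_x}\leq C(M,L)\bigl(\|S_jv_0\|_{L^2}+\|S_jf\|_{Y^0}+\|\mathcal{E}_j\|_{Y^0}\bigr)+c\epsilon\|S_jv\|_{X^0}.
\end{equation*}

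Multiplying through by $2^{j\sigma}$, squaring and summing in $j\geq 0$, and noting that $S_jv$ is localized at frequency scale $2^j$, so that $\sum_j2^{2j\sigma}\|S_jv\|_{X^0}^2\approx\|v\|_{X^{\sigma}}^2$ and $\sum_j2^{2j\sigma}\|S_jf\|_{Y^0}^2\approx\|f\|_{Y^{\sigma}}^2$, we deduce
\begin{equation*}
\|v\|_{\mathcal{Z}^{\sigma}}\leq C(M,L)\bigl(\|v_0\|_{H^{\sigma}}+\|f\|_{Y^{\sigma}}+\mathcal{K}\bigr)+\epsilon\|v\|_{X^{\sigma}},\qquad \mathcal{K}:=\Bigl(\sum_{j\geq 0}2^{2j\sigma}\|\mathcal{E}_j\|_{Y^0}^2\Bigr)^{\frac{1}{2}}.
\end{equation*}
It therefore remains to establish the commutator bound
\begin{equation*}
\mathcal{K}\leq C(M,L)\|v\|_{\mathcal{X}^{\sigma}}.
\end{equation*}

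The dominant term in $\mathcal{E}_j$ is $\partial_k[S_j,T_{g^{kl}}]\partial_lv$, as the remaining two commutators are of order zero and are handled analogously (and in fact more easily). Expanding $T_{g^{kl}}=\sum_mS_{<m-4}g^{kl}S_m$, only the diagonal contributions $|j-m|\leq C$ survive, and $[S_j,S_{<m-4}g^{kl}]S_m$ is a standard Littlewood-Paley commutator against a low-frequency coefficient which gains a derivative. Applying \Cref{commutatorwithmultiplier} together with \Cref{comremark} (or \Cref{CM} for the zeroth order commutators) in the form adapted to the $l^1Y^0$ setting produces
\begin{equation*}
\|\partial_k[S_j,T_{g^{kl}}]\partial_lv\|_{Y^0}\lesssim \|g-g_{\infty}\|_{l^1X^{s_0}}\cdot 2^{j/2}\|\tilde{S}_jv\|_X,
\end{equation*}
where $\tilde{S}_j$ is a fattened Littlewood-Paley projection. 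Recognising $2^{j/2}\|\tilde{S}_jv\|_X$ as the local-energy component of $\|\tilde{S}_jv\|_{X_j}$, we conclude $\mathcal{K}\lesssim_M\|v\|_{\mathcal{X}^{\sigma}}$ upon square-summing with weight $2^{2j\sigma}$.

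The main technical subtlety is carrying out the commutator estimate in the presence of a low-regularity coefficient $g\in l^1X^{s_0}$, which prevents the direct use of classical pseudodifferential calculus; this is exactly the reason the dyadic bounds of \Cref{commutatorwithmultiplier}, \Cref{comremark} and \Cref{commutatorestimate} were established in \Cref{Prelims}, and their output is tailored so that the factor $\|\tilde{S}_jv\|_X$ reconstitutes the $\mathcal{X}^{\sigma}$ norm after summation. The only other mild point to verify is that $\|S_jv\|_{X^0}\approx \|S_jv\|_{X_j}$ for a frequency-localized function, which is immediate from the definitions of $X^0$ and $X_j$, and likewise for $\|S_jf\|_{Y^0}\approx \|S_jf\|_{Y_j}$.
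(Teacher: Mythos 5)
Your overall strategy matches the paper's: apply the $L^\infty_TL^2_x$ bound of \Cref{mainEEpara} to each Littlewood--Paley block $S_jv$, treat the frequency-localization commutators as forcing, and square-sum. The difficulty, which you correctly identify, is that the commutator sum $\mathcal{K}$ must be controlled by the \emph{local-energy piece only} $\|v\|_{\mathcal{X}^\sigma}$ (possibly plus $\epsilon\|v\|_{X^\sigma}$), and \emph{not} by $C(M,L)\|v\|_{X^\sigma}$, which would render the lemma useless when it is combined with \eqref{startingest} to prove \eqref{reducedbound}.

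The gap is in the justification of
\begin{equation*}
\|\partial_k[S_j,T_{g^{kl}}]\partial_lv\|_{Y^0}\lesssim \|g-g_{\infty}\|_{l^1X^{s_0}}\cdot 2^{j/2}\|\tilde{S}_jv\|_X.
\end{equation*}
You cite \Cref{commutatorwithmultiplier} and \Cref{comremark}, but those give a bound in terms of the \emph{full} norm $\|S_jv\|_{X^0}$, which contains both the local-energy component $2^{j/2}\|S_jv\|_X$ \emph{and} the component $\|S_jv\|_{L^\infty_TL^2_x}$. If one only uses that, square-summing produces $\mathcal{K}\lesssim_M\|v\|_{X^\sigma}$ with a constant that is not small, and the argument does not close. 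The paper avoids this by a further decomposition you did not make: it splits
\begin{equation*}
[T_{g^{ij}},S_k]\partial_i\partial_j\tilde{S}_kv=[S_{<k}g^{ij},S_k]\partial_i\partial_j\tilde{S}_kv+[T_{S_{<k}g^{ij}}-S_{<k}g^{ij},S_k]\partial_i\partial_j\tilde{S}_kv.
\end{equation*}
The second term is a genuine paradifferential remainder of order zero, estimated in $L^1_TH^\sigma_x$ and made $\epsilon$-small by shrinking $T$; this is what produces the allowed error $\epsilon\|v\|_{X^\sigma}$ in the lemma statement (so, incidentally, your aimed bound $\mathcal{K}\leq C(M,L)\|v\|_{\mathcal{X}^\sigma}$ with no $\epsilon$-piece is more than is proved or needed). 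For the first term, the paper exhibits the explicit translation-invariant bilinear representation
\begin{equation*}
[S_{<k}g^{ij},S_k]\partial_i\partial_j\tilde{S}_kv=2^{-k}L(S_{<k}\nabla_xg^{ij},\partial_i\partial_j\tilde{S}_kv),
\end{equation*}
with an $L^1$ kernel, and then invokes the high-low case of \Cref{bilinear}, whose right-hand side involves precisely $\|S_kv\|_{2^{-k/2}X}$ -- the local-energy piece alone. This is the step that actually buys the refinement from $X^\sigma$ to $\mathcal{X}^\sigma$, and it is the step missing from your argument: you need either this bilinear-kernel argument, or a sharpened version of \Cref{commutatorwithmultiplier} stated and proved with $2^{k/2}\|S_ku\|_X$ on the right. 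Finally, the first-order commutators $[S_j,T_{b^k}]\partial_kv$ and $[S_j,T_{\tilde b^k}]\partial_k\overline{v}$ are not, as you suggest, handled ``analogously and more easily'' so as to land in $\mathcal{X}^\sigma$; the paper estimates them via \Cref{CM} in $L^1_TH^\sigma_x$, which again produces only the $\epsilon\|v\|_{X^\sigma}$ term.
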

\begin{proof}
We begin by defining $v_k:=S_kv$ for each $k\geq 0$. We see that $v_k$ satisfies the equation
\begin{equation}\label{paraeqn2}
\begin{cases}
&i\partial_tv_k+\partial_jT_{g^{ij}}\partial_i v_k+T_{b^j}\partial_jv_k+T_{\tilde{b}^j}\partial_j\overline{v}_k=S_kf+\mathcal{R}_k,
\\
&v_k(0)=S_kv_0,
\end{cases}
\end{equation}
where
\begin{equation*}
\mathcal{R}_k:=[T_{g^{ij}},S_k]\partial_i\partial_j\tilde{S}_kv+[T_{\partial_jg^{ij}},S_k]\partial_i\tilde{S}_kv+[T_{b^j},S_k]\partial_j\tilde{S}_kv+[T_{\tilde{b}^j},S_k]\partial_j\tilde{S}_k\overline{v}
\end{equation*}
and $\tilde{S}_k$ is a fattened version of the dyadic multiplier $S_k$. By dyadic summation and \Cref{mainEE}, the proof of the lemma will be concluded if we can show that
\begin{equation}\label{Restimatefinal}
\|\mathcal{R}_k\|_{Y^{\sigma}}\leq C(M)\|\tilde{S}_kv\|_{\mathcal{X}^{\sigma}}+\epsilon\|\tilde{S}_kv\|_{X^{\sigma}}
\end{equation}
for some $\epsilon>0$ sufficiently small. This is an easy consequence of \Cref{CM} for the latter three terms as we can estimate these in $L_T^1H_x^{\sigma}$ and take $T$ small. To estimate the remaining term, we first observe that
\begin{equation*}
[T_{g^{ij}},S_k]\partial_i\partial_j\tilde{S}_kv=[T_{S_{<k}g^{ij}},S_k]\partial_i\partial_j\tilde{S}_kv=[S_{<k}g^{ij},S_k]\partial_i\partial_j\tilde{S}_kv+[T_{S_{<k}g^{ij}}-S_{<k}g^{ij},S_k]\partial_i\partial_j\tilde{S}_kv. 
\end{equation*}
The latter term above can be estimated easily in $L_T^1H_x^{\sigma}$ by the right-hand side of (\ref{Restimatefinal}) by using paradifferential calculus and then by taking $T$ small. For the remaining term, we use that
\begin{equation*}
[S_{<k}g^{ij},S_k]\partial_i\partial_j\tilde{S}_kv=2^{-k}L(S_{<k}\nabla_xg^{ij},\partial_i\partial_j\tilde{S}_kv) ,
\end{equation*}
where $L$ is a translation invariant operator of the form
\begin{equation*}
L(\phi_1,\phi_2)(x)=\int \phi_1(x+y)\phi_2(x+z)K(y,z)dydz,\hspace{5mm}\|K\|_{L^1}\lesssim 1.    
\end{equation*}
See, for instance, \cite{tao2001global2}. As the spaces $l^1X^{s_0}$ and $\mathcal{X}^{\sigma}$ are translation invariant (in that they admit translation invariant equivalent norms), it follows from \Cref{bilinear} that we have
\begin{equation*}
\|[S_{<k}g^{ij},S_k]\partial_i\partial_j\tilde{S}_kv\|_{Y^{\sigma}}\leq C(M)\|\tilde{S}_kv\|_{\mathcal{X}^{\sigma}}.    
\end{equation*}
This completes the proof of the lemma.
\end{proof}
\section{Well-posedness for the nonlinear flow}\label{finalsection}
Now, we proceed with the proof of \Cref{quadraticmain}. By differentiating (\ref{fulleqn}), we obtain an equation for $(u,\nabla u)$ of the form (\ref{div-form}). Therefore, it suffices to prove the second part of the theorem for (\ref{div-form}). Given the key estimate and well-posedness in \Cref{linearestimate}, the scheme for proving this follows a very similar path to  \cite[Section 7]{marzuola2021quasilinear}. We only outline the main results and procedure here for the convenience of the reader, and refer to the corresponding parts of \cite{marzuola2021quasilinear} where relevant. A fully detailed exposition of a simplified version of the scheme that we employ below can  be found in \cite{ifrim2023local}. 
\\

The starting point is to rewrite the equation
\begin{equation}\label{finalsectioneqn}
\begin{cases}
&i\partial_tu+\partial_j g^{ij}(u,\overline{u})\partial_i u=F(u,\overline{u},\nabla u,\nabla\overline{u}),
\\
&u(0,x)=u_0(x),    
\end{cases}    
\end{equation}
in the paradifferential form
\begin{equation*}\label{paradifferentialflow}
\begin{cases}
&i\partial_tu+\partial_j T_{g^{ij}}\partial_i u+T_{b^j}\partial_ju+T_{\tilde{b}^j}\partial_j\overline{u}=G(u,\overline{u},\nabla u,\nabla\overline{u}),
\\
&u(0,x)=u_0(x),    
\end{cases}    
\end{equation*}
where 
\begin{equation*}
b:=-\partial_{(\nabla u)}F,\hspace{5mm}\tilde{b}:=-\partial_{(\nabla \overline{u})}F
\end{equation*}
and
\begin{equation*}
G(u,\overline{u},\nabla u,\nabla\overline{u}):=(\partial_jT_{g^{ij}}\partial_i-\partial_jg^{ij}\partial_i)u+F(u,\overline{u},\nabla u,\nabla\overline{u})+T_{b^j}\partial_ju+T_{\tilde{b}^j}\partial_j\overline{u}.
\end{equation*}

\subsection{Existence of  \texorpdfstring{$l^1X^s$}{}  solutions to the nonlinear equation} Our first aim is to establish existence of $l^1X^s$ solutions to the equation (\ref{div-form}) for small time. This is given by the following proposition.
\begin{proposition}\label{Existence} Let $s>\frac{d}{2}+2$ and let $u_0\in l^1H^s$ with $\|u_0\|_{l^1H^s}=M$. Suppose that $g(u_0)$ is a nontrapping, non-degenerate metric with parameters $R_0$ and $L$. Then there is $T_0>0$ depending on $M$, $L(R_0)$ and $R_0$ such that for every $T\leq T_0$, there exists a solution $u\in l^1X^s$ to \eqref{div-form} such that
\begin{enumerate}
\item ($l^1X^s$ bound).
\begin{equation*}
\|u\|_{l^1X^s}\leq C(M,L)\|u_0\|_{l^1H^s}.
\end{equation*}
\item(Smallness outside $B_{R_0}$).
\begin{equation*}
\|\chi_{>R_0}u\|_{l^1X^s}\leq 2\epsilon.   
\end{equation*}
\item (Comparable nontrapping parameter).
\begin{equation*}
L(u)\leq 2L(u_0).    
\end{equation*}
\end{enumerate}
\end{proposition}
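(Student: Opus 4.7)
The plan is to construct solutions by the standard regularization/bootstrap strategy: we regularize the initial data, produce smooth solutions on a uniform time interval using the linear machinery, and then pass to the limit in a weaker topology while retaining the $l^1X^s$ bound via frequency envelopes. Concretely, I would set $u_0^{(n)}:=S_{\leq n}u_0$ and produce, for each $n$, a smooth solution $u^{(n)}$ on some short time interval by standard quasilinear existence theory (or by a simple Picard-type iteration at high regularity, using that the paradifferential linear flow \eqref{paralinflow} with frozen coefficients is well-posed in $H^\sigma$ for large $\sigma$ by \Cref{linearestimate}). The essential point is to show that these solutions live on a common time interval $[0,T_0]$ with $T_0=T_0(M,L,R_0)$, and that they satisfy the three conclusions uniformly in $n$.

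To achieve this, I would run a continuity/bootstrap argument. Set the bootstrap hypothesis to be the three conclusions (i), (ii), (iii) with slightly relaxed constants on a subinterval $[0,T]\subset [0,T_0]$; the task is to improve each of them. Viewing $u^{(n)}$ as solving a linear equation of the form \eqref{paralinflow} with coefficients $g^{jk}(u^{(n)},\overline{u^{(n)}})$, $b^j$, $\tilde b^j$ determined from \eqref{coefficients1} and a source term $f$ coming from the paradifferential reduction, the algebra and Moser estimates in \Cref{Algebraproperty}, together with the bilinear/paradifferential estimates in \Cref{parabil}, show that the asymptotic flatness and size condition \eqref{asympflat} holds with a constant comparable to $M$. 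Hence \Cref{linearestimate}, and more precisely the frequency envelope version in \Cref{freqenvbound}, yields a bound
\[
\|S_k u^{(n)}\|_{l^1X^s}\le C(M,L)\, c_k\|u_0\|_{l^1H^s},
\]
where $c_k$ is an admissible $l^1H^s$ frequency envelope for $u_0$, plus controlled nonlinear contributions. Square-summing gives (i), while square-summing on cubes outside $B_{R_0}$ uses the asymptotic smallness of $u_0$ there and the small-cube version of the same estimate to deliver (ii).

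To close (iii), I would use the quantitative nontrapping stability \Cref{perturbationstable}: since $u^{(n)}(t)-u_0$ controls $g(u^{(n)}(t))-g(u_0)$ in $l^1H^{s_0}$ via the Moser bound in \Cref{Algebraproperty}, and since the fundamental theorem of calculus together with the equation gives
\[
\|u^{(n)}(t)-u_0\|_{l^1H^{s_0-2}}\le C(M)\, t,
\]
we can interpolate with the uniform $l^1H^s$ bound from step (i) to produce smallness in $l^1H^{s_0}$ on a time interval whose length depends only on $M$, $L(R_0)$ and $R_0$; choosing $T_0$ small enough makes this smallness beat the threshold $e^{-C_0L(R_0)}$ in \Cref{perturbationstable}, so that $g(u^{(n)}(t))$ remains nontrapping with nontrapping function comparable to $L$. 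The hardest point is arranging all three improvements to close simultaneously with a single choice of $T_0$ depending on the stated parameters, since the nontrapping threshold is exponentially sensitive to $L(R_0)$ while the linear estimate constants also depend on $L$; this forces the order in which one picks thresholds, regularization scale, and $T_0$.

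Finally, with the uniform bounds in place, I would pass to the limit $n\to\infty$. Uniform $l^1X^s$ bounds together with the equation yield uniform bounds on $\partial_t u^{(n)}$ in a weaker space, so by Aubin–Lions one extracts a subsequence converging to some $u$ in $C([0,T_0];l^1H^{s-\delta})$, strongly enough to pass to the limit in the nonlinearity $\partial_j g^{jk}(u,\overline u)\partial_k u$ and $F(u,\overline u,\nabla u,\nabla \overline u)$. Lower semicontinuity of the $l^1X^s$ norm under weak limits propagates (i) and (ii) to the limit $u$, and the nontrapping parameter (iii) passes to the limit by the stability result \Cref{perturbationstable} applied one last time. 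The details are entirely parallel to \cite[Section 7]{marzuola2021quasilinear}, with our \Cref{linearestimate} and \Cref{freqenvbound} replacing their analogous linear estimates.
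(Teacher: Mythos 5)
Your overall strategy is in the same spirit as the paper's (which simply outsources to Section~7 of \cite{marzuola2021quasilinear}, with \Cref{linearestimate}, \Cref{freqenvbound} and \Cref{perturbationstable} replacing the analogous ingredients), but the packaging differs in a way worth flagging. The paper constructs the solution by directly Picard--iterating the paradifferential form of \eqref{div-form} with the actual data $u_0$ (the scheme \eqref{iteration} with $u^0=0$): at each step one solves a \emph{linear} paradifferential flow, for which \Cref{linearestimate} supplies the needed bound, and then one proves uniform bounds and convergence of the iterates in $l^1X^\sigma$ for $\sigma<s$. You instead propose to regularize the data to $u_0^{(n)}=S_{\leq n}u_0$, produce a solution for each $n$, and pass to the limit by Aubin--Lions.

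Your primary mechanism for producing $u^{(n)}$ — ``standard quasilinear existence theory'' — is not available here: even for $C^\infty$ data the ultrahyperbolic quasilinear Schr\"odinger equation is not amenable to naive energy methods because of the Mizohata loss-of-derivative mechanism, which is precisely what Sections~\ref{Sec lin flow}--\ref{LED sec} are built to circumvent. Your stated fallback (Picard iteration on the paradifferential flow) is exactly the paper's route, at which point the prior regularization of data and the Aubin--Lions extraction become redundant: one may simply iterate with $u_0$ directly and get a convergent sequence as in \cite[Sections 7.1--7.3]{marzuola2021quasilinear}. The compactness detour also introduces an extra obligation, namely lower semicontinuity of the $l^1X^s$ norm under the weak limit you extract, and the use of $u_0^{(n)}\neq u_0$ means you must verify, for finite $n$, that $g(u_0^{(n)})$ is nontrapping with parameters comparable to those of $g(u_0)$ before \Cref{perturbationstable} may be invoked along the flow. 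Both are manageable, but neither is automatic, and the direct iteration avoids them. Your treatment of the nontrapping propagation (FTC bound in $l^1H^{s_0-2}$, interpolation against the uniform $l^1H^s$ bound, then \Cref{perturbationstable}) and your observation that \Cref{Algebraproperty} and \Cref{parabil} propagate the asymptotic flatness condition \eqref{asympflat} with constant $\lesssim M$ both match the intended argument.
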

As in Section 7 of \cite{marzuola2021quasilinear}, for each $n\geq 0$ we consider the following iteration scheme for the paradifferential form of the nonlinear equation:
\begin{equation}\label{iteration}
\begin{cases}
&i\partial_tu^{n+1}+\partial_iT_{g^{ij}(u^n)}\partial_j u^{n+1}+T_{b^j(u^n)}\partial_ju^{n+1}+T_{\tilde{b}^j(u^n)}\partial_j\overline{u}^{n+1}=G(u^n),
\\
&u^{n+1}(0,x)=u_0(x),
\end{cases}
\end{equation}
with initialization $u^0=0$. Here, we are suppressing the dependence on derivatives of $u^n$ and its complex conjugate in $b^j$, $\tilde{b}^j$ and $G$. It is clear that \Cref{Existence} will follow from our next proposition, which addresses the  convergence and bounds for the iteration scheme.
\begin{proposition}\label{uniformboundsonscheme}
Let $s,M,L,R_0, T_0$  and $u_0$ be as in \Cref{Existence}. Then there exists a constant $C(M,L)$ such that for every $n\geq 0$ there exists a solution $u^{n}$ to \eqref{iteration} on $[0,T]$  such that
\begin{enumerate}
\item ($l^1X^s$ bound).
\begin{equation*}
\|u^n\|_{l^1X^s}\leq C(M,L)\|u_0\|_{l^1H^s}.
\end{equation*}
\item(Smallness outside $B_{R_0}$).
\begin{equation*}
\|\chi_{>R_0}u^n\|_{l^1X^s}\leq 2\epsilon   . 
\end{equation*}
\item (Comparable nontrapping parameter).
\begin{equation*}
L(u^n)\leq 2L(u_0).    
\end{equation*}
\end{enumerate}
Moreover, there is a function $u\in l^1X^s$ satisfying the same bounds as above such that $u^n$ converges strongly to $u$ in $l^1X^{\sigma}$ for every $0\leq\sigma<s$.
\end{proposition}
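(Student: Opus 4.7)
The plan is to close an induction on $n$ establishing (i)-(iii) simultaneously, then to upgrade to strong convergence by running the linear estimate on consecutive differences at lower regularity.

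\textbf{Step 1 (Induction setup).} The base case $n=0$ is trivial. Assuming (i)-(iii) for $u^n$, the idea is to view \eqref{iteration} as the paradifferential flow \eqref{paralinflow} with coefficients $g^{ij}(u^n)$, $b^j(u^n)$, $\tilde{b}^j(u^n)$ and source $G(u^n)$, then invoke \Cref{linearestimate}. To do so, I must first verify the hypotheses: non-degeneracy, asymptotic flatness \eqref{asympflat}, asymptotic smallness \eqref{smallness} and nontrapping, all with constants controlled by $(M,L)$.  The coefficient bounds in $l^1X^{s_0}$ and $l^1X^{s_0-1}$ follow from (i) together with the Moser estimate in \Cref{Algebraproperty} applied to $g(y,\bar y)-g_\infty$ and $F_y, F_{\bar y}$ (for $b,\tilde b$). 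The time derivative bounds follow by differentiating \eqref{iteration} and using the bilinear estimates in \Cref{parabil}, yielding $\partial_t g(u^n) \in l^1X^{s_0-2}$ with uniform bound in terms of $\|u^{n}\|_{l^1X^s}\|u^{n-1}\|_{l^1X^s}$. The smallness \eqref{smallness} of the coefficients outside $B_{R_0}$ will follow from (ii) together with the  Moser estimate and the fact that $g(0,0) = g_\infty$ (after a linear change of variables if necessary). Nontrapping of $g(u^n|_{t})$ with parameter $\leq 2L(u_0)$ on a short time interval follows from (ii), (iii) and the stability statement \Cref{perturbationstable}, using that $\|g(u^n(t)) - g(u_0)\|_{l^1H^{s_0}} \lesssim \|u^n(t)-u_0\|_{l^1H^{s_0}}\cdot c(\|u^n\|_{L^\infty})$, which can be made smaller than $e^{-C_0 L(R_0)}$ by taking $T$ small depending on $M$ and $L$.

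\textbf{Step 2 (Source term bound).} The source $G(u^n)$ must be estimated in $l^1Y^s$ by $C(M,L)\|u_0\|_{l^1H^s}$. The nonlinear term $F(u^n,\overline{u^n},\nabla u^n,\nabla \overline{u^n})$ is controlled by the paradifferential bilinear estimates in \Cref{parabil} combined with the quadratic vanishing hypothesis \eqref{QIP} and the Moser bounds from \Cref{freqenv1}. The  renormalization remainders $(T_{b^j}-b^j)\partial_j u^n$, $(T_{\tilde b^j}-\tilde b^j)\partial_j \overline{u^n}$ and $\partial_j (T_{g^{ij}}-g^{ij})\partial_i u^n$ are similarly controlled by \Cref{parabil}, with one derivative placed on the low-frequency coefficient factor. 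Applying \Cref{linearestimate} now gives the bound (i) for $u^{n+1}$, with constant $C(M,L)$, provided $T\leq T_0(M,L)$.

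\textbf{Step 3 (Propagation of (ii) and (iii)).} For (ii), I would write an equation for $\chi_{>R_0/2} u^{n+1}$ by commuting the cutoff through the paradifferential equation, producing commutator error terms supported in $\{|x| \lesssim R_0\}$ that can be absorbed into the $l^1X^s$ bound via H\"older in time if $T$ is sufficiently small. Combining with the smallness of the data outside $B_{R_0/2}$ (which is a consequence of the data lying in $l^1H^s$), one obtains (ii). For (iii), I apply \Cref{perturbationstable} again to compare $g(u^{n+1})$ with $g(u_0)$ on $[0,T]$, using the just-proved (i) and the fundamental theorem of calculus in time to bound $\|g(u^{n+1}(t)) - g(u_0)\|_{l^1H^{s_0}}$ by $T\cdot C(M,L)$, which is $\leq e^{-C_0 L(R_0)}$ for $T$ small.

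\textbf{Step 4 (Convergence).} Set $w^n:=u^{n+1}-u^n$. Subtracting consecutive iterates produces an equation for $w^n$ of the form \eqref{paralinflow} with coefficients from $u^n$ and a source term comprised of differences of coefficients evaluated at $u^n$ vs.~$u^{n-1}$, each of which is at least linear in $w^{n-1}$ and its first derivatives. Applying \Cref{linearestimate} at the lower regularity $\sigma = s-1$, together with the bilinear estimates in \Cref{parabil} to convert coefficient differences into factors of $\|w^{n-1}\|_{l^1X^{s-1}}$ times $\|u^n\|_{l^1X^s}$, yields
\begin{equation*}
\|w^n\|_{l^1X^{s-1}} \leq C(M,L)\cdot T^{1/2}\cdot \|w^{n-1}\|_{l^1X^{s-1}},
\end{equation*}
where the small factor comes from H\"older in $T$ applied to lower order source terms. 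Taking $T$ even smaller then gives a contraction in $l^1X^{s-1}$, hence a limit $u$ in $l^1X^{s-1}$. Interpolating with the uniform bound (i) gives convergence in $l^1X^\sigma$ for each $0 \leq \sigma < s$, and weak-$*$ compactness and lower semicontinuity of norms give $u \in l^1 X^s$ with (i); the properties (ii), (iii) pass to the limit using Fatou and the stability of nontrapping, respectively.

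\textbf{Main obstacle.} The subtlest point is the simultaneous propagation of the nontrapping parameter along the iteration. A naive perturbation argument would only give that $L(u^n) \leq 2^n L(u_0)$, which blows up. The workaround is to always measure perturbations of $g(u^n)$ against the \emph{fixed} initial metric $g(u_0)$ rather than against the previous iterate, and then apply \Cref{perturbationstable} once to $g(u_0)$ with the threshold $e^{-C_0L(R_0)}$. The fact that this threshold is independent of $n$, combined with a uniform-in-$n$ time derivative bound from Step~1, is what lets one choose a single $T_0=T_0(M,L,R_0)$ making the iteration stable.
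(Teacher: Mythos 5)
Your overall architecture — induction propagating (i)-(iii), verification of the hypotheses of \Cref{linearestimate}, re-centering perturbations of the metric around the fixed initial metric $g(u_0)$ so that \Cref{perturbationstable} can be applied once with a threshold independent of $n$, and a difference estimate at lower regularity to obtain convergence — matches the approach the paper intends (the paper's own proof simply defers to \cite[Sections 7.1--7.3]{marzuola2021quasilinear}, replacing their linear estimate and stability lemma by \Cref{linearestimate} and \Cref{perturbationstable}). Your identification of the nontrapping-propagation issue as the chief subtlety, and the fix of always comparing $g(u^n)$ against $g(u_0)$ rather than against $g(u^{n-1})$, is exactly right.

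However, Step 4 has a genuine gap. Writing the difference equation for $w^n = u^{n+1} - u^n$, the source contains the top-order piece
\begin{equation*}
\partial_i T_{\delta g^{ij}}\partial_j u^n + T_{\delta b^j}\partial_j u^n + T_{\delta \tilde b^j}\partial_j \overline{u^n}, \qquad \delta g := g(u^n)-g(u^{n-1}),\ \text{etc.}
\end{equation*}
These terms are genuinely of second order in $u^n$ and therefore \emph{must} be estimated via the bilinear bounds \Cref{parabil} in the dual local energy space $l^1Y^\sigma$ — they cannot be placed in $L_T^1 L_x^2$, so H\"older in time does not produce a power of $T$. The bound one actually gets is of the form $\|\delta g\|_{l^1X^{s_0-1}}\,\|u^n\|_{X^{\sigma+1}}$, and the factor $\|\delta g\|_{l^1X^{s_0-1}}\lesssim c(M)\|w^{n-1}\|_{l^1 X^{s_0-1}}$ carries no $T$-smallness. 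Your claimed contraction $\|w^n\|_{l^1X^{s-1}}\le C(M,L)\,T^{1/2}\,\|w^{n-1}\|_{l^1X^{s-1}}$ therefore does not follow from the reasoning you give: the $T^{1/2}$ gain applies only to the genuinely lower-order source pieces. Relatedly, the regularity index $\sigma = s-1$ is also the wrong choice: the coefficient-difference factors are only controlled at the fixed regularity $s_0-1$, so the iterate estimate naturally mixes $l^1X^{s-1}$ on the left with $l^1X^{s_0-1}$ on the right and does not self-improve. The paper's own weak Lipschitz bound, \Cref{weaklip}, is stated for $0\le\sigma<s_0-1$ for exactly this reason, and the convergence of the iterates should be run at such a $\sigma$, with the top-order coefficient-difference sources treated by the same structural (rather than $T$-Hölder) mechanism that produces the perturbative bounds in the linear-estimate source-term lemmas; the upgrade to $l^1X^\sigma$ for all $\sigma<s$ then comes from interpolation with the uniform $l^1X^s$ bound, as you note.
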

\begin{remark}
    For simplicity of presentation, we have omitted the parameter $\frac{d}{2}+2<s_0<s$ used in \cite[Section 7]{marzuola2021quasilinear} from the statements of the results in this section. This parameter still needs to be taken into account in the (omitted) proofs to ensure that the bounds for the low-frequency coefficients $g,$ $b$, and $\tilde{b}$ stay under control in each iteration.
\end{remark}
The proof of the above proposition follows from a virtually identical line of reasoning as  \cite[Sections 7.1-7.3]{marzuola2021quasilinear}. We simply use \Cref{perturbationstable} and \Cref{linearestimate} in place of the analogues in their proof. We omit the details.
\subsection{Uniqueness and the weak Lipschitz bound}
In this subsection, we establish uniqueness of solutions in the class $l^1X^s$ when $s>\frac{d}{2}+2$. In fact, our uniqueness result follows as a corollary of a weak Lipschitz type bound as noted in the following proposition.
\begin{proposition}\label{weaklip}
Let $s>\frac{d}{2}+2$ and let $u^1_0\in l^1H^s$. Assume that $g(u_0^1)$ is a non-degenerate, nontrapping metric with parameters $M,$ $R_0$ and $L$ as above. Suppose that $u_0^2\in l^1H^s$ is another initial datum satisfying
\begin{equation*}
\|u_0^2\|_{l^1H^s}\lesssim M    ,
\end{equation*}
and suppose that $u_0^2$ is close to $u_0^1$ in the $l^1L^2$ topology in the sense that
\begin{equation*}
\|u_0^1-u_0^2\|_{l^1L^2}\ll_M e^{-C(M)L(R_0)}.   
\end{equation*}
Then the following statements hold:
\begin{enumerate}
\item $g(u_0^2)$ is nontrapping with comparable parameters to $g(u_0^1)$.
\item The solutions $u^1$ and $u^2$ generated by $u_0^1$ and $u_0^2$ exist on a time interval $[0,T]$ whose length depends only on the parameters $M$, $R_0$ and $L(R_0)$.
\item For $0\leq\sigma<s_0-1$, we have the following weak Lipschitz type bound:
\begin{equation*}
\|u^1-u^2\|_{l^1X^{\sigma}}\leq C(M,L)\|u_0^1-u_0^2\|_{l^1H^{\sigma}}    .
\end{equation*}
\end{enumerate}
\end{proposition}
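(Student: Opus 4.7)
The plan is to establish all three claims essentially as consequences of results already developed in the paper, namely the stability of the nontrapping parameter under perturbations (\Cref{perturbationstable}), the existence theorem (\Cref{Existence}), and the main linear well-posedness estimate (\Cref{linearestimate}).

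First, for (i), I would show that the hypothesis $\|u_0^1-u_0^2\|_{l^1L^2}\ll e^{-C(M)L(R_0)}$, combined with the uniform $l^1H^s$ bound on both data, gives (by dyadic interpolation) smallness of $u_0^1-u_0^2$ in $l^1H^{s_0}$, say of size $\lesssim e^{-C(M)L(R_0)/2}$, where $\frac{d}{2}+2<s_0<s$. Since $g$ is a smooth function of its arguments, the Moser-type estimate in \Cref{Algebraproperty} transfers this smallness to $\|g(u_0^1)-g(u_0^2)\|_{l^1H^{s_0}}$. Then \Cref{perturbationstable} yields that $g(u_0^2)$ is nontrapping with parameters comparable to those of $g(u_0^1)$, establishing (i). Part (ii) is then immediate from \Cref{Existence} applied to each datum: both solutions exist on a common time interval $[0,T]$ whose length depends only on $M$, $R_0$ and $L(R_0)$, and in particular one has uniform control of $\|u^j\|_{l^1X^s}$ as well as persistence of comparable nontrapping parameters for $g(u^j(t))$.

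For (iii), I would form $v:=u^1-u^2$ and derive its equation by subtracting the two paradifferential formulations of \eqref{finalsectioneqn}. The result has the structure of the linear paradifferential ultrahyperbolic flow \eqref{paralinflow} with coefficients $g^{ij}(u^1)$, $b^j(u^1,\nabla u^1)$, $\tilde b^j(u^1,\nabla u^1)$ associated to $u^1$, plus a source term $f$ collecting all the remainder terms. Schematically,
\begin{equation*}
f = \bigl(g^{ij}(u^1)-g^{ij}(u^2)\bigr)\cdot N_1(u^2,\nabla^2 u^2) + \bigl(F'(u^1)-F'(u^2)\bigr)\cdot N_2(u^2,\nabla u^2) + (\text{paralinearization errors}),
\end{equation*}
where each factor $g^{ij}(u^1)-g^{ij}(u^2)$ and $F'(u^1)-F'(u^2)$ is a smooth function times $v$ (and $\bar v$) by the fundamental theorem of calculus. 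The coefficients for the linear equation satisfy the asymptotic flatness \eqref{asympflat} with parameter $M$ and nontrapping with parameter $L$, by (i)--(ii) and the $l^1X^s$ bounds from \Cref{Existence}. Hence \Cref{linearestimate} applies at any regularity $0\le\sigma\le s_0$, giving
\begin{equation*}
\|v\|_{l^1X^\sigma}\le C(M,L)\bigl(\|v_0\|_{l^1H^\sigma}+\|f\|_{l^1Y^\sigma}\bigr).
\end{equation*}

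The main obstacle is controlling $\|f\|_{l^1Y^\sigma}$ in a way that either closes the estimate or is absorbed on the left. The dangerous term is $(g^{ij}(u^1)-g^{ij}(u^2))\partial_i\partial_j u^2$, because it formally requires two derivatives on $u^2$. Writing $g^{ij}(u^1)-g^{ij}(u^2) = \bigl(\int_0^1 \nabla g^{ij}(su^1+(1-s)u^2)ds\bigr)\cdot v + \text{conj.}$ and applying the Moser estimate together with the bilinear bounds in \Cref{freqenv2}, one obtains
\begin{equation*}
\|(g^{ij}(u^1)-g^{ij}(u^2))\partial_i\partial_j u^2\|_{l^1Y^\sigma}\lesssim C(M)\|v\|_{l^1X^\sigma}\|u^2\|_{l^1X^{s}},
\end{equation*}
which is the place where the loss of one derivative forces the restriction $\sigma\le s_0-1<s-1$. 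The remaining nonlinear and paralinearization errors are estimated analogously, and the time-integrated contributions can be made small by taking $T$ small (or, alternatively, one can set up this whole step on the short interval already provided by (ii) and use H\"older in time together with the slack in the bilinear bounds to produce a coefficient $o(1)$ in front of $\|v\|_{l^1X^\sigma}$). Absorbing this term on the left yields the weak Lipschitz bound claimed in (iii).
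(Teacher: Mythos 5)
Your proposal matches the paper's approach, which simply refers the reader to Section 7.4 of \cite{marzuola2021quasilinear} and notes that the only modification is to use \Cref{perturbationstable} in place of their Proposition 5.2 when establishing part (i). You have correctly identified that substitution, used \Cref{Existence} for (ii), and set up the difference equation and linear estimate \Cref{linearestimate} for (iii), so the overall structure is right. One small caution: the final absorption in (iii) is the subtle point. As you observe, the zeroth-order source terms like $\Phi\,\partial_i\partial_j u^2\cdot v$ carry a factor $C(M,L)\|u^2\|_{l^1X^s}$ that is not small, so the smallness must come from elsewhere; measuring these terms in $l^1L^1_TH^\sigma_x\subset l^1Y^\sigma$ and using H\"older in time (to pick up a factor of $T$) is indeed the right mechanism, but this should be stated as the actual source of the $o(1)$ coefficient rather than as an alternative. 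Also note the stated range is the open interval $0\le\sigma<s_0-1$, not $\sigma\le s_0-1$ as you wrote; the restriction comes from controlling the difference of the first-order coefficients $b^j(u^1,\nabla u^1)-b^j(u^2,\nabla u^2)$, which contain $\nabla v$, in the bilinear estimates of \Cref{parabil} and \Cref{freqenv2}, in addition to the second-derivative loss you highlight.
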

\begin{proof}
The proof follows an identical line of reasoning as Section 7.4 in \cite{marzuola2021quasilinear} except that we use \Cref{perturbationstable} in place of Proposition 5.2 in \cite{marzuola2021quasilinear} to prove (i).
\end{proof}
\subsection{Frequency envelope bounds and continuous dependence}
In this final subsection, our main objective is to establish continuous dependence for (\ref{finalsectioneqn}). More precisely, for $s>\frac{d}{2}+2$, we want to show that the data-to-solution map (given nontrapping data) $u_0\mapsto u$ is continuous from $l^1H^s$ to $l^1X^s$. As in \cite{marzuola2021quasilinear}, the main ingredient is the following frequency envelope bound for the solution $u\in l^1X^s$ in terms of the data.
\begin{proposition}\label{freqenvfinal} Let $u\in l^1X^s$ be a solution to \eqref{finalsectioneqn} as in \Cref{Existence} with initial data $u_0\in l^1H^s$. Let $a_k$ be an admissible frequency envelope for $u_0$ in $l^1H^s$. Then the solution $u$ satisfies the bound
\begin{equation*}
\|S_ku\|_{l^1X^s}\leq a_kC(M,L)\|u_0\|_{l^1H^s}.    
\end{equation*}
\end{proposition}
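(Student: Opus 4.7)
The plan is to bootstrap the frequency envelope version of the linear paradifferential estimate, \Cref{freqenvbound}, through the paradifferential form of the nonlinear equation, in the spirit of \cite[Section 7]{marzuola2021quasilinear}. Rewriting \eqref{finalsectioneqn} in paradifferential form, $u$ satisfies
\begin{equation*}
i\partial_t u+\partial_j T_{g^{ij}(u,\bar u)}\partial_i u+T_{b^j}\partial_j u+T_{\tilde b^j}\partial_j\bar u=G(u),
\end{equation*}
where $G(u)$ collects $F(u,\bar u,\nabla u,\nabla\bar u)$ together with the paradifferential remainders $(\partial_j T_{g^{ij}}\partial_i-\partial_j g^{ij}\partial_i)u$, $(T_{b^j}-b^j)\partial_j u$, and $(T_{\tilde b^j}-\tilde b^j)\partial_j\bar u$. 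By \Cref{Existence}, the coefficients $g(u),b(u),\tilde b(u)$ satisfy the hypotheses of \Cref{freqenvbound} with parameters comparable to $M$ and $L$.

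Let $c_k$ denote the $l^1X^s$ frequency envelope for $u$ given by the explicit formula in \Cref{Prelims}. Applying \Cref{freqenvbound} with $v_0=u_0$ (admissible envelope $a_k$) and source $f=G(u)$ yields
\begin{equation*}
\|S_k u\|_{l^1X^s}\leq C(M,L)\bigl(a_k\|u_0\|_{l^1H^s}+d_k\|G(u)\|_{l^1Y^s}\bigr)
\end{equation*}
for any admissible $l^1Y^s$ envelope $d_k$ of $G(u)$. The heart of the argument is to produce such a $d_k$ with $d_k\|G(u)\|_{l^1Y^s}\lesssim c_k C(M,L)\|u_0\|_{l^1H^s}$. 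For each of the three paradifferential remainders I would invoke the bilinear envelope bounds of \Cref{freqenv2}, combined with the Moser-type envelope bound of \Cref{freqenv1} applied to $g(u)-g(0),b(u),\tilde b(u)$, distributing the top-order derivative onto the $u$ factor so that the resulting envelope contribution at frequency $2^k$ is proportional to $c_k$ and the accompanying norms are bounded by $C(M,L)\|u_0\|_{l^1H^s}$. The quadratic nonlinearity $F$ is handled analogously using the vanishing condition \eqref{QIP} together with \Cref{freqenv1} and \Cref{freqenv2}.

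Substituting back gives the implicit inequality
\begin{equation*}
\|S_k u\|_{l^1X^s}\leq C(M,L)a_k\|u_0\|_{l^1H^s}+C(M,L)T^{\delta}\, c_k\|u\|_{l^1X^s},
\end{equation*}
on a short interval $[0,T]$, where the extra factor $T^{\delta}$ with some $\delta>0$ comes from the routine H\"older-in-time reductions that were used for the subleading paradifferential errors in \Cref{hifreqlemma} and \Cref{secondsource}. Taking the weighted maximum that defines $c_k$ from the sequence $\|S_j u\|_{l^1X^s}/\|u\|_{l^1X^s}$ and using that the envelope is slowly varying to absorb the $\max$-type terms, one obtains $c_k\|u\|_{l^1X^s}\leq C(M,L)a_k\|u_0\|_{l^1H^s}+C(M,L)T^{\delta}c_k\|u\|_{l^1X^s}$. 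Choosing $T$ sufficiently small relative to $M,L,\|u_0\|_{l^1H^s}$ absorbs the second term on the right, yielding $\|S_k u\|_{l^1X^s}\leq c_k\|u\|_{l^1X^s}\leq C(M,L)a_k\|u_0\|_{l^1H^s}$ on the initial short subinterval; iterating over the lifespan and using \Cref{perturbationstable} together with part (iii) of \Cref{Existence} to keep $L$ comparable at each restart produces the claim. The main obstacle is the absorption step: one must verify that every summand of $G(u)$ genuinely admits a source envelope whose coefficient in front of $c_k$ carries the $T^{\delta}$ smallness, which requires carefully matching the regularity indices in \Cref{freqenv2} to place the top-order derivative on the right factor and thereby avoid a critical loss.
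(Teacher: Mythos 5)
Your high-level strategy matches what the paper has in mind: the paper's proof of \Cref{freqenvfinal} simply defers to \cite[Proposition 7.5]{marzuola2021quasilinear}, substituting \Cref{freqenvbound} for the analogous linear ingredient there, and that argument does run through the paradifferential form, the frequency envelope bound for the linear paradifferential flow, and the bilinear/Moser envelope estimates of \Cref{freqenv1} and \Cref{freqenv2}, followed by an absorption and time-iteration, exactly as you outline.

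The absorption step, however, is not justified as you have written it, and you are right to flag it as ``the main obstacle.'' You assert the implicit inequality $\|S_k u\|_{l^1X^s}\leq C(M,L)a_k\|u_0\|_{l^1H^s}+C(M,L)T^{\delta}c_k\|u\|_{l^1X^s}$ and attribute the $T^{\delta}$ gain to the H\"older-in-time reductions of \Cref{hifreqlemma} and \Cref{secondsource}, but those lemmas do not furnish a blanket $T^{\delta}$ factor for the source term $G(u)$. Two points need care. First, for the balanced and high-high pieces of the paradifferential remainder $(\partial_jT_{g^{ij}}\partial_i-\partial_jg^{ij}\partial_i)u$, the bilinear estimates of \Cref{parabil} and \Cref{freqenv2} compare $l^1Y^s$ and $l^1X^s$ norms, which are uniform in $T$; no time-smallness appears, and the resulting coefficient in front of $c_k$ is $C(M,L)C_1(M)$, which is not small for large data, so a direct absorption on the limit solution fails. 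Second, the mechanism that actually produces smallness in \Cref{hifreqlemma} and \Cref{secondsource} is a combined frequency truncation and short-time argument: one splits $G(u)$ (or, in those lemmas, the paradifferential remainders) into a part with coefficients truncated at frequency $\lesssim 2^{k_0}$, whose contribution is sub-principal and gains $T$ via H\"older, and a part where the coefficient sits at frequency $\gtrsim 2^{k_0}$, which gains $2^{-\delta k_0}$. Your sketch gestures at ``carefully matching the regularity indices'' but does not carry out this decomposition for each summand of $G(u)$, and without it the envelope bootstrap does not close. Carrying out that split, and checking that each resulting piece has a source envelope proportional to $c_k$ with a small prefactor, is precisely what is left to do; this is the content imported from \cite[Section 7.5]{marzuola2021quasilinear}.
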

\begin{proof}
The proof follows identical reasoning as the proof of Proposition 7.5 in \cite{marzuola2021quasilinear}. The only difference is that we use \Cref{freqenvbound} in place of the analogous bound in their proof.
\end{proof}
Armed with \Cref{freqenvfinal}, the proof of the continuity of the data-to-solution map in Section 7.6 of \cite{marzuola2021quasilinear} now applies verbatim to establish the same property in our setting. 
 \bibliographystyle{plain}
\bibliography{refs.bib}

\begin{thebibliography}{10}

\bibitem{bony1981calcul}
Jean-Michel Bony.
\newblock Calcul symbolique et propagation des singularit\'{e}s pour les
  \'{e}quations aux d\'{e}riv\'{e}es partielles non lin\'{e}aires.
\newblock {\em Ann. Sci. \'{E}cole Norm. Sup. (4)}, 14(2):209--246, 1981.

\bibitem{calderon1972class}
Alberto-P. Calder\'{o}n and R\'{e}mi Vaillancourt.
\newblock A class of bounded pseudo-differential operators.
\newblock {\em Proc. Nat. Acad. Sci. U.S.A.}, 69:1185--1187, 1972.

\bibitem{chihara1995local}
Hiroyuki Chihara.
\newblock Local existence for semilinear {S}chr\"{o}dinger equations.
\newblock {\em Math. Japon.}, 42(1):35--51, 1995.

\bibitem{craig1995microlocal}
Walter Craig, Thomas Kappeler, and Walter Strauss.
\newblock Microlocal dispersive smoothing for the {S}chr\"{o}dinger equation.
\newblock {\em Comm. Pure Appl. Math.}, 48(8):769--860, 1995.

\bibitem{davey1974three}
A.~Davey and K.~Stewartson.
\newblock On three-dimensional packets of surface waves.
\newblock {\em Proc. Roy. Soc. London Ser. A}, 338:101--110, 1974.

\bibitem{ichi1996remarks}
Shin-ichi Doi.
\newblock Remarks on the {C}auchy problem for {S}chr\"{o}dinger-type equations.
\newblock {\em Comm. Partial Differential Equations}, 21(1-2):163--178, 1996.

\bibitem{hayashi1994remarks}
Nakao Hayashi and Tohru Ozawa.
\newblock Remarks on nonlinear {S}chr\"{o}dinger equations in one space
  dimension.
\newblock {\em Differential Integral Equations}, 7(2):453--461, 1994.

\bibitem{MR0233064}
Lars H\"{o}rmander.
\newblock Pseudo-differential operators and non-elliptic boundary problems.
\newblock {\em Ann. of Math. (2)}, 83:129--209, 1966.

\bibitem{MR0948533}
Wataru Ichinose.
\newblock On {$L^2$} well posedness of the {C}auchy problem for
  {S}chr\"{o}dinger type equations on the {R}iemannian manifold and the
  {M}aslov theory.
\newblock {\em Duke Math. J.}, 56(3):549--588, 1988.

\bibitem{ifrim2023local}
Mihaela Ifrim and Daniel Tataru.
\newblock Local well-posedness for quasi-linear problems: A primer.
\newblock {\em Bulletin of the American Mathematical Society}, 60(2):167--194,
  2023.

\bibitem{ishimori1984multi}
Yuji Ishimori.
\newblock Multivortex solutions of a two-dimensional nonlinear wave equation.
\newblock {\em Progr. Theoret. Phys.}, 72(1):33--37, 1984.

\bibitem{MR4456288}
In-Jee Jeong and Sung-Jin Oh.
\newblock On the {C}auchy problem for the {H}all and electron
  magnetohydrodynamic equations without resistivity {I}: {I}llposedness near
  degenerate stationary solutions.
\newblock {\em Ann. PDE}, 8(2):Paper No. 15, 106, 2022.

\bibitem{jeong2023illposedness}
In-Jee Jeong and Sung-Jin Oh.
\newblock Illposedness for dispersive equations: {D}egenerate dispersion and
  {T}akeuchi--{M}izohata condition.
\newblock {\em arXiv preprint arXiv:2308.15408}, 2023.

\bibitem{jeong2023cauchy}
In-Jee Jeong and Sung-Jin Oh.
\newblock Wellposedness of the electron {MHD} without resistivity for large
  perturbations of the uniform magnetic field.
\newblock {\em preprint}, 2023.

\bibitem{kenig2005variable}
C.~E. Kenig, G.~Ponce, C.~Rolvung, and L.~Vega.
\newblock Variable coefficient {S}chr\"{o}dinger flows for ultrahyperbolic
  operators.
\newblock {\em Adv. Math.}, 196(2):373--486, 2005.

\bibitem{MR2263709}
Carlos~E. Kenig, Gustavo Ponce, Christian Rolvung, and Luis Vega.
\newblock The general quasilinear ultrahyperbolic {S}chr\"{o}dinger equation.
\newblock {\em Adv. Math.}, 206(2):402--433, 2006.

\bibitem{MR1660933}
Carlos~E. Kenig, Gustavo Ponce, and Luis Vega.
\newblock Smoothing effects and local existence theory for the generalized
  nonlinear {S}chr\"{o}dinger equations.
\newblock {\em Invent. Math.}, 134(3):489--545, 1998.

\bibitem{MR2096797}
Carlos~E. Kenig, Gustavo Ponce, and Luis Vega.
\newblock The {C}auchy problem for quasi-linear {S}chr\"{o}dinger equations.
\newblock {\em Invent. Math.}, 158(2):343--388, 2004.

\bibitem{PsiDo}
H.~Kumano-Go.
\newblock {\em Pseudo-Differential Operators}.
\newblock MIT Press, Cambridge, 1981.

\bibitem{MR0206534}
P.~D. Lax and L.~Nirenberg.
\newblock On stability for difference schemes: {A} sharp form of
  {G}\.{a}rding's inequality.
\newblock {\em Comm. Pure Appl. Math.}, 19:473--492, 1966.

\bibitem{marzuola2008wave}
Jeremy Marzuola, Jason Metcalfe, and Daniel Tataru.
\newblock Wave packet parametrices for evolutions governed by {PDO}'s with
  rough symbols.
\newblock {\em Proc. Amer. Math. Soc.}, 136(2):597--604, 2008.

\bibitem{marzuola2012quasilinear}
Jeremy~L. Marzuola, Jason Metcalfe, and Daniel Tataru.
\newblock Quasilinear {S}chr\"{o}dinger equations {I}: {S}mall data and
  quadratic interactions.
\newblock {\em Adv. Math.}, 231(2):1151--1172, 2012.

\bibitem{MR3263550}
Jeremy~L. Marzuola, Jason Metcalfe, and Daniel Tataru.
\newblock Quasilinear {S}chr\"{o}dinger equations, {II}: {S}mall data and cubic
  nonlinearities.
\newblock {\em Kyoto J. Math.}, 54(3):529--546, 2014.

\bibitem{marzuola2021quasilinear}
Jeremy~L. Marzuola, Jason Metcalfe, and Daniel Tataru.
\newblock Quasilinear {S}chr\"{o}dinger equations {III}: {L}arge data and short
  time.
\newblock {\em Arch. Ration. Mech. Anal.}, 242(2):1119--1175, 2021.

\bibitem{metivier2008differential}
Guy M\'{e}tivier.
\newblock {\em Para-differential calculus and applications to the {C}auchy
  problem for nonlinear systems}, volume~5 of {\em Centro di Ricerca Matematica
  Ennio De Giorgi (CRM) Series}.
\newblock Edizioni della Normale, Pisa, 2008.

\bibitem{MR0170112}
Sigeru Mizohata.
\newblock Some remarks on the {C}auchy problem.
\newblock {\em J. Math. Kyoto Univ.}, 1:109--127, 1961/62.

\bibitem{mizohata1981quelques}
Sigeru Mizohata.
\newblock Sur quelques \'equations du type {S}chr{\"o}dinger.
\newblock {\em Journ{\'e}es {\'e}quations aux d{\'e}riv{\'e}es partielles},
  pages 1--5, 1981.

\bibitem{MR0860041}
Sigeru Mizohata.
\newblock {\em On the {C}auchy problem}, volume~3 of {\em Notes and Reports in
  Mathematics in Science and Engineering}.
\newblock Academic Press, Inc., Orlando, FL; Science Press Beijing, Beijing,
  1985.

\bibitem{zakharov1980degenerative}
E.~I. Schulman and V.~E. Zakharov.
\newblock Degenerative dispersion laws, motion invariants and kinetic
  equations.
\newblock {\em Phys. D}, 1(2):192--202, 1980.

\bibitem{takeuchi1980cauchy}
Jiro Takeuchi.
\newblock On the {C}auchy problem for some non-{K}owalewskian equations with
  distinct characteristic roots.
\newblock {\em J. Math. Kyoto Univ.}, 20(1):105--124, 1980.

\bibitem{tao2001global}
Terence Tao.
\newblock Global regularity of wave maps. {I}. {S}mall critical {S}obolev norm
  in high dimension.
\newblock {\em Internat. Math. Res. Notices}, pages 299--328, 2001.

\bibitem{tao2001global2}
Terence Tao.
\newblock Global regularity of wave maps. {II}. {S}mall energy in two
  dimensions.
\newblock {\em Comm. Math. Phys.}, 224(2):443--544, 2001.

\bibitem{taylor1991pseudodifferential}
Michael~E. Taylor.
\newblock {\em Pseudodifferential operators and nonlinear {PDE}}, volume 100 of
  {\em Progress in Mathematics}.
\newblock Birkh\"{a}user Boston, Inc., Boston, MA, 1991.

\end{thebibliography}
\end{document}